\definecolor{airforceblue}{rgb}{0.36, 0.54, 0.66}
\definecolor{auburn}{rgb}{0.43, 0.21, 0.1}
\definecolor{alizarin}{rgb}{0.82, 0.1, 0.26}
\definecolor{Plum}{HTML}{89b02e}
\definecolor{Violet}{HTML}{58429B}
\definecolor{OliveGreen}{HTML}{0d8795}
\crefname{section}{Section}{Sections}
\crefname{subsection}{Subsection}{Subsections}
\crefname{condition}{Condition}{Conditions}
\crefname{hypothesis}{Hypothesis}{Hypothesis}
\crefname{assumption}{Assumption}{Assumptions}
\crefname{lemma}{Lemma}{Lemmas}
\crefname{claim}{Claim}{Claims}
\crefname{remark}{Remark}{Remarks}
\crefname{figure}{Figure}{Figures}
\newtheorem{theorem}{Theorem}[section]
\newtheorem{lemma}[theorem]{Lemma}
\newtheorem{claim}[theorem]{Claim}
\newtheorem{proposition}[theorem]{Proposition}
\newtheorem{definition}[theorem]{Definition}
\newtheorem{remark}[theorem]{Remark}        
\newtheorem{assumption}[theorem]{Assumption} 
\newcommand{\tlcref}[1]{\textup{\labelcref{#1}}}
\def\YYint#1#2#3{{\setbox0=\hbox{$#1{#2#3}{\iint}$}
		\vcenter{\hbox{$#2#3$}}\kern-.50\wd0}}
\def\XXint#1#2#3{{\setbox0=\hbox{$#1{#2#3}{\int}$}
		\vcenter{\hbox{$#2#3$}}\kern-.50\wd0}}
\def\namedlabel#1#2{\begingroup
	\def\@currentlabel{#2}%
	\label{#1}\endgroup
}
\newcommand{\rmh}[1]{\mathpalette{\raisem@th{#1}}}
\newcommand{\raisem@th}[3]{\hspace*{-1pt}\raisebox{#1}{$#2#3$}}
\newcommand{\lsb}[2]{#1_{\rmh{-3pt}{#2}}}
\newcommand{\redref}[2]{\texorpdfstring{\protect\hyperlink{#1}{\textcolor{black}{(}\textcolor{red}{#2}\textcolor{black}{)}}}{}}
\newcommand{\redlabel}[2]{\hypertarget{#1}{\textcolor{black}{(}\textcolor{red}{#2}\textcolor{black}{)}}}
\newcommand{\descref}[2]{\hyperref[#1]{\textup{\textcolor{black}{(}\textcolor{blue}{\bf #2}\textcolor{black}{)}}}}
\newcommand{\descitemnormal}[2]{\item[#1]\label{#2}}
\newcommand{\descrefnormal}[2]{\hyperref[#1]{\textup{\textcolor{blue}{\bf #2}}}}
\newcommand{\overlabel}[2]{\overset{\text{\cref{#1}}}{#2}}
\newcommand{\overred}[3]{\overset{\redlabel{#1}{#2}}{#3}}
\g@addto@macro\normalsize{%
	\setlength\abovedisplayskip{3pt}
	\setlength\belowdisplayskip{3pt}
	\setlength\abovedisplayshortskip{1pt}
	\setlength\belowdisplayshortskip{3pt}
}
\def\ps@pprintTitle{%
	\let\@oddhead\@empty
	\let\@evenhead\@empty
	\def\@oddfoot{}%
	\let\@evenfoot\@oddfoot}
\numberwithin{equation}{section}
\newcommand{\mfd}{\mathfrak{d}}
\newcommand{\mfg}{\mathfrak{g}}
\newcommand{\mfq}{\mathfrak{q}}
\newcommand{\mfs}{\mathfrak{s}}
\newcommand{\mft}{\mathfrak{t}}
\newcommand{\mcq}{\mathcal{q}}
\newcommand{\mcB}{\mathcal{B}}
\newcommand{\mcI}{\mathcal{I}}
\newcommand{\mcQ}{\mathcal{Q}}
\newcommand{\mbC}{\mathbb{C}}
\newcommand{\mbfa}{\mathbf{a}}
\DeclareMathOperator{\mopt}{{t}}
\DeclareMathOperator{\mopC}{{C}}
\newcommand{\al}{\alpha}
\newcommand{\be}{\beta}
\newcommand{\de}{\delta}
\newcommand{\ve}{\varepsilon}
\newcommand{\tht}{\theta}
\newcommand{\ep}{\epsilon}
\newcommand{\pa}{\partial}
\newcommand{\om}{\omega}
\newcommand{\Om}{\Omega}
\newcommand{\La}{\Lambda}
\DeclareMathOperator*{\I}{\mathbf{I}}
\DeclareMathOperator*{\II}{\mathbf{II}}
\newcommand\RR{\mathbb{R}}
\DeclareMathOperator{\dv}{div}
\DeclareMathOperator{\spt}{spt}
\DeclareMathOperator{\loc}{loc}
\DeclareMathOperator*{\esssup}{ess\,sup}
\DeclareMathOperator*{\essinf}{ess\,inf}
\DeclareMathOperator*{\essosc}{ess\,osc}
\newcommand{\norm}[1]{\left|\hspace{-0.2mm}\left| #1\right|\hspace{-0.2mm}\right|}
\newcommand{\abs}[1]{\left| #1\right|}
\newcommand{\lbr}[1][(]{\left#1}
\newcommand{\rbr}[1][)]{\right#1}
\newcommand{\txt}[1]{\qquad \text{#1} \quad}
\newenvironment{notationlist}
{\begin{enumerate}[labelindent=*,
		leftmargin=*,
		label=(\bf{N}{\arabic*})
		]}
	{\end{enumerate}}
\DeclareMathOperator*{\tail}{{\textup{Tail}_\infty}}
\newcommand{\tailp}[1][p-1]{\textup{Tail}_\infty^{#1}}
\newcommand{\rint}{\mathring{\mopt}}
\newcommand{\mreta}{\mathring\eta}
\DeclareMathOperator{\bsmu}{\boldsymbol\mu}
\DeclareMathOperator{\bsom}{\boldsymbol\omega}
\DeclareMathOperator{\bsy}{\boldsymbol Y}
\DeclareMathOperator{\bsc}{\boldsymbol C}
\newcommand{\data}[1]{\{n,p,q,s,\Lambda#1\}}
\newcommand{\datanb}[1]{n,p,q,s,\Lambda#1}
\begin{document}
	
	\begin{frontmatter}
		\title{H\"older regularity of doubly nonlinear nonlocal quasilinear parabolic equations in some mixed singular - degenerate regime}
		\author{Karthik Adimurthi\tnoteref{thanksfirstauthor}}
		\ead{karthikaditi@gmail.com and kadimurthi@tifrbng.res.in}
		\tnotetext[thanksfirstauthor]{Supported by the Department of Atomic Energy,  Government of India, under
			project no.  12-R\&D-TFR-5.01-0520}
		\author{Mitesh Modasiya\tnoteref{thanksfirstauthor}}
		\ead{mit.modasiya@gmail.com}
		
		\address{Tata Institute of Fundamental Research, Centre for Applicable Mathematics,Bangalore, Karnataka, 560065, India}
		
		\begin{abstract}
			We study local H\"older regularity of bounded, weak solutions for the nonlocal quasilinear equations of the form
			\[
		(|u|^{q-2}u)_t + \text{P.V.} \int_{\mathbb{R}^n} \frac{|u(x,t) - u(y,t)|^{p-2}(u(x,t)-u(y,t))}{|x-y|^{n+sp}} dy= 0,
			\]
			with $p\in (1,\infty)$, $q\in (1,\infty)$ and $s \in (0,1)$.   
			Analogous H\"older continuity result in the local case is known  in the purely singular case $\{1<p<2, p<q\}$, purely degenerate case $\{2<p, q<p\}$, scale invariant case $\{p=q\}$ and translation invariant case $\{q=2,1<p<\infty\}$. In the nonlocal setting, H\"older regularity is known when the equation is either translation invariant $\{q=2, 1<p<\infty\}$ or  scale invariant $\{q=p, 1<p<\infty\}$ or purely degenerate case $\{2<p, q<p\}$. Similar strategy can be used to obtain H\"older regularity in the purely singular case $\{1<p<2, p<q\}$.

			In this paper, we adapt several ideas developed over the past few years and combine it with a new intrinsic scaling to prove H\"older regularity in the mixed singular - degenerate range $\max\{p,q,2\} <  \min\left\{q + \tfrac{p-1}{1+\frac{n}{sp}}, 2 + \tfrac{p-1}{1+\frac{n}{sp}}\right\}$. The proof explicitly makes use of the nonlocal nature of the problem and as a consequence, our estimates are not stable at $s \rightarrow 0$.  We note that the analogous regularity in the local problem remains open.
%

		\end{abstract}
		\begin{keyword} Nonlocal operators; quasilinear equations; Weak Solutions; H\"older regularity
			\MSC[2010] 35K51, 35A01, 35A15, 35R11.
		\end{keyword}
		
	\end{frontmatter}
	\tableofcontents
	
\section{Introduction}

In this article, we prove local H\"older regularity for bounded, sign-changing solutions of nonlocal parabolic equations whose prototype structure is  of the form
\begin{equation}\label{maineq}
	\partial_t (|u|^{q-2}u) + \text{P.V.}\int_{\RR^n} |u(x,t)-u(y,t)|^{p-2}(u(x,t)-u(y,t))K(x,y,t)\,dy=0,
\end{equation} with $p\in (1,\infty)$, $q \in (1,\infty)$ and $s \in (0,1)$. Moreover, for some universally fixed constant  $\La \geq 1$ and for almost every $x,y \in \RR^n$, we take $K:\RR^n\times\RR^n\times \RR \to [0,\infty)$ to be a symmetric measurable function satisfying
\begin{equation*}
	\frac{(1-s)}{\Lambda|x-y|^{n+sp}}\leq K(x,y,t)\leq \frac{(1-s)\Lambda}{|x-y|^{n+sp}}. 
\end{equation*}

First let us collect all the main theorems that will be proved in this paper:
\subsection{Main theorems}

Following the proof of the degenerate case from \cite{liaoHolderRegularityParabolic2024} combined with our unified scaling, we obtain the following theorem:
\begin{theorem}\label{holderparabolicqtwo}
	Let $p\in[2,\infty)$, $q =2$, $s\in(0,1)$ and let $u\in L^p(I;W^{s,p}_{\text{loc}}(\Om))\cap L^\infty(I;L^q_{\text{loc}}(\Om))\cap L^\infty(I;L^{p-1}_{sp}(\RR^n))$ be any locally bounded, sign-changing weak solution to \cref{maineq}. Then $u$ is locally H\"older continuous in $\Om_T$, i.e., there exist constants $j_o > 1$, $\mathbf{C}_o>1$ and $\alpha\in (0,1)$ depending only on the data and $\mfd$, such that the following holds:
	\[
	|u(0,0) - u(x,t)| \leq C_{\data{,\mfd}} \frac{L}{R^{\alpha}} \lbr \max\{L^{\frac{2-\mfd}{sp}}|x - 0|, L^{\frac{p-\mfd}{sp}}|t - 0|^{\frac{1}{sp}}\}\rbr^{\alpha},
	\]
	for any $(x,t) \in B_{\frac12 R}(0)\times (-(\tfrac12 R)^{sp},0)$, the exponent $\mfd$ is any exponent satisfying $0<\mfd \leq \min\{2,p\}$ with $p-3+\mfd >0$ and 
	\[
	L:= 2 \mathbf{C}_o^{\frac{sp}{p-1}j_o} \|u\|_{L^{\infty}(B_{8R}\times (-(8R)^{sp},0))} + \tail(|u|,8R,0,(-(8R)^{sp},0)).
	\]
	Here $R$ is any fixed radius and we assume $B_{8R}\times (-(8R)^{sp},0)\subset \Omega_T$ and $(0,0) \in \Omega_T$ is any fixed point.
\end{theorem}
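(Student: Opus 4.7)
The plan is to follow the De Giorgi/intrinsic scaling framework adapted to the nonlocal fractional $p$-Laplacian (cf.\ \cite{liaoHolderRegularityParabolic2024}) and to absorb the parameter $\mathfrak{d}$ into the geometry of the intrinsic cylinders, so that the same machinery covers a whole family of scalings at once. Concretely, working at the reference point $(0,0)$, I would introduce intrinsic cylinders of the form $Q_\rho(\theta) = B_\rho \times (-\theta \rho^{sp}, 0)$ with time-scaling $\theta \sim (L/\omega)^{p-\mathfrak{d}}$ and ``space weight'' $(L/\omega)^{2-\mathfrak{d}}$ built into the radius, where $\omega$ is the current oscillation and $L$ the global size from the statement. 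The constant $L$ already collects the $L^\infty$-bound and tail contribution, so this rescaling makes the equation behave, on each cylinder, as if $q$ and $p$ were simultaneously normalised.

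The core of the argument would be a single reduction-of-oscillation lemma: assuming $\operatorname{osc}_{Q_R} u \leq \omega$ with $\omega$ not too small relative to $L$, there exist $\eta, \sigma \in (0,1)$ depending only on the data and $\mathfrak{d}$ such that $\operatorname{osc}_{Q_{\sigma R}(\theta)} u \leq (1-\eta)\omega$. This is proved by the standard alternative: either the set $\{u \leq \mu^- + \omega/2\}$ occupies a substantial portion of an upper time slice, or else $\{u \geq \mu^+ - \omega/2\}$ does. In the first alternative I would use the nonlocal Caccioppoli inequality together with a logarithmic energy estimate and a De Giorgi lemma to propagate positivity of $(u - \mu^-)$ forward in time and then shrink the level, while carefully absorbing the nonlocal tail using the defining bound on $L$ (the factor $2\mathbf{C}_o^{spj_o/(p-1)}$ in front of $\|u\|_\infty$ gives the slack needed to dominate tails at scale $8R$). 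The second alternative is analogous, using $(\mu^+ - u)$ as the test-level. Since $q=2$ and $p \geq 2$, only the degenerate side needs to be handled, which means the time-scaling $\theta \geq 1$ and the usual complication of ``expanding forward in time'' through the Caccioppoli/log-lemma pair is available without the singular-case modifications.

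Iterating this reduction along geometrically shrinking intrinsic cylinders $Q_{\sigma^k R}(\theta_k)$, one obtains $\operatorname{osc}_{Q_{\sigma^k R}} u \leq (1-\eta)^k \omega_0$ for as long as $\omega_k \geq c L \sigma^{k\alpha'}$ for an appropriate $\alpha'$; below that threshold the ``smallness regime'' takes over and a direct Campanato-type argument on fixed (non-intrinsic) cylinders closes the iteration. Re-expressing the conclusion in the original non-intrinsic variables produces the anisotropic distance $\max\{L^{(2-\mathfrak{d})/sp}|x|,\,L^{(p-\mathfrak{d})/sp}|t|^{1/sp}\}$ with exponent $\alpha = \alpha(\text{data},\mathfrak{d}) \in (0,1)$, which is exactly the form stated in the theorem; the factor $L/R^\alpha$ in front comes from the normalisation step $u \mapsto u/L$ before iteration.

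The main obstacle, as usual, is making the intrinsic scaling self-consistent under the unified parameter $\mathfrak{d}$: the exponents $2-\mathfrak{d}$ and $p-\mathfrak{d}$ must be paired in such a way that the Caccioppoli inequality, the logarithmic estimate, and the tail control all rescale covariantly, and the constraint $p - 3 + \mathfrak{d} > 0$ is precisely what is needed to keep the log-estimate integrable at the critical level. A second delicate point is that the tail bound must remain valid on every cylinder of the iteration; this is why $L$ is defined with the enlarged cylinder $B_{8R} \times (-(8R)^{sp},0)$ and with the amplifying factor $\mathbf{C}_o^{spj_o/(p-1)}$, so that the geometrically shrinking tails remain dominated by $L$ uniformly in the iteration index. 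Once these bookkeeping issues are in place, the rest of the argument is a direct transcription of the nonlocal degenerate case in \cite{liaoHolderRegularityParabolic2024} with $q=2$.
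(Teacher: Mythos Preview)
Your overall framework (intrinsic cylinders parametrised by $\mathfrak d$, two alternatives, De Giorgi iteration, tail control via $L$) is in the right spirit, but there is a genuine gap in the mechanism you propose for the expansion-of-positivity step, and your explanation of the constraint $p-3+\mathfrak d>0$ is incorrect.

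The paper does \emph{not} use any logarithmic energy estimate. In the nonlocal setting the Caccioppoli inequality (\cref{Prop:energy}, or \cref{energyqequal2} for $q=2$) comes with an additional ``good term''
\[
\iint (u-k)_{\pm}(x,t)\zeta^p(x,t)\int_{B_R}\frac{(u-k)_{\mp}^{p-1}(y,t)}{|x-y|^{n+sp}}\,dy\,dx\,dt
\]
on the left-hand side, and this term furnishes a solution-level isoperimetric inequality (\cref{lem:isop}) that yields a \emph{polynomial} measure-shrinking lemma (\cref{lemma3.5}): if the good set has density $\alpha$ on each time slice, then $|\{\pm(\bsmu^{\pm}-u)\le\tfrac12\sigma\varepsilon\bsom\}\cap\mcQ|\lesssim \sigma^{p-1}|\mcQ|$. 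This replaces the log-estimate entirely. The expansion-of-positivity chain is then: measure hypothesis $\Rightarrow$ time propagation (\cref{lemma3.4}) $\Rightarrow$ measure shrinking (\cref{lemma3.5}) $\Rightarrow$ De Giorgi iteration (\cref{lemma3.2}).

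Consequently the constraint $p-3+\mathfrak d>0$ has nothing to do with log-integrability. It arises in Step~6 of the second alternative (\cref{redoscsecondp>2scaled}): the shrinking lemma gives decay $\sigma^{p-1}$, while the De Giorgi threshold $\nu_1$ from \cref{lemma3.2} scales like $\sigma^{2-\mathfrak d}$ (through $\delta_x=(\sigma\delta\varepsilon)^{(2-\mathfrak d)/sp}$). One needs $\sigma^{p-1}\ll\sigma^{2-\mathfrak d}$ for small $\sigma$, i.e.\ $p-1>2-\mathfrak d$. If you try to run a log-based argument instead, you will get only exponential rather than polynomial decay in the shrinking step, and the $\mathfrak d$-dependent matching of exponents that drives the unified scaling will not close.
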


Following the proof of the singular case from \cite{liaoHolderRegularityParabolic2024} combined with our unified scaling, we obtain the following theorem:
\begin{theorem}\label{holderparabolicqtwosing}
	Let $p_o\in(1,2)$  be given,  $q =2$, $s\in(0,1)$ and for $p \in (p_o,2+\epsilon_{p_o})$ with $\epsilon_{p_o}$ as given in \cref{defeppo}, let $u\in L^p(I;W^{s,p}_{\text{loc}}(\Om))\cap L^\infty(I;L^q_{\text{loc}}(\Om))\cap L^\infty(I;L^{p-1}_{sp}(\RR^n))$ be any locally bounded, sign-changing weak solution to \cref{maineq}. Furthermore, define \begin{equation}\label{defeppo}\epsilon_{p_o} := \min\left\{p_o-1, \tfrac{sp_o(p_o-1)}{n}, \tfrac{sp_o(p_o-1)}{n+3s}\right\},\end{equation} then $u$ is locally H\"older continuous in the range $p \in (p_o,2+\epsilon_{p_o})$, i.e., there exist constants $j_o > 1$, $\mathbf{C}_o>1$ and $\alpha\in (0,1)$ depending only on the data and $p_o$, such that the following holds:
	\[
	|u(0,0) - u(x,t)| \leq C_{\data{,\mfd}} \frac{L}{R^{\alpha}} \lbr \max\{L^{\frac{2-\mfd}{sp}}|x - 0|, L^{\frac{p-\mfd}{sp}}|t - 0|^{\frac{1}{sp}}\}\rbr^{\alpha},
	\]
	for any $(x,t) \in B_{\frac12 R}(0)\times (-(\tfrac12 R)^{sp},0)$, the exponent $\mfd:=2+\epsilon_{p_o}$ and 
	\[
	L:= 2 \mathbf{C}_o^{\frac{sp}{p-1}j_o} \|u\|_{L^{\infty}(B_{8R}\times (-(8R)^{sp},0))} + \tail(|u|,8R,0,(-(8R)^{sp},0)).
	\]
	Here $R$ is any fixed radius and we assume $B_{8R}\times (-(8R)^{sp},0)\subset \Omega_T$ and $(0,0) \in \Omega_T$ is any fixed point.
\end{theorem}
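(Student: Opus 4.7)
My plan is to mirror the proof of \cref{holderparabolicqtwo} but with the intrinsic scaling exponent $\mfd$ chosen as the critical value $2+\epsilon_{p_o}$ dictated by \tlcref{defeppo}, which forces the space--time geometry to compensate for the singular range $p<2$ (and remain admissible uniformly for $p$ slightly above $2$). Concretely, I would fix the reference cylinder $Q_R = B_R\times(-R^{sp},0)$, set $L$ as in the statement, and work on intrinsic cylinders of the form $Q_\rho^{(\omega)} := B_\rho\times(-\omega^{2-\mfd}\rho^{sp},0)$, where $\omega$ is a running upper bound on the oscillation of $u$ on the cylinder under consideration. Because $q=2$, the time part of the weak formulation remains a standard Steklov--tested $\partial_t u$ term, so the energy and logarithmic estimates proved in the degenerate section carry over verbatim once the scaling is modified; the content of the argument is concentrated in the choice of $\mfd$ and in controlling tails.

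Following \cite{liaoHolderRegularityParabolic2024} in the singular regime, the core of the argument is a dichotomy on each scale: either the set where $u$ is close to its supremum has sufficiently small measure on a slice, in which case a De Giorgi / Krylov--Safonov shrinking-of-measure lemma (proved from the scaled Caccioppoli inequality together with the nonlocal tail contribution controlled by $L$) propagates this smallness to an $L^\infty$ reduction of oscillation from above; or the analogous alternative holds from below. The $\epsilon_{p_o}$ is precisely what is needed so that, in the singular part $p<2$, the product $\omega^{2-\mfd}\rho^{sp}$ used in the logarithmic/expansion-of-positivity lemma remains comparable to the intrinsic time-scale required by both the local term (which is $\omega^{2-p}\rho^{sp}$) and the tail term (which is $\omega^{2-\mfd}\rho^{sp}\cdot\text{Tail}^{p-1}/\omega^{p-1}$). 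The constraint $\epsilon_{p_o}\leq \tfrac{sp_o(p_o-1)}{n}$ is exactly the Sobolev-to-$L^\infty$ budget needed for the De Giorgi iteration to close, and $\epsilon_{p_o}\leq \tfrac{sp_o(p_o-1)}{n+3s}$ (respectively $\leq p_o-1$) is what is needed to absorb the tail term into the energy in the expansion-of-positivity and logarithmic estimates.

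Once the reduction of oscillation $\osc_{Q_{\lambda\rho}^{(\omega')}} u \leq \eta\,\omega$ is established for some $\eta,\lambda\in(0,1)$, with the new $\omega' \leq \eta\omega$ and with the intrinsic time-cylinder properly rebuilt, an iteration on dyadic scales $\rho_j=\lambda^j R$ yields $\osc_{Q_{\rho_j}^{(\omega_j)}} u \leq \eta^j L$. Translating the geometric decay back to the standard (non-intrinsic) scaling through the relations $\omega_j\simeq \eta^j L$ gives the H\"older estimate with the two metrics $L^{\frac{2-\mfd}{sp}}|x|$ and $L^{\frac{p-\mfd}{sp}}|t|^{1/sp}$ displayed in the statement; this part is exactly the computation already carried out for \cref{holderparabolicqtwo} with $\mfd$ now fixed at the critical singular value.

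The main obstacle I expect is the tail control in the expansion-of-positivity step for $p$ close to $p_o$: in the singular case one cannot simply discard the tail contribution using $\omega$-scaling, since $\omega$ enters with the unfavorable exponent $2-\mfd<0$. This is where the precise smallness condition on $\epsilon_{p_o}$ in \tlcref{defeppo} is used, and it is also the reason the estimates degenerate as $s\to 0$, since the $(1-s)$ factor in the kernel does not compensate the $sp$ factor appearing in $L^{\frac{p-\mfd}{sp}}$ when the tail and energy terms are balanced. A secondary technical point is that the constant $\mathbf{C}_o^{sp j_o/(p-1)}$ multiplying $\|u\|_{L^\infty}$ in the definition of $L$ must be chosen large enough (via a finite iteration count $j_o$) so that the alternative analysis can be restarted at each scale without the intrinsic cylinders escaping $Q_{8R}$; I would track this constant explicitly along the iteration exactly as in \cref{holderparabolicqtwo}.
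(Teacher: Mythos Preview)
Your proposal misses the two structural ingredients that make the paper's argument work, and with the scheme you describe the constants would not close for the claimed range $p\in(p_o,2+\epsilon_{p_o})$.

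First, the intrinsic geometry is not the one you write. The paper does \emph{not} use time-only cylinders $B_\rho\times(-\omega^{2-\mfd}\rho^{sp},0)$; it introduces a \emph{double} intrinsic scaling, $\theta_x=\delta_x(\varepsilon\bsom)^{\frac{\mfd-2}{sp}}$ in space and $\theta_t=\delta_t(\varepsilon\bsom)^{\mfd-p}$ in time (see \cref{lemma3.2}--\cref{lemma3.5}). Since $\mfd=2+\epsilon_{p_o}>\max\{2,p\}$, both exponents are positive and the cylinders \emph{shrink} as $\bsom\searrow 0$; in particular the spatial scaling is what produces the $\delta_x$-dependence $\nu_1\approx(\tfrac{1}{\delta_x^{sp}}+\tfrac{1}{\delta_t})^{-\frac{n+sp}{sp}}(\delta_x^n\delta_t)^{-1}$ in the De~Giorgi threshold, and that dependence is exactly what is balanced later. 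With your time-only scaling this leverage disappears.

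Second, there are no logarithmic estimates anywhere in the argument; the paper replaces expansion of positivity by the nonlocal shrinking \cref{lemma3.5}, which comes from the ``good term'' in the Caccioppoli inequality (\cref{lem:isop}, \cref{lem:shrinking}) and yields a \emph{polynomial} bound $|\{\pm(\bsmu^\pm-u)\le\tfrac12\sigma\varepsilon\bsom\}\cap\mcQ|\le C(\tfrac{\delta_x^{sp}}{\delta_t}+1)\tfrac{\sigma^{p-1}}{\alpha(1-\sigma)^{p-1}}|\mcQ|$. The constraints defining $\epsilon_{p_o}$ arise precisely in Step~4 of \cref{prop3.13}, where one must choose $\sigma$ so that this polynomial output sits below the De~Giorgi threshold $\nu_1$; with $\delta_x=(\delta\sigma)^{\frac{2-\mfd}{sp}}$ and $\delta_t=\bar\varepsilon\delta^{p-\mfd}$ this reduces to the three inequalities $p-1-\epsilon>0$, $p-1-\epsilon>\epsilon\tfrac{n}{sp}$, $p-1>\epsilon\tfrac{n}{sp}$, whence $\epsilon_{p_o}=\min\{p_o-1,\tfrac{sp_o(p_o-1)}{n},\tfrac{sp_o(p_o-1)}{n+3s}\}$. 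Your attribution of these constraints to ``Sobolev-to-$L^\infty$ budget'' and ``tail absorption in the logarithmic estimate'' is not where they come from, and an exponential-decay scheme based on logarithmic test functions would not produce the same threshold.
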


In the general setting of \cref{maineq}, the main theorem we prove in this paper is the following:

\begin{theorem}\label{holderparabolic}
	Let $p,q\in(1,\infty)$, $s\in(0,1)$ be given satisfying $\max\{p,q\} <  q + \tfrac{p-1}{1+\frac{n}{sp}}$ and $\max\{p,q,2\} <  2 + \tfrac{p-1}{1+\frac{n}{sp}}$. Furthermore let $\mfd$ be chosen to satisfy 
	\[
	\max\{p,q,2\} < \mfd < \min\left\{q + \tfrac{p-1}{1+\frac{n}{sp}}, 2 + \tfrac{p-1}{1+\frac{n}{sp}}\right\}.
	\]
	Let $u\in L^p(I;W^{s,p}_{\text{loc}}(\Om))\cap L^\infty(I;L^q_{\text{loc}}(\Om))\cap L^\infty(I;L^{p-1}_{sp}(\RR^n))$ be any locally bounded, sign-changing weak solution to \cref{maineq},  then $u$ is locally H\"older continuous in $\Om_T$, i.e., there exist constants $j_o > 1$, $\mathbf{C}_o>1$ and $\alpha\in (0,1)$ depending only on $\data{}$, such that the following holds:
	\[
	|u(0,0) - u(x,t)| \leq C_{\data{}} \frac{L}{R^{\alpha}} \lbr \max\{L^{\frac{q-\mfd}{sp}}|x - 0|, L^{\frac{p-\mfd}{sp}}|t - 0|^{\frac{1}{sp}}\}\rbr^{\alpha},
	\]
	for any $(x,t) \in B_{\frac12 R}(0)\times (-(\tfrac12 R)^{sp},0)$ and
	\[
	L:= 2 \mathbf{C}_o^{\frac{sp}{p-1}j_o} \|u\|_{L^{\infty}(B_{8R}\times (-(8R)^{sp},0))} + \tail(|u|,8R,0,(-(8R)^{sp},0)).
	\]
	Here $R$ is any fixed radius and we assume $B_{8R}(0)\times (-(8R)^{sp},0)\subset \Omega_T$ and $(0,0) \in \Omega_T$ is any fixed point.
\end{theorem}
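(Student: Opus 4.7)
The plan is a De Giorgi--type oscillation decay argument on intrinsic cylinders, with the novelty being a unified scaling in which the free parameter $\mfd$ interpolates between the two natural scalings of the equation. After translating the base point to the origin and using the constant $L$ to normalize both the $L^\infty$ norm and the tail, I would work on the family of cylinders
\[
Q^{(\omega)}_\rho := B_{\omega^{(\mfd-q)/(sp)}\rho}\times\bigl(-\omega^{\mfd-p}\rho^{sp},0\bigr),
\]
where $\omega \lesssim L$ tracks the essential oscillation on $Q^{(\omega)}_\rho$. The lower bound $\mfd>\max\{p,q,2\}$ forces $\mfd-p$ and $\mfd-q$ to be positive, so the scaling is self-similar simultaneously for the doubly nonlinear time term $\partial_t(|u|^{q-2}u)$ and for the $p$-fractional diffusion; the upper bound $\mfd<2+(p-1)/(1+n/(sp))$ is the threshold at which the De Giorgi iteration converges against the fractional Sobolev exponent $p_s^{*}=np/(n-sp)$, while $\mfd<q+(p-1)/(1+n/(sp))$ is exactly the threshold that makes the doubly nonlinear reinsertion step close.

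The first technical ingredient is a Caccioppoli inequality on $Q^{(\omega)}_\rho$ for truncations $(u-k)_\pm$: testing \cref{maineq} with $(u-k)_\pm\zeta^p$ and using the Boccardo--Dall'Aglio convexity identity for $\partial_t(|u|^{q-2}u)$ yields an energy bound whose main term is
\[
\esssup_t\int \mfg_\pm(u,k)\,\zeta^p\,dx + \bigl[(u-k)_\pm\zeta\bigr]_{W^{s,p}}^p,
\]
where $\mfg_\pm(u,k)\asymp|u-k|^q$ is the natural convex primitive of $(|u|^{q-2}u-|k|^{q-2}k)$, and whose right-hand side consists of an interior $|\partial_t\zeta|$-term plus a nonlocal tail contribution controlled by $L^{p-1}$ via the definition of $L$.

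Next I would set up the De Giorgi alternative on a reference intrinsic cylinder: either the sublevel set $\{u\le \mu^-+\omega/4\}$ or the superlevel set $\{u\ge \mu^+-\omega/4\}$ occupies a small fraction of $|Q^{(\omega)}_\rho|$. In the first alternative, a De Giorgi iteration based on the Caccioppoli estimate combined with the fractional Sobolev embedding reduces the oscillation; in the second, since the solution is sign-changing, one first runs a logarithmic estimate against a shifted $\log$-function to spread the information in time and then executes a shrinking-cylinder De Giorgi iteration, mirroring the degenerate/singular dichotomy of \cite{liaoHolderRegularityParabolic2024} recycled through Theorems~\ref{holderparabolicqtwo}--\ref{holderparabolicqtwosing}. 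Either alternative yields $\osc_{Q^{(\eta\omega)}_{\lambda\rho}}u\le \eta\,\omega$ for universal $\eta,\lambda\in(0,1)$, and iterating along $\omega_k=\eta^k\omega_0$, $\rho_k=\lambda^k\rho_0$ delivers the claimed anisotropic Hölder estimate with exponent $\alpha=\log(1/\eta)/\log(1/\lambda)$.

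The hardest step will be the \emph{expansion of positivity} (measure-to-pointwise) in this mixed regime: the two exponents $\mfd-q$ and $\mfd-p$ reshape the cylinders differently as $\omega$ shrinks, and one must verify that propagating positivity along an intrinsic chain remains compatible simultaneously with the nonlocal tail and with the doubly nonlinear time term. The bound $\mfd<\min\{q+(p-1)/(1+n/(sp)),\,2+(p-1)/(1+n/(sp))\}$ is exactly the quantitative threshold at which this propagation closes without loss in either the "time-like" or the "space-like" direction, and is the explicit reason that the constants blow up as $s\to 0$, as noted in the abstract.
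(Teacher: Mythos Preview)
Your overall architecture (intrinsic cylinders with the $\mfd$-scaling, Caccioppoli with $\mfg_\pm^q$, De Giorgi iteration, tail control via $L$) is aligned with the paper, but two essential ingredients of the actual proof are missing or misidentified.

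\textbf{No logarithmic estimates.} You propose to ``run a logarithmic estimate against a shifted $\log$-function to spread the information in time.'' The paper does \emph{not} use log-estimates anywhere; in fact the whole point of the mixed regime is that the local log-machinery is too weak. The replacement is the nonlocal isoperimetric/shrinking lemma (\cref{lem:isop}, \cref{lem:shrinking}), extracted from the \emph{good term}
\[
\iint (u-k)_\pm(x,t)\,\zeta^p\int_{B}\frac{(u-k)_\mp^{p-1}(y,t)}{|x-y|^{n+sp}}\,dy\,dx\,dt
\]
on the left of the Caccioppoli inequality. This produces a \emph{polynomial} measure decay $|\{u<\tfrac{m}{2^{j+1}}\}\cap Q|\lesssim 2^{-j(p-1)}|Q|$, as opposed to the exponential decay one gets locally from log-estimates. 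The thresholds on $\mfd$ are precisely the conditions under which this polynomial rate $\sigma^{p-1}$ beats the $\sigma$-dependence of the De Giorgi smallness constant $\nu_1\approx\sigma^{(\mfd-q)\frac{n}{sp}}$ (resp.\ $\sigma^{(\mfd-2)\frac{n}{sp}}$); see Step~4 of \cref{prop6.13} and \cref{prop6.14}. Your description of the thresholds as ``where the De Giorgi iteration converges against $p_s^*$'' is not the mechanism.

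\textbf{Missing near-zero/away-from-zero dichotomy.} Because $\partial_t(|u|^{q-2}u)$ is not translation invariant, the paper splits according to \ref{Eq:6Hp-main1}--\ref{Eq:6Hp-main2}: either $|\bsmu^\pm|\lesssim\tau\bsom$ (near zero) or $\bsmu^->\tau\bsom$ (away from zero). In the near-zero case the four lemmas of Section~4 (De Giorgi, De Giorgi with initial data, time propagation, shrinking) apply directly with the hypothesis $|\bsmu^\pm|\le 8\varepsilon\bsom$; in the away-from-zero case one performs the change of variable $w=u/\bsmu^-$, which reduces the time part to $\partial_t(w^{q-1})$ with $w\asymp 1$, effectively a $q=2$-type problem handled as in \cite{adimurthi2025localholderregularitybounded}. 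Your alternative is phrased purely in terms of sublevel-set measure on a single cylinder and never isolates this structural dichotomy; without it, the bounds on $\mfg_\pm^q(u,k)\asymp(|u|+|k|)^{q-2}(u-k)_\pm^2$ cannot be closed, because $(|u|+|k|)^{q-2}$ is uncontrolled when $u$ and $k$ are both near zero and $q<2$, or when they are large and $q>2$.
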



\subsection{On historical development of intrinsic scaling}

The method of intrinsic scaling was developed by E.DiBenedetto when $p \geq 2$ in \cite{dibenedettoLocalBehaviourSolutions1986} to prove H\"older regularity for degenerate quasilinear parabolic equations of the form
\[
u_t - \dv |\nabla u|^{p-2} \nabla u = 0.
\]  A technical requirement of the proof was a novel logarithmic estimate which aids in the expansion of positivity.  Subsequently, the proof of H\"older regularity for the singular case  $p < 2$ was given in \cite{ya-zheLocalBehaviorSolutions1988} by switching the scaling from time variable to the space variable. These results are collected in E.DiBenedetto's treatise \cite{dibenedettoDegenerateParabolicEquations1993}. After a gap of several years, E.DiBenedetto, U.Gianazza and V.Vespri \cite{dibenedettoHarnackEstimatesQuasilinear2008, dibenedettoHarnackInequalityDegenerate2012} were able to develop fundamentally new ideas involving an exponential change of variable in time to prove Harnack's inequality for the parabolic $p$-Laplace equations. Crucially, this proof relies on expansion of positivity estimates and does not involve logarithmic test functions. Then, a new proof of H\"older regularity was given with a more geometric flavour in \cite{gianazzaNewProofHolder2010}. This theory was extended to generalized parabolic $p$-Laplace equations with Orlicz growth in \cite{hwangHolderContinuityBounded2015,hwangHolderContinuityBounded2015a}, where they were able to adapt the techniques used for degenerate equations and prove H\"older regularity for the singular equation. Subsequently, using the exponential change of variables, a new proof of H\"older regularity was given in \cite{liaoUnifiedApproachHolder2020}.

\subsection{A brief history of the problem} 

Much of the early work on regularity of fractional elliptic equations in the case $p=2$ was carried out by Silvestre \cite{silvestreHolderEstimatesSolutions2006}, Caffarelli and Vasseur \cite{caffarelliDriftDiffusionEquations2010}, Caffarelli, Chan, Vasseur \cite{caffarelliRegularityTheoryParabolic2011} and Bass-Kassmann  \cite{bassHarnackInequalitiesNonlocal2005,bassHolderContinuityHarmonic2005, kassmannPrioriEstimatesIntegrodifferential2009}. An early formulation of the fractional $p$-Laplace operator was done by Ishii and Nakamura \cite{ishiiClassIntegralEquations2010} and existence of viscosity solutions was established. DiCastro, Kuusi and Palatucci extended the De Giorgi-Nash-Moser framework to study the regularity of the fractional $p$-Laplace equation in \cite{dicastroLocalBehaviorFractional2016}. The subsequent work of Cozzi \cite{cozziRegularityResultsHarnack2017} covered a stable (in the limit $s\to 1$) proof of H\"older regularity by defining a novel fractional De Giorgi class. An alternate proof of H\"older regularity based on the ideas of \cite{tilliRemarksHolderContinuity2006} was given in \cite{adimurthiOlderRegularityFractional2022}. Explicit exponents for H\"older regularity with no coefficients were found in \cite{brascoHigherHolderRegularity2018,brascoContinuitySolutionsNonlinear2021} and the range of exponents was improved to the sharp range in \cite{BOGELEIN2025111078}. The full Lipschitz regularity was  recently obtained in the fundamental work of \cite{biswas2025lipschitzregularityfractionalplaplacian}.

 H\"older regularity of bounded, weak solutions for doubly nonlinear equations of the form
\begin{equation}\label{localtrudinger}
	\partial_{t}(|u|^{q-2}u) - \Delta_p u = 0,
\end{equation}
was proved in \cite{bogeleinHolderRegularitySigned2021} when $q =p$,  in \cite{bogeleinHolderRegularitySigned2023} when $2<p$, $q<p$ and in  \cite{liaoHolderRegularitySigned2022} when $1<p<2$ and $p<q$. In the special case of non-negative solutions, analogous H\"older regularity results were proved in the important paper \cite{MR2957656,MR3029403} which forms the framework for all future developments in understanding the regularity for \cref{localtrudinger}. In \cite{adimurthi2025localholderregularitybounded}, bounded, sign changing weak solutions of \cref{maineq} in the case $q=p$ was shown to be H\"older continuous, see \cref{figrange}.


\begin{figure}[ht]
	\begin{center}
		\begin{tikzpicture}[line cap=round,line join=round,>=latex,scale=0.5]
			\coordinate  (O) at (0,0);
			\draw[help lines, color=gray!40, dashed] (-5,-5) grid (5,5);
			\draw[->,ultra thick] (-5,-5)--(5,-5) node[right]{$p$};
			\draw[->,ultra thick] (-5,-5)--(-5,5) node[above]{$q$};
			
			\node [below] at (-3,-5) {$1$};
			\node [below] at (-1,-5) {$2$};
			\node [below] at (1,-5) {$3$};
			\node [below] at (3,-5) {$4$};
			
			\node [left] at (-5,-3) {$1$};
			\node [left] at (-5,-1) {$2$};
			\node [left] at (-5,1) {$3$};
			\node [left] at (-5,3) {$4$};

			\node [left] at (1,3) {{\bf ??}};
			\node [left] at (-1,-2.5) {{\bf ??}};

			\draw[draw=black, ultra thick, dotted,->] (-3,-3) -- (5,-3);
			\draw[draw=black, ultra thick, dotted,->] (-3,-3) -- (-3,5);
			
			\draw[draw=teal,ultra thick,->] (-3,-3) -- (5,5);
			
			\draw[draw=orange, ultra thick] (-3,-1) -- (-1,-1);
			\draw[draw=blue, ultra thick,->] (-1,-1) -- (5,-1);
			
			\fill [opacity=0.3,alizarin](-1,-1) -- (5,5) -- (5,-3) -- (-1,-3) -- cycle;
			
			\fill [opacity=0.3,auburn] (-1,5) -- (-1,-1) -- (-3,-3) -- (-3,5) -- cycle;
			
			\node[right,align=left] at (8,3) { 
				{\color{orange}$q=2, 1<p<2$} in \cite{dibenedettoLocalBehaviourSolutions1986}\\
				{\color{blue}$q=2, 2<p<\infty$} in \cite{ya-zheLocalBehaviorSolutions1988}\\
				{\color{teal}$p=q$} in \cite{bogeleinHolderRegularitySigned2021}\\
				{\color{alizarin}$q<p, 2<p<\infty$} in \cite{bogeleinHolderRegularitySigned2023}\\
				{\color{auburn}$p<q, 1<p<2$} in \cite{liaoHolderRegularitySigned2022} };
		\end{tikzpicture}
	\end{center}
	\caption{Ranges of $p$ and $q$ for local doubly nonlinear equations}
	\label{figrange}
\end{figure}
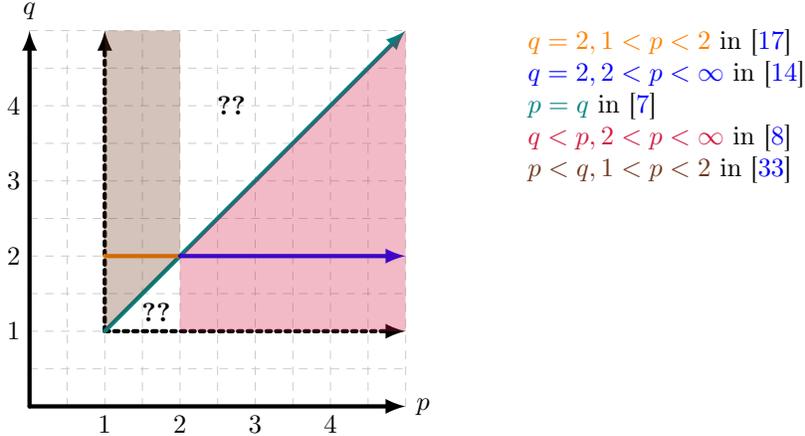

\emph{We note that  \cref{maineq} is translation invariant only when $q=2$, scale invariant when $q=p$ and neither scale invariant nor translation invariant in other cases.  It is well known that failure of either translation invariance or scale invariance leads to fundamental difficulties when trying to obtain H\"older regularity. }



Regarding parabolic nonlocal equations of the form 
\begin{equation*}
	\partial_t u + \text{P.V.}\int_{\RR^n} \frac{|u(x,t)-u(y,t)|^{p-2}(u(x,t)-u(y,t))}{|x-y|^{n+sp}}\,dy=0,
\end{equation*}
bounded, weak solutions were shown to be H\"older continuous in \cite{adimurthi2024localholderregularitynonlocal,liaoHolderRegularityParabolic2024}. In the scale invariant case of $p=q$, bounded weak solutions were shown to be H\"older continuous in \cite{adimurthi2025localholderregularitybounded}. The ideas from \cite{adimurthi2025localholderregularitybounded} was extended to the doubly degenerate setting in \cite{li2025holderregularityweaksolutions} (see \cref{figrangenonlocal}) and we note that the same strategy can be applied to obtain H\"older regularity in the doubly singular regime, see \cref{figrangenonlocalsing}. We leave this extension to the doubly singular regime to the interested reader.  
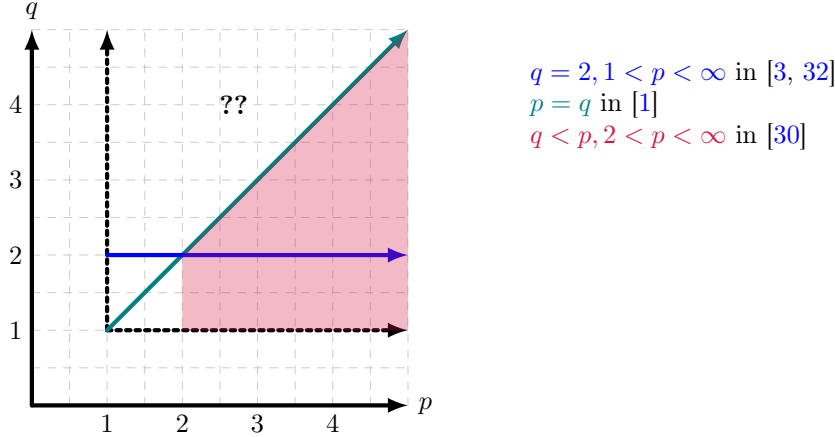
\begin{figure}[ht]
	\begin{center}
		\begin{tikzpicture}[line cap=round,line join=round,>=latex,scale=0.5]
			\coordinate  (O) at (0,0);
			\draw[help lines, color=gray!40, dashed] (-5,-5) grid (5,5);
			\draw[->,ultra thick] (-5,-5)--(5,-5) node[right]{$p$};
			\draw[->,ultra thick] (-5,-5)--(-5,5) node[above]{$q$};
			
			\node [below] at (-3,-5) {$1$};
			\node [below] at (-1,-5) {$2$};
			\node [below] at (1,-5) {$3$};
			\node [below] at (3,-5) {$4$};
			
			\node [left] at (-5,-3) {$1$};
			\node [left] at (-5,-1) {$2$};
			\node [left] at (-5,1) {$3$};
			\node [left] at (-5,3) {$4$};

			\node [left] at (1,3) {{\bf ??}};

			\draw[draw=black, ultra thick, dotted,->] (-3,-3) -- (5,-3);
			\draw[draw=black, ultra thick, dotted,->] (-3,-3) -- (-3,5);
			
			\draw[draw=teal,ultra thick,->] (-3,-3) -- (5,5);
			
			\draw[draw=blue, ultra thick] (-3,-1) -- (-1,-1);
			\draw[draw=blue, ultra thick,->] (-1,-1) -- (5,-1);
			
			\fill [opacity=0.3,alizarin](-1,-1) -- (5,5) -- (5,-3) -- (-1,-3) -- cycle;
			
			
			
			\node[right,align=left] at (8,3) { 
				{\color{blue}$q=2, 1<p<\infty$} in \cite{adimurthi2024localholderregularitynonlocal,liaoHolderRegularityParabolic2024}\\
				{\color{teal}$p=q$} in \cite{adimurthi2025localholderregularitybounded}\\
				{\color{alizarin}$q<p, 2<p<\infty$} in \cite{li2025holderregularityweaksolutions}};
		\end{tikzpicture}
	\end{center}
	\caption{Ranges of $p$ and $q$ for nonlocal doubly nonlinear equations}
	\label{figrangenonlocal}
\end{figure}

 In this paper, we show that bounded, weak solutions of \cref{maineq} are H\"older continuous in the range  \cref{figrangenonlocalmixed} where $p_o$ is any arbitrary fixed exponent in $(1,2)$. We note that H\"older regularity for  the range of exponents considered in \cref{figrangenonlocalmixed} for the local problem \cref{localtrudinger} remains a challenging open problem. 

\begin{figure}[ht]
	\begin{minipage}{0.5\textwidth}
	\begin{center}
		\begin{tikzpicture}[line cap=round,line join=round,>=latex,scale=0.5]
			\coordinate  (O) at (0,0);
			\draw[help lines, color=gray!40, dashed] (-5,-5) grid (5,5);
			\draw[->,ultra thick] (-5,-5)--(5,-5) node[right]{$p$};
			\draw[->,ultra thick] (-5,-5)--(-5,5) node[above]{$q$};
			
			\node [below] at (-3,-5) {$1$};
			\node [below] at (-1,-5) {$2$};
			\node [below] at (1,-5) {$3$};
			\node [below] at (3,-5) {$4$};
			
			\node [left] at (-5,-3) {$1$};
			\node [left] at (-5,-1) {$2$};
			\node [left] at (-5,1) {$3$};
			\node [left] at (-5,3) {$4$};


			\draw[draw=black, ultra thick, dotted,->] (-3,-3) -- (5,-3);
			\draw[draw=black, ultra thick, dotted,->] (-3,-3) -- (-3,5);
			
			\draw[draw=teal,ultra thick,->] (-3,-3) -- (5,5);
			
			\draw[draw=blue, ultra thick] (-3,-1) -- (-1,-1);
			\draw[draw=blue, ultra thick,->] (-1,-1) -- (5,-1);
			\fill [opacity=0.3,auburn] (-1,5) -- (-1,-1) -- (-3,-3) -- (-3,5) -- cycle;
			
			
			
			
		\end{tikzpicture}
	\end{center}
	\caption{Doubly singular ranges of $p$ and $q$}
	\label{figrangenonlocalsing}
	\end{minipage}
\begin{minipage}{0.5\textwidth}
	\begin{center}
	\begin{tikzpicture}[line cap=round,line join=round,>=latex,scale=0.5]
		\coordinate  (O) at (0,0);
		\draw[help lines, color=gray!40, dashed] (-5,-5) grid (5,5);
		\draw[->,ultra thick] (-5,-5)--(5,-5) node[right]{$p$};
		\draw[->,ultra thick] (-5,-5)--(-5,5) node[above]{$q$};
		
		\node [below] at (-3,-5) {$1$};
		\node [below] at (-1,-5) {$2$};
		\node [below] at (1,-5) {$3$};
		\node [below] at (3,-5) {$4$};
		
		\node [left] at (-5,-3) {$1$};
		\node [left] at (-5,-1) {$2$};
		\node [left] at (-5,1) {$3$};
		\node [left] at (-5,3) {$4$};

		\fill[opacity=0.3,teal, dashed] (-3,-1) .. controls (-2,-0.8).. (0.5,0.5) -- (0.5,-1)  ..controls (0,-1.1).. (-1,-1.5) ..controls(-2,-1.2).. (-3,-1);
		\draw[opacity=1,draw=teal, dashed] (-3,-1) .. controls (-2,-0.8).. (0.5,0.5) -- (0.5,-1)  ..controls (0,-1.1).. (-1,-1.5) ..controls(-2,-1.2).. (-3,-1);

		\draw[draw=black, ultra thick, dotted,->] (-3,-3) -- (5,-3);
		\draw[draw=black, ultra thick, dotted,->] (-3,-3) -- (-3,5);
		
		\draw[draw=teal,ultra thick,->] (-3,-3) -- (5,5);
		
		\draw[draw=blue, ultra thick] (-3,-1) -- (-1,-1);
		\draw[draw=blue, ultra thick,->] (-1,-1) -- (5,-1);
		
		\fill [opacity=0.3,alizarin](-1,-1) -- (5,5) -- (5,-3) -- (-1,-3) -- cycle;
		
		\fill [opacity=0.3,auburn] (-1,5) -- (-1,-1) -- (-3,-3) -- (-3,5) -- cycle;
	\end{tikzpicture}
	\end{center}
	\caption{Mixed ranges of $p$ and $q$ from \cref{holderparabolic}\\ For illustration purpose only for a fixed choice of $s$ and $n$,  not to scale.}
	\label{figrangenonlocalmixed}
	\end{minipage}
\end{figure}

\subsection{Our approach to proving H\"older regularity to \texorpdfstring{\cref{maineq}}.}

The general strategy to proving H\"older regularity is based on the framework developed in \cite{bogeleinHolderRegularitySigned2021,liaoHolderRegularityParabolic2024,adimurthi2025localholderregularitybounded}, though our proof requires additional difficulties to be overcome. 
\begin{enumerate}[(i)]
	\item First we obtain a new proof of \cref{maineq} in the case $q=2$ using a new scaling, which enables us to partially handle the singular and degenerate cases at one go. In particular, we provide a partially unified proof of H\"older regularity, see the proof of \cref{holderparabolicqtwosing}. This unified scaling is the main new idea and it's adaptation for general $q \neq 2$ forms the basis for the rest of the paper. This represents a purely nonlocal phenomenon and as a consequence, our estimates are unstable as $s \rightarrow 1$. 
	
	\item The second important observation that is used in this paper, is that the isoperimetric inequality in the nonlocal setting is much stronger due to it being derived from the energy estimates. As a consequence, this gives a polynomial decay of the level sets, which is in stark contrast to the local problem where one obtains a much weaker exponential decay. This improved polynomial decay is the crucial ingredient that enables us to obtain H\"older regularity in the mixed regime (see \cref{figrangenonlocalmixed}) which remains a challenging problem for the local equations. 
	
	\item As in \cite{bogeleinHolderRegularitySigned2021}, we first obtain reduction of oscillation in the `close to zero' case. To achieve this, we extend the new unified scaling idea to the general case $q \in (1,\infty)$.  We note that our approach deviates significantly from the local theory developed in \cite{bogeleinHolderRegularitySigned2023,liaoHolderRegularitySigned2022} and instead more closely follows the strategy of \cite{bogeleinHolderRegularitySigned2021} and its adaptation in \cite{adimurthi2025localholderregularitybounded} which in turn is based on \cite{liaoHolderRegularitySigned2022}. 
	\item In the `away from zero case', the  change of variable is only available locally and cannot be performed to handle the tail term. This difficulty was overcome in the case $p=q$ in \cite{adimurthi2025localholderregularitybounded} and the same strategy can be implemented verbatim here. Since this part follows exactly as in \cite{adimurthi2025localholderregularitybounded}, we only provide the final statement of the conclusions. 
	\item By choosing $\mfd=q$ and following the proof of \cref{holderparabolic} in the standard way with obvious modifications, we can directly obtain H\"older regularity of bounded weak solutions of \cref{maineq} in the doubly singular regime $\{1<p<2, p<q\}$ (see \cref{figrangenonlocalsing}) and we leave the details to the interested reader.  
	
\end{enumerate}

%

\subsection{Notations}
We begin by collecting the standard notation that will be used throughout the paper:
\begin{notationlist}
	\item\label{not1} We shall denote $n$ to be the space dimension and by $z=(x,t)$ to be  a point in $ \RR^n\times (0,T)$.  
	\item\label{not2} We shall alternately use $\dfrac{\partial f}{\partial t}$, $\partial_t f$, $f'$ to denote the time derivative of $f$.
	\item\label{not3} Let $\Omega$ be an open bounded domain in $\mathbb{R}^n$ with boundary $\partial \Omega$ and for $0<T\leq\infty$,  let $\Omega_T\coloneqq \Omega\times (0,T)$. 
	\item\label{not4} We shall use the notation
	\begin{equation*}
		\begin{array}{ll}
			B_{\varrho}(x_0)=\{x\in\RR^n:|x-x_0|<\varrho\}, &
			\overline{B}_{\varrho}(x_0)=\{x\in\RR^n:|x-x_0|\leq\varrho\},\\
			I_{\tht}(t_0)=\{t\in\RR:t_0-\tht<t<t_0\},
			&Q_{\varrho,\tht}(z_0)=B_{\varrho}(x_0)\times I_\tht(t_0).
		\end{array}
	\end{equation*} 
	\item\label{not5} We shall also use the notation $Q_R^\theta(x_0,t_0) := B_R(x_0) \times (t_0 - \theta R^{sp}, t_0)$.  In the case of different scaling in space and time, we denote $Q_R^{\theta_x,\theta_t}(x_0,t_0) := B_{\theta_xR}(x_0) \times (t_0 - \theta_t R^{sp}, t_0)$. 
	\item\label{not7} Integration with respect to either space or time only will be denoted by a single integral $\int$ whereas integration on $\Om\times\Om$ or $\RR^n\times\RR^n$ will be denoted by a double integral $\iint$. 
	\item\label{not8} We will  use $\iiint dx\,dy\,dt$ to denote integral over $\RR^n \times \RR^n \times (0,T)$. More specifically, we will use the notation $\iiint_{\mcQ}$ and $\iiint_{\mcI \times \mcB}$ to denote the integral over $\iiint_{\mcI\times \mcB \times \mcB }dx\,dy\,dt$ where $\mcQ = \mcB\times \mcI$ with the order of integration made precise by the order of the differentials.
	\item\label{not9} The notation $a \lesssim b$ is shorthand for $a\leq \mopC b$ where $\mopC$ is a constant which only depends on $\datanb{}$ and could change from line to line. We shall denote a constant to depend on data if it depends on $\data{}$. Many a times, we will ignore writing this dependence explicitly for the sake or brevity.
	\item\label{not10} Given a domain $\mcQ=B\times(0,T)$, we denote it's parabolic boundary by $\pa_p \mcQ = (\pa B \times (0,T)) \cup (B \times \{t=0\})$.
	\item\label{not11} For any fixed $t,k\in\RR$ and set $\Om\subset\RR^n$, we denote $A_{\pm}(k,t) := \{x\in \Om: (u-k)_{\pm}(\cdot,t) > 0\}$ and for any ball $B_r$ we write $A_{\pm}(k,t) \cap B_{r} =: A_{\pm}(k,t,r)$. 
	\item\label{not12} For any ball $B \subset \RR^n$, we denote $\mathcal{C}_B:=(B^c\times B^c)^c=\left(B\times B\right) \cup \left( B\times(\RR^n\setminus B)\right) \cup \left((\RR^n\setminus B)\times B \right)$.
\end{notationlist}

\subsection{Function spaces}
Let $1<p<\infty$, we denote by $p'=p/(p-1)$ the conjugate exponent of $p$. Let $\Om$ be an open subset of $\RR^n$, we  define the {\it Sobolev-Slobodeki\u i} space, which is the fractional analogue of Sobolev spaces as follows:
\begin{equation*}
	W^{s,p}(\Om):=\left\{ \psi\in L^p(\Omega): [\psi]_{W^{s,p}(\Om)}<\infty \right\},\qquad \text{for} \  s\in (0,1),
\end{equation*} where the seminorm $[\cdot]_{W^{s,p}(\Om)}$ is defined by 
\begin{equation*}
	[\psi]_{W^{s,p}(\Om)}:=\left( \iint_{\Om\times\Om} \frac{|\psi(x)-\psi(y)|^p}{|x-y|^{n+sp}}\,dx\,dy \right)^{\frac 1p}.
\end{equation*}
The space when endowed with the norm $\norm{\psi}_{W^{s,p}(\Om)}=\norm{\psi}_{L^p(\Om)}+[\psi]_{W^{s,p}(\Om)}$ becomes a Banach space. The space $W^{s,p}_0(\Om)$ is the subspace of $W^{s,p}(\RR^n)$ consisting of functions that vanish outside $\Om$. We will use the notation $W^{s,p}_{(u_0)}(\Om)$ to denote the space of functions in $W^{s,p}(\RR^n)$ such that $u-u_0\in W^{s,p}_0(\Om)$.

Let $I$ be an interval and let $V$ be a separable, reflexive Banach space, endowed with a norm $\norm{\cdot}_V$. We denote by $V^*$ to be its topological dual space. Let $v$ be a mapping such that for a.e. $t\in I$, $v(t)\in V$. If the function $t\mapsto \norm{v(t)}_V$ is measurable on $I$, then $v$ is said to belong to the Banach space $L^p(I;V)$ provided $\int_I\norm{v(t)}_V^p\,dt<\infty$. It is well known that the dual space $L^p(I;V)^*$ can be characterized as $L^{p'}(I;V^*)$.

Since the boundedness result requires some finiteness condition on the nonlocal tails, we define the tail space for some $m >0$ and $s >0$ as follows:
\begin{equation*}
	L^m_{s}(\RR^n):=\left\{ v\in L^m_{\text{loc}}(\RR^n):\int_{\RR^n}\frac{|v(x)|^m}{1+|x|^{n+s}}\,dx<+\infty \right\}.
\end{equation*}
Then a nonlocal tail is defined by 
\begin{equation*}
	\text{Tail}_{m,s,\infty}(v;R,x_0,I):=\text{Tail}_\infty(v;R,x_0,t_0-\tht,t_0):=\sup_{t\in (t_0-\tht, t_0)}\left( R^{sm}\int_{\RR^n\setminus B_R(x_0)} \frac{|v(x,t)|^{m-1}}{|x-x_0|^{n+sm}}\,dx \right)^{\frac{1}{m-1}},
\end{equation*} where $(x_0,t_0)\in \RR^n\times (-T,T)$ and the interval $I=(t_0-\tht,t_0)\subseteq (-T,T)$. From this definition, it follows that for any $v\in L^\infty(-T,T;L^{m-1}_{sm}(\RR^n))$, there holds $\text{Tail}_{m,s,\infty}(v;R,x_0,I)<\infty$. 

\subsection{Definitions}

For $k, l\in\RR$ we define 
\begin{equation}\label{defgpm}
	\mfg_\pm^q (l,k):=\pm (q-1)\int_{k}^{l}|s|^{q-2}(s-k)_\pm\,ds.
\end{equation}
Note that $\mfg_\pm^q (l,k)\ge 0$, then we have the following lemma from \cite[Lemma 2.2]{bogeleinHolderRegularitySigned2021}:
\begin{lemma}\label{lem:g}
	For any $q \in (1,\infty)$, there exists a constant $\mopC =\mopC (q)$ such that,
	for all $l,k\in\RR$, the following inequality holds true:
	\begin{equation*}
		\tfrac1{\mopC} \lbr|l| + |k|\rbr^{q-2}(l-k)_\pm^2
		\le
		\mfg_\pm^q (l,k)
		\le
		\mopC  \lbr|l| + |k|\rbr^{q-2}(l-k)_\pm^2
	\end{equation*}
\end{lemma}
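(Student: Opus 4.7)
The plan is to reduce the two-sided estimate to a direct one-variable calculation by exploiting symmetry and splitting into natural cases. First I observe the reflection symmetry: the substitution $s\mapsto -s$ in the defining integral of $\mfg_-^q$ gives $\mfg_-^q(l,k)=\mfg_+^q(-l,-k)$, and since $|{-l}|+|{-k}|=|l|+|k|$ together with $((-l)-(-k))_+ = (l-k)_-$, it suffices to prove the inequality for $\mfg_+^q$. Next, if $l\le k$ the function $(s-k)_+$ vanishes on the interval of integration, so both $\mfg_+^q(l,k)$ and $(l-k)_+^2$ are zero and the inequality holds trivially.

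Assuming $l>k$, the substitution $u=s-k$ rewrites the defining integral as
\begin{equation*}
\mfg_+^q(l,k)=(q-1)\int_0^{m} u\,|u+k|^{q-2}\,du, \qquad m:=l-k>0,
\end{equation*}
so the claim reduces to showing $\int_0^m u|u+k|^{q-2}\,du \asymp (|l|+|k|)^{q-2}\,m^2$ with constants depending only on $q$. I would handle three geometrically distinct cases. When $k\ge 0$, $|u+k|=u+k$ ranges in $[k,l]$, and monotonicity of $x\mapsto x^{q-2}$ delivers both bounds after restricting to the half-subinterval $[m/2,m]$ for the lower bound. The case $l\le 0$ is symmetric via $u\mapsto m-u$. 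The sign-change case $k<0<l$ is handled by splitting the integral at $u=-k$, where $u+k$ vanishes; each piece integrates explicitly, and here $|l|+|k|=m$ so the target reduces to $m^q$. An equivalent, slicker route is to integrate by parts using the antiderivative $s|s|^{q-2}$ of $(q-1)|s|^{q-2}$ to obtain the closed form
\begin{equation*}
\mfg_+^q(l,k) = \tfrac{q-1}{q}|l|^q + \tfrac{1}{q}|k|^q - k\,l\,|l|^{q-2}, \qquad l>k,
\end{equation*}
which is precisely the second-order Taylor remainder of the strictly convex function $f(s)=|s|^q/q$ (whose second derivative is $(q-1)|s|^{q-2}$) at $l$ evaluated at $k$; a routine convexity comparison then identifies it with $(|l|+|k|)^{q-2}(l-k)^2$ up to constants depending only on $q$.

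The main obstacle is the singular regime $1<q<2$ in the sign-change sub-case $k<0<l$, where the integrand $|u+k|^{q-2}$ genuinely blows up at $u=-k$. The key controlling inequality $u\,|u+k|^{q-2}\le (u+|k|)^{q-1}$ for $u\ge 0$ keeps the integral finite precisely because $q>1$, and explicit integration of the two pieces yields exactly the order $m^q=(|l|+|k|)^{q-2}m^2$ required in this sub-case. Once this delicate sub-case is handled, the remaining cases combine mechanically to yield the stated two-sided bound with a constant depending only on $q$.
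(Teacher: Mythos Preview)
The paper does not prove this lemma; it simply quotes it from \cite[Lemma~2.2]{bogeleinHolderRegularitySigned2021}. Your proposal is a correct, self-contained proof. The reflection $\mfg_-^q(l,k)=\mfg_+^q(-l,-k)$ is valid, the substitution $u=s-k$ is clean, and your case split (same sign versus sign change) together with the half-interval trick for the lower bound is the standard way to push the argument through. The integration-by-parts identity
\[
\mfg_+^q(l,k)=\tfrac{q-1}{q}|l|^q+\tfrac{1}{q}|k|^q-kl|l|^{q-2}
\]
is exactly the Bregman remainder of $|s|^q/q$, and the equivalence of this remainder with $(|l|+|k|)^{q-2}(l-k)^2$ is a well-known fact in the calculus-of-variations literature (it is precisely how the cited reference argues). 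One small point: in the case $k\ge0$, $1<q<2$, your phrase ``monotonicity of $x\mapsto x^{q-2}$ delivers both bounds'' hides a little work for the \emph{upper} bound, since $x^{q-2}$ is now decreasing and the naive sup over $[k,l]$ gives $k^{q-2}$, not $(|l|+|k|)^{q-2}$; you need to split according to whether $k\gtrless m$ (or simply use $u(u+k)^{q-2}\le u^{q-1}$ when $k\le m$). This is routine, but worth spelling out.
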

Now, we are ready to state the definition of a weak sub(super)-solution.

\begin{definition}
	A function $u\in L^p_{\loc}(I;W^{s,p}_{\text{loc}}(\Om))\cap L^\infty_{\loc}(I;L^q_{\text{loc}}(\Om))\cap L^\infty_{\loc}(I;L^{p-1}_{sp}(\RR^n))$ is said to be a local weak sub(super)-solution to  if for any closed interval $[t_1,t_2]\subset I$ and a compact set $B\subseteq \Om$, the following holds:
	\begin{multline*}
		\int_{B} (|u|^{q-2}u)(x,t_2)\phi(x,t_2)\,dx - \int_{B} (|u|^{q-2}u)(x,t_1)\phi(x,t_1)\,dx - \int_{t_1}^{t_2}\int_{B} (|u|^{q-2}u)(x,t)\partial_t\phi(x,t)\,dx\,dt\\
		+\iiint_{[t_1,t_2] \times \mathcal{C}_B  }\,K(x,y,t)|u(x,t)-u(y,t)|^{p-2}(u(x,t)-u(y,t))(\phi(x,t)-\phi(y,t))\,dy\,dx\,dt
		\leq (\geq)0 ,
	\end{multline*} for all $\phi\in L^p_{\loc}(I,W^{s,p}_{\loc}(\Omega))\cap W^{1,q}_{\loc}(I,L^q(\Om))$ and the spatial support of $\phi$ is contained in $\sigma B$ for some $\sigma \in (0,1)$.
\end{definition}

\subsection{Auxiliary Results}
We collect the following standard results which will be used in the course of the paper. We begin with the Sobolev-type inequality~\cite[Lemma 2.3]{dingLocalBoundednessHolder2021}.

\begin{theorem}\label{fracpoin}
	Let $t_2>t_1>0$ and suppose $\mfs\in(0,1)$ and $1\leq {\mfq}<\infty$. Then for any $f\in L^{\mfq}(t_1,t_2;W^{\mfs,{\mfq}}(B_r))\cap L^\infty(t_1,t_2;L^2(B_r))$, we have
	\begin{equation*}
		\begin{array}{rc@{}l}
			\int_{t_1}^{t_2}\fint_{B_r}|f(x,t)|^{{\mfq}\left(1+\frac{2\mfs}{N}\right)}\,dx\,dt
			& \apprle_{n,\mfs,{\mfq}} &  \left(r^{\mfs{\mfq}}\int_{t_1}^{t_2}\int_{B_r}\fint_{B_r}\frac{|f(x,t)-f(y,t)|^{\mfq}}{|x-y|^{n+\mfs{\mfq}}}\,dx\,dy\,dt+\int_{t_1}^{t_2}\fint_{B_r}|f(x,t)|^{\mfq}\,dx\,dt\right) \\
			&&\quad \times\left(\sup_{t_1<t<t_2}\fint_{B_r}|f(x,t)|^2\,dx\right)^{\frac{\mfs{\mfq}}{N}}.
		\end{array}
	\end{equation*}  
\end{theorem}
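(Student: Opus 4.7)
The plan is to establish the inequality by a two-step argument: \emph{pointwise-in-time spatial interpolation} combined with a \emph{fractional Sobolev-Poincar\'e embedding}, followed by integration in time. This is the standard template for parabolic Gagliardo--Nirenberg estimates, adapted here to the Sobolev--Slobodecki\u{\i} setting.

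For almost every fixed $t\in(t_1,t_2)$, I would first split the integrand using H\"older's inequality in space. Writing $|f|^{\mfq(1+\frac{2\mfs}{n})}=|f|^{\mfq}\cdot |f|^{\frac{2\mfs\mfq}{n}}$ and applying H\"older with conjugate exponents $\alpha=\frac{n}{n-\mfs\mfq}$ and $\alpha'=\frac{n}{\mfs\mfq}$ (treating the subcritical regime $\mfs\mfq<n$; the remaining case is handled by replacing the Sobolev exponent with any sufficiently large finite $p$), a short bookkeeping of the $|B_r|$ factors yields
\[
\fint_{B_r}|f(x,t)|^{\mfq(1+\frac{2\mfs}{n})}\,dx \ \leq\ \left(\fint_{B_r}|f(x,t)|^{\mfq^*}\,dx\right)^{\mfq/\mfq^*}\left(\fint_{B_r}|f(x,t)|^{2}\,dx\right)^{\frac{\mfs\mfq}{n}},
\]
where $\mfq^*=\frac{n\mfq}{n-\mfs\mfq}$ is the fractional Sobolev exponent. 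The exponents are calibrated so that the powers of $|B_r|$ arising from converting between absolute and averaged integrals cancel exactly, which is what makes the estimate scale-invariant.

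Next, the first factor is controlled via the averaged fractional Sobolev-Poincar\'e inequality on $B_r$, which in scale-invariant form reads
\[
\left(\fint_{B_r}|f(x,t)|^{\mfq^*}\,dx\right)^{\mfq/\mfq^*}\ \lesssim\ r^{\mfs\mfq}\int_{B_r}\fint_{B_r}\frac{|f(x,t)-f(y,t)|^{\mfq}}{|x-y|^{n+\mfs\mfq}}\,dx\,dy\ +\ \fint_{B_r}|f(x,t)|^{\mfq}\,dx,
\]
with the factor $r^{\mfs\mfq}$ dictated by homogeneity. This follows by a standard scaling/covering argument applied to the unit-ball embedding $W^{\mfs,\mfq}(B_1)\hookrightarrow L^{\mfq^*}(B_1)$ together with the addition of the $L^{\mfq}$ mean to absorb the constant part of $f$.

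Finally, I would bound the $L^2$ factor by its essential supremum, which is independent of $t$, and integrate the resulting pointwise-in-time inequality over $(t_1,t_2)$; pulling the supremum out of the time integral delivers exactly the asserted estimate. The only genuine obstacle is the borderline/super-critical case $\mfs\mfq\geq n$, where $\mfq^*$ is not given by the usual formula and the H\"older exponents degenerate. This is dealt with by replacing $\mfq^*$ with any finite exponent strictly larger than $\mfq(1+\frac{2\mfs}{n})$ and invoking the corresponding subcritical embedding (which is available since $W^{\mfs,\mfq}(B_r)$ embeds into every $L^p(B_r)$ for $p<\infty$ in this regime); all scaling constants can then be tracked in the same manner as above.
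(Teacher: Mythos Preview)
The paper does not actually prove this theorem: it is quoted as an auxiliary result with a citation to \cite[Lemma~2.3]{dingLocalBoundednessHolder2021}, so there is no in-paper argument to compare against. Your outline is the standard proof of this parabolic fractional Gagliardo--Nirenberg inequality and is correct: the H\"older split $|f|^{\mfq(1+2\mfs/n)}=|f|^{\mfq}\cdot|f|^{2\mfs\mfq/n}$ with exponents $\tfrac{n}{n-\mfs\mfq}$ and $\tfrac{n}{\mfs\mfq}$ lands exactly on the critical exponent $\mfq^*$ and the $L^2$ factor, the scale-invariant fractional Sobolev--Poincar\'e inequality on $B_r$ then controls the $L^{\mfq^*}$ piece, and pulling the $L^2$ factor out as an essential supremum before integrating in $t$ gives the claimed bound; your treatment of the borderline case $\mfs\mfq\ge n$ via any large finite exponent is also the right fix.
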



%
Let us recall the following simple algebraic lemma:
\begin{lemma}\label{alg_lem}
	Let $c_1, c_2 \in (0,\infty)$ and supposed $c_1 \leq \al,\be \leq c_2$ be two numbers with $\alpha \geq \beta$. Then for any $1\leq  \mfq< \infty$, we have 
	\[
	\mfq\frac{ c_1^{\mfq}}{c_2} (\al-\be) \leq \al^{\mfq} - \be^{\mfq} \leq \mfq \frac{c_2^{\mfq}}{c_1} (\al-\be).
	\]
	In the case $0 < \mfq < 1$, we instead have
	\[
	\mfq\frac{ c_1}{c_2^{2-\mfq}} (\al-\be) \leq \al^{\mfq} - \be^{\mfq} \leq \mfq \frac{c_2}{c_1^{2-\mfq}} (\al-\be).
	\]
\end{lemma}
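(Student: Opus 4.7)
The plan is to prove both inequalities directly via the mean value theorem applied to the function $\xi \mapsto \xi^{\mfq}$ on the interval $[\beta,\alpha]$, and then simplify the resulting exponent bounds using the inclusion $[\beta,\alpha]\subseteq [c_1,c_2]$. Since the statement is a purely one-variable calculus fact, I do not expect any genuine obstacle; the only subtlety is keeping track of monotonicity of $\xi^{\mfq-1}$ according to whether $\mfq\geq 1$ or $\mfq<1$.

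First, I would apply the mean value theorem to obtain some $\xi \in [\beta,\alpha] \subseteq [c_1,c_2]$ with
\[
\alpha^{\mfq} - \beta^{\mfq} = \mfq\, \xi^{\mfq - 1} (\alpha - \beta).
\]
In the case $\mfq \geq 1$, the map $\xi \mapsto \xi^{\mfq-1}$ is nondecreasing, so $c_1^{\mfq-1} \leq \xi^{\mfq-1} \leq c_2^{\mfq-1}$. Rewriting $c_1^{\mfq-1} = c_1^{\mfq}/c_1 \geq c_1^{\mfq}/c_2$ and $c_2^{\mfq-1} = c_2^{\mfq}/c_2 \leq c_2^{\mfq}/c_1$, using $c_1 \leq c_2$, immediately yields
\[
\mfq \frac{c_1^{\mfq}}{c_2}(\alpha - \beta) \leq \alpha^{\mfq} - \beta^{\mfq} \leq \mfq \frac{c_2^{\mfq}}{c_1}(\alpha - \beta).
\]

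In the case $0 < \mfq < 1$, the map $\xi \mapsto \xi^{\mfq-1}$ is nonincreasing, so $c_2^{\mfq-1} \leq \xi^{\mfq-1} \leq c_1^{\mfq-1}$. I then rewrite $c_2^{\mfq-1} = c_2 / c_2^{2-\mfq} \geq c_1/c_2^{2-\mfq}$ (since $c_1 \leq c_2$), and similarly $c_1^{\mfq-1} = c_1 / c_1^{2-\mfq} \leq c_2/c_1^{2-\mfq}$, to obtain
\[
\mfq \frac{c_1}{c_2^{2-\mfq}}(\alpha - \beta) \leq \alpha^{\mfq} - \beta^{\mfq} \leq \mfq \frac{c_2}{c_1^{2-\mfq}}(\alpha - \beta),
\]
which is the second claim. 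Both chains of inequalities are elementary, and since $\alpha,\beta \in [c_1,c_2]$ with $c_1>0$ the bounds are well defined, completing the proof.
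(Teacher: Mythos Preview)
Your proof is correct and essentially equivalent to the paper's own argument. The paper fixes $\beta$, writes $\alpha=\beta+x$, defines the difference functions $f_i(x)=(\beta+x)^{\mfq}-\beta^{\mfq}-\mfq K_i x$ (with $K_i$ the claimed constants), and checks that $f_i(0)=0$ together with the sign of $f_i'(x)=\mfq\big((\beta+x)^{\mfq-1}-K_i\big)$; this amounts to exactly the same bound on $\xi^{\mfq-1}$ for $\xi\in[c_1,c_2]$ that you obtain in one stroke via the mean value theorem, so the two arguments are interchangeable.
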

\begin{proof}
	Let $\be \in [c_1,c_2]$ be fixed and denote $\al = \be+x$ for $x \in [0,c_2-\be]$. Define the functions 
	\[
	\text{Case}\,\, \mfq \geq 1:\begin{cases}
		f_1(x) = (\be+x)^{\mfq} - \be^{\mfq} - {\mfq}\tfrac{ c_1^{\mfq}}{c_2}x, \\
		f_2(x) = (\be+x)^{\mfq} - \be^{\mfq} - {\mfq} \tfrac{c_2^{{\mfq}}}{c_1}x,
	\end{cases}
	\text{Case}\,\, 0<{\mfq}<1:\begin{cases}
		h_1(x) = (\be+x)^{\mfq} - \be^{\mfq} - {\mfq}\tfrac{ c_1}{c_2^{2-{\mfq}}}x, \\
		h_2(x) = (\be+x)^{\mfq} - \be^{\mfq} -{\mfq} \tfrac{c_2}{c_1^{2-{\mfq}}}x.
	\end{cases}
	\]
	Then, we see that for $x \in [0,c_2-\be]$, there holds $f_1(0) = f_2(0) = h_1(0)= h_2(0)= 0$ along with  $f_1'(x) \geq 0$, $f_2'(x) \leq 0$, $h_1'(x) \geq 0$ and  $h_2'(x) \leq 0$ which proves the lemma.
\end{proof}

Finally, we recall the following well known lemma concerning the geometric convergence of sequence of numbers (see \cite[Lemma 4.1 from Section I]{dibenedettoDegenerateParabolicEquations1993} for the details): 
\begin{lemma}\label{geo_con}
	Let $\{Y_n\}$, $n=0,1,2,\ldots$, be a sequence of positive number, satisfying the recursive inequalities 
	\[ Y_{n+1} \leq C b^n Y_{n}^{1+\alpha},\]
	where $C > 1$, $b>1$, and $\alpha > 0$ are given numbers. If 
	\[ Y_0 \leq  C^{-\frac{1}{\alpha}}b^{-\frac{1}{\alpha^2}},\]
	then $\{Y_n\}$ converges to zero as $n\to \infty$. 
\end{lemma}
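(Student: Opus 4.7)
The plan is to prove by induction on $n$ the explicit geometric decay estimate
\[
Y_n \leq Y_0\, b^{-n/\alpha}, \qquad n = 0,1,2,\ldots,
\]
from which $Y_n \to 0$ follows immediately since $b > 1$. The base case $n=0$ is trivial, so the whole argument reduces to verifying the inductive step using the hypothesis on $Y_0$.

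For the inductive step, assuming the bound at level $n$, we insert it into the recursion:
\[
Y_{n+1} \leq C\, b^n\, Y_n^{1+\alpha} \leq C\, b^n \bigl(Y_0\, b^{-n/\alpha}\bigr)^{1+\alpha} = C\, Y_0^{\alpha}\, Y_0\, b^{\,n - n(1+\alpha)/\alpha} = C\, Y_0^{\alpha}\, Y_0\, b^{-n/\alpha}.
\]
The desired bound at level $n+1$ is $Y_0\, b^{-(n+1)/\alpha}$, so it suffices to check that $C\, Y_0^{\alpha} \leq b^{-1/\alpha}$, equivalently $Y_0 \leq C^{-1/\alpha}\, b^{-1/\alpha^2}$. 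This is exactly the smallness hypothesis imposed on $Y_0$, so the induction closes.

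There is no real obstacle here; the threshold $Y_0 \leq C^{-1/\alpha} b^{-1/\alpha^2}$ is reverse-engineered precisely so that the amplification factor $C\, Y_0^{\alpha}$ at each iteration is exactly absorbed by the deterministic decay factor $b^{-1/\alpha}$. The one place to be careful is the arithmetic exponent $n - n(1+\alpha)/\alpha = -n/\alpha$, which is what makes the geometric ansatz $Y_0 b^{-n/\alpha}$ stable under the recursion.
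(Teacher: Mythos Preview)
Your proof is correct and is exactly the standard induction argument for this classical lemma. The paper does not actually prove this statement; it simply quotes the result and refers to \cite[Lemma 4.1 from Section I]{dibenedettoDegenerateParabolicEquations1993} for the details, where precisely your induction on the ansatz $Y_n \le Y_0\, b^{-n/\alpha}$ is carried out.
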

Let us recall the following algebraic lemma from \cite[Lemma 4.1]{cozziRegularityResultsHarnack2017}: 
\begin{lemma}\label{pineq1}
	Let ${\mfq}\geq 1$ and $a,b\geq 0$, then for any $\tht\in [0,1]$,  the following holds:
	\begin{equation*}
		(a+b)^{\mfq}-a^{\mfq}\geq \tht {\mfq} a^{{\mfq}-1}b + (1-\tht)b^{\mfq}.
	\end{equation*}
\end{lemma}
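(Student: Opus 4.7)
The plan is to obtain the claimed inequality as a convex combination of two standard pointwise estimates: a linear (tangent line) bound from convexity and a super-additivity bound. Specifically, I will prove separately
\begin{equation*}
(a+b)^{\mfq}-a^{\mfq}\geq \mfq\, a^{\mfq-1} b
\qquad\text{and}\qquad
(a+b)^{\mfq}-a^{\mfq}\geq b^{\mfq},
\end{equation*}
valid for all $a,b\ge 0$ and $\mfq\ge 1$, and then multiply the first by $\tht\in[0,1]$ and the second by $(1-\tht)$ and add.

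For the first inequality I would invoke convexity of $t\mapsto t^{\mfq}$ on $[0,\infty)$ (which holds because $\mfq\ge 1$): its graph lies above every tangent line, so
\begin{equation*}
(a+b)^{\mfq}\;\ge\; a^{\mfq}+\mfq\, a^{\mfq-1}\bigl((a+b)-a\bigr)\;=\;a^{\mfq}+\mfq\, a^{\mfq-1}b.
\end{equation*}
A clean way to see this without worrying about $a=0$ when $\mfq-1<0$ is to apply the fundamental theorem of calculus to $t\mapsto t^{\mfq}$ on $[a,a+b]$ and use monotonicity of the derivative $\mfq t^{\mfq-1}$ together with $t\ge a$.

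For the second inequality, I would fix $a\ge 0$ and consider the auxiliary function
\begin{equation*}
g(t):=(a+t)^{\mfq}-a^{\mfq}-t^{\mfq},\qquad t\ge 0.
\end{equation*}
Then $g(0)=0$ and $g'(t)=\mfq\bigl((a+t)^{\mfq-1}-t^{\mfq-1}\bigr)\ge 0$, because $\mfq-1\ge 0$ makes $s\mapsto s^{\mfq-1}$ nondecreasing and $a+t\ge t$. Hence $g(t)\ge 0$ for all $t\ge 0$, which applied at $t=b$ is precisely $(a+b)^{\mfq}-a^{\mfq}\ge b^{\mfq}$.

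Finally, taking the $\tht$-weighted sum yields
\begin{equation*}
(a+b)^{\mfq}-a^{\mfq}=\tht\bigl[(a+b)^{\mfq}-a^{\mfq}\bigr]+(1-\tht)\bigl[(a+b)^{\mfq}-a^{\mfq}\bigr]\ge \tht\mfq\, a^{\mfq-1}b+(1-\tht)b^{\mfq},
\end{equation*}
which is the claim. There is no real obstacle here: the only small care needed is the boundary case $a=0$ with $\mfq=1$ (where $a^{\mfq-1}=1$) and $\mfq>1$ (where $a^{\mfq-1}=0$), both of which reduce to the trivial bound $b^{\mfq}\le b^{\mfq}$, so the estimate holds throughout the stated range.
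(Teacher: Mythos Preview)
Your proof is correct. The paper does not supply its own argument for this lemma; it simply recalls the statement from \cite[Lemma~4.1]{cozziRegularityResultsHarnack2017}, so there is nothing to compare against. One small remark: your parenthetical worry about ``$a=0$ when $\mfq-1<0$'' is unnecessary here, since the hypothesis $\mfq\ge 1$ already guarantees $\mfq-1\ge 0$ and hence $a^{\mfq-1}$ is well defined (with the usual convention $0^0=1$ when $\mfq=1$).
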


Let us recall the following algebraic lemma from \cite[Lemma 4.3]{cozziRegularityResultsHarnack2017}:
\begin{lemma}\label{pineq3}
	Let ${\mfq}\geq 1$ and $a\geq b\geq 0$, then for any $\ve>0$, the following holds:
	\begin{equation*}
		a^{\mfq}-b^{\mfq}\leq \ve a^{\mfq}+\left(\tfrac{{\mfq}-1}{\ve}\right)^{{\mfq}-1}(a-b)^{\mfq}.
	\end{equation*}
\end{lemma}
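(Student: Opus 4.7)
My plan is to prove this via a straightforward mean value estimate followed by Young's inequality with a carefully chosen free parameter. The first step is to reduce the two-variable difference $a^{\mfq}-b^{\mfq}$ to a product involving $(a-b)$. Since $\mfq \ge 1$ and $a \ge b \ge 0$, the map $t \mapsto t^{\mfq-1}$ is non-decreasing on $[0,a]$, so the fundamental theorem of calculus yields
\[
a^{\mfq} - b^{\mfq} \;=\; \int_b^a \mfq\, t^{\mfq-1}\,dt \;\le\; \mfq\, a^{\mfq-1}(a-b).
\]
This converts the problem into splitting the product $\mfq\, a^{\mfq-1}(a-b)$ into an $a^{\mfq}$-piece and an $(a-b)^{\mfq}$-piece with the prescribed constants.

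For the second step I would apply Young's inequality with the conjugate exponents $\tfrac{\mfq}{\mfq-1}$ and $\mfq$. Inserting a free parameter $\delta>0$ and writing $a^{\mfq-1}(a-b) = \bigl(\delta\, a^{\mfq-1}\bigr)\bigl(\delta^{-1}(a-b)\bigr)$, Young's inequality gives
\[
\mfq\, a^{\mfq-1}(a-b) \;\le\; (\mfq-1)\,\delta^{\tfrac{\mfq}{\mfq-1}} a^{\mfq} \;+\; \delta^{-\mfq}(a-b)^{\mfq}.
\]
To match the target coefficients I would choose $\delta$ so that $(\mfq-1)\,\delta^{\mfq/(\mfq-1)}=\ve$, namely $\delta = \bigl(\tfrac{\ve}{\mfq-1}\bigr)^{(\mfq-1)/\mfq}$. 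A direct computation then gives $\delta^{-\mfq} = \bigl(\tfrac{\mfq-1}{\ve}\bigr)^{\mfq-1}$, which is exactly the constant appearing in the statement. Combining the two steps completes the argument.

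The only mildly delicate point, rather than a genuine obstacle, is the boundary case $\mfq=1$, where the conjugate exponent $\tfrac{\mfq}{\mfq-1}$ is undefined. In that regime the inequality collapses to $a-b \le \ve a + (a-b)$ (with the convention $0^0 = 1$), which is trivially true. Outside this degenerate case both ingredients are elementary and the parameter tuning is forced by the desired form of the constants, so I expect no further difficulty. This matches the argument in \cite{cozziRegularityResultsHarnack2017} from which the lemma is quoted.
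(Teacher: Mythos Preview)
Your proof is correct. The paper does not supply its own argument for this lemma; it simply recalls it from \cite[Lemma~4.3]{cozziRegularityResultsHarnack2017}, and your mean-value-plus-Young approach is exactly the standard proof given there, including the handling of the degenerate case $\mfq=1$.
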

	
	\section{Preliminary Estimates}
	In this section, we recall some important estimates. The first one is a standard energy estimate,   the proof of which follows by combining \cite[Proposition 3.1]{bogeleinHolderRegularitySigned2021} along with \cite[Theorem 3.1]{adimurthi2024localholderregularitynonlocal}.
	\begin{proposition}\label{Prop:energy}
		Let $u$ be a  local weak sub(super)-solution of \cref{maineq} in $\Om_T$ and $z_o = (x_o,t_o)$ be a fixed point.
		There exists a constant $\mopC  >0$ depending only on the data such that
		for all cylinders $\mcQ_{R,S}=B_R(x_o)\times (t_o-S,t_o)\Subset E_T$,
		every $k\in\RR$, and every non-negative, piecewise smooth cut-off function
		$\zeta$ vanishing on $\partial B_{R}(x_o)\times (t_o-S,t_o)$,  there holds 
		\begin{multline*}
			\esssup_{t_o-S<t<t_o}\int_{B_R(x_o)\times\{t\}}	
			\zeta^p\mfg_\pm^q (u,k)\,dx +
			\iint_{\mcQ_{R,S}(z_o)} (u-k)_{\pm}(x,t)\zeta^p(x,t)\int_{B_R(x_o)}\frac{(u-k)_{\mp}^{p-1}(y,t)}{|x-y|^{n+sp}}\,dy\,dx\,dt 
			\\
			+\int_{t_o-S}^{t_o}\iint_{B_R(x_o)\times B_R(x_o)}|(u-k)_{\pm}(x,t)\zeta(x,t)-(u-k)_{\pm}(y,t)\zeta(y,t)|^p\,d\mu\,dt\\ 
			\def\arraystretch{2.2}
			\begin{array}{rcl}
				&\leq&
				\mopC \int_{t_o-S}^{t_o}\iint_{B_R(x_o)\times B_R(x_o)}\hspace*{-1.6cm} \frac{\max\{(u-k)_{\pm}(x,t),(u-k)_{\pm}(y,t)\}^{p}|\zeta(x,t)-\zeta(y,t)|^p}{|x-y|^{n+sp}}\,dx\,dy\,dt
				\\
				&&+
				\iint_{\mcQ_{R,S}(z_o)}\mfg_\pm^q (u,k)|\partial_t\zeta^p| \,dx\,dt
				+\int_{B_R(x_o)\times \{t_o-S\}} \zeta^p \mfg_\pm^q (u,k)\,dx 
				\\ 
				&&+\mopC\lbr  \underset{\stackrel{t \in (t_o-S,t_o)}{x\in \spt \zeta}}{\esssup}\,\int_{\RR^n \setminus B_R(x_o)}\frac{(u-k)_{\pm}^{p-1}(y,t)}{|x-y|^{n+sp}}\,dy\rbr\iint_{(t_o-S,t_o)\times B_R(x_o)}\hspace*{-1.6cm} (u-k)_{\pm}(x,t)\zeta^p(x,t)\,dx\,dt,
			\end{array}
		\end{multline*}
		where we recall the definition of $\mfg_\pm^q (u,k)$ from  \cref{defgpm}.
	\end{proposition}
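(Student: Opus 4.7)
The plan is a standard Caccioppoli/energy-estimate argument, combining a time-derivative manipulation for the doubly nonlinear part with the by-now classical localization trick for the fractional $p$-Laplacian. I will do it just for the sub-solution case (with $(\cdot)_+$); the super-solution case is identical modulo a sign.

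\textbf{Step 1: Choice of test function and time regularization.} The natural test function is $\phi = (u-k)_+\zeta^p$, but since $|u|^{q-2}u$ is only in $L^\infty_t L^1_x$ a priori, one cannot differentiate in $t$ directly. The fix is to pass through Steklov averages (or a time mollification). I replace $u$ by $[u]_h$ in the weak formulation, test with $(\,[u]_h - k)_+ \zeta^p$, integrate over $(t_o-S, t_1)$ for $t_1 \in (t_o-S, t_o)$, and then let $h \downarrow 0$. This is the same scheme as in \cite{bogeleinHolderRegularitySigned2021}.

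\textbf{Step 2: The time term.} Using \cref{lem:g} and the chain rule, for smooth $u$ one has the pointwise identity
\[
\partial_t(|u|^{q-2}u)\,(u-k)_+ = \partial_t \mfg_+^q(u,k),
\]
so that after integrating against $\zeta^p$ in space--time and an integration by parts in $t$, one obtains the sup-in-time term $\int_{B_R\times\{t_1\}}\zeta^p \mfg_+^q(u,k)\,dx$, the initial term at $t = t_o - S$, and the lower-order $\mfg_+^q(u,k)|\partial_t\zeta^p|$ contribution. After the Steklov passage to the limit this holds in the weak sense; taking the essential supremum in $t_1$ gives the first term on the left-hand side.

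\textbf{Step 3: The nonlocal term --- local part.} The double integral over $\mathcal{C}_{B_R}$ splits into a diagonal piece on $B_R\times B_R$ and a tail piece on $B_R\times (\RR^n\setminus B_R)$ (the symmetric piece is absorbed by the symmetry of $K$). On $B_R\times B_R$, apply the now-standard pointwise algebraic inequality (cf. \cite{dicastroLocalBehaviorFractional2016,cozziRegularityResultsHarnack2017}): for $a,b\in\RR$ and $\zeta_a,\zeta_b\ge 0$,
\begin{multline*}
|a-b|^{p-2}(a-b)\bigl((a-k)_+\zeta_a^p - (b-k)_+\zeta_b^p\bigr) \\
\ge \tfrac{1}{C}\bigl|(a-k)_+\zeta_a - (b-k)_+\zeta_b\bigr|^p + (a-k)_+(b-k)_-^{\,p-1}\min\{\zeta_a^p,\zeta_b^p\} \\
- C\,\max\{(a-k)_+, (b-k)_+\}^p\,|\zeta_a-\zeta_b|^p.
\end{multline*}
The first term on the right is the fractional Gagliardo seminorm of $(u-k)_+\zeta$, the second produces (after symmetrization and using $K\asymp (1-s)|x-y|^{-n-sp}$) the cross-product term on the left-hand side of the proposition, and the last is the error term carrying $|\zeta(x)-\zeta(y)|^p$.

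\textbf{Step 4: The nonlocal term --- tail part.} On $B_R\times (\RR^n\setminus B_R)$ one has $\zeta(y)=0$, so the integrand is $K(x,y,t)|u(x)-u(y)|^{p-2}(u(x)-u(y))(u(x)-k)_+\zeta^p(x)$. Using the elementary bound
\[
|u(x)-u(y)|^{p-2}(u(x)-u(y))\,(u(x)-k)_+ \ge -(u(x)-k)_+(u(y)-k)_+^{\,p-1} + (u(x)-k)_+(u(y)-k)_-^{\,p-1},
\]
the negative contribution, after using $K\le (1-s)\Lambda|x-y|^{-n-sp}$ and pulling the essential supremum in $(t,x)\in \spt\zeta$ of $\int_{\RR^n\setminus B_R}(u-k)_+^{p-1}(y,t)|x-y|^{-n-sp}\,dy$ outside, gives exactly the last term on the right-hand side of the claimed inequality. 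The positive contribution is absorbed into the cross-product term on the left (extended from $B_R$ to $\RR^n$).

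\textbf{Main obstacle.} The sole technical subtlety is the rigorous handling of the time derivative for weak solutions with the doubly nonlinear structure; the pointwise identity $\partial_t(|u|^{q-2}u)(u-k)_+ = \partial_t \mfg_+^q(u,k)$ must be justified via Steklov averages as in \cite{bogeleinHolderRegularitySigned2021}, and one has to check that the $\mfg_+^q(u,k)|\partial_t\zeta^p|$ remainder survives the passage to the limit. Everything else (the algebraic splitting, the Gagliardo energy extraction, the treatment of the tail) is by now routine and is carried out in \cite{adimurthi2024localholderregularitynonlocal} for the $q=2$ case; the only new ingredient here is to feed in the $\mfg_+^q$ bookkeeping from \cite{bogeleinHolderRegularitySigned2021} in place of the clean $(u-k)_+^2$ term.
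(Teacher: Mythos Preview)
Your proposal is correct and matches the paper's own approach: the paper does not give a self-contained proof but simply states that the estimate follows by combining \cite[Proposition 3.1]{bogeleinHolderRegularitySigned2021} (for the $\mfg_\pm^q$ time-derivative bookkeeping via Steklov averages) with \cite[Theorem 3.1]{adimurthi2024localholderregularitynonlocal} (for the nonlocal splitting into Gagliardo energy, cross term, and tail). Your Steps 1--4 are precisely this combination, and your identification of the Steklov-average justification as the only nontrivial point is accurate.
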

	
	\subsection{Shrinking Lemma}
	
	One of the main difficulties we face when dealing with regularity issues for nonlocal equations is the lack of a corresponding isoperimetric inequality for $W^{s,p}$ functions. Indeed, since such functions can have jumps, a generic isoperimetric inequality seems out of reach at this time (see \cite{cozziFractionalGiorgiClasses2019, adimurthiOlderRegularityFractional2022} ) One way around this issue is to note that since we are working with solutions of an equation, which we expect to be continuous and hence have no jumps, we could try and cook up an isoperimetric inequality for solutions. Such a strategy turns out to be feasible due to the presence of the ``good term'' or the isoperimetric term in the Caccioppoli inequality. The following lemma can be found in \cite[Lemma 3.3]{adimurthi2024localholderregularitynonlocal}.
	
	\begin{lemma}\label{lem:isop}
		Let $k<l<m$ be arbitrary levels and $A \geq 1$. Then,
		\[
		(l-k)(m-l)^{p-1}\abs{[u>m]\cap B_{\varrho}}\abs{[u<k]\cap B_{\varrho}} \leq 
		C\varrho^{n+sp}\int_{B_{\varrho}} (u-l)_{-}(x)\int_{B_{A\varrho}}\frac{(u-l)_{+}^{p-1}(y)}{|x-y|^{n+sp}}\,dy\,dx,
		\]
		where $C = C(n,s,p,A)>0$. 
	\end{lemma}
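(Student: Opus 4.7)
The plan is to prove this inequality by a direct pointwise estimation after restricting each integration on the right-hand side to an appropriate level set. The argument is purely algebraic and does not invoke any Poincar\'e- or Sobolev-type inequality: the nonlocal kernel together with the truncations $(u-l)_\pm$ already encode all the information needed.

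First, I would observe two pointwise lower bounds on the integrand. For any $x\in\{u<k\}\cap B_{\varrho}$, one has $u(x)<k<l$ and hence $(u-l)_{-}(x) = l-u(x) > l-k$. Symmetrically, for any $y\in\{u>m\}\cap B_{\varrho}$ one has $u(y)>m>l$, whence $(u-l)_{+}^{p-1}(y) > (m-l)^{p-1}$. Since $A\geq 1$, the set $\{u>m\}\cap B_{\varrho}$ lies inside $B_{A\varrho}$, so it is a legitimate subset of the inner integration domain. Finally, for $x,y\in B_{\varrho}$ one has $|x-y|\leq 2\varrho$, so $|x-y|^{-(n+sp)} \geq (2\varrho)^{-(n+sp)}$.

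Combining these, I would restrict the integrals on the right-hand side and use the pointwise bounds to get
\begin{align*}
\int_{B_{\varrho}} (u-l)_{-}(x) \int_{B_{A\varrho}}\frac{(u-l)_{+}^{p-1}(y)}{|x-y|^{n+sp}}\,dy\,dx
&\geq \int_{\{u<k\}\cap B_{\varrho}} (u-l)_{-}(x) \int_{\{u>m\}\cap B_{\varrho}} \frac{(u-l)_{+}^{p-1}(y)}{|x-y|^{n+sp}}\,dy\,dx \\
&\geq \frac{(l-k)(m-l)^{p-1}}{(2\varrho)^{n+sp}}\, \abs{\{u<k\}\cap B_{\varrho}}\, \abs{\{u>m\}\cap B_{\varrho}},
\end{align*}
and multiplying through by $(2\varrho)^{n+sp}$ gives the claim with $C=2^{n+sp}$. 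The constant only depends on $n,s,p$; the $A$-dependence advertised in the statement is harmless, since enlarging the inner domain from $B_{\varrho}$ to $B_{A\varrho}$ can only make the right-hand side larger.

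There is essentially no real obstacle here --- the content of the lemma is the \emph{observation} that the ``good term'' present on the right-hand side of the energy estimate of \cref{Prop:energy} (see the second line of that display) already has the structure of a nonlocal interaction between points where $u$ is far below and far above an intermediate level $l$. Once one identifies $\{u<k\}$ and $\{u>m\}$ as the subsets on which the two factors $(u-l)_-$ and $(u-l)_+^{p-1}$ are quantitatively bounded away from zero, the inequality reduces to the trivial pointwise kernel bound inside $B_{\varrho}$. This is precisely the feature that will later allow level-set decay in the shrinking argument to be polynomial rather than exponential.
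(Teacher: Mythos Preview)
Your proof is correct and is exactly the standard argument (the paper does not give its own proof here but cites \cite[Lemma~3.3]{adimurthi2024localholderregularitynonlocal}, where the same restriction-to-level-sets plus the trivial kernel bound $|x-y|\le 2\varrho$ is used). Your observation that the constant in fact does not depend on $A$ is also correct; the $A$ in the statement is an artifact of how the lemma is invoked downstream rather than a genuine dependence.
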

	
	We can apply \cref{lem:isop} to get the following shrinking lemma whose proof can be found in \cite[Lemma 3.4]{adimurthi2024localholderregularitynonlocal}.
	\begin{lemma}\label{lem:shrinking}
		Let $u$ be a given function and suppose that for some level $m$, some constant $\nu \in (0,1)$ and all time levels $\tau$ in some interval $I$, we have
		\[
		|[u(\cdot,\tau)>m]\cap B_{\varrho}| \geq \nu|B_{\varrho}|,
		\]
		and we can arrange that for some $A \geq 1$,  the following is also satisfied:
		\begin{equation*}
			\iint_{I\times B_{\varrho}} (u-l)_{-}(x,t)\int_{B_{A\varrho}}\frac{(u-l)_{+}^{p-1}(y,t)}{|x-y|^{n+sp}}\,dy\,dx\,dt \leq {\mbC}_1\frac{l^p}{\varrho^{sp}}|Q|,
		\end{equation*}
		where 
		\[
		l = \frac{m}{2^{j}}, \qquad j\geq 1 \qquad \text{ and } \qquad Q := B_{\varrho} \times I,
		\] 
		then the following conclusion holds:
		\[
		\left|\left[u<\frac{m}{2^{j+1}}\right]\cap Q\right| \leq \left(\frac{C}{2^{j}-1}\right)^{p-1}|Q|,
		\]
		where $C = ({\mbC}_1,A,n,\nu) >0$.
	\end{lemma}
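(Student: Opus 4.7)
The plan is to apply the pointwise-in-time isoperimetric inequality of \cref{lem:isop} with a careful choice of three levels, then integrate in time over $I$ and absorb the resulting spatial integral using the second hypothesis. For each fixed time $\tau \in I$, I would invoke \cref{lem:isop} with the triple
\[
k := \frac{m}{2^{j+1}}, \qquad l := \frac{m}{2^j}, \qquad \text{upper level } m,
\]
so that $l - k = m/2^{j+1}$ and $m - l = m(2^j - 1)/2^j$. Multiplying these out, the prefactor on the left-hand side of \cref{lem:isop} becomes
\[
(l-k)(m-l)^{p-1} = \frac{m}{2^{j+1}}\left(\frac{m(2^j - 1)}{2^j}\right)^{p-1} = \frac{m^p (2^j - 1)^{p-1}}{2^{jp+1}}.
\]

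Next I would combine this with the density assumption $|[u(\cdot, \tau) > m] \cap B_\varrho| \geq \nu|B_\varrho|$, use $\varrho^{n+sp}/|B_\varrho| \lesssim \varrho^{sp}$, and integrate in $\tau$ over $I$ to get
\[
\left|\left[u < \tfrac{m}{2^{j+1}}\right] \cap Q\right| \leq \frac{\mopC (n,A)\, 2^{jp+1} \varrho^{sp}}{\nu\, m^p (2^j - 1)^{p-1}} \iiint_{I \times B_\varrho} \frac{(u - l)_-(x,t)(u - l)_+^{p-1}(y,t)}{|x - y|^{n+sp}} \, dy \, dx \, dt,
\]
where the inner integration in $y$ is over $B_{A\varrho}$. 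The second hypothesis bounds the triple integral by $\mbC_1 l^p \varrho^{-sp} |Q|$; inserting $l^p = m^p/2^{jp}$ makes all powers of $m$ and $\varrho$ drop out, while the remaining factor $2^{jp+1}/2^{jp} = 2$ yields
\[
\left|\left[u < \tfrac{m}{2^{j+1}}\right] \cap Q\right| \leq \frac{2\, \mopC (n,A)\, \mbC_1}{\nu\, (2^j - 1)^{p-1}} |Q|.
\]
Setting $C := \bigl(2\,\mopC (n,A)\,\mbC_1/\nu\bigr)^{1/(p-1)}$ produces exactly the claimed decay.

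The proof is essentially a bookkeeping exercise, and I do not anticipate any serious obstacle. The only conceptual point is the exact matching between the power $p$ on the left of \cref{lem:isop} and the exponent in $l^p$ prescribed by the second hypothesis: this matching is precisely what allows the algebraic dependence on $m$ and $\varrho$ to cancel and leaves behind the clean geometric factor $(2^j - 1)^{-(p-1)}$. No energy estimates, cut-off constructions, or further Caccioppoli-type inputs are needed at this stage; everything reduces to a single direct substitution into \cref{lem:isop} followed by integration in time.
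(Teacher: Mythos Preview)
Your proposal is correct and follows exactly the approach the paper indicates: the paper states that the shrinking lemma is obtained by applying \cref{lem:isop} and refers to \cite[Lemma~3.4]{adimurthi2024localholderregularitynonlocal} for details, and your argument supplies precisely those details with the natural choice of levels $k=m/2^{j+1}$, $l=m/2^j$, upper level $m$, followed by time-integration and the energy hypothesis. The algebra and cancellations are all right; nothing is missing.
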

	\subsection{Tail Estimates}\label{sec:tail}
	In this section, we want to outline how we estimate the tail term and we shall refer to this section whenever we make a similar calculation.
	
	For any level $k = \bsmu^{-}+ M$ with $M > 0$, we want to estimate
	\[
	\underset{\stackrel{t \in I;}{x\in \spt \zeta}}{\esssup}\int_{\RR^n \setminus B_{\varrho}(y_o)}\frac{(u-k)_{-}^{p-1}(y,t)}{|x-y|^{n+sp}}\,dy,
	\]
	where $I$ is some time interval and $\zeta$ is a cut-off function supported in $B_{\vartheta\varrho}$ for some $\vartheta \in (0,1)$. In such case, we have
	\[
	|y-y_o| \leq |x-y|\left(1+\frac{|x-y_o|}{|x-y|}\right)\leq  |x-y|\left(1+\frac{\vartheta}{(1-\vartheta)}\right),
	\]
	using which, we get
	\[
	\underset{\stackrel{t \in I;}{ x\in \spt \zeta}}{\esssup}\int_{\RR^n \setminus B_{\varrho}(y_o)}\frac{(u-k)_{-}^{p-1}(y,t)}{|x-y|^{n+sp}}\,dy 
	\leq \frac{1}{(1-\vartheta)^{n+sp}}\underset{t \in I}{\esssup}\int_{\RR^n \setminus B_{\varrho}(y_o)}\frac{(u-k)_{-}^{p-1}(y,t)}{|y-y_o|^{n+sp}}\,dy. 
	\]
	In the local case, we could always estimate $(u-k)_{-} \leq k$ because we take $u \geq 0$ locally. However, in the $\tail$ term,  we are on the complement of a cube and so unless we make a global boundedness assumption (for e.g. $u \geq 0$ in full space), the best we can do is
	\[
	(u-k)_{-} \leq u_{-} + k,
	\]
	which leads us to the next estimate
	\[
	\underset{t \in I}{\esssup}\int_{\RR^n \setminus B_{\varrho}(y_o)}\frac{(u-k)_{-}^{p-1}(y,t)}{|y-y_0|^{n+sp}}\,dy  \leq C(p)\frac{M^{p-1}}{\varrho^{sp}} +  C(p)\,\underset{t \in I}{\esssup}\int_{\RR^n \setminus B_{\varrho}(y_o)}\frac{(u -\bsmu^-)_-^{p-1}(y,t)}{|y-y_o|^{n+sp}}\,dy.
	\]
	Putting together the above estimates yield
	\[
	\underset{\stackrel{t \in I;}{ x\in \spt \zeta}}{\esssup}\int_{\RR^n \setminus B_{\varrho}(y_o)}\frac{(u-k)_{-}^{p-1}(y,t)}{|x-y|^{n+sp}}\,dy  \leq \frac{C(p)}{\varrho^{sp}}\left[M^{p-1}+\tailp((u -\bsmu^-)_-;y_o,\varrho,I)\right].
	\]
	Finally we usually want an estimate of the form
	\[
	\underset{\stackrel{t \in I;}{ x\in \spt \zeta}}{\esssup}\int_{\RR^n \setminus B_{\varrho}(y_o)}\frac{(u-k)_{-}^{p-1}(y,t)}{|x-y|^{n+sp}}\,dy \leq C\frac{M^{p-1}}{\varrho^{sp}},
	\]
	and so we impose the condition that the following is satisfied:
	\[
	\tailp((u -\bsmu^-)_-;y_0,\varrho,I) \leq M^{p-1}.
	\]
	This is the origin of the various Tail alternatives. 
	
	\begin{remark}
		We have an analogous estimate regarding $k = \boldsymbol{\mu}^+-M$ which corresponds to subsolutions.
	\end{remark}

\begin{remark}\label{Rmk:5.1}
	Let $B_R(x_o) \times (t_o, t_o+S)$ be some cylinder and let $\nu^\pm$ be two numbers satisfying
	\[
	\nu^+\geq  \esssup_{B_R(x_o) \times (t_o, t_o+S)}u \qquad \text{ and }
	\qquad 
	\nu^-\leq \essinf_{B_R(x_o) \times (t_o, t_o+S)} u,
	\]
	then we see that $(u-\nu^{\pm})_{\pm} = 0$ in $B_R(x_o)\times (t_o,t_o+S)$. Thus, for any $\varrho \leq R$, we have
	\[
	\tail((u-\nu^{\pm})_{\pm};x_o;\varrho;(t_o, t_o + S)) \leq \left(\frac{\varrho}{R}\right)^{\frac{sp}{p-1}}\tail((u-\nu^{\pm})_{\pm};x_o;R;(t_0, t_o + S)).
	\]
\end{remark}
	
	\section{Partially unified proof of H\"older continuity when \texorpdfstring{$q=2$}.}\label{section3}
	
	Let us recall the energy estimate, see \cite[Lemma 3.3]{dingLocalBoundednessHolder2021} for the details. 
	\begin{lemma}\label{energyqequal2}
		 Let $u$ be a weak solution of \cref{maineq} in the case $q=2$ and $k \in \RR$ be given, then
		there exists a constant $\mopC  >0$ depending only on the data such that
		for all  every non-negative, piecewise smooth cut-off function
		$\zeta(x,t) = \zeta_1(x)\zeta_2(t)$ on $\mcQ = B_{R}(x_o) \times (t_o - S,t_o)$ and  vanishing on $\partial_p \mcQ$,  there holds
		\begin{multline*}
			\esssup_{t_o-S<t<t_o}\int_{B_R(x_o)\times\{t\}}	
			\zeta^p(u-k)_{\pm}^2\,dx +
			\iint_{\mcQ_{R,S}(z_o)} (u-k)_{\pm}(x,t)\zeta^p(x,t)\int_{B_R(x_o)}\frac{(u-k)_{\mp}^{p-1}(y,t)}{|x-y|^{n+sp}}\,dy\,dx\,dt 
			\\
			+\int_{t_o-S}^{t_o}\iint_{B_R(x_o)\times B_R(x_o)}|(u-k)_{\pm}(x,t)\zeta(x,t)-(u-k)_{\pm}(y,t)\zeta(y,t)|^p\,d\mu\,dt\\ 
			\def\arraystretch{2.2}
			\begin{array}{rcl}
				&\leq&
				\mopC \int_{t_o-S}^{t_o}\iint_{B_R(x_o)\times B_R(x_o)}\hspace*{-1.6cm} \frac{\max\{(u-k)_{\pm}(x,t),(u-k)_{\pm}(y,t)\}^{p}|\zeta(x,t)-\zeta(y,t)|^p}{|x-y|^{n+sp}}\,dx\,dy\,dt
				\\
				&&+
				\iint_{\mcQ_{R,S}(z_o)}(u-k)_{\pm}^2|\partial_t\zeta^p| \,dx\,dt
				+\int_{B_R(x_o)\times \{t_o-S\}} \zeta^p (u-k)_{\pm}^2\,dx 
				\\ 
				&&+\mopC\lbr  \underset{\stackrel{t \in (t_o-S,t_o)}{x\in \spt \zeta}}{\esssup}\,\int_{\RR^n \setminus B_R(x_o)}\frac{(u-k)_{\pm}^{p-1}(y,t)}{|x-y|^{n+sp}}\,dy\rbr\iint_{(t_o-S,t_o)\times B_R(x_o)}\hspace*{-1.6cm} (u-k)_{\pm}(x,t)\zeta^p(x,t)\,dx\,dt.
			\end{array}
		\end{multline*}
	\end{lemma}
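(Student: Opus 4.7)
The plan is to test the weak formulation of \cref{maineq} (with $q=2$) with $\phi = \pm (u-k)_\pm \zeta^p$, regularized in time by a standard Steklov-averaging argument so that differentiation in $t$ is admissible. Since $q=2$ the parabolic term is $\partial_t u$ rather than $\partial_t(|u|^{q-2}u)$, and an integration by parts in time yields, after selecting the slice on which the kinetic energy is essentially maximal, the supremum of $\tfrac12 \int_{B_R} \zeta^p (u-k)_\pm^2\,dx$ on the left, balanced by the initial slice $\int_{B_R\times\{t_o-S\}}\zeta^p(u-k)_\pm^2\,dx$ and the cut-off error $\iint (u-k)_\pm^2 |\partial_t\zeta^p|\,dx\,dt$ on the right. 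This accounts for the first term on the left and for the second and third terms on the right of the claim.

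Next I would split the nonlocal integration domain $\mathcal{C}_{B_R}$ into the interior piece $B_R\times B_R$ and the two symmetric off-diagonal pieces $B_R\times B_R^c$ and $B_R^c\times B_R$. For the interior piece, the core computation is a pointwise algebraic inequality of the form
\begin{equation*}
|a-b|^{p-2}(a-b)\bigl((a-k)_\pm\zeta^p(x)-(b-k)_\pm\zeta^p(y)\bigr) \geq \tfrac{1}{\mopC}\bigl|(a-k)_\pm\zeta(x)-(b-k)_\pm\zeta(y)\bigr|^p + \mathrm{G} - \mathrm{E},
\end{equation*}
where $\mathrm{G}$ is a nonnegative cross term of the type $(a-k)_\pm\zeta^p(x)(b-k)_\mp^{p-1}+(b-k)_\pm\zeta^p(y)(a-k)_\mp^{p-1}$ arising in the two cases $a>k>b$ and $b>k>a$, and $\mathrm{E}\leq \mopC \max\{(a-k)_\pm,(b-k)_\pm\}^p|\zeta(x)-\zeta(y)|^p$ is a controllable cut-off error. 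Dividing by $|x-y|^{n+sp}$, multiplying by $K$, and integrating over $B_R\times B_R\times(t_o-S,t_o)$, the first contribution produces the Gagliardo seminorm of $(u-k)_\pm\zeta$ on the left, the $\mathrm{G}$ contribution produces the isoperimetric good term, and the $\mathrm{E}$ contribution is absorbed into the cut-off fluctuation term on the right.

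For the off-diagonal pieces, the symmetry of $K$ collapses them into twice the integral over $B_R\times B_R^c$, and since $\zeta(y)=0$ for $y\in B_R^c$ the integrand reduces to $\pm K(x,y,t)|u(x)-u(y)|^{p-2}(u(x)-u(y))(u-k)_\pm(x)\zeta^p(x)$. Splitting $(u(y)-k)$ into its positive and negative parts, the $(u(y)-k)_\mp^{p-1}$ portion feeds additional mass into the isoperimetric good term on the left-hand side (thereby extending the inner $y$-integration from $B_R$ to $\mathbb{R}^n$), while the $(u(y)-k)_\pm^{p-1}$ portion is estimated by pulling the essential supremum in $y$ outside the $x$-integration precisely as explained in \cref{sec:tail}, producing the tail product on the right-hand side.

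The main obstacle I expect is verifying the pointwise algebraic inequality above, namely performing a clean case analysis over the four sign configurations of $(u(x)-k)$ and $(u(y)-k)$ and tracking cancellations so that the two cross cases contribute with the correct sign to generate the isoperimetric good term $\iint (u-k)_\pm(x)\zeta^p(x)\int_{B_R}(u-k)_\mp^{p-1}(y)/|x-y|^{n+sp}\,dy\,dx\,dt$ rather than a defect. This good term is exactly the ingredient that makes the shrinking \cref{lem:shrinking} applicable downstream, so even though it may look decorative at this stage it must survive the manipulation. Once the inequality is in hand, the remainder is a routine assembly of the interior and off-diagonal contributions together with standard absorption of lower-order terms.
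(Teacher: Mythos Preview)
Your proposal is correct and follows the standard route for deriving Caccioppoli-type energy estimates in the nonlocal parabolic setting. However, the paper itself does not give a proof of this lemma: it simply cites \cite[Lemma~3.3]{dingLocalBoundednessHolder2021} and treats the estimate as known. Your sketch---testing with $\pm(u-k)_\pm\zeta^p$ (Steklov-regularized), integrating by parts in time, splitting the nonlocal form into the diagonal block $B_R\times B_R$ and the off-diagonal pieces, and invoking the pointwise algebraic inequality that simultaneously produces the Gagliardo seminorm and the isoperimetric cross term---is exactly the argument carried out in that reference (and in closely related works such as \cite{cozziRegularityResultsHarnack2017} and \cite[Theorem~3.1]{adimurthi2024localholderregularitynonlocal}). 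One small overreach: you say the off-diagonal $(u(y)-k)_\mp^{p-1}$ contribution extends the inner $y$-integral of the good term to all of $\mathbb{R}^n$, but the lemma as stated only records the inner integral over $B_R$; your version would be stronger and certainly implies it, so this is harmless.
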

\subsection{Defining the constants}
	For a compact ball $B_R\subset\RR^n$ of radius $R$ and
	a cylinder $\mcQ_o := B_R\times(T_1,T_2]\subset \Om_T$, 
	we introduce numbers $\bsmu^{\pm}$ and $\bsom$ satisfying
	\begin{equation*}
		\bsmu^+\geq  \esssup_{\mcQ_o}u, \qquad \bsmu^-\leq \essinf_{\mcQ_o} u \qquad \text{ and } 
		\qquad \bsom \geq \bsmu^+ - \bsmu^-.	
	\end{equation*}
	For some $\varrho$, we also assume $(x_o,t_o)\in \mcQ_o$, such that the	cylinder 
	\begin{equation*}
		B_{8\varrho}(x_o)\times\left(t_o-(8\varrho)^{sp},t_o+(8\varrho)^{sp}\right]\subset\mcQ_o.
	\end{equation*}
We further assume $\bsom$ is chosen such that 
\[
\tail((u-\bsmu^\pm)_{\pm},8\varrho,0,(-(8\varrho)^{sp},0)) \leq \bsom. 
\]
	In subsequent calculations, we will assume $(x_o,t_o) = (0,0)$ and obtain reduction of oscillation at $(0,0)$. 
	
	\subsection{de Giorgi iteration} 
	The first lemma we prove is the de Giorgi iteration:
	\begin{lemma}\label{lemma3.2}
		Let $u$ be a weak solution of \cref{maineq}. Given $\de_x, \de_t, \varepsilon \in (0,1)$, set $\theta_t = \de_t (\varepsilon \bsom)^{\mfd-p}$, $\theta_x = \de_x (\varepsilon \bsom)^{\frac{\mfd -2}{sp}}$ for some $1<\mfd<\min\{2,p\}$ and denote $\mcQ_{c_o\varrho}^{\theta_x,\theta_t} = B_{\theta_x c_o\varrho} \times (-\theta_t (c_o\varrho)^{sp},0)$. Then there exists a universal constant $\nu_1 = \nu(\datanb{,\delta_x,\delta_t}) \in (0,1)$ such that if 
		\begin{equation*}
			\left|\left\{
			\pm\left(\bsmu^{\pm}-u\right)\leq \varepsilon \bsom\right\}\cap \mcQ_{c_o\varrho}^{\theta_x,\theta_t}\right|
			\leq
			\nu_1|\mcQ_{c_o\varrho}^{\theta_x,\theta_t}|,
		\end{equation*}
		holds along with the assumption 
		\begin{equation*}
			c_o^{\frac{sp}{p-1}}\tail((u-\bsmu_-)_-, \theta_x\varrho,0,(-\theta_t \varrho^{sp},0))\leq \varepsilon\bsom,
		\end{equation*}
		then the following conclusion follows:
		\begin{equation*}
			\pm\left(\bsmu^{\pm}-u\right)\geq\tfrac{1}2\varepsilon \bsom
			\quad
			\mbox{ on }\quad \mcQ_{\frac{1}{2}c_o\varrho}^{\theta_x,\theta_t} = B_{\theta_x\frac{c_o\varrho}2} \times \left(-\theta_t\lbr \tfrac12c_o\varrho\rbr^{sp}, 0\right].
		\end{equation*}
	If we denote $\Gamma := \lbr \tfrac{1}{\de_x^{sp}} + \tfrac{1}{\de_t}\rbr^{\frac{n+sp}{p(n+2s)}} (\de_x^n\de_t)^{\frac{s}{(n+2s)}}$, then the relation between $\nu_1$ and $\Gamma$ takes the form $ \nu_1 \approx \Gamma^{-\frac{n+2s}{s}}$ or in particular, $\nu_1 \approx \lbr \tfrac{1}{\de_x^{sp}} + \tfrac{1}{\de_t}\rbr^{-\frac{n+sp}{sp}} (\de_x^n\de_t)^{-1}$.
	\end{lemma}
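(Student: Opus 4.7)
The plan is to carry out a standard De Giorgi iteration on a nested family of anisotropic cylinders, with the energy estimate of \cref{energyqequal2} and the parabolic Sobolev embedding of \cref{fracpoin} as the two main inputs. I argue the ``$-$'' case (the ``$+$'' case is symmetric). Without loss of generality I take $(x_o,t_o)=(0,0)$.

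\textbf{Setup of the iteration.} For $j\in\NN_0$ I introduce the decreasing levels, shrinking radii, and shrinking cylinders
\[
k_j := \bsmu^{-} + \tfrac{\varepsilon\bsom}{2}\bigl(1+2^{-j}\bigr),\qquad
\varrho_j := \tfrac{c_o\varrho}{2}\bigl(1+2^{-j}\bigr),\qquad
\mcQ_j := B_{\theta_x \varrho_j} \times \bigl(-\theta_t \varrho_j^{sp},\,0\bigr],
\]
together with cut-offs $\zeta_j(x,t) = \eta_j(x)\chi_j(t)$ where $\eta_j \in C_c^\infty(B_{\theta_x \varrho_j})$ equals $1$ on $B_{\theta_x \varrho_{j+1}}$ with $|\nabla\eta_j|\lesssim 2^{j+1}/(\theta_x c_o\varrho)$, and $\chi_j$ is a Lipschitz time cut-off equal to $1$ on $(-\theta_t \varrho_{j+1}^{sp},0]$ and vanishing near $t=-\theta_t \varrho_j^{sp}$, with $|\chi_j'|\lesssim 2^{(j+1)sp}/(\theta_t (c_o\varrho)^{sp})$. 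I abbreviate $A_j := \{u<k_j\}\cap \mcQ_j$ and $Y_j := |A_j|/|\mcQ_j|$.

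\textbf{Applying the energy estimate.} Inserting $k=k_{j+1}$ and $\zeta=\zeta_j$ into \cref{energyqequal2} I bound each right hand side term. On $A_j$ one has $(u-k_{j+1})_- \leq \varepsilon\bsom$, so the time-derivative term contributes at most $C\,2^{jsp}(\varepsilon\bsom)^2 |A_j|/(\theta_t (c_o\varrho)^{sp})$. For the $\zeta$-gradient term I use the standard nonlocal split $|\eta_j(x)-\eta_j(y)|^p\lesssim \min\{1,(2^{j+1}|x-y|/(\theta_x c_o \varrho))^p\}$ and integrate in $y$, producing $C\,2^{jsp}(\varepsilon\bsom)^p|A_j|/(\theta_x c_o\varrho)^{sp}$. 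For the tail term, using the decomposition described in \cref{sec:tail} together with the hypothesis $c_o^{sp/(p-1)}\tail((u-\bsmu^-)_-;\theta_x\varrho,(-\theta_t\varrho^{sp},0))\leq \varepsilon\bsom$, the tail contribution is controlled by $C\,2^{j(n+sp)}(\varepsilon\bsom)^p |A_j|/(\theta_x c_o\varrho)^{sp}$. Combining, and using $\theta_x = \delta_x(\varepsilon\bsom)^{(\mfd-2)/(sp)}$, $\theta_t=\delta_t(\varepsilon\bsom)^{\mfd-p}$, every term on the right is a multiple of
\[
\bigl(\tfrac{1}{\delta_x^{sp}}+\tfrac{1}{\delta_t}\bigr)\,b^j\,\frac{(\varepsilon\bsom)^{\,2}}{(\theta_x c_o\varrho)^{sp}}\,|A_j|,
\qquad b := 2^{n+sp},
\]
the unified $(\varepsilon\bsom)^2$ scaling being the payoff of the ansatz for $\theta_x,\theta_t$ (and the reason for the choice $1<\mfd<\min\{2,p\}$). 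The left hand side controls both $\esssup_t \int \zeta_j^p (u-k_{j+1})_-^2\,dx$ and the Gagliardo seminorm of $(u-k_{j+1})_- \zeta_j$.

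\textbf{Sobolev embedding and recursion.} I now apply \cref{fracpoin} with $\mfq=p$ on $\mcQ_{j+1}$, together with the obvious bound $(u-k_{j+2})_- \geq 2^{-(j+3)}\varepsilon\bsom$ on $A_{j+1}$, which gives
\[
\bigl(2^{-(j+3)}\varepsilon\bsom\bigr)^{p(1+\frac{2s}{n})}\frac{|A_{j+1}|}{|B_{\theta_x\varrho_{j+1}}|}
\;\lesssim\;
\Bigl(\tfrac{1}{\delta_x^{sp}}+\tfrac{1}{\delta_t}\Bigr)^{1+\frac{2s}{n}} b^{\,j(1+\frac{2s}{n})}
\frac{(\varepsilon\bsom)^{p(1+\frac{2s}{n})}}{(\theta_x c_o \varrho)^{sp}}\,\frac{|A_j|^{1+\frac{2s}{n}}}{|B_{\theta_x\varrho_{j+1}}|},
\]
after dividing appropriately by the measure of the box. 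Rescaling in time to reinsert $\theta_t(c_o\varrho)^{sp}$ and renormalising by $|\mcQ_j|$ yields the clean recursive inequality
\[
Y_{j+1} \;\leq\; \mathbf{C}\, \bigl(\tfrac{1}{\delta_x^{sp}}+\tfrac{1}{\delta_t}\bigr)^{1+\frac{2s}{n}}\,\tilde{b}^{\,j}\, Y_j^{\,1+\frac{2s}{n}},
\]
for some $\tilde{b}>1$ depending only on the data. Invoking \cref{geo_con}, $Y_j\to 0$ provided $Y_0$ is smaller than an explicit threshold of the form $\nu_1 \sim \bigl(\tfrac{1}{\delta_x^{sp}}+\tfrac{1}{\delta_t}\bigr)^{-\frac{n+sp}{sp}}(\delta_x^n\delta_t)^{-1}$, where the extra $(\delta_x^n\delta_t)^{-1}$ factor comes from normalising $|A_0|$ by $|\mcQ_0|=|B_{\theta_x c_o\varrho}|\cdot \theta_t (c_o\varrho)^{sp}$ rather than by $|B_{c_o\varrho}|\cdot (c_o\varrho)^{sp}$. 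This matches the announced dependence $\nu_1 \approx \Gamma^{-(n+2s)/s}$.

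\textbf{Main technical obstacle.} The delicate point is not the geometric iteration itself but the bookkeeping of powers of $\varepsilon\bsom$ so that every right-hand-side contribution in the energy estimate scales like $(\varepsilon\bsom)^2/(\theta_x c_o\varrho)^{sp}$ against $|A_j|$. This is precisely what forces the choice of $\theta_x,\theta_t$ in terms of $\mfd$ and is the source of the restriction $\mfd<\min\{2,p\}$; it is also what produces the new unified scaling flagged in the introduction. A secondary difficulty is that the gradient-of-$\zeta$ and tail terms in \cref{energyqequal2} carry different geometric factors $(\theta_x c_o\varrho)^{-sp}$, and one must use \cref{sec:tail} in the anisotropic form (with $\vartheta=1-2^{-j-1}$ there) to absorb the tail into the same $(\varepsilon\bsom)^2$-scaled bound via the hypothesis on the tail.
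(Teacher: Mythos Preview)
Your overall strategy matches the paper's---De Giorgi iteration driven by \cref{energyqequal2} and \cref{fracpoin}---but two concrete bookkeeping errors prevent the argument as written from delivering the stated conclusion. First, the displayed ``unified'' energy bound is wrong: the gradient-of-$\zeta$ and tail terms scale as $(\varepsilon\bsom)^p/(\theta_x^{sp}(c_o\varrho)^{sp})=(\varepsilon\bsom)^{p+2-\mfd}/(\delta_x^{sp}(c_o\varrho)^{sp})$, while the time term scales as $(\varepsilon\bsom)^2/(\theta_t(c_o\varrho)^{sp})=(\varepsilon\bsom)^{p+2-\mfd}/(\delta_t(c_o\varrho)^{sp})$. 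The common power is $p+2-\mfd$, not $2$, and the correct combined bound is $\bigl(\tfrac{1}{\delta_x^{sp}}+\tfrac{1}{\delta_t}\bigr)(\varepsilon\bsom)^{p+2-\mfd}/(c_o\varrho)^{sp}$, not your expression with $(\theta_x c_o\varrho)^{sp}$ in the denominator. (This alignment holds for any $\mfd$, so it is not, as you claim, the reason for the restriction $\mfd<\min\{2,p\}$.)

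Second, your recursion exponent $1+\tfrac{2s}{n}$ is not what any variant of this argument produces, and it will not yield the stated $\Gamma$-dependence. The paper does \emph{not} bound $\int |f|^{p(1+2s/n)}$ from below directly; instead it starts from $\tfrac{\varepsilon\bsom}{2^{j+2}}|A_{j+1}|\leq\iint (u-k_j)_-\bar\zeta_j$, applies H\"older with exponent $p\tfrac{n+2s}{n}$ (introducing a factor $|A_j|^{1-n/(p(n+2s))}$), and only then invokes \cref{fracpoin}. The seminorm and the $L^2$-sup, each bounded by the energy right-hand side, are then raised to $\tfrac{n}{p(n+2s)}$ and $\tfrac{s}{n+2s}$ respectively, giving $Y_{j+1}\leq \mathbf{C}\,\Gamma\,b^j\,Y_j^{1+\frac{s}{n+2s}}$. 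After dividing by $|\mcQ_j|\approx \delta_x^n\delta_t(\varepsilon\bsom)^{\frac{n(\mfd-2)}{sp}+\mfd-p}(c_o\varrho)^{n+sp}$, all $\varepsilon\bsom$-powers cancel via the identity $(p+2-\mfd)\tfrac{n+sp}{p(n+2s)}+\tfrac{s}{n+2s}\bigl(\tfrac{n(\mfd-2)}{sp}+\mfd-p\bigr)=1$, and this is precisely what produces $\Gamma=\bigl(\tfrac{1}{\delta_x^{sp}}+\tfrac{1}{\delta_t}\bigr)^{\frac{n+sp}{p(n+2s)}}(\delta_x^n\delta_t)^{\frac{s}{n+2s}}$ with $\nu_1\approx\Gamma^{-(n+2s)/s}$. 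Your vague normalisation remark does not substitute for this computation.
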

	\begin{proof}
		For $j=0,1,\ldots$, define
		\begin{equation*}
			{\def\arraystretch{1.1}\begin{array}{cccc}
					k_j:=\bsmu^-+\tfrac{\varepsilon \bsom}2+\tfrac{\varepsilon \bsom}{2^{j+1}},& \tilde{k}_j:=\tfrac{k_j+k_{j+1}}2,&
					\varrho_j:=\tfrac{c_o\varrho}2+\tfrac{c_o\varrho}{2^{j+1}},
					&\tilde{\varrho}_j:=\tfrac{\varrho_j+\varrho_{j+1}}2,\\
					B_j:=B_{\varrho_j}^{\theta_x},& \tilde{B}_j:=B_{\tilde{\varrho}_j}^{\theta_x},&
					\mcQ_j:=\mcQ_{\varrho_j}^{\theta_x,\theta_t},&
					\tilde{\mcQ}_j:=\mcQ_{\tilde\varrho_j}^{\theta_x,\theta_t}.
			\end{array}}
		\end{equation*}
		Furthermore, we also define 
		\begin{equation*}
			\hat{\varrho}_j:=\tfrac{3\varrho_j+\varrho_{j+1}}4, \quad 
			\bar{\varrho}_j:=\tfrac{\varrho_j+3\varrho_{j+1}}4,\quad 
			\hat{\mcQ}_j:=\mcQ_{\hat\varrho_j}^{\theta_x,\theta_t}, \quad
			\bar{\mcQ}_j:=\mcQ_{\bar\varrho_j}^{\theta_x,\theta_t}.
		\end{equation*}
		We now consider a cutoff functions $\bar{\zeta_j}$ and $\zeta_j$ such that
		\begin{equation*}
			\begin{array}{c}
				\bar{\zeta_j} \equiv 1 \text{ on } B_{j+1}, \quad \bar{\zeta_j} \in C_c^{\infty}(\bar{B}_{j}), \quad  |\nabla\bar{\zeta_j}| \apprle \frac{1}{\theta_x(\bar{\varrho_j} - \varrho_{j+1})} \approx \frac{2^j}{\theta_xc_o\varrho} \quad \text{and} \quad  |\pa_t\bar{\zeta_n}| \apprle \frac{1}{\theta_t(\bar\varrho_j^{sp} - \varrho_{j+1}^{sp})} \approx \frac{2^{jsp}}{\theta_t(c_o\varrho)^{sp}}, \\
				{\zeta_j} \equiv 1 \text{ on } \tilde{B}_{j}, \quad {\zeta_j} \in C_c^{\infty}(\hat{B}_{j}), \quad  |\nabla{\zeta_j}| \apprle \frac{1}{\theta_x(\hat{\varrho_n} - \tilde{\varrho_{j}})}\approx \frac{2^{j}}{\theta_xc_o\varrho} \quad \text{and} \quad  |\pa_t{\zeta_j}| \apprle \frac{1}{\theta_t(\hat\varrho_j^{sp} - \tilde{\varrho_{j}}^{sp})}\approx \frac{2^{jsp}}{\theta_t(c_o\varrho)^{sp}}.
			\end{array}
		\end{equation*}
		Let us estimate each of the terms appearing on the right hand side of \cref{energyqequal2} as follows:
		\begin{description}[leftmargin=*]
			\item[Estimate for the first term:] Since $(u-k_j)-_ \leq \ve \bsom$, we get 
			\begin{multline*}
				\int_{-\theta_t \varrho_j^{sp}}^0\iint_{B_j \times B_j}\hspace*{-0.7cm} \frac{\max\{(u-k_j)_{-}(x,t),(u-k_j)_{-}(y,t)\}^{p}|\zeta(x,t)-\zeta(y,t)|^p}{|x-y|^{n+sp}}\,dx\,dy\,dt\\
				\begin{array}{rcl}
					& \leq & (\ve \bsom)^p  \lbr \frac{2^{j}}{\theta_x(c_o\varrho)} \rbr^p \int_{-\theta_t \varrho_j^{sp}}^0 \int_{B_j}\int_{B_{j}} \frac{\lsb{\chi}{\{u(x,t) < k_j\}}}{|x-y|^{n+(s-1)p}}\,dx\,dy\,dt\\
					& \leq & \mopC (\ve \bsom)^p  \lbr \frac{2^{j}}{\theta_x(c_o\varrho)} \rbr^p \int_{-\theta_t \varrho_j^{sp}}^0 \int_{B_j}\int_{4B_{j}} \frac{\lsb{\chi}{\{u(x,t) < k_j\}}}{|y|^{n+(s-1)p}}\,dy\,dx\,dt\\
					& \leq & \mopC (\ve \bsom)^p   \frac{2^{pj}}{(\theta_x\varrho_j)^{sp}} |A_j| \approx \mopC (\ve \bsom)^p   \frac{2^{pj}}{\theta_x^{sp}(c_o\varrho)^{sp}} |A_j|,
				\end{array}
			\end{multline*}
		where we have denoted $A_j := \{u(x,t) \leq k_j\} \cap \mcQ_j$.
			
			\item[Estimate for the second term:] This is easily estimated as follows:
			\begin{equation*}
				\begin{array}{rcl}
					\iint_{\mcQ_j}(u-k_j)_{-}^2|\partial_t\zeta_j^p| \,dx\,dt & \leq & (\ve \bsom)^2 |A_j| \frac{2^{jsp}}{\theta_t (c_o \varrho)^{sp}}.
				\end{array}
			\end{equation*}
		\item[Estimate for the third term:] This term is zero as our cut-off functions are chosen to have zero values on the parabolic boundary of $\mcQ_j$. 
			\item[Estimate for the fourth term:] Since $x \in \spt \zeta_j \Longrightarrow |x| \leq \theta_x \hat{\varrho_j}$ and $|y| \geq \theta_x\varrho_j$, we get
			\[
			\frac{|y-x|}{|y|} \geq \frac{\varrho_j - \hat{\varrho_j}}{\hat{\varrho_j}} = \frac14 \lbr  \frac{\varrho_j - \varrho_{j+1}}{\varrho_j}\rbr \geq \frac{1}{2^{j+4}},
			\]
			which allows us to estimate as follows:
			\begin{multline*}
				\lbr  \underset{\stackrel{t \in (-\theta_t \varrho_j^{sp},0)}{x\in \spt \zeta_j}}{\esssup}\,\int_{\RR^n \setminus B_j}\frac{(u-k_j)_{-}^{p-1}(y,t)}{|x-y|^{n+sp}}\,dy\rbr\iint_{(-\theta_t \varrho_j^{sp},0)\times B_j}\hspace*{-0.6cm} (u-k_j)_{-}(x,t)\zeta_j^p(x,t)\,dx\,dt\\
				\begin{array}{rcl}
					& \leq & \mopC 2^{(n+sp)j}(\ve \bsom) |A_j| \lbr\underset{t \in (-\theta_t \varrho_j^{sp},0)}{\esssup}\,\int_{\RR^n \setminus B_j}\frac{(\ve\bsom + (\bsmu_--u)_+ )^{p-1}(y,t)}{|y|^{n+sp}}\,dy \rbr\\
					& \leq & \mopC 2^{(n+sp)j}(\ve \bsom) |A_j|  \lbr\frac{(\ve \bsom)^{p-1}}{(\theta_x\varrho_j)^{sp}}+\underset{t \in (-\theta_t \varrho_j^{sp},0)}{\esssup}\,\int_{\RR^n \setminus B_j}\frac{(u -\bsmu_- )_-^{p-1}(y,t)}{|y|^{n+sp}}\,dy \rbr\\
					& = & \mopC 2^{(n+sp)j}(\ve \bsom) |A_j|  \lbr\frac{(\ve \bsom)^{p-1}}{(\theta_x\varrho_j)^{sp}}+\frac{(\theta_x \varrho_j)^{sp-sp}}{(\theta_x \varrho)^{sp}}\underset{t \in (-\theta_t \varrho_j^{sp},0)}{\esssup} (\theta_x \varrho)^{sp}\int_{\RR^n \setminus B_j}\frac{(u -\bsmu_- )_-^{p-1}(y,t)}{|y|^{n+sp}}\,dy \rbr\\
					& \leq & \mopC 2^{(n+sp)j}(\ve \bsom) |A_j|  \lbr\frac{(\ve \bsom)^{p-1}}{(\theta_x\varrho_j)^{sp}}+\frac{(\theta_x \varrho_j)^{sp-sp}}{(\theta_x \varrho)^{sp}}\underset{t \in (-\theta_t \varrho^{sp},0)}{\esssup} (\theta_x \varrho)^{sp}\int_{\RR^n \setminus B_j}\frac{(u -\bsmu_- )_-^{p-1}(y,t)}{|y|^{n+sp}}\,dy \rbr\\
					& \leq & \mopC 2^{(n+sp)j}(\ve \bsom) |A_j|  \lbr\frac{(\ve \bsom)^{p-1}}{(\theta_x\varrho_j)^{sp}}+\frac{c_o^{sp}\tail((u-\bsmu_-)_-, \theta_x\varrho,0,(-\theta_t \varrho^{sp},0))^{p-1} }{(\theta_x\varrho_j)^{sp}}\rbr\\
					& \leq & \mopC 2^{(n+sp)j}(\ve \bsom) |A_j|  \frac{(\ve \bsom)^{p-1}}{(\theta_x\varrho_j)^{sp}} \approx  \mopC 2^{(n+sp)j}(\ve \bsom) |A_j|  \frac{(\ve \bsom)^{p-1}}{\theta_x^{sp}(c_o\varrho)^{sp}}.
				\end{array}
			\end{multline*}
		\end{description}
	Combining the previous four estimates, we get
	\begin{multline}\label{Eq:3.2}
			\underset{(-\theta_t \tilde\varrho_j^{sp},0)}{\esssup}
			\int_{\tilde{B}_j} (u-{k}_j)_-^2\,dx
			+\iiint_{\tilde{\mcQ}_j}\frac{|(u-{k}_j)_{-}(x,t)-(u-{k}_j)_{-}|^p}{|x-y|^{n+sp}}\,dx \,dy\,dt
			\\\leq
			\mopC 2^{(n+sp)j} |A_j| \lbr  \frac{(\ve \bsom)^{p}}{\theta_x^{sp}(c_o\varrho)^{sp}} +  \frac{(\ve \bsom)^2}{\theta_t (c_o \varrho)^{sp}}\rbr.
	\end{multline}
Using Young's inequality, we have
\begin{multline*}
	|(u-{k}_{j})_- \bar{\zeta_j}(x,t) - (u-{k}_{j})_- \bar{\zeta_j}(y,t)|^p \leq c |(u-{k}_{j})_- (x,t) - (u-{k}_{j})_- (y,t)|^p\bar{\zeta_j}^p(x,t) \\
	+ c |(u-{k}_{j})_-(y,t)|^p |\bar{\zeta_j}(x,t) - \bar{\zeta_j}(y,t)|^p,
\end{multline*}
from which we obtain the following sequence of estimates:
\begin{equation*}
	\begin{array}{rcl}
		\frac{\ve \bsom}{2^{j+2}}
		|A_{j+1}|
		&\overred{3.4a}{a}{\leq} &
		\iint_{\tilde\mcQ_{j}}(u-{k}_j)_-\bar{\zeta_j}\,dx\,dt \\
		&\overred{3.4b}{b}{\leq} &
		\lbr\iint_{\tilde\mcQ_{j}}\left[(u-{k}_j)_-\bar{\zeta_j}\right]^{p\frac{n+2s}{n}}
		\,dx\,dt\rbr^{\frac{n}{p(n+2s)}}|A_j|^{1-\frac{n}{p(n+2s)}}\\
		&\overred{3.4c}{c}{\leq} &\mopC 
		\left(\iiint_{\tilde\mcQ_j}\frac{|(u-{k}_j)_{-}(x,t)\bar{\zeta_j}(x,t)-(u-{k}_j)_{-}\bar{\zeta_j}(y,t)|^p}{|x-y|^{n+sp}}\,dx \,dy\,dt\right)^{\frac{n}{p(n+2s)}} 
		\\&&\qquad\times \left(\underset{-\theta_t\tilde\varrho_j^{sp} < t < 0}{\esssup}\int_{\tilde{B}_j}[(u-\tilde{k}_j)_{-}\bar\zeta_j(x,t)]^2\,dx\right)^{\frac{s}{n+2s}}|A_j|^{1-\frac{n}{p(n+2s)}}\\
		&\overred{3.4d}{d}{\leq}&
		\mopC b_o^j   \lbr  \frac{(\ve \bsom)^{p}}{\theta_x^{sp}(c_o\varrho)^{sp}} +  \frac{(\ve \bsom)^2}{\theta_t (c_o \varrho)^{sp}}\rbr^{\frac{n+sp}{p(n+2s)}}
		|A_j|^{1+\frac{s}{n+2s}} \\
		&\overred{3.4e}{e}{\leq} &
		\mopC
		\frac{b_o^j}{(c_o\varrho)^\frac{s(n+sp)}{n+2s}}(\ve\bsom)^{(p+2-\mfd){\frac{n+sp}{p(n+2s)}}}
		\lbr \tfrac{1}{\de_x^{sp}} + \tfrac{1}{\de_t}\rbr^{\frac{n+sp}{p(n+2s)}}|A_j|^{1+\frac{s}{n+2s}}, 
	\end{array}
\end{equation*}
where to obtain \redref{3.4a}{a}, we made use of the observations and enlarged the domain of integration with $\bar{\zeta}_j$; to obtain \redref{3.4b}{b}, we applied H\"older's inequality; to obtain \redref{3.4c}{c}, we applied \cref{fracpoin}; to obtain \redref{3.4d}{d}, we made use of \cref{Eq:3.2}  with $\mopC = \mopC_{\data{}}$ and finally we collected all the terms to obtain \redref{3.4e}{e}, where $b_o = b_o(\datanb{})\geq 1$ is a constant. Setting
$\bsy_j=|A_j|/|\mcQ_j|$ and noting $|\mcQ_{i+1}| \approx |\mcQ_j| \approx \de_x^n \de_t (\ve \bsom)^{\frac{n(\mfd-2)}{sp}+\mfd -p} (c_o\varrho)^{n+sp}$,  we get
\[
\frac{\ve \bsom}{2^{j+2}} \bsy_{j+1} \leq \mopC  \bsy_j^{1+\frac{s}{n+2s}}\frac{b_o^j}{(c_o\varrho)^\frac{s(n+sp)}{n+2s}}(\ve\bsom)^{(p+2-\mfd){\frac{n+sp}{p(n+2s)}}}
\lbr \tfrac{1}{\de_x^{sp}} + \tfrac{1}{\de_t}\rbr^{\frac{n+sp}{p(n+2s)}} |\mcQ_{j}|^{\frac{s}{n+2s}}.
\]
Simplifying this expression noting that $(p+2-\mfd)\lbr \tfrac{n+sp}{p(n+2s)}\rbr + \lbr \tfrac{s}{n+2s}\rbr \lbr \tfrac{n(\mfd-2)}{sp} + \mfd - p \rbr = 1$,  we get
\begin{equation*}
	\bsy_{j+1}
	\le
	\bsc \Gamma  \boldsymbol b^j  \bsy_j^{1+\frac{s}{n+2s}},
\end{equation*}
where $\Gamma := \lbr \tfrac{1}{\de_x^{sp}} + \tfrac{1}{\de_t}\rbr^{\frac{n+sp}{p(n+2s)}} (\de_x^n\de_t)^{\frac{s}{(n+2s)}}$ and   $\bsc  = \bsc_{\data{}} \geq 1$,  $\boldsymbol b\geq  1$ are two constants depending only on the data. The conclusion now follows from \cref{geo_con}.
	\end{proof}
\subsection{de Giorgi iteration with quantitative initial data}
The second lemma we prove is a de Giorgi iteration involving quantitative initial data.
\begin{lemma}\label{lemma3.3}
	Let $u$ be a weak solution of \cref{maineq}. Given $\de_x, \varepsilon \in (0,1)$, there exists $\nu_1 = \nu_1(\datanb{})\in (0,1)$ such that if we take $\theta_x = \de_x (\varepsilon \bsom)^{\frac{\mfd -2}{sp}}$ and  $\de_t := \nu_1 \de_x^{sp}$ and suppose for some time level $t_o$, the following assumptions are satisfied:
	\begin{equation*} 
		\pm(\bsmu^{\pm}-u(\cdot,t_o))\geq \varepsilon \bsom \txt{on} B_{\theta_x c_o\varrho},
	\end{equation*}
	holds along with the assumption 
	\begin{equation*}
		c_o^{\frac{sp}{p-1}}\tail((u-\bsmu_-)_-, \theta_x\varrho,0,(t_o,\rint))\leq \varepsilon\bsom,
	\end{equation*}
where $\rint := t_o + \de_t (\ve \bsom )^{-p}(c_o\varrho)^{sp}$,
	then the following conclusion follows:
	\begin{equation*}
		\pm(\bsmu^{\pm}-u)\geq\tfrac{1}2\varepsilon \bsom
		\quad
		\mbox{ on }\quad  B_{\theta_x\frac{c_o\varrho}2} \times (t_o, t_o+\nu_1 \de_x^{sp}(\ve \bsom)^{\mfd-p}(c_o\varrho)^{sp}].
	\end{equation*}

\end{lemma}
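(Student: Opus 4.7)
The plan is to mimic the de Giorgi iteration of \cref{lemma3.2}, replacing the measure–smallness assumption on the initial slice by the pointwise lower bound $u(\cdot,t_o) \geq \bsmu^- + \ve\bsom$ on $B_{\theta_x c_o\varrho}$. I treat the minus case only (the plus case is symmetric). First I would fix the usual levels, radii, and geometric objects
\[
k_j = \bsmu^- + \tfrac12\ve\bsom + \tfrac{\ve\bsom}{2^{j+1}}, \qquad \varrho_j = \tfrac{c_o\varrho}{2}+\tfrac{c_o\varrho}{2^{j+1}}, \qquad \mcQ_j = B_{\theta_x\varrho_j} \times (t_o, \rint],
\]
with $\rint := t_o + \de_t(\ve\bsom)^{\mfd-p}(c_o\varrho)^{sp}$ as in the statement, together with the corresponding intermediate objects $\tilde\varrho_j,\tilde\mcQ_j,\tilde B_j$ exactly as in \cref{lemma3.2}. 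The key design choice is to take the cut-off $\zeta_j = \zeta_j(x)$ depending on space only, with $\zeta_j \equiv 1$ on $B_{\theta_x\varrho_{j+1}}$, $\zeta_j \in C_c^\infty(B_{\theta_x\varrho_j})$, and $|\nabla \zeta_j| \lesssim 2^j/(\theta_x c_o\varrho)$.

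With this cut-off, two of the four terms on the right-hand side of the energy estimate \cref{energyqequal2} vanish outright: the time-derivative term $\iint (u-k_j)_-^2 |\pa_t\zeta_j^p|\,dx\,dt$ is zero because $\zeta_j$ is time-independent, and the initial-data term $\int_{B_{\theta_x\varrho_j}\times\{t_o\}}\zeta_j^p(u-k_j)_-^2\,dx$ is zero because the hypothesis forces $u(\cdot,t_o)\ge \bsmu^-+\ve\bsom \ge k_j$, hence $(u-k_j)_-(\cdot,t_o)\equiv 0$ on $B_{\theta_x\varrho_j}$. Estimating the remaining spatial-gradient and tail contributions verbatim from the corresponding items in the proof of \cref{lemma3.2} — and using the tail hypothesis $c_o^{sp/(p-1)}\tail((u-\bsmu^-)_-,\theta_x\varrho,0,(t_o,\rint))\leq \ve\bsom$ to absorb the tail — would give
\[
\esssup_{t_o < t < \rint}\int_{\tilde B_j}(u-k_j)_-^2\,dx + \iiint_{\tilde\mcQ_j}\frac{|(u-k_j)_-(x,t)-(u-k_j)_-(y,t)|^p}{|x-y|^{n+sp}}\,dx\,dy\,dt \lesssim \frac{2^{(n+sp)j}(\ve\bsom)^p}{\theta_x^{sp}(c_o\varrho)^{sp}}\,|A_j|,
\]
with $A_j := \{u \leq k_j\}\cap \mcQ_j$. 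Crucially, the $(\ve\bsom)^2/\theta_t$ contribution that appeared in \cref{lemma3.2} is now absent, so the iteration constant will depend on $\de_t$ only through $\de_x^n\de_t = |\mcQ_j|/(\theta_x^n\theta_t)$ factors.

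Next I would run the same H\"older--Sobolev chain (a)--(e) as in the proof of \cref{lemma3.2}, and set $\bsy_j := |A_j|/|\mcQ_j|$ with $|\mcQ_j| \approx \theta_x^n\theta_t(c_o\varrho)^{n+sp}$. A direct computation using the substitutions $\theta_x = \de_x(\ve\bsom)^{(\mfd-2)/(sp)}$ and $\theta_t = \de_t(\ve\bsom)^{\mfd-p}$ shows that every $(\ve\bsom)$ power cancels (the exponents telescope to $s[(p-2)-(\mfd-2)+(\mfd-p)]/(n+2s)=0$), leaving the clean recurrence
\[
\bsy_{j+1} \leq \bsc\, \boldsymbol b^{\,j}\, \de_x^{-\frac{s^2p}{n+2s}}\, \de_t^{\frac{s}{n+2s}}\, \bsy_j^{1+\frac{s}{n+2s}},
\]
with $\bsc,\boldsymbol b \geq 1$ depending only on the data. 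Since $\bsy_0 \leq 1$ is automatic, \cref{geo_con} (with $\alpha=s/(n+2s)$) forces $\bsy_j\to 0$ as soon as $\bsc\,\de_x^{-s^2p/(n+2s)}\de_t^{s/(n+2s)} \lesssim \boldsymbol b^{-(n+2s)/s}$, which rearranges to $\de_t \leq \nu_1 \de_x^{sp}$ for a universal small $\nu_1 = \nu_1(\datanb{}) \in (0,1)$. I would therefore set $\de_t := \nu_1\de_x^{sp}$, taking $\nu_1$ to be the minimum of this threshold and the constant produced by \cref{lemma3.2}.

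The only delicate point is the exact cancellation of every $(\ve\bsom)$ power in the final iteration constant: it is this cancellation that motivates the unified intrinsic scaling $(\theta_x,\theta_t)=(\de_x(\ve\bsom)^{(\mfd-2)/(sp)},\,\de_t(\ve\bsom)^{\mfd-p})$, and it is what allows $\de_t$ to be prescribed purely in terms of $\de_x$, independently of $\ve$. The measure-smallness condition on $Y_0$ required in \cref{lemma3.2} is replaced here, \emph{for free}, by the pointwise initial hypothesis on $u$.
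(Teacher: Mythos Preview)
Your proposal is correct and follows essentially the same route as the paper: time-independent cut-offs kill both the $\partial_t\zeta^p$ term and the initial-slice term via the pointwise hypothesis, leaving only the spatial and tail contributions, after which the same H\"older--Sobolev chain and the exponent cancellation yield the recurrence with $\Gamma = \de_x^{-s^2p/(n+2s)}\de_t^{s/(n+2s)}$, and one chooses $\de_t = \nu_1\de_x^{sp}$ so that $\bsy_0\le 1$ already meets the threshold of \cref{geo_con}. Your exponent bookkeeping is in fact slightly cleaner than the paper's (which carries a spurious $1/\de_t$ term in one displayed line, a copy-paste remnant from \cref{lemma3.2}); the only unnecessary step in your plan is taking the minimum with the $\nu_1$ of \cref{lemma3.2}, since the two constants play independent roles.
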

\begin{proof}
	 For $j=0,1,\ldots$, define
	\begin{equation*}
		{\def\arraystretch{1.1}\begin{array}{cccc}
				k_j:=\bsmu^-+\tfrac{\varepsilon \bsom}2+\tfrac{\varepsilon \bsom}{2^{j+1}},& \tilde{k}_j:=\tfrac{k_j+k_{j+1}}2,&
				\varrho_j:=\tfrac{c_o\varrho}2+\tfrac{c_o\varrho}{2^{j+1}},
				&\tilde{\varrho}_j:=\tfrac{\varrho_j+\varrho_{j+1}}2,\\
				B_j:=B_{\varrho_j}^{\theta_x},& \tilde{B}_j:=B_{\tilde{\varrho}_j}^{\theta_x},&
				\hat{\varrho}_j:=\tfrac{3\varrho_j+\varrho_{j+1}}4, &
				\bar{\varrho}_j:=\tfrac{\varrho_j+3\varrho_{j+1}}4.
		\end{array}}
	\end{equation*}
	We now consider a cutoff functions $\bar{\zeta_j}(x)$ and $\zeta_j(x)$ independent of time such that
	\begin{equation*}
		\begin{array}{c}
			\bar{\zeta_j} \equiv 1 \text{ on } B_{j+1}, \quad \bar{\zeta_j} \in C_c^{\infty}(\bar{B}_{j}), \quad  |\nabla\bar{\zeta_j}| \apprle \frac{1}{\theta_x(\bar{\varrho_j} - \varrho_{j+1})} \approx \frac{2^j}{\theta_xc_o\varrho}, \\
			{\zeta_j} \equiv 1 \text{ on } \tilde{B}_{j}, \quad {\zeta_j} \in C_c^{\infty}(\hat{B}_{j}), \quad  |\nabla{\zeta_j}| \apprle \frac{1}{\theta_x(\hat{\varrho_n} - \tilde{\varrho_{j}})}\approx \frac{2^{j}}{\theta_xc_o\varrho}.
		\end{array}
	\end{equation*}
We will apply \cref{energyqequal2} over cylinders of the form $B_j \times (t_o,\rint)$ and denote $\rint := t_o + \tht_t (\ve \bsom )^{-p}(c_o\varrho)^{sp}$ (note that the time intervals do not change in the iteration). 
	Let us estimate each of the terms appearing on the right hand side of \cref{energyqequal2} as follows:
	\begin{description}[leftmargin=*]
		\item[Estimate for the first term:] Since $(u-k_j)-_ \leq \ve \bsom$, we get 
		\begin{multline*}
			\int_{t_o}^{\rint}\iint_{B_j \times B_j}\hspace*{-0.7cm} \frac{\max\{(u-k_j)_{-}(x,t),(u-k_j)_{-}(y,t)\}^{p}|\zeta(x)-\zeta(y)|^p}{|x-y|^{n+sp}}\,dx\,dy\,dt\\
			\begin{array}{rcl}
				& \leq & (\ve \bsom)^p  \lbr \frac{2^{j}}{\theta_x(c_o\varrho)} \rbr^p \int_{t_o}^{\rint} \int_{B_j}\int_{B_{j}} \frac{\lsb{\chi}{\{u(x,t) < k_j\}}}{|x-y|^{n+(s-1)p}}\,dx\,dy\,dt\\
				& \leq & \mopC (\ve \bsom)^p  \lbr \frac{2^{j}}{\theta_x(c_o\varrho)} \rbr^p \int_{t_o}^{\rint} \int_{B_j}\int_{4B_{j}} \frac{\lsb{\chi}{\{u(x,t) < k_j\}}}{|y|^{n+(s-1)p}}\,dy\,dx\,dt\\
				& \leq & \mopC (\ve \bsom)^p   \frac{2^{pj}}{(\theta_x\varrho_j)^{sp}} |A_j| \approx \mopC (\ve \bsom)^p   \frac{2^{pj}}{\theta_x^{sp}(c_o\varrho)^{sp}} |A_j|,
			\end{array}
		\end{multline*}
		where we have denoted $A_j := \{u(x,t) \leq k_j\} \cap (B_j \times (t_o,\rint))$.
		
		\item[Estimate for the second term:] This term is zero since $\pa_t\zeta_j = 0$.
		\item[Estimate for the third term:] This term is zero since the hypothesis says $u \geq \bsmu_-+\ve\bsom$ at the initial time level $t=t_o$ and hence $(u(\cdot,t_o)-k_j)_-=0$. 
		\item[Estimate for the fourth term:] Since $x \in \spt \zeta_j \Longrightarrow |x| \leq \theta_x \hat{\varrho_j}$ and $|y| \geq \theta_x\varrho_j$, we get
		\[
		\frac{|y-x|}{|y|} \geq \frac{\varrho_j - \hat{\varrho_j}}{\hat{\varrho_j}} = \frac14 \lbr  \frac{\varrho_j - \varrho_{j+1}}{\varrho_j}\rbr \geq \frac{1}{2^{j+4}},
		\]
		which allows us to estimate as follows:
		\begin{multline*}
			\lbr  \underset{\stackrel{t \in (t_o,\rint)}{x\in \spt \zeta_j}}{\esssup}\,\int_{\RR^n \setminus B_j}\frac{(u-k_j)_{-}^{p-1}(y,t)}{|x-y|^{n+sp}}\,dy\rbr\iint_{B_j\times (t_o,\rint)}\hspace*{-0.6cm} (u-k_j)_{-}(x,t)\zeta_j^p(x)\,dx\,dt\\
			\begin{array}{rcl}
				& \leq & \mopC 2^{(n+sp)j}(\ve \bsom) |A_j| \lbr\underset{t \in (t_o,\rint)}{\esssup}\,\int_{\RR^n \setminus B_j}\frac{(\ve\bsom + (\bsmu_--u)_+ )^{p-1}(y,t)}{|y|^{n+sp}}\,dy \rbr\\
				& \leq & \mopC 2^{(n+sp)j}(\ve \bsom) |A_j|  \lbr\frac{(\ve \bsom)^{p-1}}{(\theta_x\varrho_j)^{sp}}+\underset{t \in (t_o,\rint)}{\esssup}\,\int_{\RR^n \setminus B_j}\frac{(u -\bsmu_- )_-^{p-1}(y,t)}{|y|^{n+sp}}\,dy \rbr\\
				& = & \mopC 2^{(n+sp)j}(\ve \bsom) |A_j|  \lbr\frac{(\ve \bsom)^{p-1}}{(\theta_x\varrho_j)^{sp}}+\frac{(\theta_x \varrho_j)^{sp-sp}}{(\theta_x \varrho)^{sp}}\underset{t \in (t_o,\rint)}{\esssup} (\theta_x \varrho)^{sp}\int_{\RR^n \setminus B_j}\frac{(u -\bsmu_- )_-^{p-1}(y,t)}{|y|^{n+sp}}\,dy \rbr\\
				& \leq & \mopC 2^{(n+sp)j}(\ve \bsom) |A_j|  \lbr\frac{(\ve \bsom)^{p-1}}{(\theta_x\varrho_j)^{sp}}+\frac{(\theta_x \varrho_j)^{sp-sp}}{(\theta_x \varrho)^{sp}}\underset{t \in (t_o,\rint)}{\esssup} (\theta_x \varrho)^{sp}\int_{\RR^n \setminus B_j}\frac{(u -\bsmu_- )_-^{p-1}(y,t)}{|y|^{n+sp}}\,dy \rbr\\
				& = & \mopC 2^{(n+sp)j}(\ve \bsom) |A_j|  \lbr\frac{(\ve \bsom)^{p-1}}{(\theta_x\varrho_j)^{sp}}+\frac{c_o^{sp}\tail((u-\bsmu_-)_-, \theta_x\varrho,0,(t_o,\rint))^{p-1} }{(\theta_x\varrho_j)^{sp}}\rbr\\
				& \leq & \mopC 2^{(n+sp)j}(\ve \bsom) |A_j|  \frac{(\ve \bsom)^{p-1}}{(\theta_x\varrho_j)^{sp}} \approx  \mopC 2^{(n+sp)j}(\ve \bsom) |A_j|  \frac{(\ve \bsom)^{p-1}}{\theta_x^{sp}(c_o\varrho)^{sp}}.
			\end{array}
		\end{multline*}
	\end{description}
	Combining the previous two estimates along with the observations, we have
	\begin{equation}\label{Eq:3.6}
		\underset{(t_o,\rint)}{\esssup}
		\int_{\tilde{B}_j} (u-{k}_j)_-^2\,dx
		+\iiint_{\tilde{\mcQ}_j}\frac{|(u-{k}_j)_{-}(x,t)-(u-{k}_j)_{-}|^p}{|x-y|^{n+sp}}\,dx \,dy\,dt
				\leq
		\mopC 2^{(n+sp)j} |A_j| \lbr  \frac{(\ve \bsom)^{p}}{\theta_x^{sp}(c_o\varrho)^{sp}}\rbr,
	\end{equation}
where we have denoted $\tilde\mcQ_j = \tilde{B}_j  \times (t_o,\rint)$. 
	Using Young's inequality, we have
	\begin{multline}\label{Eq:3.7}
		|(u-{k}_{j})_- \bar{\zeta_j}(x,t) - (u-{k}_{j})_- \bar{\zeta_j}(y,t)|^p \leq c |(u-{k}_{j})_- (x,t) - (u-{k}_{j})_- (y,t)|^p\bar{\zeta_j}^p(x,t) \\
		+ c |(u-{k}_{j})_-(y,t)|^p |\bar{\zeta_j}(x,t) - \bar{\zeta_j}(y,t)|^p,
	\end{multline}
	from which we obtain the following sequence of estimates:
	\begin{equation*}
		\begin{array}{rcl}
			\frac{\ve \bsom}{2^{j+2}}
			|A_{j+1}|
			&\overred{3.8a}{a}{\leq} &
			\iint_{\tilde\mcQ_{j}}(u-{k}_j)_-\bar{\zeta_j}\,dx\,dt \\
			&\overred{3.8b}{b}{\leq} &
			\lbr\iint_{\tilde\mcQ_{j}}\left[(u-{k}_j)_-\bar{\zeta_j}\right]^{p\frac{n+2s}{n}}
			\,dx\,dt\rbr^{\frac{n}{p(n+2s)}}|A_j|^{1-\frac{n}{p(n+2s)}}\\
			&\overred{3.8c}{c}{\leq} &\mopC 
			\left(\iiint_{\tilde\mcQ_j}\frac{|(u-{k}_j)_{-}(x,t)\bar{\zeta_j}(x,t)-(u-{k}_j)_{-}\bar{\zeta_j}(y,t)|^p}{|x-y|^{n+sp}}\,dx \,dy\,dt\right)^{\frac{n}{p(n+2s)}} 
			\\&&\qquad\times \left(\underset{t\in (t_o,\rint)}{\esssup}\int_{\tilde{B}_j}[(u-\tilde{k}_j)_{-}\bar\zeta_j(x,t)]^2\,dx\right)^{\frac{s}{n+2s}}|A_j|^{1-\frac{n}{p(n+2s)}}\\
			&\overred{3.8d}{d}{\leq}&
			\mopC b_o^j   \lbr  \frac{(\ve \bsom)^{p}}{\theta_x^{sp}(c_o\varrho)^{sp}} \rbr^{\frac{n+sp}{p(n+2s)}}
			|A_j|^{1+\frac{s}{n+2s}} \\
			&\overred{3.8e}{e}{\leq} &
			\mopC
			\frac{b_o^j}{(c_o\varrho)^\frac{s(n+sp)}{n+2s}}(\ve\bsom)^{(p+2-\mfd){\frac{n+sp}{p(n+2s)}}}
			\lbr \tfrac{1}{\de_x^{sp}}\rbr^{\frac{n+sp}{p(n+2s)}}|A_j|^{1+\frac{s}{n+2s}}, 
		\end{array}
	\end{equation*}
	where to obtain \redref{3.8a}{a}, we made use of the observations and enlarged the domain of integration; to obtain \redref{3.8b}{b}, we applied H\"older's inequality; to obtain \redref{3.8c}{c}, we applied \cref{fracpoin}; to obtain \redref{3.8d}{d}, we made use of \cref{Eq:3.6} along with \cref{Eq:3.7} with $\mopC = \mopC_{\data{}}$ and finally we collected all the terms to obtain \redref{3.8e}{e}, where $b_o = b_o(\datanb{})\geq 1$ is a  constant. Setting
	$\bsy_j=|A_j|/|\mcQ_j|$  and noting $|\mcQ_{i+1}| \approx |\mcQ_j| \approx \de_x^n \de_t (\ve \bsom)^{\frac{n(\mfd-2)}{sp}+\mfd -p} (c_o\varrho)^{n+sp}$ where we have denoted $\mcQ_j := B_{j} \times (t_o,\rint)$,  we get
	\[
	\frac{\ve \bsom}{2^{j+2}} \bsy_{j+1} \leq \mopC  \bsy_j^{1+\frac{s}{n+2s}}\frac{b_o^j}{(c_o\varrho)^\frac{s(n+sp)}{n+2s}}(\ve\bsom)^{(p+2-\mfd){\frac{n+sp}{p(n+2s)}}}
	\lbr \tfrac{1}{\de_x^{sp}} + \tfrac{1}{\de_t}\rbr^{\frac{n+sp}{p(n+2s)}} |\mcQ_{j}|^{\frac{s}{n+2s}}.
	\]
	Simplifying this expression noting that $(p+2-\mfd)\lbr \tfrac{n+sp}{p(n+2s)}\rbr + \lbr \tfrac{s}{n+2s}\rbr \lbr \tfrac{n(\mfd-2)}{sp} + \mfd - p \rbr = 1$,  we get
	\begin{equation*}
		\bsy_{j+1}
		\le
		\bsc \Gamma  \boldsymbol b^j  \bsy_j^{1+\frac{s}{n+2s}},
	\end{equation*}
	where $\Gamma := \lbr \tfrac{1}{\de_x^{sp}}\rbr^{\frac{n+sp}{p(n+2s)}} (\de_x^n\de_t)^{\frac{s}{(n+2s)}}$ and   $\bsc  = \bsc_{\data{}} \geq 1$,  $\boldsymbol b\geq  1$ are two constants depending only on the data. We apply  \cref{geo_con} to see if $\bsy_0 \leq \bsc^{-1/\alpha}\Gamma^{-1/\alpha} \boldsymbol b^{1/\alpha^2}$ with $\alpha = \tfrac{s}{n+2s}$, then $\bsy_{\infty} = 0$ which is the desired conclusion.  We make the choice of $\de_t$ to satisfy
	\[
	1 = \bsc^{-1/\alpha}\Gamma^{-1/\alpha} \boldsymbol b^{1/\alpha^2} = \bsc^{-1/\alpha}\boldsymbol b^{1/\alpha^2} \de_x^{sp} \de_t^{-1} \quad \Longrightarrow\quad  \de_t = \bsc^{-1/\alpha}\boldsymbol b^{1/\alpha^2} \de_x^{sp} =: \nu_1 \de_x^{sp},
	\]
	which completes the proof of the lemma.
\end{proof}
\subsection{Propagation of measure information forward in time}


\begin{lemma}\label{lemma3.4}
	Let $\alpha \in(0,1)$ be given, then there exist constants $\de = \de(\alpha) \in (0,1)$ and $\bar\ve  = \bar\ve(n,p,s,\La,\alpha)\in (0,1)$,
	 such that whenever $u$ satisfies
	\begin{equation*}
		\left|\left\{
		\pm\lbr\bsmu^{\pm}-u(\cdot, t_o)\rbr\geq \ve\bsom
		\right\}\cap B_{\theta_xc_o\varrho}(x_o)\right|
		\geq\alpha \left|B_{\theta_xc_o\varrho}\right|,
	\end{equation*}
	and the bound  
	\begin{equation*}
		c_o^{\frac{sp}{p-1}}\tail((u-\bsmu^{\pm})_{\pm};\theta_x\varrho,0,(t_o,t_o+\bar\ve \de_x^{sp}(\ve\bsom)^{\mfd-p}(c_o\varrho)^{sp}]) \leq  \ve\bsom,
	\end{equation*}
	then the following conclusion follows:
	\begin{equation*}
		|\{
		\pm\left(\bsmu^{\pm}-u(\cdot, t)\right)\geq \de\ve \bsom\} \cap B_{\theta_xc_o\varrho}|
		\geq\tfrac{\alpha}2 |B_{\theta_xc_o\varrho}|
		\quad \text{ for all } \,  t\in(t_o,t_o + \bar\ve \de_x^{sp}(\ve\bsom)^{\mfd-p}(c_o\varrho)^{sp}].
	\end{equation*}
\end{lemma}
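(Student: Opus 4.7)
I treat the $+$ case explicitly, the $-$ case being completely analogous. The strategy is a contradiction argument combining an upper bound (from the energy estimate) and a lower bound (from the contradiction hypothesis) on $\int (u-k)_+^2 \zeta^p\,dx$ at a forward time $t_1$.

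Set $k := \bsmu^+ - \ve\bsom$ and $\tilde{k} := \bsmu^+ - \de\ve\bsom$, where $\de = \de(\alpha) \in (0,1)$ will be specified at the end. Let $\zeta(x)$ be a time-independent cutoff supported in $B_{\theta_x c_o\varrho}$, with $\zeta \equiv 1$ on $B_{(1-\sigma)\theta_x c_o\varrho}$ and $|\nabla \zeta| \lesssim (\sigma \theta_x c_o \varrho)^{-1}$, for a parameter $\sigma = \sigma(\alpha) \in (0,1)$ to be chosen. Apply the energy estimate \cref{energyqequal2} with truncation $(u-k)_+$ on the cylinder $B_{\theta_x c_o\varrho} \times (t_o, t_1)$. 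Since $\partial_t\zeta \equiv 0$, the $\partial_t\zeta^p$ contribution drops out, and tracking the remaining spatial cutoff and tail terms exactly as in the proofs of \cref{lemma3.2,lemma3.3}, using the uniform bound $(u-k)_+ \leq \ve\bsom$ together with the tail hypothesis, one obtains
\[
\int (u-k)_+^2(x, t_1)\zeta^p\,dx \leq \int (u-k)_+^2(x, t_o)\zeta^p\,dx + C\sigma^{-\beta}\bar\ve\,(\ve\bsom)^2\,|B_{\theta_x c_o\varrho}|,
\]
for some $\beta = \beta(n,p,s)$. The essential point is that the time-length $\bar\ve\de_x^{sp}(\ve\bsom)^{\mfd-p}(c_o\varrho)^{sp}$, the cutoff scale $\theta_x c_o\varrho$, and the tail constant combine via the identity $\theta_x^{sp} = \de_x^{sp}(\ve\bsom)^{\mfd-2}$ to yield the single clean factor $\bar\ve(\ve\bsom)^2|B_{\theta_x c_o\varrho}|$ in the error. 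The measure hypothesis at $t_o$ together with the bound $(u-k)_+ \leq \ve\bsom$ also gives
\[
\int (u-k)_+^2(x, t_o)\zeta^p\,dx \leq (\ve\bsom)^2(1-\alpha)|B_{\theta_x c_o\varrho}|.
\]

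Now suppose for contradiction that $|\{u(\cdot, t_1) \leq \tilde{k}\} \cap B_{\theta_x c_o\varrho}| < \tfrac{\alpha}{2}|B_{\theta_x c_o\varrho}|$ for some $t_1$ in the admissible interval. Then $\{u(\cdot, t_1) > \tilde{k}\}$ has measure strictly more than $(1-\alpha/2)|B_{\theta_x c_o\varrho}|$ and on this set $(u-k)_+ \geq \tilde{k} - k = (1-\de)\ve\bsom$. Restricting the integral to the region where $\zeta \equiv 1$, and using the elementary estimate $|B_{\theta_x c_o\varrho}\setminus B_{(1-\sigma)\theta_x c_o\varrho}| \leq C\sigma|B_{\theta_x c_o\varrho}|$, yields
\[
\int (u-k)_+^2(x, t_1)\zeta^p\,dx \geq ((1-\de)\ve\bsom)^2 \bigl(1 - \tfrac{\alpha}{2} - C\sigma\bigr)|B_{\theta_x c_o\varrho}|.
\]
Combining with the upper bound, dividing by $(\ve\bsom)^2|B_{\theta_x c_o\varrho}|$, and choosing in succession $\sigma = \sigma(\alpha)$ with $C\sigma \leq \alpha/8$, then $\de = \de(\alpha)$ with $2\de \leq \alpha/8$, and finally $\bar\ve = \bar\ve(\alpha, \datanb{})$ with $C\sigma^{-\beta}\bar\ve \leq \alpha/8$, the LHS is forced to be at least $1-3\alpha/4$ while the RHS is at most $1-7\alpha/8$, a contradiction.

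The only genuine obstacle in carrying this out is the bookkeeping in the cutoff and tail error contributions: one must verify that the particular intrinsic choice $\theta_x = \de_x(\ve\bsom)^{(\mfd-2)/sp}$, coupled to the time-length $\bar\ve \de_x^{sp}(\ve\bsom)^{\mfd-p}(c_o\varrho)^{sp}$, is precisely what causes the two independent error scales (cutoff and tail) to collapse onto a single universal factor $\bar\ve(\ve\bsom)^2|B|$, which is then beaten by the quadratic gap $(1-\de)^2$ coming from the separation between the truncation levels $k$ and $\tilde{k}$. All other ingredients are standard De Giorgi-style measure propagation, adapted to the nonlocal setting via the tail hypothesis on $(u-\bsmu^{\pm})_{\pm}$.
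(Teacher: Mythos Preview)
Your proposal is correct and follows essentially the same approach as the paper. The paper also applies the energy estimate with a time-independent cutoff, uses the initial measure hypothesis to bound $\int (u-k)_\pm^2\zeta^p\,dx$ at $t_o$ by $(\ve\bsom)^2(1-\alpha)|B|$, exploits the identity $\theta_x^{sp}=\de_x^{sp}(\ve\bsom)^{\mfd-2}$ to collapse the error to $C\sigma^{-p}\bar\ve(\ve\bsom)^2|B|$, and bounds the integral from below on the sublevel set by $(1-\de)^2(\ve\bsom)^2|A_{k_\de}|$; the only cosmetic difference is that the paper writes this as a direct upper bound on the bad set (and works with the $-$ case), whereas you phrase the same inequality as a contradiction argument in the $+$ case.
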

\begin{proof} 
	We prove the case of supersolutions only because the case for subsolutions is analogous. We set
	\[
	A_{k}^{\varrho}(t) = \{u(\cdot,t)  - \bsmu^-< k\} \cap B_{\theta_xc_o\varrho}.
	\]
	By hypothesis, we have 
	\[
	|A_{\ve\bsom}^{\varrho}(t_o)|\leq (1-\alpha)|B_{\theta_xc_o\varrho}|.
	\]
	We apply the energy estimate  from \cref{energyqequal2} for $(u-k)_{-}$ over the cylinder $B_{\theta_xc_o\varrho}\times(t_o,\mft]$  with $k=\bsmu^-+\ve\bsom$ where $\delta>0$ will be chosen later in the proof. Note that $(u-k)_{-} \leq \ve\bsom$ in $B_{\theta_xc_o\varrho}$. For $\sigma \in (0,\tfrac18]$ to be chosen later, we take a cutoff function $\zeta = \zeta(x)\geq 0$,  such that it is supported in $B_{\varrho(1-\frac{\sigma}{2})}$ with $\zeta \equiv 1$ on $B_{(1-\sigma)\theta_xc_o\varrho}$ and $|\nabla \zeta| \apprle \tfrac{1}{\sigma\theta_xc_o\varrho}$. We now make use of  \cref{energyqequal2} over $(t_o,\mft)$ and estimate analogously as \cref{lemma3.3} to get
	\begin{equation}\label{Eq:3.10}
			\begin{array}{rcl}
		\int_{B_{(1-\sigma)\theta_xc_o\varrho}\times\{\mft\}}(u-k)_-^2 \,dx 
		&\leq&   \mopC\frac{ (\ve\bsom)^p}{\sigma^p \theta_x^{sp}(c_o\varrho)^{sp}}|\mft-t_o||B_{\theta_xc_o\varrho}|
		+ \int_{B_{\theta_xc_o\varrho}\times\{t_o\}}(u-k)_-^2\zeta^p\,dx\\
		& \leq & \mopC\frac{ (\ve\bsom)^p}{\sigma^p \theta_x^{sp}(c_o\varrho)^{sp}}|\mft-t_o||B_{\theta_xc_o\varrho}|
		+ (\ve\bsom)^2|A_{\ve\bsom}^{t_o}|\\
		& \leq & \mopC\frac{ (\ve\bsom)^p}{\sigma^p \theta_x^{sp}(c_o\varrho)^{sp}}|\mft-t_o||B_{\theta_xc_o\varrho}|
		+ (\ve\bsom)^2(1-\alpha)|B_{\theta_xc_o\varrho}|,
		    \end{array}
	\end{equation}
	where $\mft >t_o$ is any time level. 
	Setting $k_\de:=\bsmu^-+\de \ve\bsom$ for some $ \de \in (0,\tfrac12)$ to be chosen later, we estimate the term on the left hand side of \cref{Eq:3.10} from below to get
	\begin{align*}
		\int_{B_{(1-\sigma)\theta_xc_o\varrho}\times\{\mft\}}(u-k)_-^2 \,dx
		\geq \int_{B_{(1-\sigma)\theta_xc_o\varrho}\times\{\mft\} \cap\{u<k_{\de}\}}\hspace*{-1cm}(u-k)_-^2 \,dx \geq
		(1-\de)^2(\ve\bsom)^2|A_{k_\de}^{(1-\sigma)\varrho}(\mft)|.
	\end{align*}
	Next, we note that
	\begin{align*}
		|A_{k_\de}^{\varrho}(\mft)| \leq 
		|A_{k_\de}^{(1-\sigma)\varrho}(\mft)|+n\sigma |B_{\theta_xc_o\varrho}|.
	\end{align*}
	Combining  all the above estimates gives
	\begin{align*}
		|\{-(\bsmu^- -u(\cdot,t))< \de\ve\bsom\} \cap B_{\theta_xc_o\varrho}|
		\leq 
		\lbr\frac{1-\al}{(1-\de)^2}+ \mopC \frac{(\ve\bsom)^{p}|\mft-t_o|}{\sigma^p \de_x^{sp} (\ve\bsom)^{\mfd}(1-\de)^2(c_o\varrho)^{sp}} +n\sigma\rbr |B_{\theta_xc_o\varrho}|,
	\end{align*}
	for a universal constant $\mopC  > 0$. We now choose $\de\in (0,1)$ and $\sigma$ small enough such that
	\begin{equation*}
		\frac{1-\alpha}{(1- \de)^2}\leq {1-\frac{3\alpha}{4}} \txt{and} \sigma = \frac{\alpha}{8n}.
	\end{equation*}
With these choices, let us take 
\[
|\mft- t_o| \leq \lbr\frac{\alpha}{8}\rbr \lbr\frac{\sigma^p \de_x^{sp}(1-\de)^2 (\ve\bsom)^{\mfd-p}(c_o\varrho)^{sp}}{\mopC}\rbr =: \bar\ve \de_x^{sp} (\ve\bsom)^{\mfd-p}(c_o\varrho)^{sp},
\]
	and make the choice $\sigma=\tfrac{\alpha}{8n}$.
	Finally, we choose $\delta\in (0,1)$ small enough so that $\tfrac{\bsc\delta}{\sigma^p}\leq\tfrac{\alpha}{8}$, which gives the required conclusion.
\end{proof}
\subsection{Measure shrinking lemma}
\begin{lemma}\label{lemma3.5}
	Let $\de_x, \de_t$ be given and assume for some $\sigma \in (0,1)$, the following is satisfied:
	\begin{equation*}
		\left|\left\{
		\pm\lbr\bsmu^{\pm}-u(\cdot, t)\rbr\geq \ve\bsom
		\right\}\cap B_{\theta_xc_o\varrho}(x_o)\right|
		\geq\alpha \left|B_{\theta_xc_o\varrho}\right| \txt{for all} t \in (t_o, t_o + \theta_t (c_o\varrho)^{sp}),
	\end{equation*}
and 
	\begin{equation*}
		c_o^{\frac{sp}{p-1}}\tail((u-\bsmu^{\pm})_{\pm};2\theta_x\varrho,0,(t_o,t_o + \theta_t (c_o\varrho)^{sp}]) \leq \sigma \ve \bsom,
	\end{equation*}
where $\theta_x = \de_x (\sigma \ve\bsom)^{\frac{\mfd-2}{sp}}$ and $\theta_t = \de_t (\sigma \ve\bsom)^{\mfd-p}$.
	Then there exists $\mopC >0$ depending only on data,  we have 
	\begin{equation*}
		|\{\pm(\bsmu^{\pm}-u) \leq \tfrac12 \sigma \ve \bsom\} \cap \mcQ| \leq \mopC \lbr \tfrac{\de_x^{sp}}{\de_t}+1\rbr \tfrac{\sigma^{p-1}}{\al (1-\sigma)^{p-1}} |\mcQ|,
	\end{equation*}
where we have denoted $\mcQ = B_{\theta_x c_o \varrho} \times (t_o, t_o + \theta_t (c_o\varrho)^{sp})$.  
\end{lemma}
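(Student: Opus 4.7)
\emph{Strategy.} The plan is to apply the energy estimate \cref{energyqequal2} to the truncation $(u-k)_-$ at level $k:=\bsmu^-+\sigma\ve\bsom$, and exploit the nonlocal isoperimetric-type term on the left-hand side together with the density hypothesis. Writing $L:=\sigma\ve\bsom$, since $u\geq\bsmu^-$ on $\mcQ$ we have $(u-k)_-\leq L$ throughout, while on the target bad set $E:=\{u-\bsmu^-\leq\tfrac12 L\}\cap\mcQ$ we have $(u-k)_-\geq\tfrac12 L$. The hypothesis provides, at every time $t$, a set $G_t\subset B_{\theta_x c_o\varrho}$ of measure $\geq\alpha|B_{\theta_x c_o\varrho}|$ on which $(u-k)_+\geq M:=(1-\sigma)\ve\bsom$; this is the positive reservoir that will feed the good term.

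\emph{Execution.} Apply \cref{energyqequal2} over $\mcQ$ with a spatial cutoff $\zeta_1\in C_c^\infty(B_{\theta_x c_o\varrho})$ satisfying $\zeta_1\equiv 1$ on $B_{\theta_x c_o\varrho/2}$ and $|\nabla\zeta_1|\lesssim(\theta_x c_o\varrho)^{-1}$. For $x$ in the inner ball and any $t$, bounding $|x-y|\leq 2\theta_x c_o\varrho$ on $B_{\theta_x c_o\varrho}$ and restricting the $y$-integral to $G_t$ gives
\[
\int_{B_{\theta_x c_o\varrho}}\frac{(u-k)_+^{p-1}(y,t)}{|x-y|^{n+sp}}\,dy \;\geq\; \frac{c\,\alpha M^{p-1}}{(\theta_x c_o\varrho)^{sp}},
\]
so that, combined with $(u-k)_-\geq\tfrac12 L$ on $E$, the good term dominates $c\,\alpha L M^{p-1}(\theta_x c_o\varrho)^{-sp}|E'|$, where $E'$ is $E$ restricted to the inner cylinder. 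For the right-hand side, using $(u-k)_-\leq L$: the cutoff-gradient term is $\lesssim L^p(\theta_x c_o\varrho)^{-sp}|\mcQ|$; the initial-time term is $\int(u(\cdot,t_o)-k)_-^2\zeta_1^p\,dx\leq L^2|B|$, which equals $L^{p+2-\mfd}/(\de_t(c_o\varrho)^{sp})\cdot|\mcQ|$ after invoking the scaling $|\mcQ|=\de_t L^{\mfd-p}(c_o\varrho)^{sp}|B|$; and the tail term, by the hypothesis $c_o^{sp/(p-1)}\tail(\cdot;2\theta_x\varrho,\ldots)\leq L$ processed as in \cref{sec:tail}, is again $\lesssim L^p(\theta_x c_o\varrho)^{-sp}|\mcQ|$. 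Using $(\theta_x c_o\varrho)^{sp}=\de_x^{sp}L^{\mfd-2}(c_o\varrho)^{sp}$, all three terms collapse to the common scale
\[
\text{RHS}\;\lesssim\;\frac{L^{p+2-\mfd}}{(c_o\varrho)^{sp}}\lbr\frac{1}{\de_x^{sp}}+\frac{1}{\de_t}\rbr|\mcQ|.
\]

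\emph{Conclusion.} Dividing the RHS bound by the lower bound on the good term and simplifying with $L/M=\sigma/(1-\sigma)$ produces
\[
|E'|\;\lesssim\;\frac{\sigma^{p-1}}{\alpha(1-\sigma)^{p-1}}\lbr 1+\frac{\de_x^{sp}}{\de_t}\rbr|\mcQ|,
\]
which is the asserted shape. Passing from $E'$ to the full bad set $E$ is a cosmetic refinement: the outer spatial shell $B_{\theta_x c_o\varrho}\setminus B_{\theta_x c_o\varrho/2}$ contributes a fixed multiple of $|\mcQ|$ that is absorbed into the constant, or one iterates with a sequence of cutoffs whose shrinking factor tends to $1$. \emph{The principal technical obstacle} is the accounting of the initial-time term: since using a time cutoff vanishing at $t_o$ would forfeit information on an initial slice of $\mcQ$, we retain the initial value $L^2|B|$ and must absorb it into the same common scale $L^{p+2-\mfd}(c_o\varrho)^{-sp}|\mcQ|$ shared by the cutoff-gradient and tail terms; it is precisely the intrinsic scaling $\theta_t=\de_t L^{\mfd-p}$, $\theta_x=\de_x L^{(\mfd-2)/sp}$ that makes every RHS contribution dimensionally comparable and yields the clean factor $1+\de_x^{sp}/\de_t$ in the final bound.
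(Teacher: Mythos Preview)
Your overall strategy is correct and matches the paper's: apply the energy estimate at level $k=\bsmu^-+\sigma\ve\bsom$, keep only the isoperimetric ``good term'' on the left, and bound the right-hand side by the gradient, initial-time, and tail contributions, all of which collapse to the common scale $L^{p+2-\mfd}(c_o\varrho)^{-sp}\bigl(\de_x^{-sp}+\de_t^{-1}\bigr)|\mcQ|$ via the intrinsic scaling. The arithmetic you record is right.

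The gap is in the passage from $E'$ to $E$. The shell $\bigl(B_{\theta_x c_o\varrho}\setminus B_{\theta_x c_o\varrho/2}\bigr)\times I$ has measure $(1-2^{-n})|\mcQ|$, a fixed fraction \emph{independent of $\sigma$}. This cannot be ``absorbed into the constant'' because the target bound is $\mopC\sigma^{p-1}|\mcQ|$, which must tend to zero as $\sigma\to 0$ (this is exactly how the lemma is consumed downstream: one chooses $\sigma$ small to force the bad set below the de~Giorgi threshold $\nu_1$). Your alternative of letting the shrinking factor tend to $1$ also fails to give the stated bound: a cutoff equal to $1$ on $B_{(1-\epsilon)\theta_x c_o\varrho}$ and supported in $B_{\theta_x c_o\varrho}$ picks up factors $\epsilon^{-p}$ (gradient) and $\epsilon^{-(n+sp)}$ (tail), so you would obtain $|E|\lesssim \epsilon^{-(n+sp)}\sigma^{p-1}|\mcQ|+\epsilon|\mcQ|$, which after optimisation yields only a fractional power of $\sigma$.

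The fix is simple and is what the paper does: apply the energy estimate over the \emph{enlarged} ball (say $B_{\frac32\theta_x c_o\varrho}$ or $B_{2\theta_x c_o\varrho}$) with a cutoff $\zeta\equiv 1$ on the full $B_{\theta_x c_o\varrho}$ and supported in the larger ball. Then the good term captures all of $E$ directly, with no shell to account for. This is precisely why the tail hypothesis in the statement is posed at radius $2\theta_x\varrho$ rather than $\theta_x\varrho$.
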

\begin{proof}
	We prove the case of super-solutions only because the case for sub-solutions is analogous.  We write the energy estimate \cref{energyqequal2} over the cylinder $B_{\theta_x c_o\varrho}\times (t_o,\mft]$ with $\mft = t_o + \theta_t (c_o\varrho)^{sp}$, for the functions $(u-k)_-$ where $k = \bsmu^-+\sigma \ve \bsom$.
	We choose a test function $\zeta = \zeta(x)$ such that $\zeta\equiv 1$ on $B_{\theta_xc_o\varrho}$, it is supported in $B_{\theta_x\frac32 c_o\varrho}$ and $|\nabla \zeta|\leq \frac{1}{\theta_xc_o\varrho}$.

	Using $(u-k)_{-} \leq \sigma \ve \bsom$ locally and  by our choice of the test function, applying \cref{energyqequal2}, we get
	\begin{multline*}
		\int_{t_o}^{\mft}\int_{B_{\theta_xc_o\varrho}}|(u-k)_-(x,t)|\int_{B_{\theta_xc_o\varrho}}\frac{|(u-k)_+(y,t)|^{p-1}}{|x-y|^{n+sp}}\,dx\,dy\,dt
		\\
		\begin{array}{rcl}
		&\leq&  \mopC (\sigma\ve \bsom)^p   \frac{1}{\theta_x^{sp}(c_o\varrho)^{sp}} |B_{\theta_xc_o\varrho}||\mft-t_o|
		+\mopC (\sigma \ve\bsom)^2 |B_{\theta_xc_o\varrho}|\\
		& \leq &  \mopC (\sigma\ve \bsom)^{2}   \tfrac{\de_t}{\de_x^{sp}} |B_{\theta_xc_o\varrho}|
		+\mopC (\sigma \ve\bsom)^2 |B_{\theta_xc_o\varrho}|.
		\end{array}
	\end{multline*}
	where $\mopC>0$ is a universal constant.
	We now invoke \cref{lem:shrinking} with $l = \bsmu^- + \sigma \ve\bsom$, $k = \bsmu^- + \tfrac12 \sigma \ve\bsom$ and $m = \bsmu^- + \ve\bsom$ to get
	\[
	(\tfrac12 \sigma \ve \bsom)(\ve\bsom)^{p-1}(1-\sigma)^{p-1} \frac{\al |B_{\theta_x c_o\varrho}|}{(\theta_xc_o\varrho)^{n+sp}} |\{-(\bsmu^--u) \leq \tfrac12 \sigma \ve \bsom\} \cap \mcQ| \leq \mopC (\sigma\ve \bsom)^{2}   \lbr 1+\tfrac{\de_t}{\de_x^{sp}}\rbr |B_{\theta_xc_o\varrho}|.
	\]
	In particular, we get
	\[
	\al \tfrac{\sigma}{2} (\ve\bsom)^p (1-\sigma)^{p-1} \frac{|B_{\theta_x c_o\varrho}|}{|\mcQ|} \frac{\theta_t}{\theta_x^{sp}}
	|\{-(\bsmu^--u) \leq \tfrac12 \sigma \ve \bsom\} \cap \mcQ| \leq \mopC (\sigma\ve \bsom)^{2}   \lbr 1+\tfrac{\de_t}{\de_x^{sp}}\rbr |B_{\theta_xc_o\varrho}|,
	\]
	which becomes
	\[
	\al \tfrac{\sigma}{2} (\ve\bsom)^p (1-\sigma)^{p-1} \frac{|B_{\theta_x c_o\varrho}|}{|\mcQ|} \frac{\de_t}{\de_x^{sp}}(\sigma \ve \bsom)^{2-p}
	|\{-(\bsmu^--u) \leq \tfrac12 \sigma \ve \bsom\} \cap \mcQ| \leq \mopC (\sigma\ve \bsom)^{2}   \lbr 1+\tfrac{\de_t}{\de_x^{sp}}\rbr |B_{\theta_xc_o\varrho}|.
	\]
	This gives
	\[
	|\{-(\bsmu^--u) \leq \tfrac12 \sigma \ve \bsom\} \cap \mcQ| \leq \mopC \lbr \tfrac{\de_x^{sp}}{\de_t}+1\rbr \tfrac{\sigma^{p-1}}{\al (1-\sigma)^{p-1}} |\mcQ|,
	\]
	holds for a constant $\mopC>0$ depending only on data. 
\end{proof}
\begin{remark}
	In \cref{lemma3.2}, \cref{lemma3.3}, \cref{lemma3.4} and \cref{lemma3.5}, if we make the choice $\de_t = \de_x^{sp}$, then all the constants will become independent of $\de_x$ and $\de_t$. 
\end{remark}
\subsection{Reduction of oscillation when \texorpdfstring{$2<p<\infty$}.}
\begin{definition}\label{defd}
	We will choose $\mfd$ such that $1 < \mfd \leq \min\{2,p\}$.  
\end{definition}

Let us consider the reference cylinder $\mcQ_o = B_{\varrho^{1-\epsilon}} \times (-\varrho^{s},0)$ for some fixed $\epsilon \in (0,1)$ which is the reference cylinder, see \cref{figtwoalt}. Let us take constants
\begin{equation*}
	\bsmu^+\geq  \esssup_{\mcQ_o}u, \qquad \bsmu^-\leq \essinf_{\mcQ_o} u \qquad \text{ and } 
	\qquad \bsom \geq \bsmu^+ - \bsmu^-.	
\end{equation*}
For some $\mbfa \gg 1$, let us denote $\theta:= \lbr \tfrac{\bsom}{\mathbf{a}}\rbr$ and consider the cylinder 
\[
\mcQ_{\mbfa}^{\de_x,\de_t} = B_{\de_x \lbr \tfrac{\bsom}{\mathbf{a}}\rbr^{\frac{\mfd-2}{sp}} (c_o\varrho)} \times (- \de_t \lbr \tfrac{\bsom}{\mathbf{a}}\rbr^{\mfd-p} (c_o\varrho)^{sp},0].
\]
\begin{claim}\label{Claim1}
	The following inclusion holds $\mcQ_{\mbfa}^{\de_x,\de_t} \subset \mcQ_o$.
\end{claim}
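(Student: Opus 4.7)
The plan is to observe that the inclusion $\mcQ_{\mbfa}^{\de_x,\de_t} \subset \mcQ_o$ reduces to comparing the two spatial radii and the two time horizons, and that each comparison is equivalent to a power-of-$\varrho$ inequality that holds automatically once $\varrho$ is small. First, since both cylinders are centered at $(0,0)$ and both have right time endpoint $0$, the inclusion is equivalent to the two scalar inequalities
\begin{equation*}
\de_x \lbr \tfrac{\bsom}{\mathbf{a}} \rbr^{\frac{\mfd-2}{sp}} c_o \varrho \;\leq\; \varrho^{1-\epsilon}
\qquad \text{and} \qquad
\de_t \lbr \tfrac{\bsom}{\mathbf{a}} \rbr^{\mfd-p} (c_o\varrho)^{sp} \;\leq\; \varrho^{s}.
\end{equation*}

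Next, by \cref{defd} we have $\mfd \leq \min\{2,p\}$, so the exponents $\tfrac{\mfd-2}{sp}$ and $\mfd-p$ are both non-positive. Rearranging the two displayed inequalities therefore yields the equivalent pair
\begin{equation*}
\varrho^{\epsilon} \;\leq\; \tfrac{1}{\de_x c_o}\lbr \tfrac{\bsom}{\mathbf{a}}\rbr^{\frac{2-\mfd}{sp}}
\qquad \text{and} \qquad
\varrho^{s(p-1)} \;\leq\; \tfrac{1}{\de_t c_o^{sp}}\lbr \tfrac{\bsom}{\mathbf{a}}\rbr^{p-\mfd}.
\end{equation*}
Since $u$ is locally bounded, $\bsom$ is controlled from above by the quantity $L$ appearing in the statement of \cref{holderparabolicqtwo}, and $\mathbf{a}\gg 1$ has been fixed \emph{before} $\varrho$ is chosen. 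Hence $\mathbf{a}/\bsom$ is bounded above by a constant depending only on $\mathbf{a}$ and $L$, so both right-hand sides admit a strictly positive $\varrho$-independent lower bound. Because $\epsilon>0$ and $s(p-1)>0$, both inequalities hold whenever $\varrho\leq \varrho_{*}$ for an explicit threshold $\varrho_{*} = \varrho_{*}(\datanb{},\de_x,\de_t,c_o,\mathbf{a},\epsilon,L)$.

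The main (and only) conceptual point is the order of quantification: the parameters $\de_x,\de_t,c_o,\mathbf{a}$ are frozen first, and \emph{only then} is $\varrho$ restricted. Since H\"older regularity is a local assertion and the full scheme proceeds by iterating reduction of oscillation on arbitrarily small cylinders, imposing $\varrho\leq \varrho_*$ throughout the argument entails no loss of generality, and the claim follows.
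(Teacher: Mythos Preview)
Your reduction to the two scalar inequalities is correct, and so is the rearrangement. The gap is in the sentence ``Hence $\mathbf{a}/\bsom$ is bounded above by a constant depending only on $\mathbf{a}$ and $L$, so both right-hand sides admit a strictly positive $\varrho$-independent lower bound.'' The right-hand sides are $\tfrac{1}{\de_x c_o}(\bsom/\mathbf{a})^{\frac{2-\mfd}{sp}}$ and $\tfrac{1}{\de_t c_o^{sp}}(\bsom/\mathbf{a})^{p-\mfd}$, and since $2-\mfd\geq 0$ and $p-\mfd>0$, bounding these from \emph{below} requires a \emph{lower} bound on $\bsom$. You only supply an upper bound (via $L$), which goes the wrong way. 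Worse, $\bsom$ is tied to the oscillation of $u$ over $\mcQ_o=B_{\varrho^{1-\epsilon}}\times(-\varrho^s,0]$, so it depends on $\varrho$ and may well tend to $0$ as $\varrho\to 0$; there is no $\varrho$-independent lower bound available, and restricting $\varrho$ does not rescue the argument.

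The paper's approach is not to choose $\varrho$ small but to treat the claim as an alternative: if the inclusion fails, then one of your rearranged inequalities is violated, which forces $\bsom\leq C\varrho^{\alpha}$ for some $\alpha>0$ determined by $\epsilon$, $s$, $p$, $\mfd$. In that case the oscillation is already polynomially small in the radius and the reduction-of-oscillation step is vacuous. So the content of the claim is really ``we may assume the inclusion holds, since otherwise we are in the trivial case'', and this is the point your argument misses.
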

\begin{proof}
	[Proof of \cref{Claim1}] We need to show $\de_x \lbr \tfrac{\bsom}{\mathbf{a}}\rbr^{\frac{\mfd-2}{sp}} (c_o\varrho) \leq \varrho^{1-\epsilon}$ and $\de_t \lbr \tfrac{\bsom}{\mathbf{a}}\rbr)^{\mfd-p} (c_o\varrho)^{sp} \leq \varrho^{s}$. Suppose the inclusion fails, then we would have either $c_o \de_x \varrho^{\ve} \geq \lbr \tfrac{\bsom}{\mathbf{a}}\rbr^{\frac{\mfd-2}{sp}}$ and/or $c_o^{sp} \de_t \varrho^{s(p-1)} \geq \lbr \tfrac{\bsom}{\mathbf{a}}\rbr^{p-\mfd}$ and in both cases, the oscillation is comparable to the radius. Hence the claim follows. 
\end{proof}
 \begin{assumption}\label{Tailassumption}
 	We further assume 
 	\[
 	\tail((u-\bsmu^\pm)_{\pm},\varrho^{1-\ep},0,(-\varrho^s,0)) \leq \bsom. 
 	\]
 \end{assumption}
\begin{definition}
	For a constant $\nu$ to be eventually determined according to \descrefnormal{step1}{Step 1} in the proof of \cref{redoscfirstp>2scaled},  we have one of two alternatives:
	\begin{description}
		\descitemnormal{DegAlt-$\I$:}{alt1} Consider the cylinder $\mcQ_{\mbfa} = B_{\lbr \tfrac{\bsom}{\mathbf{a}}\rbr^{\frac{\mfd-2}{sp}} (c_o\varrho)} \times (-  \lbr \tfrac{\bsom}{\mathbf{a}}\rbr)^{\mfd-p} (c_o\varrho)^{sp},0]$. There exists some time level $\bar{t} \in (-  \lbr \tfrac{\bsom}{\mathbf{a}}\rbr)^{\mfd-p} (c_o\varrho)^{sp},0]$ such that the following holds:
		\begin{equation*}
			|\{u\leq \bsmu^-+\tfrac14 \bsom\}\cap
			\mcQ(\bar{t})|\leq \nu|\mcQ(\bar{t})|,
		\end{equation*}
		where we have denoted $\mcQ(\bar{t}) := B_{\lbr \tfrac{\bsom}{\mathbf{4}}\rbr^{\frac{\mfd-2}{sp}} (c_o\varrho)} \times ( \bar{t}- \lbr \tfrac{\bsom}{\mathbf{4}}\rbr^{\mfd-p} (c_o\varrho)^{sp},\bar{t}]$.
		\descitemnormal{DegAlt-$\II$:}{alt2} OR for every time level $\bar{t} \in (-  \lbr \tfrac{\bsom}{\mathbf{a}}\rbr)^{\mfd-p} (c_o\varrho)^{sp},0]$,  the following holds:
		\begin{equation*}
			|\{u\leq \bsmu^-+\tfrac14 \bsom\}\cap
			\mcQ(\bar{t})|\geq \nu|\mcQ(\bar{t})|,
		\end{equation*}
		where we have denoted $\mcQ(\bar{t}) := B_{\lbr \tfrac{\bsom}{\mathbf{4}}\rbr^{\frac{\mfd-2}{sp}} (c_o\varrho)} \times (\bar{t}- \lbr \tfrac{\bsom}{\mathbf{4}}\rbr^{\mfd-p} (c_o\varrho)^{sp},\bar{t}]$.
	\end{description}
\end{definition}
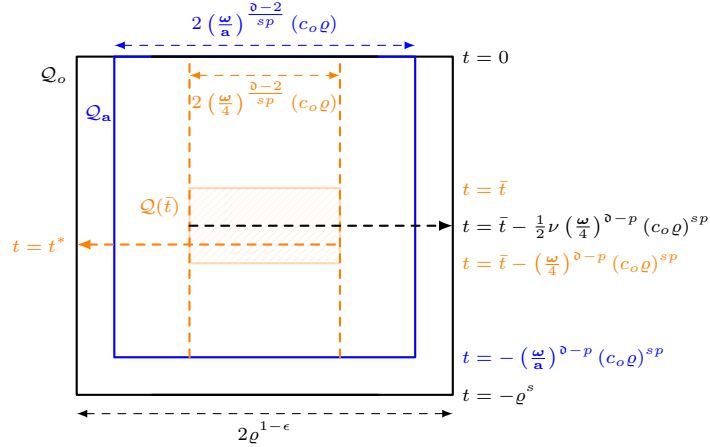
\begin{figure}[ht]
	\begin{center}
		\begin{tikzpicture}[line cap=round,line join=round,>=latex,scale=0.5]
			\coordinate  (O) at (0,0);
			
			\draw[thick, draw=black] (-5,5.5) rectangle (5,-3.5);
				\draw[draw=black, dashed, <->] (-5,-4) -- (5,-4);
			\node  at (0,-4.5) {\scriptsize $2\varrho^{1-\epsilon}$};
			\node  [anchor=west] at (5,-3.5) {\scriptsize \textcolor{black}{$t=-\varrho^s$}};
			\node  [anchor=west] at (5,5.5) {\scriptsize \textcolor{black}{$t=0$}};
			\draw[thick, draw=blue, pattern color=blue, pattern=north east lines, opacity=0.1] (-4,5.5) rectangle (4,-2.5);
			\draw[thick, draw=blue] (-4,5.5) rectangle (4,-2.5);
			\node [anchor=west] at (5,-2.5) {\color{blue}\scriptsize $t=- \lbr\tfrac{\bsom}{\mathbf{a}} \rbr^{\mfd-p}(c_o\varrho)^{sp}$};
			\draw[draw=blue, dashed, <->] (-4,5.8) -- (4,5.8);
			\node  at (0,6.5) {\color{blue}\scriptsize $2\lbr \tfrac{\bsom}{\mathbf{a}}\rbr^{\frac{\mfd-2}{sp}}(c_o\varrho)$};
			\draw[thick, draw=orange, dashed] (-2,-2.5) -- (-2,5.5);
			\draw[thick, draw=orange, dashed] (2,-2.5) -- (2,5.5);
			\draw[draw=orange, dashed, <->] (-2,5) -- (2,5);
			\node  at (0,4.4) {\color{orange}\scriptsize $2\lbr \tfrac{\bsom}{4}\rbr^{\frac{\mfd-2}{sp}}(c_o\varrho)$};
			
			\draw[thick, draw=orange, pattern color=orange,pattern=north east lines, opacity=0.3] (-2,0) rectangle (2,2);
			
			\node  [anchor=west] at (5,2) {\scriptsize \textcolor{orange}{$t=\bar{t}$}};
			\node  [anchor=west] at (5,0) {\scriptsize \textcolor{orange}{$t=\bar{t}-\lbr \tfrac{\bsom}{4}\rbr^{\mfd-p}(c_o\varrho)^{sp}$}};

			\draw[thick, draw=black, dashed,->] (-2,1) -- (5,1);
			\node  [anchor=west] at (5,1) {\scriptsize \textcolor{black}{$t=\bar{t}-\tfrac12\nu\lbr \tfrac{\bsom}{4}\rbr^{\mfd-p}(c_o\varrho)^{sp}$}};
			
			\draw[thick, draw=orange, dashed,<-] (-5,0.5) -- (2,0.5);
			\node  [anchor=east] at (-5,0.5) {\scriptsize \textcolor{orange}{$t=t^{\ast}$}};
			\draw[draw=black,thick] (-3,-3.5) -- (3,-3.5);
			
			\draw[draw=black,thick] (-3,5.5) -- (3,5.5);
			\node [anchor=east] at (-5,5) {\scriptsize{$\mcQ_o$}};
			\node [anchor=east] at (-3.8,4) {\color{blue}\scriptsize{$\mcQ_{\mathbf{a}}$}};
		\node [anchor=east] at (-2,1.5) {\color{orange}\scriptsize{$\mcQ(\bar{t})$}};
			
		\end{tikzpicture}
	\end{center}
	\caption{Defining the geometry}
	\label{figtwoalt}
\end{figure}

\subsection{Reduction of oscillation when \texorpdfstring{\descrefnormal{alt1}{DegAlt-$\I$}}. holds }\label{redoscfirstp>2scaled}

In this section, we work with $v$  near its infimum and the proof will proceed in several steps:
\begin{description}[leftmargin=*]
	\descitemnormal{Step 1:}{step1} Let us take $\de_x = \de_t=1$ and define $\nu := \nu_1$ with $\nu_1$ obtained from \cref{lemma3.2} applied with $\ve = \tfrac14$. Then we get
	\[
	u \geq \bsmu^- + \tfrac18 \bsom \txt{on} B_{\lbr \tfrac{\bsom}{\mathbf{4}}\rbr^{\frac{\mfd-2}{sp}} (\frac12c_o\varrho)} \times ( \bar{t}- \lbr \tfrac{\bsom}{\mathbf{4}}\rbr^{\mfd-p} (\tfrac12c_o\varrho)^{sp} , \bar{t}].
	\]
	To ensure the tail alternative holds, we estimate as follows:
	\[
	\begin{array}{rcl}
		c_o^{{sp}}\tailp((u-\bsmu_-)_-, \theta_x\varrho,0,(\bar{t}-\lbr \tfrac{\bsom}{\mathbf{4}}\rbr^{\mfd-p} (c_o\varrho)^{sp},\bar{t})) & \leq & c_o^{sp}\varrho^{\ep s p}\lbr\tfrac{\bsom}{\mathbf{4}}\rbr^{\mfd-2}\tailp((u-\bsmu_-)_-, \varrho^{1-\ep},0,(-\varrho^{sp},0))\\
		& \leq & \tfrac{1}{4^{\mfd-2}}c_o^{sp} \bsom^{p-1},
	\end{array}
	\]
	where we made use of the bound $\varrho^{\ep s p } \leq \bsom^{2-\mfd}$, since otherwise the oscillation is comparable to the radius along with \cref{Tailassumption}. Now we can choose $c_o$ small such that $\tfrac{1}{4^{\mfd-2}}c_o^{sp} \leq  (\tfrac18)^{p-1}$ to ensure the tail alternative is satisfied.

	\descitemnormal{Step 2:}{step2}  Let $\ve \in (0,\tfrac18)$ to be chosen and we take $t_o =\bar{t}- \lbr \tfrac{\bsom}{\mathbf{4}}\rbr^{\mfd-p} (\tfrac12c_o\varrho)^{sp}$ in \cref{lemma3.3}. Then we get 
	\begin{equation*}
		u\geq\bsmu^- + \tfrac{1}2\varepsilon \bsom
		\quad
		\mbox{ on }\quad  B_{\theta_x\frac{c_o\varrho}4} \times (t_o, t_o+\nu_1 (\tfrac{1}{4\ve})^{2-\mfd} (\ve \bsom)^{\mfd-p}(\tfrac{c_o\varrho}4)^{sp}].
	\end{equation*}
  
     We now choose $\ve$ small such that $\nu_1 (\tfrac{1}{4\ve})^{2-\mfd} (\ve \bsom)^{\mfd-p}(\tfrac{c_o\varrho}4)^{sp} \geq \lbr\tfrac{\bsom}{\mathbf{a}} \rbr^{\mfd-p}(c_o\varrho)^{sp}$, i.e., we take $\ve^{p-2} \leq \frac{4^{2-\mfd-sp}\nu_1 }{\mathbf{a}^{p-\mfd}}$. This ensures that the pointwise information is available till the top of the cylinder. With this choice of $\ve$, we further restrict $c_o$ such that $\tfrac{1}{4^{\mfd-2}}c_o^{sp} \leq \ve^{p-1} =\lbr \frac{4^{2-\mfd-sp}\nu_1 }{\mathbf{a}^{p-\mfd}}\rbr^{\frac{p-1}{p-2}}$ and this ensure the tail alternative is satisfied. 
     
     \begin{remark}
     	In trying to extend the degenerate proof to the singular case $p \in (1,2)$, we are unable to choose $\ve$ small enough in \descref{step2}{Step 2} to ensure $\ve^{p-2} \leq \frac{4^{2-\mfd-sp}\nu_1 }{\mathbf{a}^{p-\mfd}}$ holds. 
     \end{remark}
%
%
\end{description}

This completes the proof of the reduction of oscillation when \descrefnormal{alt1}{DegAlt-$\I$} holds. 
\subsection{Reduction of oscillation when \texorpdfstring{\descrefnormal{alt2}{DegAlt-$\II$}}. holds}\label{redoscsecondp>2scaled}
In this section, we work with $u$  near its supremum and the proof will proceed in several steps:

\begin{description}
	\item[Step 1:] We see that \descrefnormal{alt2}{DegAlt-$\II$} can be rewritten as 
	\begin{equation*}
		|\{u\leq \bsmu^+-\tfrac14 \bsom\}\cap \mcQ(\bar{t})|> \nu|\mcQ(\bar{t})|.
	\end{equation*}
	\item[Step 2:]  The first step involves finding a good time slice, see \cref{figtwoalt} for the geometry.  In particular, we have the following result:
	\begin{claim}\label{claim6.12}
		With notation as in \cref{figtwoalt}, there exists $\bar{t}-\lbr \tfrac{\bsom}{4}\rbr^{\mfd-p}(c_o\varrho)^{sp} \leq t^{\ast} \leq \bar{t}-\tfrac12 \nu\lbr \tfrac{\bsom}{4}\rbr^{\mfd-p}(c_o\varrho)^{sp}$ such that		\begin{equation*}
			|\{u(\cdot,t^{\ast})\leq \bsmu^+-\tfrac14 \bsom\}\cap B_{\lbr \tfrac{\bsom}{4}\rbr^{\frac{\mfd-2}{sp}}(c_o\varrho)}|\geq \tfrac12\nu|B_{\lbr \tfrac{\bsom}{4}\rbr^{\frac{\mfd-2}{sp}}(c_o\varrho)}|.
		\end{equation*}
	\end{claim}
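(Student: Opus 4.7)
The plan is to prove \cref{claim6.12} by a standard pigeonhole (Chebyshev) argument in time. Abbreviate $B := B_{(\bsom/4)^{(\mfd-2)/sp}(c_o\varrho)}$ and $\tau := (\bsom/4)^{\mfd-p}(c_o\varrho)^{sp}$, so that $\mcQ(\bar t) = B \times (\bar t - \tau,\bar t]$ and $|\mcQ(\bar t)| = \tau |B|$. The reformulation of \descrefnormal{alt2}{DegAlt-$\II$} recorded in Step~1 above, together with Fubini applied to $\mcQ(\bar t)$, translates into the time-integrated lower bound
\[
\int_{\bar t - \tau}^{\bar t} \bigl| \{x \in B : u(x,t) \leq \bsmu^+ - \tfrac14 \bsom\} \bigr| \, dt \;>\; \nu \, \tau \, |B|.
\]

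The plan is then to argue by contradiction. Suppose no time level $t^\ast \in [\bar t - \tau, \bar t - \tfrac12 \nu \tau]$ satisfies the desired spatial bound; that is, suppose the spatial measure is strictly less than $\tfrac12 \nu |B|$ for every $t$ in this sub-interval. I would split the time integral into the contributions on $[\bar t - \tau, \bar t - \tfrac12 \nu \tau]$ and on $(\bar t - \tfrac12\nu\tau, \bar t]$. On the first sub-interval (of length $\tau(1 - \tfrac12\nu) \leq \tau$), the pointwise assumption yields a contribution of at most $\tfrac12 \nu |B| \cdot \tau$. On the second sub-interval (of length $\tfrac12 \nu \tau$), the trivial bound by $|B|$ yields a contribution of at most $\tfrac12 \nu \tau |B|$. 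Summing these two contributions gives a strict upper bound of $\nu \tau |B|$, which contradicts the integrated lower bound displayed above. Hence at least one $t^\ast$ in the prescribed sub-interval has the desired property, which is the claim.

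I do not expect any real obstacle here: this is the elementary ``forward-in-time measure propagation starting point'' argument that appears throughout the intrinsic-scaling literature (cf.\ the analogous step in \cite{adimurthi2024localholderregularitynonlocal,liaoHolderRegularityParabolic2024,adimurthi2025localholderregularitybounded}). The only mildly delicate point is the specific choice of sub-interval length $\tfrac12 \nu \tau$, which is exactly what makes the two contributions in the dichotomy balance at $\tfrac12 \nu \tau |B|$ each. The genuine content of the proof of reduction of oscillation in the second alternative lies in what one does afterwards with $t^\ast$ (propagating the measure information forward via \cref{lemma3.4}, then running the shrinking lemma \cref{lemma3.5} and the De~Giorgi iteration \cref{lemma3.2}), not in the existence of $t^\ast$ itself.
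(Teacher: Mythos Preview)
Your proposal is correct and matches the paper's own proof essentially line by line: the paper also argues by contradiction, splits the time integral over $[\bar t-\tau,\bar t-\tfrac12\nu\tau]$ and $(\bar t-\tfrac12\nu\tau,\bar t]$, bounds the two pieces by $\tfrac12\nu|B|\tau(1-\tfrac12\nu)$ and $\tfrac12\nu\tau|B|$ respectively, and obtains the same contradiction with \descrefnormal{alt2}{DegAlt-$\II$}. Your commentary on where the real content lies (the subsequent propagation/shrinking/iteration) is also accurate.
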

	\begin{proof}
		Suppose the claim is false, then we have for all $t^{\ast} \in (\bar{t}-\lbr \tfrac{\bsom}{4}\rbr^{\mfd-p}(c_o\varrho)^{sp}, \bar{t}-\tfrac12 \nu\lbr \tfrac{\bsom}{4}\rbr^{\mfd-p}(c_o\varrho)^{sp} )$, we would have
		 \begin{equation*}
			|\{u(\cdot,t^{\ast})\leq \bsmu^+-\tfrac14 \bsom\}\cap B_{\lbr \tfrac{\bsom}{4}\rbr^{\frac{\mfd-2}{sp}}(c_o\varrho)}|< \tfrac12\nu|B_{\lbr \tfrac{\bsom}{4}\rbr^{\frac{\mfd-2}{sp}}(c_o\varrho)}|.
		\end{equation*}
		Denoting the value $T:= \lbr \tfrac{\bsom}{4}\rbr^{\mfd-p}(c_o\varrho)^{sp}$,  we have the following sequence of estimates:
		\begin{equation*}
			\begin{array}{rcl}
				|\{u\leq \bsmu^+-\tfrac14 \bsom\}\cap \mcQ(\bar{t})|& = & \int_{\bar{t} - T}^{\bar{t} - \tfrac12 \nu T} |\{u(\cdot,s)\leq \bsmu^+-\tfrac14 \bsom\}\cap B_{\lbr \tfrac{\bsom}{4}\rbr^{\frac{\mfd-2}{sp}}(c_o\varrho)}| \,ds\\
				&& + \int_{\bar{t} - \tfrac12 \nu T}^{\bar{t}} |\{u(\cdot,s)\leq \bsmu^+-\tfrac14 \bsom\}\cap B_{\lbr \tfrac{\bsom}{4}\rbr^{\frac{\mfd-2}{sp}}(c_o\varrho)}| \,ds\\
				& <& \tfrac12 \nu | B_{\lbr \tfrac{\bsom}{4}\rbr^{\frac{\mfd-2}{sp}}(c_o\varrho)} | T (1-\tfrac12 \nu) + \tfrac12 \nu | B_{\lbr \tfrac{\bsom}{4}\rbr^{\frac{\mfd-2}{sp}}(c_o\varrho)} | T\\
				& < &  \nu|\mcQ(\bar{t})|,
			\end{array}
		\end{equation*}
		which is a contradiction.
	\end{proof}
	\descitemnormal{Step 3:}{step3} In this step, we expand measure information forward in time on one of the intrinsic cylinder by applying \cref{lemma3.4}. Let us take $\theta_x=  \lbr4 \ve \rbr^{\frac{2-\mfd}{sp}}  \lbr \ve \bsom \rbr^{\frac{\mfd-2}{sp}} = \de_x  \lbr \ve \bsom \rbr^{\frac{\mfd-2}{sp}}$,   (i.e., $\de_x= \lbr4 \ve \rbr^{\frac{2-\mfd}{sp}}$) and $\ve \in (0,\tfrac14)$ to be chosen, then from \cref{lemma3.4},  we get
	\begin{equation*}
		|\{
		\bsmu^{+}-u(\cdot, t)\geq \de\ve \bsom\} \cap B_{\theta_xc_o\varrho}|
		\geq\tfrac{\nu}4 |B_{\theta_xc_o\varrho}|
		\quad \text{ for all } \,  t\in(t^{\ast},t^{\ast} + \bar\ve (4\ve)^{2-\mfd} (\ve\bsom)^{\mfd-p}(c_o\varrho)^{sp}].
	\end{equation*}
	We will choose $\ve$ such that $\bar\ve (4\ve)^{2-\mfd}  (\ve\bsom)^{\mfd-p}(c_o\varrho)^{sp} \geq  (\tfrac14\bsom)^{\mfd-p}(c_o\varrho)^{sp}$, i.e., $\ve^{p-2} \leq \bar\ve 4^{2-p}$. This way, the expansion can be claimed all the way to the top of the bad cylinder.  
	
	We further restrict $c_o$ such that $\tfrac{1}{4^{\mfd-2}}c_o^{sp} \leq \ve^{p-1} \leq \lbr \tfrac{1}{4} \bar\ve^{\tfrac{1}{p-2}} \rbr^{p-1}$ and this ensure the tail alternative is satisfied. In particular, we get
	\begin{equation*}
		|\{
		\bsmu^{+}-u(\cdot, t)\geq \de\ve \bsom\} \cap B_{\theta_xc_o\varrho}|
		\geq\tfrac{\nu}4 |B_{\theta_xc_o\varrho}|
		\quad \text{ for all } \,  t\in(- \lbr\tfrac{\bsom}{\mathbf{a}} \rbr^{\mfd-p}
		(c_o\varrho)^{sp},0].
	\end{equation*}
\item[Step 4:] Let $\sigma$ be a fixed constant to be eventually chosen, to apply \cref{lemma3.5}, we need to ensure $\lbr\tfrac{\bsom}{\mathbf{a}} \rbr^{\mfd-p}
(c_o\varrho)^{sp} \geq (\sigma \de\ve \bsom)^{\mfd-p}(c_o\varrho)^{sp}$, which requires $\mathbf{a}^{\mfd-p} \leq (\sigma \de\ve)^{p-\mfd}$. With this choice of $\mathbf{a}$, let us take $\theta_t = (\sigma \de\ve \bsom)^{\mfd-p}$ and $\de_x = 4^{\frac{2-\mfd}{sp}} (\sigma \de \ve)^{\frac{2-\mfd}{sp}}$. Then the cylinder $\mcQ= B_{\theta_x c_o \varrho} \times (- \theta_t (c_o\varrho)^{sp},0) = B_{\lbr \tfrac{\bsom}{4}\rbr^{\frac{\mfd-2}{sp}}(c_o\varrho)} \times (-(\sigma \de\ve \bsom)^{\mfd-p}(c_o\varrho)^{sp},0]$ becomes admissible  to  apply \cref{lemma3.5} (with $\ve$ replaced by $\de \ve$ from \descrefnormal{step3}{Step 3}) to get
	\begin{equation*}
		\begin{array}{rcl}
		|\{+(\bsmu^{+}-u) \leq \tfrac12 \sigma \ve \bsom\} \cap \mcQ| &\leq &  \mopC \lbr \tfrac{\de_x^{sp}}{\de_t}+1\rbr \tfrac{\sigma^{p-1}}{\nu (1-\sigma)^{p-1}} |\mcQ|\\
		& = & \mopC \lbr4^{2-\mfd} (\sigma \de\ve)^{2-\mfd}+1\rbr \tfrac{\sigma^{p-1}}{\nu (1-\sigma)^{p-1}} |\mcQ|\\
		& \leq & \mopC  \tfrac{\sigma^{p-1}}{\nu (1-\sigma)^{p-1}} |\mcQ|.
		\end{array}
	\end{equation*}
Given any $\nu_*$, we can choose $\sigma = \sigma(\datanb{}) \in (0,\tfrac12)$ such that $\mopC  \tfrac{\sigma^{p-1}}{\nu (1-\sigma)^{p-1}}\leq \nu_*$.
 We further restrict $c_o$ such that $\tfrac{1}{4^{\mfd-2}}c_o^{sp} \leq (\sigma \ve \de \bsom)^{p-1}$ and this ensure the tail alternative is satisfied.
 \item[Step 5:] Note that $\sigma$ is still yet to be chosen. With the choice $\theta_t = (\sigma \de\ve \bsom)^{\mfd-p}$ and $\de_x = 4^{\frac{2-\mfd}{sp}} (\sigma \de \ve)^{\frac{2-\mfd}{sp}}$, we can now apply \cref{lemma3.2} to get $\nu_1$ (which plays the role of $\nu_*$) such that 
 \begin{equation*}
 	+(\bsmu^{+}-u)\geq\tfrac{1}2\sigma \de\varepsilon \bsom
 	\quad
 	\mbox{ on }\quad B_{\lbr \tfrac{\bsom}{4}\rbr^{\frac{\mfd-2}{sp}}(\tfrac12c_o\varrho)} \times (-(\sigma \de\ve \bsom)^{\mfd-p}(\tfrac12c_o\varrho)^{sp},0].
 \end{equation*}
\item[Step 6:] The constant $\nu_1$ depends on $\sigma$ and $\sigma$ depends on $\nu_*$, which might lead to an impossibility in making the choices. But we show that this is not a problem as follows:

From \cref{lemma3.2}, we see that $\nu_1 \approx \lbr \tfrac{1}{\de_x^{sp}} + \tfrac{1}{\de_t}\rbr^{-\frac{n+sp}{sp}} (\de_x^n\de_t)^{-1}$, which takes the form $\nu_1 = \mopC  \sigma^{2-\mfd}$ where without loss of generality, we assumed $\de_x \leq 1$.

In the measure shrinking lemma, we need to ensure $\mopC \sigma^{p-1} \leq \nu_* =\mopC \sigma^{2-\mfd}$, which is possible since $1<\mfd < 2 \leq p$ .
 
\end{description}
This completes the proof of the reduction of oscillation when \descrefnormal{alt2}{DegAlt-$\II$} holds.

\subsection{Proof of H\"older continuity when \texorpdfstring{$1<p_o<p<2+\ep_{p_o}$}. for any fixed \texorpdfstring{$p_o>1$}.}

\begin{definition}
	For a fixed $p_o \in (1,2)$, let us take $\mfd = 2+\ep_{p_o}$  with $\epsilon_{p_o} := \min\left\{p_o-1, \tfrac{sp_o(p_o-1)}{n}, \tfrac{sp_o(p_o-1)}{n+3s}\right\}$ and consider $p$ in the range $p_o<p<\mfd$. Note that this implies $\mfd > \max\{2,p\}$. 
\end{definition}

Let us consider the reference cylinder $\mcQ_o = B_{R} \times (-R^{sp},0)$ for some fixed $R \in \RR^+$ which is the reference cylinder. Let us take constants
\begin{equation*}
	\bsmu^+=  \esssup_{\mcQ_o}u, \qquad \bsmu^-= \essinf_{\mcQ_o} u \qquad \text{ and } 
	\qquad \bsom  = 2\esssup_{\mcQ_o} |u| + \tail(u,\mcQ_o).	
\end{equation*}
Consider the cylinder 
\[
\mcQ^{\de_x,\de_t} = B_{\de_x \bsom^{\frac{\mfd-2}{sp}} (c_o\varrho)} \times (- \de_t \bsom^{\mfd-p} (c_o\varrho)^{sp},0].
\]
Then we choose $\varrho$ small such that the following inclusion holds:
\begin{equation}\label{sizerhosing}
B_{\de_x \bsom^{\frac{\mfd-2}{sp}} \varrho} \times (- \de_t \bsom^{\mfd-p} \varrho^{sp},0] \subset \mcQ_o \quad \Longrightarrow \quad \varrho := R\min\{\bsom^{\frac{2-\mfd}{sp}},\bsom^{\frac{p-\mfd}{sp}}\},
\end{equation}
for any $\de_x,\de_t \in  (0,1]$. Note that we have $\mfd > \max\{2,p\}$ and hence the cylinders are shrinking in size as $\bsom \searrow 0$.  We will now obtain the reduction of oscillation for this cylinder. 

\begin{proposition}\label{prop3.13}
	Let $u$ be bounded, weak solution of \cref{maineq} with $q=2$ and let $1<p<\mfd$. Suppose for two  constants $\alpha \in (0,1)$ and $\varepsilon \in (0,1)$, the following assumption holds:
	\[
	|\{\pm (\bsmu^{\pm} -u(\cdot,t_o))\geq \varepsilon \bsom\}\cap B_{ (\ve\bsom)^{\frac{\mfd-2}{sp}} (c_o\varrho)}| \geq \alpha |B_{ (\ve\bsom)^{\frac{\mfd-2}{sp}} (c_o\varrho)}|.
	\]
	Then there exists constants $\delta = \delta(\datanb{,\alpha})$, $\eta = \eta(\datanb{,\alpha})$ and $c_o = c_o(\eta,\varepsilon)$ such that
	\[
	c_o^{\frac{sp}{p-1}} \leq \eta\varepsilon,
	\]
	then the following conclusion holds:
	\[
	\pm(\bsmu^{\pm} - u) \geq \eta\varepsilon\bsom \quad \text{on} \quad B_{(\ve\bsom)^{\frac{\mfd-2}{sp}} \frac12c_o \varrho} \times (t_o +\tfrac12 \de (\ve\bsom)^{\mfd-p}(c_o\varrho)^{sp}
	,t_o+ \de (\ve\bsom)^{\mfd-p}(c_o\varrho)^{sp}].
	\]
	Provided $B_{(\ve\bsom)^{\frac{\mfd-2}{sp}} c_o \varrho} \times (t_o
	,t_o+ \de (\ve\bsom)^{\mfd-p}(c_o\varrho)^{sp}] \subset \mcQ_o$.
\end{proposition}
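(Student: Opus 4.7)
The plan is to execute the standard three-step scheme: forward-in-time propagation of measure (via \cref{lemma3.4}), measure shrinking (\cref{lemma3.5}), and de Giorgi iteration (\cref{lemma3.2}). This is the singular analogue of the ``away from zero'' case (\descrefnormal{alt2}{DegAlt-II}) carried out in \cref{redoscsecondp>2scaled} for the degenerate regime. The target cylinder in the conclusion is chosen so that its lower half already carries the measure information delivered by \cref{lemma3.4}, and its upper half is where the shrinking and iteration take place.

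\emph{Step 1 (propagation).} With $\de_x = 1$ in \cref{lemma3.4}, the spatial scaling $\theta_x = (\varepsilon\bsom)^{(\mfd-2)/sp}$ matches the ball in the hypothesis, so I obtain constants $\de_*, \bar\varepsilon \in (0,1)$ depending on $\datanb{,\alpha}$ such that
\[
|\{\pm(\bsmu^{\pm}-u(\cdot,t))\geq\de_*\varepsilon\bsom\}\cap B_{\theta_x c_o\varrho}| \geq \tfrac{\alpha}{2}|B_{\theta_x c_o\varrho}|
\]
holds for every $t\in(t_o,t_o+\bar\varepsilon(\varepsilon\bsom)^{\mfd-p}(c_o\varrho)^{sp}]$. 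Set $\de := \bar\varepsilon$; this already fixes the time length appearing in the conclusion.

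\emph{Steps 2 and 3 (shrinking and iteration).} For $\sigma\in(0,1)$ to be chosen, apply \cref{lemma3.5} with $\varepsilon$ replaced by $\de_*\varepsilon$ on the upper-half cylinder
\[
\mcQ' := B_{\theta_x c_o\varrho}\times\bigl(t_o+\tfrac{\de}{2}(\varepsilon\bsom)^{\mfd-p}(c_o\varrho)^{sp},\,t_o+\de(\varepsilon\bsom)^{\mfd-p}(c_o\varrho)^{sp}\bigr],
\]
with the shrinking parameters matched so that its output cylinder is exactly $\mcQ'$. This yields
\[
|\{\pm(\bsmu^{\pm}-u)\leq\tfrac12\sigma\de_*\varepsilon\bsom\}\cap\mcQ'| \leq \frac{C_1(\datanb{},\de_*,\de)\,\sigma^{p-1}}{\alpha(1-\sigma)^{p-1}}\,|\mcQ'|.
\]
I then apply \cref{lemma3.2} on $\mcQ'$ at threshold $\tfrac12\sigma\de_*\varepsilon\bsom$, with the iteration parameters $\de_x^{\mathrm{it}}, \de_t^{\mathrm{it}}$ again forced by matching the geometry of $\mcQ'$. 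Choosing $\sigma = \sigma(\datanb{,\alpha})\in(0,1)$ small enough that the shrinking output above falls below the smallness $\nu_1$ required by \cref{lemma3.2} produces
\[
\pm(\bsmu^{\pm}-u) \geq \tfrac14\sigma\de_*\varepsilon\bsom \quad\text{on}\quad B_{(\varepsilon\bsom)^{(\mfd-2)/sp}\tfrac12 c_o\varrho}\times\bigl(t_o+\tfrac{\de}{2}(\varepsilon\bsom)^{\mfd-p}(c_o\varrho)^{sp},\,t_o+\de(\varepsilon\bsom)^{\mfd-p}(c_o\varrho)^{sp}\bigr],
\]
which is the claim with $\eta := \tfrac14\sigma\de_*$. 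The tail hypotheses in \cref{lemma3.2}, \cref{lemma3.4} and \cref{lemma3.5} all take the same structural form and bundle into the single smallness condition $c_o^{sp/(p-1)}\leq\eta\varepsilon$, imposed once $\eta$ has been fixed; at each invocation the \textit{tail alternative} reduces as in \cref{sec:tail} to a comparison of $c_o^{sp/(p-1)}$ against a fixed fraction of $\varepsilon\bsom$.

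\emph{Main obstacle.} The delicate point is the compatibility between Steps 2 and 3. In the singular regime $1<p<2<\mfd$, matching the scales of $\mcQ'$ in \cref{lemma3.2} forces both $\de_x^{\mathrm{it}}$ and $\de_t^{\mathrm{it}}$ to exceed one (since $\eta<1$ and $\mfd>\max\{2,p\}$), so the resulting $\nu_1$ inherits a $\sigma$-dependence through the quantity $\Gamma$. A direct computation gives $\nu_1 \simeq (\sigma\de_*)^{2-p}/\de$ up to data-dependent constants, whence the compatibility between shrinking and iteration reduces to a constraint of the form $\sigma^{p-1}\lesssim (\sigma\de_*)^{2-p}$. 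Balancing these exponents, together with tracking how $C_1$ and $\de_*$ depend on $\datanb{},\alpha$, is the heart of the argument; this is precisely where the three-term minimum defining $\varepsilon_{p_o}$ in \cref{defeppo} is needed to ensure the exponents arising from the propagation, shrinking and iteration steps align consistently.
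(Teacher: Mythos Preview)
Your three-lemma scheme and Step~1 match the paper exactly. The gap is in the geometry of Steps~2--3. Applying \cref{lemma3.5} and \cref{lemma3.2} on the \emph{fixed} upper-half cylinder $\mcQ'$ forces $\de_t^{\mathrm{it}}\approx(\sigma\de_*)^{p-\mfd}$, and your computation $\nu_1\simeq(\sigma\de_*)^{2-p}$ is then correct for $p<2$. But the resulting compatibility $\sigma^{p-1}\lesssim\sigma^{2-p}$ is simply $p>\tfrac32$, independently of~$\mfd$; no choice of $\varepsilon_{p_o}$ rescues the range $1<p\le\tfrac32$, and the constraints in \cref{defeppo} never enter with your cylinder choice. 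Your final sentence thus misidentifies where $\epsilon_{p_o}$ comes from.

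The paper instead runs \cref{lemma3.5} and \cref{lemma3.2} on a \emph{sliding family} of sub-cylinders with $\sigma$-dependent time-length $\bar\varepsilon\,\sigma^{\mfd-p}(\varepsilon\bsom)^{\mfd-p}(c_o\varrho)^{sp}$: the lemma parameters are $\de_x=(\de_*\sigma)^{(2-\mfd)/sp}$ but $\de_t=\bar\varepsilon\,\de_*^{\,p-\mfd}$ \emph{constant in~$\sigma$}. This trades a worse shrinking output $(\de_x^{sp}/\de_t+1)\sigma^{p-1}\approx\sigma^{p+1-\mfd}$ against a much larger threshold $\nu_1\approx\sigma^{(\mfd-2)n/(sp)}$. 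The compatibility becomes $p+1-\mfd>(\mfd-2)\tfrac{n}{sp}$, and this (together with $p+1-\mfd>0$ and $p-1>(\mfd-2)\tfrac{n}{sp}$) is exactly the origin of the three-term minimum in \cref{defeppo}. A union over the sliding positions~$\hat t$ then recovers the full upper-half interval after imposing $\sigma\le 2^{-1/(\mfd-p)}$.
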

\begin{proof}
	The proof follows in several steps:
	\begin{description}[leftmargin=*]
		\descitemnormal{Step 1:}{step1sing} We apply \cref{lemma3.4} with $\de_x=\de_t =1$ and $\bsom$ replaced with $\ve \bsom$ to get 
		\[
		|\{\pm (\bsmu^{\pm} - u(\cdot,t))\geq \de\varepsilon \bsom\} \cap B_{(\ve\bsom)^{\frac{\mfd-2}{sp}} (c_o\varrho)} | \geq \tfrac12\alpha |B_{ (\ve\bsom)^{\frac{\mfd-2}{sp}} (c_o\varrho)}| \txt{for a.e} t \in (t_o,t_o+\bar\varepsilon(\varepsilon\bsom)^{\mfd-p}(c_o\varrho)^{sp}], 
		\]
		where $\bar\varepsilon$ and $\delta$ (both depending on $\al$) are as obtained in  \cref{lemma3.4}. To ensure the tail alternative holds, we estimate as follows:
		\[
		\begin{array}{rcl}
			c_o^{{sp}}\tailp((u-\bsmu_-)_-, \theta_x\varrho,0,  (t_o,t_o+\bar\varepsilon(\varepsilon\bsom)^{\mfd-p}(c_o\varrho)^{sp})) & \leq & c_o^{sp}\frac{(\ve\bsom)^{\mfd-2}\varrho^{s p}}{R^{sp}}\tailp((u-\bsmu_-)_-, R,0,(-R^{sp},0))\\
			& \overset{\cref{sizerhosing}}{\leq} & c_o^{sp} \bsom^{p-1},
		\end{array}
		\]
		Hence if we choose $c_o^{sp} \leq \ve^{p-1}$, then the tail alternative is satisfied. 
		\descitemnormal{Step 2:}{step2sing} In this step, we want to apply \cref{lemma3.5} which requires verifying the measure hypothesis is satisfied. Let us choose $\de_x = (\de\sigma)^{\frac{2-\mfd}{sp}}$ and $\de_t = \bar\ve \de^{p-\mfd}$, then we see that 
		\[
		B_{(\ve\bsom)^{\frac{\mfd-2}{sp}} (c_o\varrho)} \times (t_o,t_o+\bar\varepsilon(\varepsilon\bsom)^{\mfd-p}(c_o\varrho)^{sp}) = B_{\de_x (\sigma\de\ve\bsom)^{\frac{\mfd-2}{sp}}(c_o\varrho)} \times (t_o, t_o+ \de_t (\de\ve\bsom)^{\mfd-p}(c_o\varrho)^{sp}).
		\]
		Since $\sigma \in (0,1)$ and $\mfd>p$, we see that $ \de_t (\de\ve\bsom)^{\mfd-p}(c_o\varrho)^{sp} \geq  \de_t (\sigma\de\ve\bsom)^{\mfd-p}(c_o\varrho)^{sp}$. Hence for all $\hat{t} \in (t_o+\de_t (\sigma\de\ve\bsom)^{\mfd-p}(c_o\varrho)^{sp} ,  t_o +\bar\varepsilon(\varepsilon\bsom)^{\mfd-p}(c_o\varrho)^{sp} ]$,  we have the measure information over the cylinder 
		\[
		B_{\de_x (\sigma\de\ve\bsom)^{\frac{\mfd-2}{sp}}(c_o\varrho)} \times (\hat{t} - \de_t (\sigma\de\ve\bsom)^{\mfd-p}(c_o\varrho)^{sp} ,\hat{t}] \txt{with} \de_x =(\de\sigma)^{\frac{2-\mfd}{sp}} \quad \text{and} \quad \de_t = \bar\ve \de^{p-\mfd}.
		\]
		Thus, applying \cref{lemma3.5}, we get
		\begin{equation*}
			|\{\pm(\bsmu^{\pm}-u) \leq \tfrac12 \sigma \ve \bsom\} \cap \mcQ| \leq \mopC \lbr \tfrac{\de_x^{sp}}{\de_t}+1\rbr \tfrac{\sigma^{p-1}}{\al (1-\sigma)^{p-1}} |\mcQ|, 
		\end{equation*}
		where we have denoted $\mcQ = B_{\theta_x c_o \varrho} \times (\hat{t}- \de_t (\sigma\ve\bsom)^{\mfd-p}(c_o\varrho)^{sp},  \hat{t}]$.
		
		We note that the tail alternative holds if we choose $c_o^{sp} \leq ( \sigma \de\ve)^{p-1}$ as in \descrefnormal{step1sing}{Step 1}.
		
		\item[Step 3:] We now apply \cref{lemma3.2} to get $\nu_1 \approx_{\datanb{}} \lbr \tfrac{1}{\de_x^{sp}} + \tfrac{1}{\de_t}\rbr^{-\frac{n+sp}{sp}} (\de_x^n\de_t)^{-1}$ such that the following conclusion follows:
			\begin{equation*}
			\pm\left(\bsmu^{\pm}-u\right)\geq\tfrac{1}4\sigma \de\varepsilon \bsom
			\quad
			\mbox{ on }\quad B_{\frac12\theta_x c_o \varrho} \times (\hat{t} - \de_t (\sigma\de\ve\bsom)^{\mfd-p}(\tfrac12c_o\varrho)^{sp} , \hat{t}].
		\end{equation*}
 Again we can choose $c_o^{sp} \leq (\sigma \de\ve)^{p-1}$ small enough such that the tail alternative holds. 
	\item[Step 4:] We need to choose $\sigma$ such that 
	\[
	\mopC \lbr \tfrac{\de_x^{sp}}{\de_t}+1\rbr \tfrac{\sigma^{p-1}}{\al (1-\sigma)^{p-1}} \leq \mopC \lbr \tfrac{1}{\de_x^{sp}} + \tfrac{1}{\de_t}\rbr^{-\frac{n+sp}{sp}} (\de_x^n\de_t)^{-1}.
	\]
	Recalling $\de_x =(\de\sigma)^{\frac{2-\mfd}{sp}}$ and  $\de_t = \bar\ve \de^{p-\mfd}$, the left hand side has the form
	\[
	\mopC \lbr \tfrac{\de_x^{sp}}{\de_t}+1\rbr \tfrac{\sigma^{p-1}}{\al (1-\sigma)^{p-1}} \approx \mopC (\sigma^{2-\mfd} +1) \sigma^{p-1} = \mopC (\sigma^{p-\mfd+1} + \sigma^{p-1}).
	\]
	The right hand side has the form 
	\[
		\mopC \lbr \tfrac{1}{\de_x^{sp}} + \tfrac{1}{\de_t}\rbr^{-\frac{n+sp}{sp}} (\de_x^n\de_t)^{-1}  \approx  \mopC \frac{\sigma^{(\mfd-2)\frac{n}{sp}}}{ \lbr \sigma^{\mfd-2} + 1 \rbr^{1+\frac{n}{sp}}} \\
		 \overset{\sigma\in(0,\frac12)}{\geq}   \mopC \sigma^{(\mfd-2)\frac{n}{sp}}.
	\]
	Based on the above calculations, if we denote $\mfd = 2+\epsilon$, then we need to ensure the following is satisfied:
	\begin{enumerate}[(i)]
		\item $p-\mfd +1 = p-1-\epsilon > 0$.
		\item $p-\mfd +1  = p-1-\epsilon> (\mfd-2)\tfrac{n}{sp} = \epsilon\tfrac{n}{sp}$.
		\item $p-1 > (\mfd-2)\tfrac{n}{sp} = \epsilon\tfrac{n}{sp}$.
	\end{enumerate}
This can be ensured if we select $\epsilon = \min\left\{p-1, \tfrac{sp(p-1)}{n}, \tfrac{sp(p-1)}{n+sp}\right\}$. In particular, if we fix some $1<p_o<2$, then $p_o-1>0$ and we can choose $\epsilon = \epsilon_{p_o} = \min\left\{p_o-1, \tfrac{sp_o(p_o-1)}{n}, \tfrac{sp_o(p_o-1)}{n+3s}\right\}$ such that we have reduction of oscillation in the range $(p_o,2+\ve_{p_o})$. 
\item[Step 5:] From step 3, we have for all $\hat{t} \in (t_o+\de_t (\sigma\de\ve\bsom)^{\mfd-p}(c_o\varrho)^{sp} ,  t_o +\bar\varepsilon(\varepsilon\bsom)^{\mfd-p}(c_o\varrho)^{sp} ]$,
\begin{equation*}
			\pm\left(\bsmu^{\pm}-u(\cdot,  \hat{t})\right)\geq\tfrac{1}4\sigma \de\varepsilon \bsom
			\quad
			\mbox{ on }\quad B_{\frac12\theta_x c_o \varrho}.
		\end{equation*}
	Now we can further choose $\sigma$ small if needed such that 
	$$
\de_t (\sigma\de\ve\bsom)^{\mfd-p}(c_o\varrho)^{sp} = \bar\ve	(\sigma \ve\bsom)^{\mfd-p}(c_o\varrho)^{sp} \leq \tfrac12 \bar\varepsilon(\varepsilon\bsom)^{\mfd-p}(c_o\varrho)^{sp}.
	$$
This can be ensure by taking $\sigma \leq (\tfrac{1}{2})^{\frac{1}{\mfd -p}}$ and we have that 
$$
(t_o + \tfrac12 \bar\varepsilon(\varepsilon\bsom)^{\mfd-p}(c_o\varrho)^{sp} , t_o +\bar\varepsilon(\varepsilon\bsom)^{\mfd-p}(c_o\varrho)^{sp}] \subset (t_o+\de_t (\sigma\de\ve\bsom)^{\mfd-p}(c_o\varrho)^{sp} ,  t_o +\bar\varepsilon(\varepsilon\bsom)^{\mfd-p}(c_o\varrho)^{sp} ].
	$$
	\end{description}
This  completes the proof of the lemma. 
\end{proof}

\begin{lemma}
	\label{reductionoscsingunified}
	There exists two constants $\eta$ and $\boldsymbol{C}_o$ depending only on data, such that if we denote $\bsom_1:= (1-\eta)\bsom$ and $\varrho_1:= \boldsymbol{C}_o^{-1}\varrho$, then we have 
	\[
	\essosc_{\mcQ_2} u \leq (1-\eta) \essosc_{\mcQ_1},
	\]
	where $\mcQ_2 = B_{\bsom_1^{\frac{\mfd-2}{sp}} \varrho_1} \times (- \bsom_1^{\mfd-p}\varrho_1^{sp},0)$ and $\mcQ_1 = B_{\bsom^{\frac{\mfd-2}{sp}} \varrho} \times (- \bsom^{\mfd-p}\varrho^{sp},0]$. 
\end{lemma}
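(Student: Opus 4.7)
The plan is to deduce this oscillation reduction from Proposition \ref{prop3.13} by combining a dichotomy on a carefully chosen time slice with the observation that $\essosc_{\mcQ_1} u \le \bsom$. Write $\bsmu^{\pm}$ for the ess sup/inf of $u$ on $\mcQ_1$ and set $\omega := \essosc_{\mcQ_1} u = \bsmu^+ - \bsmu^-$; we then define
$$\varepsilon := \frac{\omega}{2\bsom} \in (0,\tfrac12],\qquad \text{so that}\qquad \varepsilon\bsom = \tfrac12(\bsmu^+ - \bsmu^-).$$
On any time slice $t_o$ inside $\mcQ_1$, the ball $B_{\bsom^{(\mfd-2)/sp}\varrho}$ is partitioned (up to a null set) by $\{u(\cdot,t_o)\le \tfrac12(\bsmu^++\bsmu^-)\}$ and $\{u(\cdot,t_o)\ge \tfrac12(\bsmu^++\bsmu^-)\}$, so one of these sets has measure at least half the ball. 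In the two cases this translates to $|\{\pm(\bsmu^\pm - u(\cdot,t_o))\ge \varepsilon\bsom\}\cap B|\ge \tfrac12|B|$, which is precisely the measure hypothesis of \cref{prop3.13} with $\alpha = \tfrac12$.

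I would next pick $t_o$ sitting at the bottom portion of $\mcQ_1$'s time interval (say inside $(-\bsom^{\mfd-p}\varrho^{sp},\,-\tfrac12\bsom^{\mfd-p}\varrho^{sp}]$) so that the conclusion cylinder of \cref{prop3.13} -- which lives in a strip reaching up to a later time -- is contained in $\mcQ_1$ and reaches $t=0$. Applying \cref{prop3.13} with this $\alpha$, $\varepsilon$, and a choice $c_o = c_o(\eta,\varepsilon)$ satisfying the required tail smallness $c_o^{sp/(p-1)}\le \eta\varepsilon$ (which is arranged exactly as in \descref{step1}{Step 1} of \cref{redoscfirstp>2scaled}, using the Tail bound of \cref{Tailassumption} and the fact that $\bsom^{\mfd-2}\varrho^{sp}$ is dominated by the corresponding quantity on the reference scale), yields $\eta,\delta\in(0,1)$ depending only on data, such that
$$\pm(\bsmu^\pm - u) \ge \eta\varepsilon\bsom = \tfrac{\eta}{2}\omega \qquad \text{on}\qquad B_{(\varepsilon\bsom)^{(\mfd-2)/sp}\tfrac12 c_o\varrho}\times (\tfrac12 \delta(\varepsilon\bsom)^{\mfd-p}(c_o\varrho)^{sp} + t_o,\,0].$$
In either alternative this immediately upgrades to $\essosc u \le \omega - \tfrac{\eta}{2}\omega = (1-\tfrac{\eta}{2})\omega$ on that smaller cylinder.

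To land on the target cylinder $\mcQ_2 = B_{\bsom_1^{(\mfd-2)/sp}\varrho_1}\times(-\bsom_1^{\mfd-p}\varrho_1^{sp},0)$ with $\bsom_1 = (1-\eta/2)\bsom$ and $\varrho_1 = \boldsymbol{C}_o^{-1}\varrho$, I choose $\boldsymbol{C}_o = \boldsymbol{C}_o(c_o,\eta)$ large enough that
$$\bsom_1^{(\mfd-2)/sp}\varrho_1 \le (\varepsilon\bsom)^{(\mfd-2)/sp}\tfrac12 c_o\varrho\qquad\text{and}\qquad \bsom_1^{\mfd-p}\varrho_1^{sp}\le \tfrac12\delta (\varepsilon\bsom)^{\mfd-p}(c_o\varrho)^{sp}.$$
When $\omega$ is comparable to $\bsom$ (i.e., $\varepsilon\gtrsim 1$) this is a one-line check since all the factors become absolute constants. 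In the complementary regime $\omega \ll \bsom$ (very small $\varepsilon$) the inequalities as stated fail, but because $\mfd>p$ and $\mfd>2$ the exponents on $\varepsilon$ are positive, so $\mcQ_2$ shrinks slower than the \cref{prop3.13} cylinder; this case is handled by first choosing $\boldsymbol{C}_o$ sufficiently large so that $\mcQ_2\subset \mcQ_1$ and invoking instead the trivial monotonicity $\essosc_{\mcQ_2}u \le \omega$ together with the baseline observation $\omega \le \bsom$, which in the iteration replaces the required $(1-\eta)$ decay by a decay in the scale parameter $\varrho$ alone. Taking $\eta$ in the statement to be $\eta/2$ from \cref{prop3.13} then completes the argument.

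The main obstacle is the bookkeeping of the cylinder containment across the two regimes $\omega\sim\bsom$ and $\omega\ll\bsom$: the scaling in \cref{prop3.13} is calibrated by $\varepsilon\bsom = \omega/2$, whereas the target $\mcQ_2$ is calibrated by $\bsom_1\sim\bsom$. Reconciling these requires a judicious choice of $\boldsymbol{C}_o$ that absorbs the possible imbalance, and then verifying throughout that the tail alternative of \cref{prop3.13} remains satisfied under the rescaling forced by $\varrho_1 = \boldsymbol{C}_o^{-1}\varrho$ --- this is precisely the step where \cref{Rmk:5.1}, combined with the choice of $\varrho$ in \cref{sizerhosing}, is used to transfer the global tail hypothesis to the intrinsic scale.
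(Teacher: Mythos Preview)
Your approach has a genuine gap in the choice of $\varepsilon$. By setting $\varepsilon = \omega/(2\bsom)$ with $\omega = \essosc_{\mcQ_1} u$, you make $\varepsilon$ depend on the solution. Proposition~\ref{prop3.13} then requires $c_o^{sp/(p-1)} \le \eta\varepsilon$, so $c_o$ also depends on the solution, and since $\boldsymbol{C}_o$ must absorb $c_o^{-1}$, it cannot be chosen ``depending only on data'' as the lemma demands. You recognize this in the $\omega \ll \bsom$ regime, but your proposed workaround --- falling back on trivial monotonicity and ``decay in the scale parameter $\varrho$ alone'' --- does not actually establish the stated inequality on $\mcQ_2$; it merely punts the problem to a later iteration step that is not part of this lemma.

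The paper avoids this entirely by fixing $\varepsilon = \tfrac14$ once and for all. The midpoint dichotomy then reads: one of
\[
|\{\pm(\bsmu^\pm - u(\cdot,\tau)) \ge \tfrac14\bsom\}\cap B| \ge \tfrac12|B|
\]
must hold, and this is automatic provided $\bsmu^+ - \bsmu^- \ge \tfrac12\bsom$ (the complementary case giving the reduction trivially, with the conclusion read as $\essosc_{\mcQ_2} u \le (1-\eta)\bsom = \bsom_1$, which is what is actually used downstream). Because $\varepsilon$ is now a universal constant, so are $\delta$, $\eta$, $c_o$, and hence $\boldsymbol{C}_o$. The specific time level is then taken as $\tau = -\delta(\tfrac14\bsom)^{\mfd-p}(c_o\varrho)^{sp}$ so that the conclusion cylinder of Proposition~\ref{prop3.13} reaches exactly up to $t=0$, after which one simply chooses $\boldsymbol{C}_o^{-1} \le \min\{\delta c_o(1-\eta)^{(2-\mfd)/sp},\, c_o(1-\eta)^{(p-\mfd)/sp}\}$ to guarantee $\mcQ_2$ sits inside it. No case split on the size of $\omega/\bsom$ is needed for the choice of constants.
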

\begin{proof}
We now obtain the reduction of oscillation as follows:
\begin{description}[leftmargin=*]
	\item[Step 1:] Let us define the time level $\tau := -\de (\ve \bsom)^{\mfd-p} (c_o\varrho)^{sp}$ where the constants $\de$ and $c_o$ are as obtained in \cref{prop3.13} with $\ve = \tfrac14$ and $\alpha = \tfrac12$. 
	\item[Step 2:] One of the two alternatives now must hold: 
	\begin{equation}\label{eq3.13}
		\left\{\begin{array}{l}
		|\{+ (\bsmu^{+} -u(\cdot,\tau))\geq \tfrac14 \bsom\}\cap B_{ (\frac14\bsom)^{\frac{\mfd-2}{sp}} (c_o\varrho)}| \geq \alpha |B_{ (\frac14\bsom)^{\frac{\mfd-2}{sp}} (c_o\varrho)}|,\\
		|\{- (\bsmu^{-} -u(\cdot,\tau))\geq \tfrac14 \bsom\}\cap B_{ (\frac14\bsom)^{\frac{\mfd-2}{sp}} (c_o\varrho)}| \geq \alpha |B_{ (\frac14\bsom)^{\frac{\mfd-2}{sp}} (c_o\varrho)}|.
		\end{array}\right.
	\end{equation}
\item[Step 3:] Without loss of generality, let us assume the second alternative in \cref{eq3.13} holds, noting that the other case follows analogously. 
\item[Step 4:] We now apply \cref{prop3.13} to find corresponding constants such that the following conclusion follows:
\[
-(\bsmu^{-} - u) \geq \eta\bsom \quad \text{on} \quad B_{(\frac14\bsom)^{\frac{\mfd-2}{sp}} \frac12c_o \varrho} \times (-\tfrac12 \de (\tfrac14\bsom)^{\mfd-p}(c_o\varrho)^{sp},0].
\]
The choice of $c_o$ is chosen to ensure the tail alternative is satisfied. 
\item[Step 5:] Let us take $\bsom_1 := (1-\eta)\bsom$ and $\varrho_1 = \boldsymbol{C}_o^{-1}\varrho$ with 
\[
\boldsymbol{C}_o^{-1} \leq \min\{\de c_o (1-\eta)^{\frac{2-\mfd}{sp}},  c_o (1-\eta)^{\frac{p-\mfd}{sp}}\},
\]
then we will have 
\[
\essosc_{\mcQ_2} u \leq (1-\eta) \essosc_{\mcQ_1},
\]
where $\mcQ_2 = B_{\bsom_1^{\frac{\mfd-2}{sp}} \varrho_1} \times (- \bsom_1^{\mfd-p}\varrho_1^{sp},0)$ and $\mcQ_1 = B_{\bsom^{\frac{\mfd-2}{sp}} \varrho} \times (- \bsom^{\mfd-p}\varrho^{sp},0]$. 
\end{description}
This completes the proof of the reduction of oscillation. 
\end{proof}


\section{Four Lemmas for \texorpdfstring{$(p,q)$}. nonlocal quasilinear equation}

	\subsection{de Giorgi iteration} 
	The first lemma we prove is a de Giorgi iteration:
	\begin{lemma}\label{lemma4.21}
		Let $u$ be a weak solution of \cref{maineq}. Given $\de_x, \de_t, \varepsilon \in (0,1)$, set $\theta_t = \de_t (\varepsilon \bsom)^{\mfd-p}$, $\theta_x = \de_x (\varepsilon \bsom)^{\frac{\mfd-q }{sp}}$ 
		and denote $\mcQ_{c_o\varrho}^{\theta_x,\theta_t} = B_{\theta_x c_o\varrho} \times (-\theta_t (c_o\varrho)^{sp},0)$. Then there exists a constant $\nu_1 = \nu(\datanb{,\delta_x,\delta_t}) \in (0,1)$ such that if 
		\begin{equation*}
			\left|\left\{
			\pm\left(\bsmu^{\pm}-u\right)\leq \varepsilon \bsom\right\}\cap \mcQ_{c_o\varrho}^{\theta_x,\theta_t}\right|
			\leq
			\nu_1|\mcQ_{c_o\varrho}^{\theta_x,\theta_t}|,
		\end{equation*}
		holds along with the assumption 
		\begin{equation*}
			c_o^{\frac{sp}{p-1}}\tail((u-\bsmu_-)_-, \theta_x\varrho,0,(-\theta_t \varrho^{sp},0))\leq \varepsilon\bsom,  \quad \text{and} \quad |\bsmu^\pm| \leq 8\ve \bsom,
		\end{equation*}
		then the following conclusion follows:
		\begin{equation*}
			\pm\left(\bsmu^{\pm}-u\right)\geq\tfrac{1}2\varepsilon \bsom
			\quad
			\mbox{ on }\quad \mcQ_{\frac{1}{2}c_o\varrho}^{\theta_x,\theta_t} = B_{\theta_x\frac{c_o\varrho}2} \times \left(-\theta_t\lbr \tfrac12c_o\varrho\rbr^{sp}, 0\right].
		\end{equation*}
	If we denote $\Gamma := \lbr \tfrac{1}{\de_x^{sp}} + \tfrac{1}{\de_t}\rbr^{\frac{n+sp}{p(n+2s)}} (\de_x^n\de_t)^{\frac{s}{(n+2s)}}$, then the relation between $\nu$ and $\Gamma$ takes the form $ \nu_1 \approx \Gamma^{-\frac{n+2s}{s}}$ or in particular, $\nu_1 \approx \lbr \tfrac{1}{\de_x^{sp}} + \tfrac{1}{\de_t}\rbr^{-\frac{n+sp}{sp}} (\de_x^n\de_t)^{-1}$.
	\end{lemma}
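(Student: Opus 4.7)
The plan is to mimic the de Giorgi iteration used in \cref{lemma3.2} for the case $q=2$, with the close-to-zero hypothesis $|\bsmu^\pm|\le 8\ve\bsom$ playing the role that trivially holds when $q=2$ (in which case $\mfg_\pm^2(u,k)$ is comparable to $(u-k)_\pm^2$ directly). I would introduce the same decreasing sequences $k_j,\tilde k_j$, radii $\varrho_j,\tilde\varrho_j,\hat\varrho_j,\bar\varrho_j$, balls $B_j,\tilde B_j$, cylinders $\mcQ_j,\tilde\mcQ_j$ and cut-off functions $\bar\zeta_j,\zeta_j$ as in \cref{lemma3.2}, only replacing the intrinsic scaling $\theta_x=\de_x(\ve\bsom)^{(\mfd-2)/sp}$ by the new one $\theta_x=\de_x(\ve\bsom)^{(\mfd-q)/sp}$, so that the powers of $\ve\bsom$ continue to cancel in the final balance. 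The energy inequality from \cref{Prop:energy} is then applied at each stage with level $k_j$ and cut-off $\bar\zeta_j$.

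The main new ingredient is the conversion of $\mfg_\pm^q$ to a quadratic proxy. If $(u-k_j)_-(x,t)>0$ then $u(x,t)<k_j$, and combined with $(u-k_j)_-\le\ve\bsom$ and $|\bsmu^\pm|\le 8\ve\bsom$ this forces $|u|+|k_j|\lesssim\ve\bsom$ on the entire support of $(u-k_j)_-$, so \cref{lem:g} yields the two-sided comparison $\mfg_\pm^q(u,k_j)\approx (\ve\bsom)^{q-2}(u-k_j)_\pm^2$. Inserting this into \cref{Prop:energy} and estimating the four right-hand side terms exactly as in \cref{lemma3.2}---the diffusion contribution, the time-derivative term (which now scales like $(\ve\bsom)^q/\theta_t$), the vanishing initial-data term, and the tail term absorbed via the tail hypothesis together with $c_o^{sp/(p-1)}\le\ve\bsom$---one obtains
\begin{equation*}
(\ve\bsom)^{q-2}\esssup_{t}\int_{\tilde B_j}(u-k_j)_-^2\,dx+\iiint_{\tilde\mcQ_j}\frac{|(u-k_j)_-(x,t)-(u-k_j)_-(y,t)|^p}{|x-y|^{n+sp}}\,dx\,dy\,dt\le \mopC\,2^{(n+sp)j}|A_j|\,\frac{(\ve\bsom)^{p+q-\mfd}}{(c_o\varrho)^{sp}}\Bigl(\tfrac{1}{\de_x^{sp}}+\tfrac{1}{\de_t}\Bigr).
\end{equation*}

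Dividing the $L^\infty_tL^2_x$ piece through by $(\ve\bsom)^{q-2}$ (which introduces the factor $(\ve\bsom)^{2-q}$ into the $\sup\int f^2$ input of \cref{fracpoin}), then combining H\"older's inequality with \cref{fracpoin} applied to $f=(u-k_j)_-\bar\zeta_j$ at $\mathfrak q=p$, I would arrive at the recursion $\bsy_{j+1}\le\bsc\,\Gamma\,\boldsymbol b^j\,\bsy_j^{1+s/(n+2s)}$ for $\bsy_j:=|A_j|/|\mcQ_j|$. The powers of $\ve\bsom$ collapse to exponent $1$ (which cancels the $\ve\bsom$ left over from the level increments on the left-hand side of the iteration) by virtue of the algebraic identity
\begin{equation*}
(p+q-\mfd)\tfrac{n+sp}{p(n+2s)}+\tfrac{s}{n+2s}\Bigl(\tfrac{n(\mfd-q)}{sp}+\mfd-p\Bigr)+\tfrac{(2-q)s}{n+2s}=1,
\end{equation*}
which reduces to the one used in \cref{lemma3.2} at $q=2$. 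An application of \cref{geo_con} then gives the claimed shrinkage with $\nu_1$ depending on $\Gamma=(\tfrac{1}{\de_x^{sp}}+\tfrac{1}{\de_t})^{(n+sp)/(p(n+2s))}(\de_x^n\de_t)^{s/(n+2s)}$, matching the stated dependence.

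The main obstacle is the lower comparison in the conversion $\mfg_\pm^q(u,k_j)\gtrsim(\ve\bsom)^{q-2}(u-k_j)_\pm^2$, which is what feeds the $L^2$ input of \cref{fracpoin}. The upper comparison follows cleanly from $|u|+|k_j|\lesssim\ve\bsom$ and \cref{lem:g} regardless of the value of $q$, but the matching lower comparison additionally requires $|u|+|k_j|\gtrsim\ve\bsom$ on the part of $\{(u-k_j)_-\neq 0\}$ that controls the iteration. This is the mechanism handled in the local setting in \cite{bogeleinHolderRegularitySigned2021} and adapted to the nonlocal scale-invariant setting in \cite{adimurthi2025localholderregularitybounded}: one exploits the close-to-zero hypothesis together with the quantitative spread $k_j-\tilde k_j\gtrsim \ve\bsom/2^j$ of the truncation levels to guarantee that $|u|+|k_j|$ remains comparable to $\ve\bsom$ on the relevant sub-level sets, at which point the lower bound holds with a universal constant and the iteration closes as above.
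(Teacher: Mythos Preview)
Your proposal is correct and follows essentially the same route as the paper: the same iteration scheme as \cref{lemma3.2} with the modified scaling $\theta_x=\de_x(\ve\bsom)^{(\mfd-q)/sp}$, the upper bound $\mfg_\pm^q(u,k_j)\lesssim(\ve\bsom)^q$ on $A_j$ via $|u|+|k_j|\le 18\ve\bsom$, and the lower bound via the level spread $k_j-\tilde k_j=\ve\bsom/2^{j+3}$ on $\{u<\tilde k_j\}$. One small correction: the lower comparison does \emph{not} hold with a universal constant but rather with the $j$-dependent factor $b_j=\min\{2^{(2-q)(j+3)},18^{q-2}\}$ (geometric in $j$), which the paper simply absorbs into the base $\boldsymbol b$ of the recursion before applying \cref{geo_con}.
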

	\begin{proof}
		We prove the lemma in the case of $-(\bsmu^--u)$ noting that the other case follows analogously. For $j=0,1,\ldots$, define
		\begin{equation*}
			{\def\arraystretch{1.1}\begin{array}{cccc}
					k_j:=\bsmu^-+\tfrac{\varepsilon \bsom}2+\tfrac{\varepsilon \bsom}{2^{j+1}},& \tilde{k}_j:=\tfrac{k_j+k_{j+1}}2,&
					\varrho_j:=\tfrac{c_o\varrho}2+\tfrac{c_o\varrho}{2^{j+1}},
					&\tilde{\varrho}_j:=\tfrac{\varrho_j+\varrho_{j+1}}2,\\
					B_j:=B_{\varrho_j}^{\theta_x},& \tilde{B}_j:=B_{\tilde{\varrho}_j}^{\theta_x},&
					\mcQ_j:=\mcQ_{\varrho_j}^{\theta_x,\theta_t},&
					\tilde{\mcQ}_j:=\mcQ_{\tilde\varrho_j}^{\theta_x,\theta_t}.
			\end{array}}
		\end{equation*}
		Furthermore, we also define 
		\begin{equation*}
			\hat{\varrho}_j:=\tfrac{3\varrho_j+\varrho_{j+1}}4, \quad 
			\bar{\varrho}_j:=\tfrac{\varrho_j+3\varrho_{j+1}}4,\quad 
			\hat{\mcQ}_j:=\mcQ_{\hat\varrho_j}^{\theta_x,\theta_t}, \quad
			\bar{\mcQ}_j:=\mcQ_{\bar\varrho_j}^{\theta_x,\theta_t}.
		\end{equation*}
		We now consider a cutoff functions $\bar{\zeta_j}$ and $\zeta_j$ such that
		\begin{equation*}
			\begin{array}{c}
				\bar{\zeta_j} \equiv 1 \text{ on } B_{j+1}, \quad \bar{\zeta_j} \in C_c^{\infty}(\bar{B}_{j}), \quad  |\nabla\bar{\zeta_j}| \apprle \frac{1}{\theta_x(\bar{\varrho_j} - \varrho_{j+1})} \approx \frac{2^j}{\theta_xc_o\varrho} \quad \text{and} \quad  |\pa_t\bar{\zeta_n}| \apprle \frac{1}{\theta_t(\bar\varrho_j^{sp} - \varrho_{j+1}^{sp})} \approx \frac{2^{jsp}}{\theta_t(c_o\varrho)^{sp}}, \\
				{\zeta_j} \equiv 1 \text{ on } \tilde{B}_{j}, \quad {\zeta_j} \in C_c^{\infty}(\hat{B}_{j}), \quad  |\nabla{\zeta_j}| \apprle \frac{1}{\theta_x(\hat{\varrho_n} - \tilde{\varrho_{j}})}\approx \frac{2^{j}}{\theta_xc_o\varrho} \quad \text{and} \quad  |\pa_t{\zeta_j}| \apprle \frac{1}{\theta_t(\hat\varrho_j^{sp} - \tilde{\varrho_{j}}^{sp})}\approx \frac{2^{jsp}}{\theta_t(c_o\varrho)^{sp}}.
			\end{array}
		\end{equation*}
		Let us estimate each of the terms appearing on the right hand side of \cref{Prop:energy} as follows:
		\begin{description}[leftmargin=*]
			\item[Estimate for the first term:] Since $(u-k_j)-_ \leq \ve \bsom$, we get 
			\begin{multline*}
				\int_{-\theta_t \varrho_j^{sp}}^0\iint_{B_j \times B_j}\hspace*{-0.7cm} \frac{\max\{(u-k_j)_{-}(x,t),(u-k_j)_{-}(y,t)\}^{p}|\zeta(x,t)-\zeta(y,t)|^p}{|x-y|^{n+sp}}\,dx\,dy\,dt\\
				\begin{array}{rcl}
					& \leq & (\ve \bsom)^p  \lbr \frac{2^{j}}{\theta_x(c_o\varrho)} \rbr^p \int_{-\theta_t \varrho_j^{sp}}^0 \int_{B_j}\int_{B_{j}} \frac{\lsb{\chi}{\{u(x,t) < k_j\}}}{|x-y|^{n+(s-1)p}}\,dx\,dy\,dt\\
					& \leq & \mopC (\ve \bsom)^p  \lbr \frac{2^{j}}{\theta_x(c_o\varrho)} \rbr^p \int_{-\theta_t \varrho_j^{sp}}^0 \int_{B_j}\int_{4B_{j}} \frac{\lsb{\chi}{\{u(x,t) < k_j\}}}{|y|^{n+(s-1)p}}\,dy\,dx\,dt\\
					& \leq & \mopC (\ve \bsom)^p   \frac{2^{pj}}{(\theta_x \varrho_j)^{sp}} |A_j| \approx \mopC (\ve \bsom)^p   \frac{2^{pj}}{\theta_x^{sp}(c_o\varrho)^{sp}} |A_j|,
				\end{array}
			\end{multline*}
		where we have denoted $A_j := \{u(x,t) \leq k_j\} \cap \mcQ_j$.
			\item[Estimate for the second term:]
			First we make following observations
\begin{itemize}
    \item For $\tilde{k} < k$, there holds $(u-k)_- \geq (u-\tilde{k})_- $.
    \item On $A_j = \{u<k_j \} \cap \mcQ_j$, we have $\bsmu^- \leq u \leq k_j \leq \bsmu^- + \ve \bsom$.
     \item On $A_j = \{u<k_j\} \cap \mcQ_j$, we have $|u| + |k_j| \leq 18 \ve \bsom$ since $|\bsmu^-| \leq 8 \ve \bsom $.
      \item On the set $ \{u<\tilde{k}_j \} $, we have $|u| + |k_j| \geq k_j - u \geq k_j - \tilde{k}_j =  \frac{ \ve \bsom}{2^{j+3}}$ .
      \item On the set $A_{j+1}$, we have $(u-k_j )_- \geq (u- \tilde{k}_j )_- = \tilde k_j - u \geq \tilde k_j - k_{j+1} =  \frac{ \ve \bsom}{2^{j+3}}$.
\end{itemize}
We note that by \cref{lem:g},  we have
$$
\mathfrak g_- ^q (u,k)
\le
\bsc  \lbr|u| + |k|\rbr^{q-2}(u-k)_-^2 \le
\bsc  \lbr|u| + |k|\rbr^{q-1}(u-k)_- .
$$
		Now	 it is easily estimated as follows:
			\begin{equation*}
					\iint_{\mcQ_j} \mfg_{-}^q (u,k_j) |\partial_t\zeta_j^p| \,dx\,dt  \apprle  (\ve \bsom)^q |A_j| \frac{2^{jsp}}{\theta_t (c_o \varrho)^{sp}}.
			\end{equation*}
		\item[Estimate for the third term:] This term is zero as our cut-off functions are chosen to have zero values on the parabolic boundary of $\mcQ_j$. 
			\item[Estimate for the fourth term:] Since $x \in \spt \zeta_j \Longrightarrow |x| \leq \theta_x \hat{\varrho_j}$ and $|y| \geq \theta_x\varrho_j$, we get
			\[
			\frac{|y-x|}{|y|} \geq \frac{\varrho_j - \hat{\varrho_j}}{\hat{\varrho_j}} = \frac14 \lbr  \frac{\varrho_j - \varrho_{j+1}}{\varrho_j}\rbr \geq \frac{1}{2^{j+4}},
			\]
			which allows us to estimate as follows:
			\begin{multline*}
				\lbr  \underset{\stackrel{t \in (-\theta_t \varrho_j^{sp},0)}{x\in \spt \zeta_j}}{\esssup}\,\int_{\RR^n \setminus B_j}\frac{(u-k_j)_{-}^{p-1}(y,t)}{|x-y|^{n+sp}}\,dy\rbr\iint_{(-\theta_t \varrho_j^{sp},0)\times B_j}\hspace*{-0.6cm} (u-k_j)_{-}(x,t)\zeta_j^p(x,t)\,dx\,dt\\
				\begin{array}{rcl}
					& \leq & \mopC 2^{(n+sp)j}(\ve \bsom) |A_j| \lbr\underset{t \in (-\theta_t \varrho_j^{sp},0)}{\esssup}\,\int_{\RR^n \setminus B_j}\frac{(\ve\bsom + (\bsmu_--u)_+ )^{p-1}(y,t)}{|y|^{n+sp}}\,dy \rbr\\
					& \leq & \mopC 2^{(n+sp)j}(\ve \bsom) |A_j|  \lbr\frac{(\ve \bsom)^{p-1}}{(\theta_x\varrho_j)^{sp}}+\underset{t \in (-\theta_t \varrho_j^{sp},0)}{\esssup}\,\int_{\RR^n \setminus B_j}\frac{(u -\bsmu_- )_-^{p-1}(y,t)}{|y|^{n+sp}}\,dy \rbr\\
					& = & \mopC 2^{(n+sp)j}(\ve \bsom) |A_j|  \lbr\frac{(\ve \bsom)^{p-1}}{(\theta_x\varrho_j)^{sp}}+\frac{1}{(\theta_x \varrho_j)^{sp}}\underset{t \in (-\theta_t \varrho_j^{sp},0)}{\esssup} (\theta_x \varrho_j)^{sp}\int_{\RR^n \setminus B_j}\frac{(u -\bsmu_- )_-^{p-1}(y,t)}{|y|^{n+sp}}\,dy \rbr\\
					& \overred{a.1}{a}{\leq} & \mopC 2^{(n+sp)j}(\ve \bsom) |A_j|  \lbr\frac{(\ve \bsom)^{p-1}}{(\theta_x\varrho_j)^{sp}}+\frac{c_o^{sp}\tail((u-\bsmu_-)_-, \theta_x\varrho,0,(-\theta_t \varrho^{sp},0))^{p-1} }{(\theta_x\varrho_j)^{sp}}\rbr\\
					& \leq & \mopC 2^{(n+sp)j}(\ve \bsom) |A_j|  \frac{(\ve \bsom)^{p-1}}{(\theta_x\varrho_j)^{sp}} \approx  \mopC 2^{(n+sp)j}(\ve \bsom) |A_j|  \frac{(\ve \bsom)^{p-1}}{\theta_x^{sp}(c_o\varrho)^{sp}},
				\end{array}
			\end{multline*}
		where \redref{a.1}{a} follows from \cref{Rmk:5.1}. 
				\item[Estimate for the first term on left side of inequality:]
				    
       First by \cref{lem:g},  we have that
$$
\mathfrak g_- ^q (u,k)
\geq
\frac{1}{\bsc}  \lbr|u| + |k|\rbr^{q-2}(u-k)_-^2 .
$$
We combine this with the following observation,  when $1<q<2$ we use that on set $A_j = \{u<k_j\} \cap \mcQ_j$, we have $|u| + |k_j| \leq 18 \ve \bsom$ and when $2<q<\infty$
we use that on the set $ \{u<\tilde{k}_j \} $, we have $|u| + |k_j| \geq k_j - u \geq k_j - \tilde{k}_j \geq  \frac{ \ve \bsom}{2^{j+3}}.$  That gives us
$$
\underset{-\theta_t \varrho_j^{sp} < t < 0}{\esssup}\int_{B_j} \mathfrak g_- ^q (u,k_j) \zeta_j^p \geq
\underset{-\theta_t \varrho_j^{sp} < t < 0}{\esssup}\int_{B_j}(|u|+|k_j|)^{q-2}(u-k_j)_{-}^2\zeta_j^p \,dx \geq \frac{b_j} {C} (\ve \bsom)^{q-2}\underset{(-\theta_t \tilde\varrho_j^{sp},0)}{\esssup}
			\int_{\tilde{B}_j} (u-{k}_j)_-^2\,dx,
$$
where $b_j = \min \{ 2^{(2-q)(j+3)} ,  18^{q-2} \}$.
		\end{description}
	Combining the previous five estimates, we get
	\begin{multline}\label{Eq:4.21}
		\frac{b_j}{C} (\ve \bsom)^{q-2}	\underset{(-\theta_t \tilde\varrho_j^{sp},0)}{\esssup}
			\int_{\tilde{B}_j} (u-{k}_j)_-^2\,dx
			+\iiint_{\tilde{\mcQ}_j}\frac{|(u-{k}_j)_{-}(x,t)-(u-{k}_j)_{-}|^p}{|x-y|^{n+sp}}\,dx \,dy\,dt
			\\\leq
			\mopC 2^{(n+sp)j} |A_j| \lbr  \frac{(\ve \bsom)^{p}}{\theta_x^{sp}(c_o\varrho)^{sp}} +  \frac{(\ve \bsom)^q}{\theta_t (c_o \varrho)^{sp}}\rbr.
	\end{multline}
Using Young's inequality, we have
\begin{multline*}
	|(u-{k}_{j})_- \bar{\zeta_j}(x,t) - (u-{k}_{j})_- \bar{\zeta_j}(y,t)|^p \leq c |(u-{k}_{j})_- (x,t) - (u-{k}_{j})_- (y,t)|^p\bar{\zeta_j}^p(x,t) \\
	+ c |(u-{k}_{j})_-(y,t)|^p |\bar{\zeta_j}(x,t) - \bar{\zeta_j}(y,t)|^p,
\end{multline*}
from which we obtain the following sequence of estimates:
\begin{equation*}
	\begin{array}{rcl}
		\frac{\ve \bsom}{2^{j+2}}
		|A_{j+1}|
		&\overred{4.41a}{a}{\leq} &
		\iint_{\tilde\mcQ_{j}}(u-{k}_j)_-\bar{\zeta_j}\,dx\,dt \\
		&\overred{4.41b}{b}{\leq} &
		\lbr\iint_{\tilde\mcQ_{j}}\left[(u-{k}_j)_-\bar{\zeta_j}\right]^{p\frac{n+2s}{n}}
		\,dx\,dt\rbr^{\frac{n}{p(n+2s)}}|A_j|^{1-\frac{n}{p(n+2s)}}\\
		&\overred{4.41c}{c}{\leq} &\mopC 
		\left(\iiint_{\tilde\mcQ_j}\frac{|(u-{k}_j)_{-}(x,t)\bar{\zeta_j}(x,t)-(u-{k}_j)_{-}\bar{\zeta_j}(y,t)|^p}{|x-y|^{n+sp}}\,dx \,dy\,dt\right)^{\frac{n}{p(n+2s)}} 
		\\&&\qquad\times \left(\underset{-\theta_t\tilde\varrho_j^{sp} < t < 0}{\esssup}\int_{\tilde{B}_j}[(u-\tilde{k}_j)_{-}\bar\zeta_j(x,t)]^2\,dx\right)^{\frac{s}{n+2s}}|A_j|^{1-\frac{n}{p(n+2s)}}\\
		&\overred{4.41d}{d}{\leq}&
		\mopC b_o^j   \lbr  \frac{(\ve \bsom)^{p}}{\theta_x^{sp}(c_o\varrho)^{sp}} +  \frac{(\ve \bsom)^q}{\theta_t (c_o \varrho)^{sp}}\rbr^{\frac{n}{p(n+2s)}}  \lbr (\ve \bsom)^{2-q} \left[ \frac{(\ve \bsom)^{p}}{\theta_x^{sp}(c_o\varrho)^{sp}} +  \frac{(\ve \bsom)^q}{\theta_t (c_o \varrho)^{sp}} \right] \rbr^{\frac{s}{(n+2s)}}
		|A_j|^{1+\frac{s}{n+2s}} \\
		&\overred{4.41e}{e}{\leq} &
		\mopC
		\frac{b_o^j}{(c_o\varrho)^\frac{s(n+sp)}{n+2s}}
		(\ve\bsom)^{\lbr (p+q- \mfd) {\frac{n}{p(n+2s)}} + (p+2-\mfd){\frac{s}{(n+2s)}}\rbr}
		\lbr \tfrac{1}{\de_x^{sp}} + \tfrac{1}{\de_t}\rbr^{\frac{n+sp}{p(n+2s)}}|A_j|^{1+\frac{s}{n+2s}}, 
	\end{array}
\end{equation*}
where to obtain \redref{4.41a}{a}, we made use of the observations and enlarged the domain of integration with $\bar{\zeta}_j$; to obtain \redref{4.41b}{b}, we applied H\"older's inequality; to obtain \redref{4.41c}{c}, we applied \cref{fracpoin}; to obtain \redref{4.41d}{d}, we made use of \cref{Eq:4.21}  with $\mopC = \mopC_{\data{}}$ and finally we collected all the terms to obtain \redref{4.41e}{e}, where $b_o = b_o(\datanb{})\geq 1$ is a constant. Setting
$\bsy_j=|A_j|/|\mcQ_j|$ and noting $|\mcQ_{i+1}| \approx |\mcQ_j| \approx \de_x^n \de_t (\ve \bsom)^{\frac{n(\mfd - q)}{sp}+\mfd -p} (c_o\varrho)^{n+sp}$,  we get
\[
\frac{\ve \bsom}{2^{j+2}} \bsy_{j+1} \leq \mopC  \bsy_j^{1+\frac{s}{n+2s}}\frac{b_o^j}{(c_o\varrho)^\frac{s(n+sp)}{n+2s}}
(\ve\bsom)^{\lbr(p+q- \mfd) {\frac{n}{p(n+2s)}} + (p+2-\mfd){\frac{s}{(n+2s)}} \rbr }
\lbr \tfrac{1}{\de_x^{sp}} + \tfrac{1}{\de_t}\rbr^{\frac{n+sp}{p(n+2s)}} |\mcQ_{j}|^{\frac{s}{n+2s}}.
\]
Simplifying this expression and noting that $\lbr \tfrac{(p+q-\mfd )n}{p(n+2s)} +\tfrac{ (p+2-\mfd) s}{n+2s} \rbr + \lbr \tfrac{s}{n+2s}\rbr \lbr \tfrac{n(\mfd - q)}{sp} +  \mfd -p  \rbr = 1$,  we get
\begin{equation*}
	\bsy_{j+1}
	\le
	\bsc \Gamma  \boldsymbol b^j  \bsy_j^{1+\frac{s}{n+2s}},
\end{equation*}
where $\Gamma := \lbr \tfrac{1}{\de_x^{sp}} + \tfrac{1}{\de_t}\rbr^{\frac{n+sp}{p(n+2s)}} (\de_x^n\de_t)^{\frac{s}{(n+2s)}}$ and   $\bsc  = \bsc_{\data{}} \geq 1$,  $\boldsymbol b = \boldsymbol{b}_{\data{}}\geq  1$ are two constants depending only on the data. The conclusion now follows from \cref{geo_con}.
	\end{proof}
\subsection{de Giorgi iteration with quantitative initial data}
The second lemma we prove is a de Giorgi iteration involving quantitative initial data.
\begin{lemma}\label{lemma4.31}
	Let $u$ be a weak solution of \cref{maineq}. Given $\de_x, \varepsilon \in (0,1)$, there exists $\nu_2 = \nu_1(\datanb{})\in (0,1)$ such that if we take $\theta_x = \de_x (\varepsilon \bsom)^{\frac{\mfd - q }{sp}}$, $\de_t = \nu_2 \de_x^{sp}$ and suppose for some time level $t_o$, the following assumptions are satisfied:
	\begin{equation*} 
	\left\{ \begin{array}{l}	\pm(\bsmu^{\pm}-u(\cdot,t_o))\geq \varepsilon \bsom \txt{on} B_{\theta_x c_o\varrho},\\
		c_o^{\frac{sp}{p-1}}\tail((u-\bsmu_-)_-, \theta_x\varrho,0,(t_o,\rint))\leq \varepsilon\bsom,
		\end{array} \right.
	\end{equation*}
where $\rint := t_o + \de_t (\ve \bsom )^{\mfd -p}(c_o\varrho)^{sp}$. Furthermore we assume 
 $|\bsmu^\pm| \leq 8\ve \bsom$  holds when $1<q<2$,	then the following conclusion follows:
	\begin{equation*}
		\pm(\bsmu^{\pm}-u)\geq\tfrac{1}2\varepsilon \bsom
		\quad
		\mbox{ on }\quad  B_{\theta_x\frac{c_o\varrho}2} \times (t_o, t_o+\nu_2 \de_x^{sp}(\ve \bsom)^{\mfd-p}(c_o\varrho)^{sp}].
	\end{equation*}

\end{lemma}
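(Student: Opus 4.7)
The plan is to mirror the argument of \cref{lemma3.3} (which handles the case $q=2$) but replace the energy input with the $(p,q)$-version used in \cref{lemma4.21}, so that the proof becomes a hybrid of the two. Define the same geometric sequences
\[
k_j:=\bsmu^-+\tfrac{\ve\bsom}{2}+\tfrac{\ve\bsom}{2^{j+1}},\quad \varrho_j:=\tfrac{c_o\varrho}{2}+\tfrac{c_o\varrho}{2^{j+1}},\quad B_j:=B_{\theta_x\varrho_j},\quad \mcQ_j := B_j\times(t_o,\rint),
\]
together with cutoffs $\bar\zeta_j,\zeta_j$ \emph{independent of time} supported in the appropriate shrinking balls (as in \cref{lemma3.3}), and apply \cref{Prop:energy} on the forward cylinder $B_j\times(t_o,\rint)$ at level $k_j$.

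Since the test function is time independent and the hypothesis gives $(u-k_j)_-(\cdot,t_o)\equiv 0$ on $B_j$, the second and third terms on the right-hand side of \cref{Prop:energy} vanish. The first term is handled exactly as in the corresponding estimate of \cref{lemma4.21}, producing a bound of order $2^{pj}(\ve\bsom)^p(\theta_xc_o\varrho)^{-sp}|A_j|$ where $A_j=\{u<k_j\}\cap\mcQ_j$. The tail term is estimated by \cref{Rmk:5.1} exactly as in \cref{lemma4.21}; the hypothesis $c_o^{sp/(p-1)}\tail((u-\bsmu^-)_-,\theta_x\varrho,0,(t_o,\rint))\leq\ve\bsom$ absorbs the tail contribution into the gradient-type bound. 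For the left-hand side, I use \cref{lem:g} together with the observation that on $\{u<\tilde k_j\}$ one has $|u|+|k_j|\gtrsim 2^{-j}\ve\bsom$ in the case $q\geq 2$, while in the singular case $1<q<2$ the bound $|\bsmu^\pm|\leq 8\ve\bsom$ yields $|u|+|k_j|\lesssim \ve\bsom$ on $A_j$. This gives
\[
\frac{b_j}{C}(\ve\bsom)^{q-2}\esssup_{(t_o,\rint)}\int_{\tilde B_j}(u-k_j)_-^2\,dx + \iiint_{\tilde\mcQ_j}\frac{|(u-k_j)_-(x,t)-(u-k_j)_-(y,t)|^p}{|x-y|^{n+sp}}\,dx\,dy\,dt \leq \mopC\, 2^{(n+sp)j}|A_j|\frac{(\ve\bsom)^p}{\theta_x^{sp}(c_o\varrho)^{sp}},
\]
with $b_j$ as in \cref{lemma4.21}; crucially the $\theta_t^{-1}$ term is absent here.

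Inserting this into the Sobolev estimate from \cref{fracpoin} in the same five-line chain as in \cref{lemma4.21} (H\"older, then fractional Sobolev, then the energy bound) yields the iteration
\[
\bsy_{j+1} \leq \bsc\,\Gamma\,\boldsymbol b^{\,j}\,\bsy_j^{1+\frac{s}{n+2s}}, \qquad \bsy_j := \frac{|A_j|}{|\mcQ_j|}, \qquad \Gamma := \left(\tfrac{1}{\de_x^{sp}}\right)^{\frac{n+sp}{p(n+2s)}}(\de_x^n\de_t)^{\frac{s}{n+2s}},
\]
where the scaling check uses $(p+q-\mfd)\tfrac{n}{p(n+2s)}+(p+2-\mfd)\tfrac{s}{n+2s}+\tfrac{s}{n+2s}\bigl(\tfrac{n(\mfd-q)}{sp}+\mfd-p\bigr)=1$, which is the same identity as in \cref{lemma4.21}. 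By \cref{geo_con} convergence $\bsy_j\to 0$ holds provided $\bsy_0\leq \bsc^{-1/\alpha}\Gamma^{-1/\alpha}\boldsymbol b^{-1/\alpha^2}$ with $\alpha=\tfrac{s}{n+2s}$; since $\bsy_0\leq 1$ we force this by solving $\bsc^{-1/\alpha}\Gamma^{-1/\alpha}\boldsymbol b^{-1/\alpha^2}=1$, which by the explicit form of $\Gamma$ reduces to $\de_t=\bsc^{-1/\alpha}\boldsymbol b^{-1/\alpha^2}\de_x^{sp}=:\nu_2\de_x^{sp}$. This is the defining relation for $\nu_2=\nu_2(\datanb{})$ stated in the lemma, and produces the desired pointwise bound on $B_{\theta_x c_o\varrho/2}\times(t_o,t_o+\nu_2\de_x^{sp}(\ve\bsom)^{\mfd-p}(c_o\varrho)^{sp}]$.

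The main obstacle is the low-order term coming from $\partial_t(|u|^{q-2}u)$: the left-hand side of the energy inequality produces $\mfg_-^q(u,k_j)$ rather than $(u-k_j)_-^2$, so one must convert it into a usable $L^2$-quantity. This is precisely where the dichotomy $q\gtrless 2$ and the assumption $|\bsmu^\pm|\leq 8\ve\bsom$ (for $1<q<2$) enter; without them the lower bound on $(|u|+|k_j|)^{q-2}$ degenerates and the factor $b_j$ in \cref{Eq:4.21} cannot be controlled independently of the excess levels, breaking the scaling identity above. Once this case analysis is executed as in \cref{lemma4.21}, the rest of the argument is a routine transcription of \cref{lemma3.3}, with the forward-in-time cylinder replacing the full parabolic cylinder.
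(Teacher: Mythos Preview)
Your proposal is correct and follows essentially the same route as the paper's own proof: the same geometric sequences and time-independent cutoffs, the same vanishing of the $\partial_t\zeta$ and initial-data terms, the same $q\gtrless 2$ dichotomy via \cref{lem:g} (with the hypothesis $|\bsmu^\pm|\leq 8\ve\bsom$ used precisely when $1<q<2$), the same energy bound \cref{Eq:4.61} missing the $\theta_t^{-1}$ contribution, the same scaling identity, and the same closing choice $\de_t=\nu_2\de_x^{sp}$. The only cosmetic discrepancy is the exponent on $\boldsymbol b$ in the convergence threshold, which you write as $\boldsymbol b^{-1/\alpha^2}$ (matching \cref{geo_con}) while the paper writes $\boldsymbol b^{1/\alpha^2}$; this is a typo in the paper, not a divergence in argument.
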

\begin{proof}
	 We prove the lemma in the case of $-(\bsmu^--u)$ noting that the other case follows analogously. For $j=0,1,\ldots$, define
	\begin{equation*}
		{\def\arraystretch{1.1}\begin{array}{cccc}
				k_j:=\bsmu^-+\tfrac{\varepsilon \bsom}2+\tfrac{\varepsilon \bsom}{2^{j+1}},& \tilde{k}_j:=\tfrac{k_j+k_{j+1}}2,&
				\varrho_j:=\tfrac{c_o\varrho}2+\tfrac{c_o\varrho}{2^{j+1}},
				&\tilde{\varrho}_j:=\tfrac{\varrho_j+\varrho_{j+1}}2,\\
				B_j:=B_{\varrho_j}^{\theta_x},& \tilde{B}_j:=B_{\tilde{\varrho}_j}^{\theta_x},&
				\hat{\varrho}_j:=\tfrac{3\varrho_j+\varrho_{j+1}}4, &
				\bar{\varrho}_j:=\tfrac{\varrho_j+3\varrho_{j+1}}4.
		\end{array}}
	\end{equation*}
	We now consider a cutoff functions $\bar{\zeta_j}(x)$ and $\zeta_j(x)$ independent of time such that
	\begin{equation*}
		\begin{array}{c}
			\bar{\zeta_j} \equiv 1 \text{ on } B_{j+1}, \quad \bar{\zeta_j} \in C_c^{\infty}(\bar{B}_{j}), \quad  |\nabla\bar{\zeta_j}| \apprle \frac{1}{\theta_x(\bar{\varrho_j} - \varrho_{j+1})} \approx \frac{2^j}{\theta_xc_o\varrho}, \\
			{\zeta_j} \equiv 1 \text{ on } \tilde{B}_{j}, \quad {\zeta_j} \in C_c^{\infty}(\hat{B}_{j}), \quad  |\nabla{\zeta_j}| \apprle \frac{1}{\theta_x(\hat{\varrho_n} - \tilde{\varrho_{j}})}\approx \frac{2^{j}}{\theta_xc_o\varrho}.
		\end{array}
	\end{equation*}
We will apply \cref{Prop:energy} over cylinders of the form $B_j \times (t_o,\rint)$ and denote $\rint := t_o + \tht_t (\ve \bsom )^{2-\mfd}(c_o\varrho)^{sp}$ (note that the time intervals do not change in the iteration). 
	Let us estimate each of the terms appearing on the right hand side of \cref{Prop:energy} as follows:
	\begin{description}[leftmargin=*]
		\item[Estimate for the first term:] Since $(u-k_j)_- \leq \ve \bsom$, we get 
		\begin{multline*}
			\int_{t_o}^{\rint}\iint_{B_j \times B_j}\hspace*{-0.7cm} \frac{\max\{(u-k_j)_{-}(x,t),(u-k_j)_{-}(y,t)\}^{p}|\zeta(x)-\zeta(y)|^p}{|x-y|^{n+sp}}\,dx\,dy\,dt\\
			\begin{array}{rcl}
				& \leq & (\ve \bsom)^p  \lbr \frac{2^{j}}{\theta_x(c_o\varrho)} \rbr^p \int_{t_o}^{\rint} \int_{B_j}\int_{B_{j}} \frac{\lsb{\chi}{\{u(x,t) < k_j\}}}{|x-y|^{n+(s-1)p}}\,dx\,dy\,dt\\
				& \leq & \mopC (\ve \bsom)^p  \lbr \frac{2^{j}}{\theta_x(c_o\varrho)} \rbr^p \int_{t_o}^{\rint} \int_{B_j}\int_{4B_{j}} \frac{\lsb{\chi}{\{u(x,t) < k_j\}}}{|y|^{n+(s-1)p}}\,dy\,dx\,dt\\
				& \leq & \mopC (\ve \bsom)^p   \frac{2^{pj}}{(\theta_x\varrho_j)^{sp}} |A_j| \approx \mopC (\ve \bsom)^p   \frac{2^{pj}}{\theta_x^{sp}(c_o\varrho)^{sp}} |A_j|,
			\end{array}
		\end{multline*}
		where we have denoted $A_j := \{u(x,t) \leq k_j\} \cap (B_j \times (t_o,\rint))$.
		
		\item[Estimate for the second term:] This term is zero since $\pa_t\zeta_j = 0$.
		\item[Estimate for the third term:] This term is zero since the hypothesis says $u \geq \bsmu_-+\ve\bsom$ at the initial time level $t=t_o$ and hence $(u(\cdot,t_o)-k_j)_-=0$. 
		\item[Estimate for the fourth term:] Since $x \in \spt \zeta_j \Longrightarrow |x| \leq \theta_x \hat{\varrho_j}$ and $|y| \geq \theta_x\varrho_j$, we get
		\[
		\frac{|y-x|}{|y|} \geq \frac{\varrho_j - \hat{\varrho_j}}{\hat{\varrho_j}} = \frac14 \lbr  \frac{\varrho_j - \varrho_{j+1}}{\varrho_j}\rbr \geq \frac{1}{2^{j+4}},
		\]
		which allows us to estimate as follows:
		\begin{multline*}
			\lbr  \underset{\stackrel{t \in (t_o,\rint)}{x\in \spt \zeta_j}}{\esssup}\,\int_{\RR^n \setminus B_j}\frac{(u-k_j)_{-}^{p-1}(y,t)}{|x-y|^{n+sp}}\,dy\rbr\iint_{B_j\times (t_o,\rint)}\hspace*{-0.6cm} (u-k_j)_{-}(x,t)\zeta_j^p(x)\,dx\,dt\\
			\begin{array}{rcl}
				& \leq & \mopC 2^{(n+sp)j}(\ve \bsom) |A_j| \lbr\underset{t \in (t_o,\rint)}{\esssup}\,\int_{\RR^n \setminus B_j}\frac{(\ve\bsom + (\bsmu_--u)_+ )^{p-1}(y,t)}{|y|^{n+sp}}\,dy \rbr\\
				& \leq & \mopC 2^{(n+sp)j}(\ve \bsom) |A_j|  \lbr\frac{(\ve \bsom)^{p-1}}{(\theta_x\varrho_j)^{sp}}+\underset{t \in (t_o,\rint)}{\esssup}\,\int_{\RR^n \setminus B_j}\frac{(u -\bsmu_- )_-^{p-1}(y,t)}{|y|^{n+sp}}\,dy \rbr\\
				& = & \mopC 2^{(n+sp)j}(\ve \bsom) |A_j|  \lbr\frac{(\ve \bsom)^{p-1}}{(\theta_x\varrho_j)^{sp}}+\frac{1}{(\theta_x \varrho_j)^{sp}}\underset{t \in (t_o,\rint)}{\esssup} (\theta_x \varrho_j)^{sp}\int_{\RR^n \setminus B_j}\frac{(u -\bsmu_- )_-^{p-1}(y,t)}{|y|^{n+sp}}\,dy \rbr\\
				& \leq & \mopC 2^{(n+sp)j}(\ve \bsom) |A_j|  \lbr\frac{(\ve \bsom)^{p-1}}{(\theta_x\varrho_j)^{sp}}+\frac{c_o^{sp}\tail((u-\bsmu_-)_-, \theta_x\varrho,0,(t_o,\rint))^{p-1} }{(\theta_x\varrho_j)^{sp}}\rbr\\
				& \leq & \mopC 2^{(n+sp)j}(\ve \bsom) |A_j|  \frac{(\ve \bsom)^{p-1}}{(\theta_x\varrho_j)^{sp}} \approx  \mopC 2^{(n+sp)j}(\ve \bsom) |A_j|  \frac{(\ve \bsom)^{p-1}}{\theta_x^{sp}(c_o\varrho)^{sp}}.
			\end{array}
		\end{multline*}
				\item[Estimate for the first term on left side of inequality:]
				    
       First by \cref{lem:g},  we have that
$$
\mathfrak g_- ^q (u,k)
\geq
\frac{1}{\bsc}  \lbr|u| + |k|\rbr^{q-2}(u-k)_-^2 .
$$
We combine this with the following observation,  when $1<q<2$ we use that on set $A_j = \{u<k_j\} \cap \mcQ_j$, we have $|u| + |k_j| \leq 18 \ve \bsom$ and when $2<q<\infty$
we use that on the set $ \{u<\tilde{k}_j \} $, we have $|u| + |k_j| \geq k_j - u \geq k_j - \tilde{k}_j \geq  \frac{ \ve \bsom}{2^{j+3}}$ .  That gives us
$$
\underset{( t_o , \rint )}{\esssup}\int_{B_j} \mathfrak g_- ^q (u,k_j) \zeta_j^p \geq 
\underset{t_o < t < \rint }{\esssup}\int_{B_j}(|u|+|k_j|)^{q-2}(u-k_j)_{-}^2\zeta_j^p \,dx \geq \frac{b_j}{C} (\ve \bsom)^{q-2} \underset{(t_o,  \rint)}{\esssup}
			\int_{\tilde{B}_j} (u-{k}_j)_-^2\,dx,
$$
where $b_j = \min \{ 2^{(2-q)(j+3)} ,  18^{q-2} \}$.		
	\end{description}
	Combining the previous three estimates along with the observations, we have
	\begin{equation}\label{Eq:4.61}
	\frac{b_j}{C} (\ve \bsom)^{q-2}	\underset{(t_o,\rint)}{\esssup}
		\int_{\tilde{B}_j} (u-{k}_j)_-^2\,dx
		+\iiint_{\tilde{\mcQ}_j}\frac{|(u-{k}_j)_{-}(x,t)-(u-{k}_j)_{-}|^p}{|x-y|^{n+sp}}\,dx \,dy\,dt
				\leq
		\mopC 2^{(n+sp)j} |A_j| \lbr  \frac{(\ve \bsom)^{p}}{\theta_x^{sp}(c_o\varrho)^{sp}}\rbr,
	\end{equation}
where we have denoted $\tilde\mcQ_j = \tilde{B}_j  \times (t_o,\rint)$. 
	Using Young's inequality, we have
	\begin{multline}\label{Eq:4.71}
		|(u-{k}_{j})_- \bar{\zeta_j}(x,t) - (u-{k}_{j})_- \bar{\zeta_j}(y,t)|^p \leq c |(u-{k}_{j})_- (x,t) - (u-{k}_{j})_- (y,t)|^p\bar{\zeta_j}^p(x,t) \\
		+ c |(u-{k}_{j})_-(y,t)|^p |\bar{\zeta_j}(x,t) - \bar{\zeta_j}(y,t)|^p,
	\end{multline}
	from which we obtain the following sequence of estimates:
	\begin{equation*}
		\begin{array}{rcl}
			\frac{\ve \bsom}{2^{j+2}}
			|A_{j+1}|
			&\overred{4.81a}{a}{\leq} &
			\iint_{\tilde\mcQ_{j}}(u-{k}_j)_-\bar{\zeta_j}\,dx\,dt \\
			&\overred{4.81b}{b}{\leq} &
			\lbr\iint_{\tilde\mcQ_{j}}\left[(u-{k}_j)_-\bar{\zeta_j}\right]^{p\frac{n+2s}{n}}
			\,dx\,dt\rbr^{\frac{n}{p(n+2s)}}|A_j|^{1-\frac{n}{p(n+2s)}}\\
			&\overred{4.81c}{c}{\leq} &\mopC 
			\left(\iiint_{\tilde\mcQ_j}\frac{|(u-{k}_j)_{-}(x,t)\bar{\zeta_j}(x,t)-(u-{k}_j)_{-}\bar{\zeta_j}(y,t)|^p}{|x-y|^{n+sp}}\,dx \,dy\,dt\right)^{\frac{n}{p(n+2s)}} 
			\\&&\qquad\times \left(\underset{t\in (t_o,\rint)}{\esssup}\int_{\tilde{B}_j}[(u-\tilde{k}_j)_{-}\bar\zeta_j(x,t)]^2\,dx\right)^{\frac{s}{n+2s}}|A_j|^{1-\frac{n}{p(n+2s)}}\\
			&\overred{4.81d}{d}{\leq}&
			\mopC b_o^j   \lbr  \frac{(\ve \bsom)^{p}}{\theta_x^{sp}(c_o\varrho)^{sp}} \rbr^{\frac{n}{p(n+2s)}}
			 \lbr  \frac{(\ve \bsom)^{p+2-q}}{\theta_x^{sp}(c_o\varrho)^{sp}} \rbr^{\frac{s}{(n+2s)}}
			|A_j|^{1+\frac{s}{n+2s}} \\
			&\overred{4.81e}{e}{\leq} &
			\mopC
			\frac{b_o^j}{(c_o\varrho)^\frac{s(n+sp)}{n+2s}}
			(\ve\bsom)^{\lbr (p+q- \mfd) {\frac{n}{p(n+2s)}} + (p+2-\mfd){\frac{s}{(n+2s)}}\rbr}
			\lbr \tfrac{1}{\de_x^{sp}}\rbr^{\frac{n+sp}{p(n+2s)}}|A_j|^{1+\frac{s}{n+2s}}, 
		\end{array}
	\end{equation*}
	where to obtain \redref{4.81a}{a}, we made use of the observations and enlarged the domain of integration; to obtain \redref{4.81b}{b}, we applied H\"older's inequality; to obtain \redref{4.81c}{c}, we applied \cref{fracpoin}; to obtain \redref{4.81d}{d}, we made use of \cref{Eq:4.61} along with \cref{Eq:4.71} with $\mopC = \mopC_{\data{}}$ and finally we collected all the terms to obtain \redref{4.81e}{e}, where $b_o = b_o(\datanb{})\geq 1$ is a  constant. Setting
	$\bsy_j=|A_j|/|\mcQ_j|$  and noting $|\mcQ_{j+1}| \approx |\mcQ_j| \approx \de_x^n \de_t (\ve \bsom)^{\frac{n(\mfd -q)}{sp}+\mfd -p } (c_o\varrho)^{n+sp}$ where we have denoted $\mcQ_j := B_{j} \times (t_o,\rint)$,  we get
	\[
	\frac{\ve \bsom}{2^{j+2}} \bsy_{j+1} \leq \mopC  \bsy_j^{1+\frac{s}{n+2s}}\frac{b_o^j}{(c_o\varrho)^\frac{s(n+sp)}{n+2s}}
	(\ve\bsom)^{\lbr (p+q-\mfd) {\frac{n}{p(n+2s)}} + (p+2-\mfd){\frac{s}{(n+2s)}}\rbr}
	\lbr \tfrac{1}{\de_x^{sp}} \rbr^{\frac{n+sp}{p(n+2s)}} |\mcQ_{j}|^{\frac{s}{n+2s}}.
	\]
	Simplifying this expression noting that $\lbr \tfrac{(p+q-\mfd) n}{p(n+2s)} +\tfrac{ (p+2- \mfd) s}{n+2s} \rbr + \lbr \tfrac{s}{n+2s}\rbr \lbr \tfrac{n(\mfd-q)}{sp} +  \mfd-p  \rbr = 1$,  we get
	\begin{equation*}
		\bsy_{j+1}
		\le
		\bsc \Gamma  \boldsymbol b^j  \bsy_j^{1+\frac{s}{n+2s}},
	\end{equation*}
	where $\Gamma := \lbr \tfrac{1}{\de_x^{sp}}\rbr^{\frac{n+sp}{p(n+2s)}} (\de_x^n\de_t)^{\frac{s}{(n+2s)}}$ and   $\bsc  = \bsc_{\data{}} \geq 1$,  $\boldsymbol b\geq  1$ are two constants depending only on the data. We apply  \cref{geo_con} to see if $\bsy_0 \leq \bsc^{-1/\alpha}\Gamma^{-1/\alpha} \boldsymbol b^{1/\alpha^2}$ with $\alpha = \tfrac{s}{n+2s}$, then $\bsy_{\infty} = 0$ which is the desired conclusion.  We make the choice of $\de_t$ to satisfy
	\[
	1 = \bsc^{-1/\alpha}\Gamma^{-1/\alpha} \boldsymbol b^{1/\alpha^2} = \bsc^{-1/\alpha}\boldsymbol b^{1/\alpha^2} \de_x^{sp} \de_t^{-1} \quad \Longrightarrow\quad  \de_t = \bsc^{-1/\alpha}\boldsymbol b^{1/\alpha^2} \de_x^{sp} =: \nu_2 \de_x^{sp},
	\]
	which completes the proof of the lemma.
\end{proof}

\subsection{Propagation of measure information forward in time}


\begin{lemma}\label{lemma4.41}
	Let $\alpha \in(0,1)$ be given, then there exist constants $\de = \de(n,p,q,s,\La,  \alpha) \in (0,1)$ and $\bar\ve  = \bar\ve(n,p,q,s,\La,\alpha)\in (0,1)$,
	 such that whenever $u$ satisfies
	\begin{equation*}
		\left|\left\{
		\pm\lbr\bsmu^{\pm}-u(\cdot, t_o)\rbr\geq \ve\bsom
		\right\}\cap B_{\theta_xc_o\varrho}(x_o)\right|
		\geq\alpha \left|B_{\theta_xc_o\varrho}\right|,
	\end{equation*}
where $\theta_x = \de_x (\varepsilon \bsom)^{\frac{\mfd - q }{sp}}$ 	and the bounds  
	\begin{equation*}
		c_o^{\frac{sp}{p-1}}\tail((u-\bsmu^{\pm})_{\pm};\theta_x\varrho,0,(t_o,t_o+\bar\ve \de_x^{sp}(\ve\bsom)^{\mfd-p}(c_o\varrho)^{sp}]) \leq  \ve\bsom,   \quad \text{and} \quad |\bsmu^\pm| \leq 8\ve \bsom,
	\end{equation*}
	then the following conclusion follows:
	\begin{equation*}
		|\{
		\pm\left(\bsmu^{\pm}-u(\cdot, t)\right)\geq \de\ve \bsom\} \cap B_{\theta_xc_o\varrho} |
		\geq\tfrac{\alpha}2 |B_{\theta_xc_o\varrho}|
		\quad \text{ for all } \,  t\in(t_o,t_o + \bar\ve \de_x^{sp}(\ve\bsom)^{\mfd-p}(c_o\varrho)^{sp}].
	\end{equation*}
\end{lemma}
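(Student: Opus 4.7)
The plan is to adapt the argument of Lemma 3.4 to the general $q$ setting, the main new ingredient being the $q$-dependent weight $(|u|+|k|)^{q-2}$ that arises once Lemma 2.2 is used to pass between $\mfg_{\pm}^q$ and $(u-k)_{\pm}^2$. Since the two signs are symmetric, I treat only the supersolution bound and apply the energy estimate of Proposition 2.1 to $(u-k)_-$ with $k=\bsmu^-+\ve\bsom$ on $B_{\theta_x c_o\varrho}\times(t_o,\mft]$, using a time-independent cutoff $\zeta(x)$ that equals $1$ on $B_{(1-\sigma)\theta_x c_o\varrho}$, is supported in $B_{(1-\sigma/2)\theta_x c_o\varrho}$, and satisfies $|\nabla\zeta|\lesssim(\sigma\theta_x c_o\varrho)^{-1}$, with $\sigma\in(0,1/8)$ to be chosen. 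The time-derivative contribution vanishes since $\zeta$ is stationary, the tail term is absorbed by the hypothesis on $\tail((u-\bsmu^{\pm})_\pm;\theta_x\varrho,\cdot)$, and the space-gradient term contributes $C(\ve\bsom)^p(\sigma\theta_x c_o\varrho)^{-sp}|\mft-t_o||B_{\theta_x c_o\varrho}|$.

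To convert the resulting inequality into a measure estimate I apply Lemma 2.2 in both directions. For the initial-data term I bound $\int\zeta^p\mfg_-^q(u(\cdot,t_o),k)\,dx$ by $C(\ve\bsom)^q|A_{\ve\bsom}^{\varrho}(t_o)|$: when $q\ge 2$ this uses $(|u|+|k|)^{q-2}\le(18\ve\bsom)^{q-2}$, which is where the hypothesis $|\bsmu^{\pm}|\le 8\ve\bsom$ enters, whereas when $1<q<2$ the triangle inequality $|u|+|k|\ge(u-k)_-$ gives $\mfg_-^q(u,k)\le C(u-k)_-^q\le C(\ve\bsom)^q$ directly. At the later time $\mft$ I restrict to $\{u(\cdot,\mft)<k_\de\}$ with $k_\de:=\bsmu^-+\de\ve\bsom$, $\de\in(0,1/2)$; on this set $(u-k)_-\ge(1-\de)\ve\bsom$ and $|u|+|k|$ is pinched between $(1-\de)\ve\bsom$ (triangle inequality) and $18\ve\bsom$ (from $|\bsmu^{\pm}|\le 8\ve\bsom$), which covers both regimes and yields the lower bound $\mfg_-^q(u,k)\gtrsim_{q,\de}(\ve\bsom)^q$ pointwise on that set.

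Combining the two sides, dividing by $(\ve\bsom)^q$, and using $\theta_x^{sp}=\de_x^{sp}(\ve\bsom)^{\mfd-q}$ so that $(\ve\bsom)^{p-q}/\theta_x^{sp}=(\ve\bsom)^{p-\mfd}/\de_x^{sp}$, I obtain the De Giorgi-type bound
\[
\bigl|\{u(\cdot,\mft)<k_\de\}\cap B_{(1-\sigma)\theta_x c_o\varrho}\bigr|\le \frac{C(\ve\bsom)^{p-\mfd}|\mft-t_o|}{\sigma^p\de_x^{sp}(c_o\varrho)^{sp}}|B_{\theta_x c_o\varrho}|+(1-\al)|B_{\theta_x c_o\varrho}|.
\]
Adding the shell contribution $n\sigma|B_{\theta_x c_o\varrho}|$ coming from enlarging the ball, fixing $\sigma=\al/(8n)$, then choosing $\de$ so small that $(1-\al)/(1-\de)^{c(q)}\le 1-3\al/4$, and finally restricting $|\mft-t_o|\le\bar\ve\de_x^{sp}(\ve\bsom)^{\mfd-p}(c_o\varrho)^{sp}$ with $\bar\ve$ small enough that the first right-hand term is at most $\al/8$, produces a final coefficient at most $1-\al/2$. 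Passing to the complement and observing that $\mft$ was arbitrary in the stated time strip yields the conclusion.

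The main obstacle is precisely the simultaneous handling of the weight $(|u|+|k|)^{q-2}$ across the two regimes $q\ge 2$ and $1<q<2$: the factor fails to be comparable to $(\ve\bsom)^{q-2}$ in one direction or the other unless $|u|+|k|$ is controlled both above and below on the relevant sublevel sets. It is exactly the compatibility condition $|\bsmu^{\pm}|\le 8\ve\bsom$, absent in Lemma 3.4, that supplies the missing side of this comparison and so closes the argument.
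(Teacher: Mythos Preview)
Your overall strategy (energy estimate with a spatial cutoff, compare the $\mfg_-^q$ integrals at $t_o$ and at $\mft$, then choose $\sigma$, $\de$, $\bar\ve$) is the same as the paper's, but the step where you convert $\mfg_-^q$ into $(\ve\bsom)^q$ via Lemma~2.2 does not close. Using $\mfg_-^q(u,k)\le C(|u|+|k|)^{q-2}(u-k)_-^2$ for the upper bound at $t_o$ and the reverse inequality for the lower bound at $\mft$ produces, after division, a coefficient of $(1-\alpha)$ of the form
\[
\frac{C_{\mathrm{up}}(q)}{C_{\mathrm{low}}(q)\,(1-\de)^{\max\{q,2\}}},
\]
where $C_{\mathrm{up}}/C_{\mathrm{low}}$ absorbs both the two-sided constant $C^2$ from Lemma~2.2 and the mismatch between the upper bound $18\ve\bsom$ and the lower bound $(1-\de)\ve\bsom$ on $|u|+|k|$. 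For $q\neq 2$ this prefactor is strictly larger than $1$ and does \emph{not} tend to $1$ as $\de\to 0$, so the inequality $(1-\alpha)/(1-\de)^{c(q)}\le 1-\tfrac34\alpha$ you invoke is unreachable. (The reason the same step succeeds in Lemma~3.4 is that for $q=2$ one has $\mfg_-^2(u,k)=\tfrac12(u-k)_-^2$ exactly, so no constant is lost.)

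The paper avoids this by never passing through Lemma~2.2 at this stage: it keeps $\mfg_-^q(u,k)=(q-1)\int_u^k|s|^{q-2}(k-s)\,ds$ intact, bounds the initial integral by $(1-\alpha)|B|\cdot(q-1)\int_{\bsmu^-}^k|s|^{q-2}(k-s)\,ds$ and the integral at $\mft$ below by $|A_{k_\de}^{(1-\sigma)\varrho}(\mft)|\cdot(q-1)\int_{k_\de}^k|s|^{q-2}(k-s)\,ds$, and then observes that the ratio of the two fixed integrals satisfies
\[
\frac{\int_{\bsmu^-}^k|s|^{q-2}(k-s)\,ds}{\int_{k_\de}^k|s|^{q-2}(k-s)\,ds}=1+I_\de,\qquad I_\de\le C(q)\,\de,
\]
using Lemma~2.2 only to show $\int_{k_\de}^k\gtrsim(\ve\bsom)^q$ and $\int_{\bsmu^-}^{k_\de}\lesssim\de(\ve\bsom)^q$. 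This ratio \emph{does} tend to $1$ as $\de\to 0$, which is exactly what is needed to make $(1+I_\de)(1-\alpha)\le 1-\tfrac34\alpha$. The hypothesis $|\bsmu^\pm|\le 8\ve\bsom$ is used here to control both integrals uniformly in terms of $(\ve\bsom)^q$, but the crucial point is that the same integrand appears in numerator and denominator, so no artificial constant survives.
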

\begin{proof} 
	We prove the case of supersolutions only because the case for subsolutions is analogous. We set
	\[
	A_{M}^{\varrho}(t) := \{u(\cdot,t)  - \bsmu^-< M\} \cap B_{\theta_xc_o\varrho}.
	\]
	By hypothesis, we have 
	\begin{equation}\label{Eq:4:7}
	|A_{\ve\bsom}^{\varrho}(t_o)|\leq (1-\alpha)|B_{\theta_xc_o\varrho}|.
	\end{equation}
	We apply the energy estimate  from \cref{Prop:energy} for $(u-k)_{-}$ over the cylinder $B_{\theta_xc_o\varrho}\times(t_o,\mft]$  with $k=\bsmu^-+\ve\bsom$. Note that $(u-k)_{-} \leq \ve\bsom$ in $B_{\theta_xc_o\varrho}$. For $\sigma \in (0,\tfrac18]$ to be chosen later, we take a cutoff function $\zeta = \zeta(x)\geq 0$,  such that it is supported in $B_{\theta_x c_o\varrho(1-\frac{\sigma}{2})}$ with $\zeta \equiv 1$ on $B_{(1-\sigma)\theta_xc_o\varrho}$ and $|\nabla \zeta| \apprle \tfrac{1}{\sigma\theta_xc_o\varrho}$. We now make use of  \cref{Prop:energy} over $(t_o,\mft)$ and estimate analogously as \cref{lemma4.31}  and using $u\geq\bsmu^-$ to get
	\begin{equation}\label{Eq:4.10k}
			\begin{array}{rcl}
		\int_{B_{(1-\sigma)\theta_xc_o\varrho}\times\{\mft\}} \mathfrak g_-^q(u,k) \zeta^p\,dx  
		&\leq&   \mopC\frac{ (\ve\bsom)^p}{\sigma^{n+sp} \theta_x^{sp}(c_o\varrho)^{sp}}|\mft-t_o||B_{\theta_xc_o\varrho}|
		+ \int_{B_{\theta_xc_o\varrho}\times\{t_o\}} \mathfrak g_-^q(u,k) \zeta^p \,dx.
	\end{array}
\end{equation}
Now we estimate $\int_{B_{\theta_xc_o\varrho}\times\{t_o\}} \mathfrak g_-^q(u,k) \zeta^p \,dx$ from above as follows:
\begin{equation*}
	\begin{array}{rcl}
	\int_{B_{\theta_xc_o\varrho}\times\{t_o\}} \mathfrak g_-^q(u,k) \zeta^p \,dx
		& \overset{\cref{defgpm}}{=} &  \int_{B_{\theta_xc_o\varrho}\times\{t_o\}} (q-1) \int_{u}^{k} |s|^{q-2}(s-k)_-\,ds \, dx  \\
		& \leq &  |A_{\ve\bsom}^{\varrho}(t_o) |(q-1) \int_{\bsmu^-}^{k} |s|^{q-2}(s-k)_-\,ds \\
		& \overset{\cref{Eq:4:7}}{\leq} &  (1-\alpha)|B_{\theta_xc_o\varrho}| (q-1) \int_{\bsmu^-}^{k} |s|^{q-2}(s-k)_-\,ds ,
		    \end{array}
	\end{equation*}
	where $\mft >t_o$ is any time level yet to be chosen. 
	Setting $k_\de:=\bsmu^-+\de \ve\bsom$ for some $ \de \in (0,\tfrac12)$ to be chosen later, we estimate the term on the left hand side of \cref{Eq:4.10k} from below to get
	\begin{equation*}
		\begin{array}{rcl}
	\int_{B_{(1-\sigma)\theta_xc_o\varrho}\times\{\mft\}} g_-^q(u,k) \zeta^p \,dx 
	    &  \overset{\cref{defgpm}}{=}& \int_{B_{(1-\sigma)\theta_xc_o\varrho}\times\{\mft\}} (q-1)\int_{u}^k |s|^{q-2}(s-k)_-\,ds \,\zeta^p\,dx  \\
		&\geq & |A_{k_\de}^{(1-\sigma)\varrho}(\mft)| (q-1) \int_{k_\de}^k |s|^{q-2}(s-k)_-\,ds.
		\end{array}
	\end{equation*}
Furthermore, we use the bound 
$\tfrac{1}{2} (\ve \bsom ) \leq(1-\de) (\ve \bsom ) =k-k_\de \leq |k_\de|+|k|\leq 2(|\bsmu^-|+ (\ve \bsom ))\leq  18 (\ve \bsom ) $ to get
\begin{align}\label{prop_1}
	(q-1)\int_{k_\de}^k|s|^{q-2}(s-k)_-\,ds
	=
	 \mathfrak g_-^q(k_\de ,k)
	\overlabel{lem:g}{\geq} 
	\tfrac{1}{\bsc  (q)}
	\lbr|k_\de|+|k|\rbr^{q-2} (k- k_\de)^2
	\geq
	\tfrac{1}{\bsc  (q)} (\ve \bsom)^q.
\end{align}
Next, we note that the following decomposition holds:
	\begin{align*}
		|A_{k_\de}^{\varrho}(\mft)| \leq 
		|A_{k_\de}^{(1-\sigma)\varrho}(\mft)|+n\sigma |B_{\theta_xc_o\varrho}|.
	\end{align*}
	Combining  all the above estimates and recalling the definition of $\theta_x$, we get
	\begin{align*}
		|\{-(\bsmu^- -u(\cdot,\mft))< \de\ve\bsom\} \cap B_{\theta_xc_o\varrho}|
		\leq 
		\lbr \frac{\int_{\bsmu^-}^k |s|^{q-2}(s-k)_-\,ds }{\int_{k_\de}^k |s|^{q-2}(s-k)_-\,ds } (1-\alpha) + \mopC \frac{(\ve\bsom)^{p-\mfd }|\mft-t_o|}{\sigma^{n+sp} \de_x^{sp} (c_o\varrho)^{sp}} +n\sigma\rbr |B_{\theta_xc_o\varrho}|,
	\end{align*}
	for a universal constant $\mopC  > 0$. 
Proceeding analogously as in \cite[Proof of Lemma 4.1]{bogeleinHolderRegularitySigned2021}, we write
\[
\frac{\int_{\bsmu^-}^k |s|^{q-2}(s-k)_{-} \,ds }{\int_{k_\de}^k |s|^{q-2}(s-k)_{-} \,ds } = 1 + \frac{\int_{\bsmu^-}^{k_\de} |s|^{q-2}(s-k)_{-} \,ds}{\int_{k_\de}^k  |s|^{q-2}(s-k)_{-} \,ds} = 1 + I_{\de}.
\]	
Using $|\bsmu^-|\leq 8 (\ve \bsom )$, $|k_\de |\leq 9 (\ve \bsom )$ and applying \cref{lem:g},  we get
\[
	\int_{\bsmu^-}^{k_\de} |\tau|^{q-2}(\tau-k)_-\,d\tau
	\leq 
	(\ve  \bsom) \int_{\bsmu^-}^{\bsmu^-+\de  (\ve  \bsom)}|\tau|^{q-2}\,d\tau
	=
	(\ve  \bsom) |s|^{q-2} s\Big|_{\bsmu^-}^{\bsmu^-+ \de (\ve \bsom)}
	\leq
	\bsc  (q) (\ve \bsom)^q \de.
\]
Making use of  \cref{prop_1},  we obtain
\begin{align*}
	I_\de
	\leq
	\bsc  (q) \de .
\end{align*}	
	We now choose $\de\in (0,1)$ and $\sigma$ small enough such that
	\begin{equation*}
	 (1-\alpha) (1+ \bsc  (q) \de )	\leq {1-\frac{3\alpha}{4}} \txt{and} \sigma = \frac{\alpha}{8n}.
	\end{equation*}
With these choices, let us take $\mft > t_o$ to satisfy
\begin{equation}\label{4.11k}
|\mft- t_o| \leq \lbr\frac{\alpha}{8}\rbr \lbr\frac{\sigma^{n+sp} \de_x^{sp} (\ve\bsom)^{\mfd - p}(c_o\varrho)^{sp}}{\mopC}\rbr =: \bar\ve \de_x^{sp} (\ve\bsom)^{ \mfd -p}(c_o\varrho)^{sp}.
\end{equation}
Since $\mft$ was arbitrary, we get the desired conclusion on the full time interval provided \cref{4.11k} holds.
\end{proof}
\subsection{Measure shrinking lemma}
\begin{lemma}\label{lemma4.51}
	Let $\de_x, \de_t$ be given and assume for some $\sigma \in (0,1)$, the following is satisfied:
	\begin{equation*}
		\left|\left\{
		\pm\lbr\bsmu^{\pm}-u(\cdot, t)\rbr\geq \ve\bsom
		\right\}\cap B_{\theta_xc_o\varrho}(x_o)\right|
		\geq\alpha \left|B_{\theta_xc_o\varrho}\right| \txt{for all} t \in (t_o, t_o + \theta_t (c_o\varrho)^{sp}),
	\end{equation*}
and 
	\begin{equation*}
		c_o^{\frac{sp}{p-1}}\tail((u-\bsmu^{\pm})_{\pm};2\theta_x\varrho,0,(t_o,t_o + \theta_t (c_o\varrho)^{sp}]) \leq \sigma \ve \bsom,   \quad \text{and} \quad |\bsmu^\pm| \leq \sigma  \ve \bsom,
	\end{equation*}
where $\theta_x = \de_x (\sigma \ve\bsom)^{\frac{\mfd - q}{sp}}$ and $\theta_t = \de_t (\sigma \ve\bsom)^{\mfd-p}$.
	Then there exists $\mopC >0$ depending only on data such that the following holds:
	\begin{equation*}
		|\{\pm(\bsmu^{\pm}-u) \leq \tfrac12 \sigma \ve \bsom\} \cap \mcQ| \leq \mopC \lbr \tfrac{\de_x^{sp}}{\de_t}+1\rbr \tfrac{\sigma^{p-1}}{\al (1-\sigma)^{p-1}} |\mcQ|,
	\end{equation*}
where we have denoted $\mcQ = B_{\theta_x c_o \varrho} \times (t_o, t_o + \theta_t (c_o\varrho)^{sp})$.  
\end{lemma}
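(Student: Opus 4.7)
I will prove only the super-solution case since the sub-solution case is analogous via the substitution $u\mapsto -u$. The strategy exactly mirrors \cref{lemma3.5} (the $q=2$ version), with the only new ingredient being a careful upper bound on $\mathfrak{g}_-^q(u,k)$ that exploits the additional hypothesis $|\bsmu^\pm|\leq \sigma\ve\bsom$. Set $k := \bsmu^- + \sigma\ve\bsom$, which gives $(u-k)_-\leq \sigma\ve\bsom$ on $\mcQ$, and moreover $|u|\leq 2\sigma\ve\bsom$ and $|k|\leq 2\sigma\ve\bsom$ pointwise on the set $\{(u-k)_-\neq 0\}$.

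I would apply \cref{Prop:energy} to $(u-k)_-$ on the cylinder $\mcQ = B_{\theta_x c_o\varrho}\times (t_o, t_o+\theta_t(c_o\varrho)^{sp})$ with a time-independent cutoff $\zeta=\zeta(x)$ such that $\zeta\equiv 1$ on $B_{\theta_x c_o\varrho}$, $\spt \zeta\subset B_{\frac{3}{2}\theta_x c_o\varrho}$, $|\nabla\zeta|\leq C/(\theta_x c_o\varrho)$. The four terms on the right-hand side are then handled as follows:
\begin{itemize}
\item The gradient term and tail term are estimated as in the proofs of \cref{lemma4.21,lemma4.31,lemma3.5}. Each contributes at most $C(\sigma\ve\bsom)^p\,\theta_t/\theta_x^{sp}\,|B_{\theta_x c_o\varrho}|$, which by the scalings $\theta_t=\delta_t(\sigma\ve\bsom)^{\mfd-p}$ and $\theta_x=\delta_x(\sigma\ve\bsom)^{(\mfd-q)/(sp)}$ equals $C(\sigma\ve\bsom)^q (\delta_t/\delta_x^{sp})\,|B_{\theta_x c_o\varrho}|$. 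The tail term uses precisely the assumed tail bound together with the argument of \cref{sec:tail}.
\item The time-derivative term vanishes since $\partial_t\zeta\equiv 0$.
\item The initial-time contribution $\int_{B_{\theta_x c_o\varrho}\times\{t_o\}}\zeta^p\,\mathfrak{g}_-^q(u,k)\,dx$ is the place where the hypothesis $|\bsmu^\pm|\leq \sigma\ve\bsom$ is used. For $q\geq 2$ one applies \cref{lem:g} to get $\mathfrak{g}_-^q(u,k)\leq C(|u|+|k|)^{q-2}(u-k)_-^2\leq C(\sigma\ve\bsom)^q$. For $1<q<2$ the two-sided bound in \cref{lem:g} degenerates, so I would integrate the defining formula \cref{defgpm} directly: with $|\bsmu^-|, |k|\leq 2\sigma\ve\bsom$, a split of $\int_{\bsmu^-}^k |s|^{q-2}\,ds$ at $s=0$ (when needed) gives $\int_{\bsmu^-}^k |s|^{q-2}\,ds\leq C(\sigma\ve\bsom)^{q-1}$, hence again $\mathfrak{g}_-^q(u,k)\leq C(\sigma\ve\bsom)^q$. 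This term therefore contributes at most $C(\sigma\ve\bsom)^q|B_{\theta_x c_o\varrho}|$.
\end{itemize}
Combining these yields the cross-term bound
\[
\iint_{\mcQ}(u-k)_-(x,t)\int_{B_{\theta_x c_o\varrho}}\frac{(u-k)_+^{p-1}(y,t)}{|x-y|^{n+sp}}\,dy\,dx\,dt \leq C(\sigma\ve\bsom)^q\Bigl[\tfrac{\delta_t}{\delta_x^{sp}}+1\Bigr]|B_{\theta_x c_o\varrho}|.
\]

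Finally I would apply the shrinking \cref{lem:shrinking} to the shifted function $v := u-\bsmu^-$, with the level choice $m := \ve\bsom$ and $l := \sigma\ve\bsom = m/2^{j}$ for $2^j=1/\sigma$. The measure hypothesis $|\{v>m\}\cap B_{\theta_x c_o\varrho}|\geq \alpha|B_{\theta_x c_o\varrho}|$ holds for all $t$ in the time interval by assumption (playing the role of $\nu=\alpha$). Rewriting the above cross-term bound in the form required by \cref{lem:shrinking}, using $l^p/(\theta_x c_o\varrho)^{sp}\,|\mcQ|=(\sigma\ve\bsom)^q(\delta_t/\delta_x^{sp})|B_{\theta_x c_o\varrho}|$, identifies the constant there as $\mathbb{C}_1 = C[\delta_x^{sp}/\delta_t+1]$. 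The conclusion $|\{v<m/2^{j+1}\}\cap\mcQ|\leq (C/(2^j-1))^{p-1}|\mcQ|$, together with $m/2^{j+1}=\tfrac12\sigma\ve\bsom$ and $2^j-1=(1-\sigma)/\sigma$, gives
\[
\bigl|\{-(\bsmu^- -u)\leq \tfrac12\sigma\ve\bsom\}\cap\mcQ\bigr|\leq C\Bigl[\tfrac{\delta_x^{sp}}{\delta_t}+1\Bigr]\frac{\sigma^{p-1}}{\alpha(1-\sigma)^{p-1}}|\mcQ|,
\]
which is the stated conclusion.

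The only genuine obstacle is the uniform bound on $\mathfrak{g}_-^q(u,k)$ in the initial-time term: the standard product estimate from \cref{lem:g} is sharp only for $q\geq 2$, so for $q<2$ one must fall back to the direct integral definition in \cref{defgpm}. The new hypothesis $|\bsmu^\pm|\leq\sigma\ve\bsom$ is precisely what closes this estimate at the correct scale $(\sigma\ve\bsom)^q$, matching the $l^p$-factor demanded by the shrinking lemma. Everything else is a routine adaptation of the $q=2$ proof given in \cref{lemma3.5}.
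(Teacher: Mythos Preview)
Your proof is correct and follows essentially the same route as the paper. One minor note: the paper's case split on $\mathfrak{g}_-^q(u,k)$ is reversed from yours---for $1<q\le 2$ it simply combines \cref{lem:g} with the elementary bound $(u-k)_-\le |u|+|k|$ to get $\mathfrak{g}_-^q(u,k)\le C(u-k)_-^q\le C(\sigma\ve\bsom)^q$ (so \cref{lem:g} does not ``degenerate'' and the hypothesis $|\bsmu^-|\le\sigma\ve\bsom$ is not even needed in that range), while for $q\ge 2$ it uses your upper bound $|u|+|k|\le C\sigma\ve\bsom$; and in the final step the paper applies \cref{lem:isop} directly with levels $k=\bsmu^-+\tfrac12\sigma\ve\bsom$, $l=\bsmu^-+\sigma\ve\bsom$, $m=\bsmu^-+\ve\bsom$ rather than going through \cref{lem:shrinking}, which sidesteps the technicality that $1/\sigma$ need not be a power of~$2$.
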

\begin{proof}
	We prove the case of super-solutions only because the case for sub-solutions is analogous.  We write the energy estimate from \cref{Prop:energy} over the cylinder $B_{\theta_x c_o\varrho}\times (t_o,\mft]$ with $\mft = t_o + \theta_t (c_o\varrho)^{sp}$ and  $k = \bsmu^-+\sigma \ve \bsom$.
	We choose a test function $\zeta = \zeta(x)$ such that $\zeta\equiv 1$ on $B_{\theta_xc_o\varrho}$, it is supported in $B_{\theta_x\frac32 c_o\varrho}$ and $|\nabla \zeta|\apprle \tfrac{1}{\theta_xc_o\varrho}$.
%
	Using $(u-k)_{-} \leq \sigma \ve \bsom$ locally and  by our choice of the test function, applying \cref{Prop:energy}, we get
	\begin{multline*}
		\int_{t_o}^{\mft}\int_{B_{\theta_xc_o\varrho}}|(u-k)_-(x,t)|\int_{B_{\theta_xc_o\varrho}}\frac{|(u-k)_+(y,t)|^{p-1}}{|x-y|^{n+sp}}\,dx\,dy\,dt
		\\
		\begin{array}{rcl}
		&\leq&  \mopC (\sigma\ve \bsom)^p   \frac{1}{\theta_x^{sp}(c_o\varrho)^{sp}} |B_{\theta_xc_o\varrho}||\mft-t_o|
		+ \int_{B_{\theta_xc_o\varrho}\times t_0 } \mathfrak g_-^q (u,k) \,dx \\
		& \leq &  \mopC (\sigma\ve \bsom)^{q}   \tfrac{\de_t}{\de_x^{sp}} |B_{\theta_xc_o\varrho}|
		+ \int_{B_{\theta_xc_o\varrho}\times t_0 } \mathfrak g_-^q (u,k) \,dx.
		\end{array}
	\end{multline*}
Now to deal with the last integral, we make use of \cref{lem:g} to  get 
\begin{equation*}
	\mathfrak g_-^q (u,k)
	\le
	\bsc_q  \lbr|u|+|k|\rbr^{q-2}(u-k)_-^2.
\end{equation*}
\begin{description}
	\item[Case $1<q\leq 2$:] In this case, we make use of $(u-k)_-\le |u|+|k|$ along with $u\geq \bsmu^-$ to get  \begin{equation*}
		\mathfrak g_-^q (u,k)
		\leq
		\bsc_q  \lbr u-k \rbr_-^q \chi_{\{u\leq k\}}
		\leq
		\bsc_q \left(\sigma \ve \bsom \right)^{q}.
	\end{equation*}
	\item[Case $q \geq 2$:] In this case, we make use of the bounds $(u-k)_-\le |u|+|k|$, $u\geq \bsmu^-$ and $|\bsmu^{-}|\leq  ( \sigma \ve \bsom)$ to get
	\begin{equation*}
		\mathfrak g_-^q(u,k)
		\leq
		\bsc_q  \lbr |u|+|k|\rbr^q \chi_{\{u\leq k\}}
		\leq
		\bsc_q \left( \sigma \ve \bsom \right)^{q}.
	\end{equation*}
\end{description}
 In particular,  for all $1<q<\infty$, we get
\begin{equation*}
	\int_{B_{\theta_xc_o\varrho}\times t_0 } \mathfrak g_-^q (u,k) \,dx .
	\leq \bsc_q \left( \sigma \ve \bsom \right)^{q} 
	|B_{\theta_xc_o\varrho}|.
\end{equation*}	

	We now invoke \cref{lem:shrinking} with $l = \bsmu^- + \sigma \ve\bsom$, $k = \bsmu^- + \tfrac12 \sigma \ve\bsom$ and $m = \bsmu^- + \ve\bsom$ to get
	\[
	(\tfrac12 \sigma \ve \bsom)(\ve\bsom)^{p-1}(1-\sigma)^{p-1} \frac{\al |B_{\theta_x c_o\varrho}|}{(\theta_xc_o\varrho)^{n+sp}} |\{-(\bsmu^--u) \leq \tfrac12 \sigma \ve \bsom\} \cap \mcQ| \leq \mopC (\sigma\ve \bsom)^{q}   \lbr 1+\tfrac{\de_t}{\de_x^{sp}}\rbr |B_{\theta_xc_o\varrho}|.
	\]
	In particular, we get
	\[
	\al \tfrac{\sigma}{2} (\ve\bsom)^p (1-\sigma)^{p-1} \frac{|B_{\theta_x c_o\varrho}|}{|\mcQ|} \frac{\theta_t}{\theta_x^{sp}}
	|\{-(\bsmu^--u) \leq \tfrac12 \sigma \ve \bsom\} \cap \mcQ| \leq \mopC (\sigma\ve \bsom)^{q}   \lbr 1+\tfrac{\de_t}{\de_x^{sp}}\rbr |B_{\theta_xc_o\varrho}|,
	\]
	which becomes
	\[
	\al \tfrac{\sigma}{2} (\ve\bsom)^p (1-\sigma)^{p-1} \frac{|B_{\theta_x c_o\varrho}|}{|\mcQ|} \frac{\de_t}{\de_x^{sp}}(\sigma \ve \bsom)^{q-p}
	|\{-(\bsmu^--u) \leq \tfrac12 \sigma \ve \bsom\} \cap \mcQ| \leq \mopC (\sigma\ve \bsom)^{q}   \lbr 1+\tfrac{\de_t}{\de_x^{sp}}\rbr |B_{\theta_xc_o\varrho}|.
	\]
	This gives
	\[
	|\{-(\bsmu^--u) \leq \tfrac12 \sigma \ve \bsom\} \cap \mcQ| \leq \mopC \lbr \tfrac{\de_x^{sp}}{\de_t}+1\rbr \tfrac{\sigma^{p-1}}{\al (1-\sigma)^{p-1}} |\mcQ|,
	\]
which completes the proof. 
\end{proof}
\begin{remark}
	In \cref{lemma4.21}, \cref{lemma4.31}, \cref{lemma4.41} and \cref{lemma4.51}, if we make the choice $\de_t = \de_x^{sp}$, then all the constants will become independent of $\de_x$ and $\de_t$. 
\end{remark}

\section{Reduction of oscillation for \texorpdfstring{\cref{holderparabolic}}.}


Let us consider the reference cylinder $\mcQ_o = B_{R} \times (-R^{sp},0)$ for some fixed $R \in \RR^+$.  Without loss of generality, we have taken $(x_o, t_o) = (0, 0)$.
Let us take constant
\begin{equation}\label{defbsom}
	\bsom  = 2\esssup_{\mcQ_o} |u| + \tail(|u|,\mcQ_o).	
\end{equation}
Consider the cylinder 
\[
\mcQ_1 = \mcQ_{\varrho} (\bsom) = B_{ \bsom^{\frac{\mfd-q}{sp}} \varrho} \times (-  \bsom^{\mfd-p} \varrho^{sp},0].
\]
Then we choose $\varrho$ small such that the following inclusion holds:
\begin{equation*}
B_{ \bsom^{\frac{\mfd-q}{sp}} \varrho} \times (-  \bsom^{\mfd-p} \varrho^{sp},0] \subset \mcQ_o \quad \Longrightarrow \quad \varrho := R\min\{\bsom^{\frac{q-\mfd}{sp}},\bsom^{\frac{p-\mfd}{sp}}\}.
\end{equation*}
 Note that we have $\mfd > \max\{q,p\}$  and hence the cylinders are shrinking in size as $\bsom \searrow 0$. 
 From \cref{defbsom}, we trivially have
 \begin{equation*}
 \essosc_{\mcQ_{\varrho}(\bsom)} u 
  \leq \bsom .
 \end{equation*}
  Let us now define 
\[  
\bsmu^+:=  \esssup_{\mcQ_1}u \qquad\text{and} \qquad \bsmu^-:= \essinf_{\mcQ_1} u.
\]

We can easily deduce following tail estimate on $\mcQ_1$ by the use of definitions of $\bsom, \varrho$ and the $\tail$.
\begin{multline*}
\tailp((u-\bsmu^{\pm})_{\pm};\mcQ_1) = \bsom^{\mfd-q} \varrho^{sp}\underset{t \in (-\bsom^{\mfd - p} \varrho^{sp},0)}{\esssup}\,\int_{\RR^n \setminus B_{ \bsom^{\frac{\mfd-q}{sp}} \varrho} }\frac{(u -\bsmu^{\pm} )_{\pm}^{p-1}(y,t)}{|y|^{n+sp}}\,dy\\
\begin{array}{rcl}
&\leq&\mopC_{n,p,s} \left[ \bsom^{p-1} +  \frac{\bsom^{\mfd-q} \varrho^{sp}}{R^{sp}}   \tailp(u,\mcQ_o) +  \bsom^{\mfd-q} \varrho^{sp} \underset{t \in (-\bsom^{\mfd - p} \varrho^{sp},0)}{\esssup}\,\int_{B_R \setminus B_{ \bsom^{\frac{\mfd-q}{sp}} \varrho} }\frac{|u|^{p-1}(y,t)}{|y|^{n+sp}}\,dy \right] \\
&\overset{\cref{defbsom}}{\leq} &\mopC_{n,p,s} \bsom^{p-1}.
\end{array}
\end{multline*}

We will now obtain the reduction of oscillation in $\mcQ_1$ cylinder.  For simplicity we will always assume that all the time intervals mentioned below are always contained inside $(- \bsom^{\mfd -p} \varrho^{sp},0]$. 

\begin{remark}
The two main results in this section that we will show are reduction of oscillation from \cref{prop6.13} when $|\bsmu^{\pm}| \leq \tau \bsom$ holds and \cref{prop6.14} when $\tau \bsom \leq \pm \bsmu^{\pm} \leq 2 \bsom$ holds. The remaining case  corresponds to the away from zero situation which is dealt with in \cref{section7}. 

\end{remark}

\begin{proposition}\label{prop6.13}
	Let $u$ be locally bounded, weak solution of \cref{maineq} and $\mfd$ satisfy $\max\{p,q\} < \mfd < q + \tfrac{p-1}{1+\frac{n}{sp}}$. Furthermore, for a given $\alpha \in (0,1)$,  we assume the following is satisfied:
	\[
	|\{\pm (\bsmu^{\pm} -u(\cdot,t_o))\geq \tfrac{\bsom}{4} \}\cap B_{ (\frac{\bsom}{4})^{\frac{\mfd-q}{sp}} (c_o\varrho)}| \geq \alpha |B_{ (\frac{\bsom}{4})^{\frac{\mfd-q}{sp}} (c_o\varrho)}|.
	\]
	Then there exists constants $\delta_o = \delta_o(\datanb{,\alpha}) \in (0,\tfrac14)$, $\eta_o = \eta_o(\datanb{,\alpha}) \in (0,\tfrac14)$ and $ c_o(\eta_o) \in (0,\tfrac14)$ satisfying
	\[
	c_o^{\frac{sp}{p-1}} \leq \eta_o  \qquad  \text{and} \qquad |\bsmu^{\pm}| \leq \eta_o  \bsom \; ,
	\]
	such that  the following conclusion holds:
	\[
	\pm(\bsmu^{\pm} - u) \geq \eta_o \bsom \quad \text{on} \quad B_{(\frac{\bsom}{4})^{\frac{\mfd-q }{sp}} \tfrac{c_o \varrho}{2}} \times (t_o + \tfrac{1}{2} \de_o (\tfrac{\bsom}{4})^{\mfd-p}(c_o\varrho)^{sp} ,t_o+ \de_o (\tfrac{\bsom}{4})^{\mfd-p}(c_o\varrho)^{sp}].
	\]

\end{proposition}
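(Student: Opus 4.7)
The plan mirrors the proof of \cref{prop3.13}, chaining the three main lemmas of Section 4 in the same order. With $\alpha \in (0,1)$ given by hypothesis, first apply \cref{lemma4.41} with $\varepsilon = \tfrac{1}{4}$ and $\delta_x = \delta_t = 1$ (after relabelling $\bsom$ as $\tfrac{\bsom}{4}$). This produces constants $\bar\varepsilon, \delta \in (0,1)$ depending on $\alpha$ such that
\[
\bigl|\{\pm(\bsmu^\pm - u(\cdot,t))\geq \tfrac{\delta\bsom}{4}\} \cap B_{(\bsom/4)^{(\mfd-q)/(sp)} c_o\varrho}\bigr| \geq \tfrac{\alpha}{2} \bigl|B_{(\bsom/4)^{(\mfd-q)/(sp)} c_o\varrho}\bigr|
\]
for every $t \in (t_o, t_o + \bar\varepsilon(\tfrac{\bsom}{4})^{\mfd-p}(c_o\varrho)^{sp}]$. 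The hypothesis $|\bsmu^\pm| \leq \eta_o\bsom$ (for $\eta_o$ small) provides the $|\bsmu^\pm| \leq 8\varepsilon\bsom$ assumption of \cref{lemma4.41}, and the tail alternative follows by choosing $c_o$ small enough, using the global bound $\tailp((u-\bsmu^\pm)_\pm;\mcQ_1) \lesssim \bsom^{p-1}$ that was established above for the cylinder $\mcQ_1$.

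Second, introduce a parameter $\sigma \in (0,\tfrac12)$ to be chosen. Set $\delta_x := (\delta\sigma)^{(q-\mfd)/(sp)}$ and $\delta_t := \bar\varepsilon\, \delta^{p-\mfd}$, so that the cylinder produced in Step~1 coincides with the domain required by \cref{lemma4.51} for the level $\tfrac{\sigma\delta\bsom}{4}$. Since $\mfd > p$, one has $\delta_t (\delta\bsom/4)^{\mfd-p} \geq \delta_t (\sigma\delta\bsom/4)^{\mfd-p}$, so the measure information persists on any time subinterval of length $\delta_t(\sigma\delta\bsom/4)^{\mfd-p}(c_o\varrho)^{sp}$. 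Applying \cref{lemma4.51} then yields
\[
\bigl|\{\pm(\bsmu^\pm - u)\leq \tfrac{\sigma\delta\bsom}{8}\} \cap \mcQ\bigr| \leq \mopC \Bigl(\tfrac{\delta_x^{sp}}{\delta_t}+1\Bigr)\frac{\sigma^{p-1}}{\alpha(1-\sigma)^{p-1}} |\mcQ| \leq \tfrac{\mopC}{\alpha}\bigl(\sigma^{p-\mfd+1} + \sigma^{p-1}\bigr)|\mcQ|.
\]
Again $c_o$ must be chosen small enough to handle the tail at the scale $\sigma\delta\bsom$, and $\eta_o$ must be taken below $\sigma\delta/4$ to satisfy the $|\bsmu^\pm| \leq \sigma\varepsilon\bsom$ assumption of the shrinking lemma.

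Third, apply \cref{lemma4.21} with $\varepsilon = \sigma\delta/8$ to pass from measure smallness to a pointwise bound. By the final remark in \cref{lemma4.21}, the smallness threshold becomes
\[
\nu_1 \asymp \Bigl(\tfrac{1}{\delta_x^{sp}} + \tfrac{1}{\delta_t}\Bigr)^{-(n+sp)/(sp)} (\delta_x^n\delta_t)^{-1} \asymp \sigma^{(\mfd-q)n/(sp)}.
\]
To close the argument we must choose $\sigma$ so that
\[
\sigma^{p-\mfd+1} + \sigma^{p-1} \lesssim \sigma^{(\mfd-q)n/(sp)}.
\]
The main obstacle — and the only place the full hypothesis on $\mfd$ is used — is this exponent matching: it is solvable in $\sigma$ if and only if both $p-1 > (\mfd-q)n/(sp)$ and $p-\mfd+1 > (\mfd-q)n/(sp)$. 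The first condition is strictly weaker, and the second rearranges to $\mfd < q + \tfrac{p-1}{1+n/(sp)}$, which is exactly the standing hypothesis. Setting $\eta_o := \sigma\delta/16$ and further shrinking $c_o$ once more to ensure $c_o^{sp/(p-1)} \leq \eta_o$ (so that all tail alternatives are fulfilled at the final scale), the conclusion follows on the top half of the time window, yielding the stated pointwise bound on the cylinder $B_{(\bsom/4)^{(\mfd-q)/(sp)} c_o\varrho/2} \times (t_o + \tfrac12\delta_o(\bsom/4)^{\mfd-p}(c_o\varrho)^{sp},\, t_o+\delta_o(\bsom/4)^{\mfd-p}(c_o\varrho)^{sp}]$ with $\delta_o := \bar\varepsilon\, \delta^{p-\mfd}\sigma^{p-\mfd}$ absorbing the scaling factors.
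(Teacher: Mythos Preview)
Your approach is essentially the same as the paper's: chain \cref{lemma4.41}, \cref{lemma4.51}, and \cref{lemma4.21} with the choices $\delta_x = (\delta\sigma)^{(q-\mfd)/(sp)}$ and $\delta_t = \bar\varepsilon\,\delta^{p-\mfd}$, then close by matching exponents in $\sigma$.

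There is one arithmetic slip that breaks the logical chain as written. With your choices one has $\delta_x^{sp}/\delta_t \approx \sigma^{q-\mfd}$, so the shrinking bound from \cref{lemma4.51} is $\approx (\sigma^{q-\mfd}+1)\sigma^{p-1} = \sigma^{p+q-\mfd-1} + \sigma^{p-1}$, not $\sigma^{p-\mfd+1} + \sigma^{p-1}$ as you wrote (you seem to have carried over the $q=2$ exponent from \cref{prop3.13}). The correct exponent $p+q-\mfd-1$ is what rearranges, via $p+q-\mfd-1 > (\mfd-q)\tfrac{n}{sp}$, to the standing hypothesis $\mfd < q + \tfrac{p-1}{1+n/(sp)}$; your stated exponent would instead give $\mfd < q + \tfrac{p-q+1}{1+n/(sp)}$, which is a different (and for $q>2$ strictly weaker) constraint. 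Also, your proposed $\delta_o := \bar\varepsilon\,\delta^{p-\mfd}\sigma^{p-\mfd}$ exceeds $\bar\varepsilon$ since $p<\mfd$, overshooting the time window produced in Step~1; the paper takes $\delta_o$ of order $\bar\varepsilon$ after imposing $\sigma^{\mfd-p}\le\tfrac12$ so that the De Giorgi cylinders fit inside the top half.
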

\begin{proof}
	It is easy to see that $B_{(\frac{\bsom}{4})^{\frac{\mfd-q }{sp}} (c_o \varrho)} \times (t_o  ,t_o+ \de_o (\tfrac{\bsom}{4})^{\mfd-p}(c_o\varrho)^{sp}] \subset \mcQ_1$ provided  we choose $t_o$ such that $(t_o  ,t_o+ \de_o (\tfrac{\bsom}{4})^{\mfd-p}(c_o\varrho)^{sp}] \subset (-  \bsom^{\mfd-p} \varrho^{sp},0]$. This is possible since $\de_o (\tfrac{\bsom}{4})^{\mfd-p}(c_o\varrho)^{sp} \leq  \bsom^{\mfd-p} \varrho^{sp}$ holds from the fact that $\de_o, c_o \in (0,\tfrac14)$. 
	The proof now follows in several steps:
	\begin{description}[leftmargin=*]
		\descitemnormal{Step 1:}{6step1sing} We apply \cref{lemma4.41} with $\de_x=1$ and $\ve =\tfrac14$ to get 
		\[
		|\{\pm (\bsmu^{\pm} - u(\cdot,t))\geq \de \tfrac{\bsom}{4}\} \cap B_{(\tfrac{\bsom}{4})^{\frac{\mfd-q}{sp}} (c_o\varrho)} | \geq \tfrac12\alpha |B_{ (\tfrac{\bsom}{4})^{\frac{\mfd-q}{sp}} (c_o\varrho)}| \txt{for a.e} t \in (t_o,t_o+\bar\varepsilon(\tfrac{\bsom}{4})^{\mfd-p}(c_o\varrho)^{sp}], 
		\]
		where $\bar\varepsilon$ and $\delta$ (both depending on $\al$ and data) are as obtained in  \cref{lemma4.41}. Note that the condition $|\bsmu^{\pm}| \leq 8\ve\bsom$ in \cref{lemma4.41} is automatically satisfied when $\ve = \tfrac14$.    To ensure the tail alternative holds, we estimate as follows (see \cref{Rmk:5.1}) :
		\[
		\begin{array}{rcl}
			c_o^{{sp}}\tailp((u-\bsmu^{\pm})_{\pm},  (\tfrac{\bsom}{4})^{\frac{\mfd -q}{sp}} \varrho, (t_o,t_o+\bar\varepsilon(\tfrac{\bsom}{4})^{\mfd-p}(c_o\varrho)^{sp}) ) & \leq & c_o^{sp} 4^{q-\mfd} \tailp((u-\bsmu^{\pm})_{\pm}, \mcQ_1)\\
			&\leq & \mopC_{n,p,s} c_o^{sp} 4^{q-\mfd}\bsom^{p-1},
		\end{array}
		\]
		Hence if we choose $c_o^{sp} \leq \tfrac{1}{\mopC_{n,p,s}} 4^{1-p}$, then the tail alternative in \cref{lemma4.41} is also satisfied. 
		\descitemnormal{Step 2:}{6step2sing} In this step, we want to apply \cref{lemma4.51}.  Let us choose $\de_x = (\de\sigma)^{\frac{q-\mfd}{sp}}$ and $\de_t = \bar\ve \de^{p-\mfd}$,  and we will first show that the measure hypothesis is satisfied.  Notice that
		\[
		B_{(\tfrac{\bsom}{4})^{\frac{\mfd-q}{sp}} (c_o\varrho)} \times (t_o,t_o+\bar\varepsilon(\tfrac{\bsom}{4})^{\mfd-p}(c_o\varrho)^{sp}) = B_{\de_x (\sigma\de \tfrac{\bsom}{4})^{\frac{\mfd-q}{sp}}(c_o\varrho)} \times (t_o, t_o+ \de_t (\de \tfrac{\bsom}{4})^{\mfd-p}(c_o\varrho)^{sp}).
		\]
		Since $\sigma \in (0,1)$ and $\mfd>p$, we see that $\de_t (\de \tfrac{\bsom}{4})^{\mfd-p}(c_o\varrho)^{sp} \geq \de_t (\sigma\de \tfrac{\bsom}{4})^{\mfd-p}(c_o\varrho)^{sp}$. Hence   for all $\hat{t} \in (t_o+\de_t (\sigma\de \tfrac{\bsom}{4})^{\mfd-p}(c_o\varrho)^{sp} ,  t_o +\bar\varepsilon(\tfrac{\bsom}{4})^{\mfd-p}(c_o\varrho)^{sp} ]$, we have the measure information over the cylinder 
		\[
		\mcQ = B_{\de_x (\sigma\de \tfrac{\bsom}{4})^{\frac{\mfd-q}{sp}}(c_o\varrho)} \times (\hat{t} - \de_t (\sigma\de \tfrac{\bsom}{4})^{\mfd-p}(c_o\varrho)^{sp} ,  \hat t] \txt{with} \de_x =(\de\sigma)^{\frac{q-\mfd}{sp}} \quad \text{and} \quad \de_t = \bar\ve \de^{p-\mfd}.
		\]
		Thus, applying \cref{lemma4.51}, we get
		\begin{equation*}
			|\{\pm(\bsmu^{\pm}-u) \leq \tfrac12 \sigma \de \tfrac{\bsom}{4}\} \cap \mcQ| \leq \mopC \lbr \tfrac{\de_x^{sp}}{\de_t}+1\rbr \tfrac{\sigma^{p-1}}{\al (1-\sigma)^{p-1}} |\mcQ| \approx \mopC |\mcQ| .
		\end{equation*}
		
	As in \descrefnormal{6step1sing}{Step 1},	we take $|\bsmu^{\pm}| \leq \tfrac14 \sigma \de \bsom $ and note that the tail alternative holds if we choose $c_o^{sp} \leq  \tfrac{1}{\mopC_{n,p,s}} (\tfrac14 \sigma \de)^{p-1}$.
		\item[Step 3:] We now apply \cref{lemma4.21} to get $\nu_1 \approx_{\datanb{}} \lbr \tfrac{1}{\de_x^{sp}} + \tfrac{1}{\de_t}\rbr^{-\frac{n+sp}{sp}} (\de_x^n\de_t)^{-1}$ such that the following conclusion follows:
			\begin{equation*}
			\pm\left(\bsmu^{\pm}-u\right)\geq\tfrac{1}{16}\sigma \de \bsom
			\quad
			\mbox{ on }\quad B_{ \de_x (\sigma\de \tfrac{\bsom}{4})^{\frac{\mfd-q}{sp}} \frac{ c_o \varrho}{2}} \times (\hat t - \de_t (\sigma\de \tfrac{\bsom}{4})^{\mfd-p}(\tfrac12c_o\varrho)^{sp},  \hat t].
		\end{equation*}
	\item[Step 4:] We need to choose $\sigma$ such that 
	\[
	\mopC \lbr \tfrac{\de_x^{sp}}{\de_t}+1\rbr \tfrac{\sigma^{p-1}}{\al (1-\sigma)^{p-1}} \leq \mopC \lbr \tfrac{1}{\de_x^{sp}} + \tfrac{1}{\de_t}\rbr^{-\frac{n+sp}{sp}} (\de_x^n\de_t)^{-1}.
	\]
	Recalling $\de_x =(\de\sigma)^{\frac{q-\mfd}{sp}}$ and  $\de_t = \bar\ve \de^{p-\mfd}$, the left hand side has the form
	\[
	\mopC \lbr \tfrac{\de_x^{sp}}{\de_t}+1\rbr \tfrac{\sigma^{p-1}}{\al (1-\sigma)^{p-1}} \approx \mopC (\sigma^{q-\mfd} +1) \sigma^{p-1} = \mopC (\sigma^{p+q-\mfd-1} + \sigma^{p-1}).
	\]
	The right hand side has the form 
	\[
		\mopC \lbr \tfrac{1}{\de_x^{sp}} + \tfrac{1}{\de_t}\rbr^{-\frac{n+sp}{sp}} (\de_x^n\de_t)^{-1}  \approx  \mopC \frac{\sigma^{(\mfd-q)\frac{n}{sp}}}{ \lbr \sigma^{\mfd-q} + 1 \rbr^{1+\frac{n}{sp}}} \\
		 \overset{\sigma\in(0,\frac12)}{\geq}   \mopC \sigma^{(\mfd-q)\frac{n}{sp}}.
	\]
	Based on the above calculations, we need to ensure the following are satisfied:
	\begin{enumerate}[(i)]
		\item $p+q-\mfd -1 > 0$ which is equivalent to $\mfd < p+q-1$.
		\item $p+q-\mfd -1 > (\mfd-q)\tfrac{n}{sp} $ which is equivalent to $\mfd < q + \frac{p-1}{1+\frac{n}{sp}}$. 
		\item $p-1 >  (\mfd-q)\tfrac{n}{sp} $ which is equivalent to $\mfd < q + \frac{p-1}{\frac{n}{sp}}$.
	\end{enumerate}
In particular, we need to ensure 
\begin{equation*}
	\max\{p,q\} < \mfd < q + \frac{p-1}{1+\frac{n}{sp}}.
\end{equation*}
\item[Step 5:] From step 3, we have for all $\hat{t} \in (t_o+\de_t (\sigma\de\tfrac{\bsom}{4})^{\mfd-p}(c_o\varrho)^{sp} ,  t_o +\bar\varepsilon(\tfrac{\bsom}{4})^{\mfd-p}(c_o\varrho)^{sp} ]$,
\begin{equation*}
			\pm\left(\bsmu^{\pm}-u(\cdot,  \hat{t})\right)\geq\tfrac{1}{16}\sigma \de \bsom
			\quad
			\mbox{ on }\quad B_{ \de_x (\sigma\de \tfrac{\bsom}{4})^{\frac{\mfd-q}{sp}} \frac{ c_o \varrho}{2}}.
		\end{equation*}
	Now we can further choose $\sigma$ small if needed such that 
	$$
\de_t (\sigma\de\tfrac{\bsom}{4})^{\mfd-p}(c_o\varrho)^{sp} = \bar\ve	(\sigma \tfrac{\bsom}{4})^{\mfd-p}(c_o\varrho)^{sp} \leq \tfrac12 \bar\varepsilon(\tfrac{\bsom}{4})^{\mfd-p}(c_o\varrho)^{sp}.
	$$
this can be ensure by taking $\sigma \leq (\tfrac{1}{2})^{\frac{1}{\mfd -p}}$ and we have that 
$$
(t_o + \tfrac12 \bar\varepsilon(\tfrac{\bsom}{4})^{\mfd-p}(c_o\varrho)^{sp} , t_o +\bar\varepsilon(\tfrac{\bsom}{4})^{\mfd-p}(c_o\varrho)^{sp}] \subset (t_o+\de_t (\sigma\de\tfrac{\bsom}{4})^{\mfd-p}(c_o\varrho)^{sp} ,  t_o +\bar\varepsilon(\tfrac{\bsom}{4})^{\mfd-p}(c_o\varrho)^{sp} ].
	$$
	\end{description}
This  completes the proof of the lemma. 
\end{proof}

Next we focus on the case when $\tau \bsom \leq \pm \bsmu ^{\pm} \leq 2 \bsom$ for some fixed $\tau \in (0,\eta_o)$ yet to be chosen depending only on data, $\eta_o$ is from \cref{prop6.13} and we will prove the reduction of oscillation.  We start by writing down three main lemmas, the first being the propagation of measure information forward in time. 


\begin{lemma}\label{Lm:6:2}
	Suppose $ \tau \bsom \leq \pm \bsmu ^\pm \leq 2 \bsom$ for some fixed $\tau$ yet to be chosen with $\mfd > 2$ and 
	\begin{equation*}
		|\{u(\cdot,t^{\ast})\leq \bsmu^\pm \mp \tfrac14 \bsom\}\cap B_{\lbr \tfrac{\bsom}{4}\rbr^{\frac{\mfd-q}{sp}}(c_o\varrho)}|\geq \nu |B_{\lbr \tfrac{\bsom}{4}\rbr^{\frac{\mfd-q}{sp}}(c_o\varrho)}|. 
	\end{equation*}	
	Then there exists  $\varepsilon = \varepsilon(\datanb{,\nu,  \tau })\in(0,\tfrac{\tau}{2})$ and $c_o = c_o(n,p,s,\varepsilon)$,  such that the following two conclusion holds:
	\[	\begin{cases*}
		\left|\left\{ u(\cdot, t)\le\bsmu^\pm \mp \varepsilon\boldsymbol \om\right\}\cap B_{\lbr \tfrac{\bsom}{4} \rbr^{\frac{\mfd - q}{sp}}c_o\varrho}\right|
		\ge
		\tfrac12\nu |B_{\lbr \tfrac{\bsom}{4} \rbr^{\frac{\mfd - q}{sp}} c_o\varrho}|
		\txt{for all} t\in (t^{\ast} , t^{\ast} +   \lbr \ve \bsom\rbr^{\mfd-p}(c_o\varrho)^{sp}],\\
		c_o^{\frac{sp}{p-1}} \tail(( u-\bsmu^{\pm})_{\pm}; \lbr \tfrac{\bsom}{4} \rbr^{\frac{\mfd - q}{sp}} \varrho  ; (t^{\ast} , t^{\ast} +   \lbr \ve \bsom\rbr^{\mfd-p}(c_o\varrho)^{sp}] ) \leq   \varepsilon \bsom.
	\end{cases*}
	\]
	
\end{lemma}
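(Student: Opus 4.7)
The plan is to reduce the two signs to one by exploiting the symmetry $u \mapsto -u$, which leaves \cref{maineq} invariant and swaps $\bsmu^+ \leftrightarrow -\bsmu^-$: this reflection converts the lower-sign hypothesis into the upper-sign hypothesis applied to $-u$, so it will suffice to treat the ``$+$'' case. Assuming $\bsmu^+ \in [\tau\bsom, 2\bsom]$ and the initial measure information at the level $\bsmu^+ - \tfrac14\bsom$, I would introduce an auxiliary level $k := \bsmu^+ - \tilde{\varepsilon}\bsom$ with $\tilde{\varepsilon} \in (\varepsilon, \tau/2)$, together with a separated level $k_\delta := \bsmu^+ - \delta\tilde{\varepsilon}\bsom$ for $\delta \in (0,\tfrac12)$; the final level in the conclusion will be $\varepsilon := \delta\tilde{\varepsilon}$.

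The key observation is that on $\{u \ge k\}$ the bound $\bsmu^+ \ge \tau\bsom$ together with $\tilde{\varepsilon} < \tau/2$ forces $u \ge k \ge \tau\bsom/2 > 0$, so that $|u|+|k| = u+k$ is pinched between $\tau\bsom$ and $4\bsom$. Combined with \cref{lem:g}, this linearises the doubly nonlinear density: $\mathfrak g_+^q(u,k) \sim \bsom^{q-2}(u-k)_+^2$ with constants depending only on $\tau$ and $q$. I would then apply the subsolution form of \cref{Prop:energy} to $(u-k)_+$ over $B_{\theta_x c_o\varrho} \times (t^\ast, \mft]$ with a purely spatial cut-off $\zeta = \zeta(x)$ equal to $1$ on $B_{(1-\sigma)\theta_x c_o\varrho}$. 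Since $\partial_t\zeta \equiv 0$ the time-derivative contribution vanishes, and the nonlocal tail contribution is rendered harmless by choosing $c_o$ small enough that $c_o^{sp/(p-1)}\tail(\,\cdot\,) \le \tilde{\varepsilon}\bsom$, exactly as in \cref{sec:tail} and \cref{Rmk:5.1}. The hypothesis feeds into the initial trace as $\int_B \mathfrak g_+^q(u,k)(t^\ast)\zeta^p\,dx \le C(\tau,q)(1-\nu)\tilde\varepsilon^2\bsom^q\,|B|$, and the bulk energy terms are bounded by $C\bar\varepsilon\,\tilde\varepsilon^\mfd\bsom^q\,|B|/\sigma^{n+sp}$ once one restricts $\mft - t^\ast \le \bar\varepsilon(\tilde\varepsilon\bsom)^{\mfd-p}(c_o\varrho)^{sp}$.

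For the lower bound on the left-hand side I want to avoid a crude pointwise comparison, which would introduce an unfriendly factor $(4/\tau)^{|q-2|}$ that could beat $(1-\nu)$. Instead, following the device from the proof of \cref{lemma4.41}, I would use the exact formula \cref{defgpm} to write
\[
R \ := \ \frac{\int_k^{\bsmu^+}|s|^{q-2}(s-k)\,ds}{\int_k^{k_\delta}|s|^{q-2}(s-k)\,ds} \ = \ 1 + I_\delta, \qquad I_\delta \ \le \ C(\tau,q)\delta,
\]
using that $|s|^{q-2}$ varies by a uniformly bounded factor on the short interval $[k_\delta,\bsmu^+] \subset [\tau\bsom/2, 2\bsom]$. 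Combining the three bounds and absorbing an $n\sigma|B|$ term from enlarging the cut-off ball, the resulting measure inequality reads
\[
|\{u > k_\delta\}(\mft)| \ \le \ \bigl[(1+C(\tau,q)\delta)(1-\nu) + n\sigma + C(\tau,q)\bar\varepsilon\,\tilde\varepsilon^{\mfd-2}\sigma^{-(n+sp)}\bigr]|B|.
\]

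The final step is to chain the parameters: fix $\sigma := \nu/(16n)$, then choose $\delta$ so small that $C(\tau,q)\delta(1-\nu) \le \nu/8$, forcing the first two bracketed terms to total at most $1-3\nu/4$; then absorb the energy term by demanding $\bar\varepsilon \le c\nu\sigma^{n+sp}/[C(\tau,q)\tilde\varepsilon^{\mfd-2}]$. To make the propagation interval match the claim $(\varepsilon\bsom)^{\mfd-p}(c_o\varrho)^{sp}$ with $\varepsilon = \delta\tilde\varepsilon$, I will additionally require $\bar\varepsilon \ge \delta^{\mfd-p}$; both constraints are compatible because $\mfd > 2$ makes $\tilde\varepsilon^{\mfd-2}$ small for small $\tilde\varepsilon$ and $\mfd > p$ makes $\delta^{\mfd-p}$ a positive power. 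A final shrinking of $c_o$ secures the tail alternative at the level $\varepsilon$ using the global bound $\tail((u-\bsmu^\pm)_\pm;\mcQ_1) \lesssim \bsom^{p-1}$ from the setup of this section. \emph{Main obstacle:} the coupled selection of $\delta, \sigma, \tilde\varepsilon, \bar\varepsilon, c_o$, each of which controls two or more terms with opposite monotonicity in opposite parameters. They must be fixed in the correct order for the argument to close, and the hypothesis $\mfd > 2$ is precisely what decouples the choices cleanly by rendering the exponent of $\tilde\varepsilon$ in the energy term non-singular; in its absence the ratio between the quantitative measure estimate and the admissible time-horizon becomes borderline and one would have to re-enter a cascade, re-adjusting $c_o$ at each step.
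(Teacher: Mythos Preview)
Your proposal is correct and follows essentially the same route as the paper: symmetry reduction to the $+$ case, energy estimate for $(u-k)_+$ with a purely spatial cut-off, the exact ratio device $\int_k^{\bsmu^+}/\int_k^{k_\delta}=1+I_\delta$ to avoid the bad $(4/\tau)^{|q-2|}$ factor, and the use of $\mfd>2$ to absorb the energy term.

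Two minor remarks. First, your naming swaps roles with the paper: your $(\tilde\varepsilon,\delta)$ is the paper's $(\varepsilon,\tilde\varepsilon)$; this is harmless. Second, the auxiliary parameter $\bar\varepsilon$ is unnecessary. The paper simply works on the full interval $(t^\ast,t^\ast+(\tilde\varepsilon\bsom)^{\mfd-p}(c_o\varrho)^{sp}]$ (your notation), chooses $\delta$ to control the ratio, then $\tilde\varepsilon$ small via $\mfd>2$ to kill the energy term $C\tilde\varepsilon^{\mfd-2}/\sigma^{n+sp}$, and at the very end observes that $(\varepsilon\bsom)^{\mfd-p}=(\delta\tilde\varepsilon\bsom)^{\mfd-p}\le(\tilde\varepsilon\bsom)^{\mfd-p}$ since $\mfd>p$, so the claimed interval is automatically a sub-interval of the one already treated. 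This eliminates your coupled constraint $\delta^{\mfd-p}\le\bar\varepsilon\le c\nu\sigma^{n+sp}/\tilde\varepsilon^{\mfd-2}$ and makes the parameter chain linear: $\sigma$, then $\delta$, then $\tilde\varepsilon$, then $c_o$.
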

\begin{proof}
	
	We prove the case for $\bsmu^{+}$ only because the case for $\bsmu^-$ is analogous noting that the proof is very similar to \cite[Lemma 5.13]{adimurthi2025localholderregularitybounded}.
	To ensure the tail alternative holds, we estimate as follows:
	\[
	\begin{array}{rcl}
		c_o^{\frac{sp}{p-1}} \tail(( u-\bsmu^{+})_{+}; \lbr \tfrac{\bsom}{4} \rbr^{\frac{\mfd - q}{sp}} \varrho  ; (t^{\ast} , t^{\ast} +   \lbr \ve \bsom\rbr^{\mfd-p}(c_o\varrho)^{sp}] ) & \leq & c_o^{\frac{sp}{p-1}}\frac{(\tfrac{\bsom}{4})^{\mfd-q}\varrho^{s p}}{R^{sp}}\tail((u-\bsmu^{\pm})_{\pm}, R,(-R^{sp},0))\\
		&\leq & c_o^{\frac{sp}{p-1}} 4^{q-\mfd}\bsom,
	\end{array}
	\]
	Hence if we choose $c_o^{\frac{sp}{p-1}} \leq 4^{\mfd-q} \ve$, then the tail alternative is automatically satisfied.  
	
	Let us now  take $k = \bsmu^+ - \varepsilon\bsom$ for some $\varepsilon\in (0,\tfrac{\tau}{2})$ to be eventually chosen. From the choice of $\varepsilon$ and $\bsmu^+ \geq \tau \bsom$, we see that $k \geq \tfrac{1}{2}\tau \bsom$.  We apply the energy estimate  from \cref{Prop:energy} for $(u-k)_{+}$ over the cylinder $B_{\theta_x c_o\varrho}\times(t^{\ast},\bar{t}]$ where $\theta_x =\lbr \frac{\bsom}{4} \rbr^{\frac{\mfd - q}{sp}}$ and $\bar{t}$ yet to be chosen. 
	 For $\sigma \in (0,\tfrac18]$ to be chosen later, we take a cutoff function $\zeta = \zeta(x)\geq 0$,  such that it is supported in $B_{ \theta_x c_o\varrho(1-\frac{\sigma}{2})}$ with $\zeta \equiv 1$ on $B_{(1-\sigma)\theta_xc_o\varrho}$ and $|\nabla \zeta| \apprle \tfrac{1}{\sigma\theta_xc_o\varrho}$. Making use of  \cref{Prop:energy}  and estimate analogously as \cref{lemma4.31} ,  \cref{lemma4.41}   and using $u\leq\bsmu^+$ along with $(u-k)_{+} \leq \ve\bsom$ in $B_{\theta_xc_o\varrho}$, we get
	\begin{equation}\label{Eq:6.10A}
		\begin{array}{rcl}
			\int_{B_{(1-\sigma)\theta_xc_o\varrho}\times\{\bar{t} \}} \mathfrak g_+^q(u,k) \zeta^p\,dx  
			&\leq&   \mopC\frac{ (\ve\bsom)^p}{\sigma^{n+sp} \theta_x^{sp}(c_o\varrho)^{sp}}|\bar{t}-t^{\ast}||B_{\theta_xc_o\varrho}|
			+ \int_{B_{\theta_xc_o\varrho}\times\{t^{\ast}\}} \mathfrak g_+^q(u,k) \zeta^p \,dx.
			\end{array}
		\end{equation}
	Now we estimate the last term appearing on the right hand side of \cref{Eq:6.10A} as follows:
	\begin{equation*}
		\begin{array}{rcl}
		\int_{B_{\theta_xc_o\varrho}\times\{t^{\ast}\}} \mathfrak g_+^q(u,k) \zeta^p \,dx	& = & \int_{B_{\theta_xc_o\varrho}\times\{t^{\ast}\}} (q-1) \int_{k}^{u} s^{q-2}(s-k)_+\,ds \, dx  \\
			& \leq &  (1- \nu )|B_{\theta_xc_o\varrho}| (q-1) \int^{\bsmu^+}_{k} s^{q-2}(s-k)_+\,ds,
		\end{array}
	\end{equation*}
where to obtain the last inequality, we made use of the measure hypothesis. 
	We now estimate the term appearing on the left hand side of \cref{Eq:6.10A} as follows:
	\begin{align*}
		\int_{B_{(1-\sigma)\theta_xc_o\varrho}\times\{\bar{t} \}} \mathfrak g_+^q(u,k) \zeta^p\,dx    &= \int_{B_{(1-\sigma)\theta_xc_o\varrho}\times\{\bar{t} \}} (q-1) \int_{k}^{u} s^{q-2}(s-k)_+\,ds \, dx  \\
		&\geq \left|\left\{ u(\cdot, \bar t)\ge k_{\tilde{\ve}}  \right\}\cap B_{(1-\sigma)\theta_x c_o\varrho}\right| (q-1) \int_{k}^{k_{\tilde{\ve}}} s^{q-2}(s-k)_+\,ds,
	\end{align*}
	where $k_{\tilde\varepsilon} := \bsmu^+ - \varepsilon\tilde\varepsilon\bsom$ for some $\tilde\varepsilon\in(0,\tfrac12)$. Recalling $k \geq \tfrac{\tau \bsom}{2}$,
	we further get	
	\begin{equation*}
		\int_k^{k_{\tilde\varepsilon}} s^{q-2}(s-k)_+\,ds \geq \lbr \frac{\tau \bsom}{2}\rbr^{q-2} \int_k^{k_{\tilde\varepsilon}} (s-k)_+ \,ds =  \lbr\frac{\tau \bsom}{2} \rbr^{q-2} \frac{(\varepsilon\bsom)^2 (1-\tilde\varepsilon)^2}2 .
	\end{equation*} 
%
	As in the proof of \cref{lemma4.41}, we have
	\begin{multline}\label{deg6.29}
		|\{u(\cdot,\bar t) >k_{\tilde\varepsilon}\}\cap B_{ \theta_x  c_o\varrho }| 
		\leq 
		\frac{\int_k^{\bsmu^+} s^{q-2}(s-k)_+\,ds }{\int_k^{k_{\tilde\varepsilon}} s^{q-2}(s-k)_+\,ds }
		(1- \nu)|B_{\theta_x c_o\varrho}|\\
		+
		\mopC\frac{4^{\mfd - 2}  \ve^{p-2} \bsom^{p-\mfd} |\bar{t}-t^{\ast}||B_{\theta_xc_o\varrho}| }{ \min \{ \tau^{q-2}, 2^{q-2}\} \sigma^{n+p} (q-1)(1-\tilde{\ve})^2 (c_o\varrho)^{sp}}  
		+n\sigma |B_{\theta_x c_o\varrho}| .
	\end{multline}
	 Furthermore, proceeding as in \cite[Lemma 6.2]{liaoHolderRegularitySigned2022},  we use $\tau \bsom  \leq \bsmu^+ \leq 2\bsom$ and $k \geq \frac{ \tau \bsom}{2}$ to get
	\begin{equation}\label{deg6.30}
		\frac{\int_k^{\bsmu^+} s^{q-2}(s-k)_+\,ds }{\int_k^{k_{\tilde\varepsilon}} s^{q-2}(s-k)_+\,ds }\leq 1 + \bsc_{q,\tau}\ve \tilde\varepsilon.
	\end{equation}
	Combining \cref{deg6.29} and \cref{deg6.30} and taking $\bar t \in (t^{\ast} , t^{\ast} +   \lbr \ve \bsom\rbr^{\mfd-p}(c_o\varrho)^{sp}]$, we get
	\begin{multline*}
		|\{u(\cdot,\bar t) >k_{\tilde\varepsilon}\}\cap B_{ \theta_x  c_o\varrho }| 
		\leq \lbr
		(1+\bsc_{q,\tau} \ve\tilde\varepsilon )
		(1-\nu)
		+
		\mopC\frac{ (4 \ve)^{\mfd-2} }{ \min\{\tau^{q-2}, 2^{q-2} \} \sigma^{n+p} (q-1)(1-\tilde{\ve})^2 }  
		+n \sigma  \rbr |B_{\theta_x c_o\varrho}|.
	\end{multline*}
	We first choose $\tilde\varepsilon$ small such that $(1+\bsc_{q,\tau} \tilde\varepsilon)
	(1-\nu) \leq 1-\tfrac34 \nu$ and then choose $\sigma = \tfrac{\nu}{8n}$ followed by $\varepsilon$ small such that $\mopC\frac{  (4 \ve)^{\mfd-2} }{\min\{ \tau^{q-2} , 2^{q-2} \} \sigma^{n+p} (q-1)(1-\tilde{\ve})^2 }   \leq \tfrac{\nu}{8}$ which is possible since $\mfd >2$.  Noting that $\mfd > p$, we can redefine $\tilde{\ve} \ve$ to be $\ve$ which completes the proof of the lemma. 
\end{proof}

We now prove measure shrinking lemma which is similar to \cite[Lemma 5.16]{adimurthi2025localholderregularitybounded}. 
\begin{lemma}\label{Lm:6.12}
	Suppose $\tau \bsom\le \pm \bsmu^\pm \le2\bsom$ and for some positive $\sigma \leq \tfrac{1}{2}$ and $\ve \in (0,\tfrac{1}{8}) $ following is satisfied, 
	\begin{equation*}
		\left|\left\{ u(\cdot, t)\le  \bsmu^\pm  \mp \varepsilon\boldsymbol \om\right\}\cap B_{\lbr \frac{\bsom}{4} \rbr^{\frac{\mfd - q}{sp}} c_o\varrho}\right|
		\ge
		\tfrac12\nu |B_{\lbr \frac{\bsom}{4} \rbr^{\frac{\mfd - q}{sp}} c_o\varrho}|
		\txt{for all} t\in (   t^{\ast} -  \lbr \sigma \ve \bsom\rbr^{\mfd-p} (c_o\varrho)^{sp},  t^{\ast}],
	\end{equation*}
along with 
\begin{equation*}
	c_o^{\frac{sp}{p-1}} \tail(( u-\bsmu^{\pm})_{\pm}; \lbr \tfrac{\bsom}{4} \rbr^{\frac{\mfd - q}{sp}}  \varrho  ;  t^{\ast}-   \lbr \sigma \ve \bsom\rbr)^{\mfd-p} (c_o\varrho)^{sp}, t^{\ast}]) \leq   \sigma\varepsilon\bsom,
\end{equation*}
	then we have the following conclusion:
	\begin{equation*}
		|\{ \pm( \bsmu^{\pm}-u) \leq \tfrac12 \sigma \ve \bsom\} \cap \mcQ| \leq \mopC \lbr (\sigma \ve)^{2-\mfd} +1\rbr \max\{\tau^{q-2} ,1 \} \tfrac{\sigma^{p-1}}{\nu (1-\sigma)^{p-1}} |\mcQ|,
	\end{equation*}
	where $\mcQ = B_{ \lbr \frac{\bsom}{4} \rbr^{\frac{\mfd - q}{sp}} c_o \varrho} \times ( t^{\ast} -  \lbr \sigma \ve \bsom\rbr^{\mfd-p} (c_o\varrho)^{sp}, t^{\ast}]$
	and $\mopC = \mopC(\datanb{})$. 
\end{lemma}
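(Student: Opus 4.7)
The plan is to mimic the proof of \cref{lemma4.51} but to handle the new regime $\tau\bsom \leq \pm\bsmu^{\pm}\leq 2\bsom$ where the levels are far from zero. Without loss of generality, I would treat the supersolution case with $\tau\bsom\leq \bsmu^+\leq 2\bsom$, set $k := \bsmu^+ - \sigma\varepsilon\bsom$, and pick a time-independent cutoff $\zeta=\zeta(x)$ supported in $B_{\frac{3}{2}\theta_x c_o\varrho}$ with $\zeta\equiv 1$ on $B_{\theta_x c_o\varrho}$ and $|\nabla\zeta|\lesssim (\theta_x c_o\varrho)^{-1}$, where $\theta_x = (\bsom/4)^{(\mfd-q)/sp}$ and the time length is $\theta_t(c_o\varrho)^{sp}$ with $\theta_t = (\sigma\varepsilon\bsom)^{\mfd-p}$. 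Then apply \cref{Prop:energy} to $(u-k)_+$ over $\mcQ$; because the cutoff has no $t$-dependence, the $|\partial_t\zeta^p|$ term drops out and only the boundary term at the initial time slice, the cutoff-gradient term, and the tail term survive on the right.

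The cutoff-gradient term produces $C(\sigma\varepsilon\bsom)^p\theta_t(c_o\varrho)^{sp}\theta_x^{-sp}(c_o\varrho)^{-sp}|B|$, which after substitution equals $C(\sigma\varepsilon)^{\mfd}\bsom^{q}|B|$, and the tail alternative $c_o^{sp/(p-1)}\tail\leq \sigma\varepsilon\bsom$ lets me absorb the nonlocal contribution into the same form, exactly as in \cref{lemma4.51}. The crux is the initial-value term $\int_{B}\mathfrak g_+^q(u,k)\,\zeta^p\,dx$. By \cref{lem:g}, $\mathfrak g_+^q(u,k)\leq C_q(|u|+|k|)^{q-2}(u-k)_+^2$, and since $u\leq\bsmu^+$ locally one has $(u-k)_+\leq\sigma\varepsilon\bsom$. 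For $q\geq 2$, the bound $|u|+|k|\leq 5\bsom$ gives $\mathfrak g_+^q\lesssim \bsom^{q-2}(\sigma\varepsilon\bsom)^2 = (\sigma\varepsilon)^2\bsom^{q}$; for $1<q<2$, the assumption $\bsmu^+\geq\tau\bsom$ together with $\sigma\varepsilon<\tau/2$ (which I can ensure from the already enforced bound $\varepsilon<\tau/2$ and $\sigma\leq 1/2$) forces $|k|\geq \tau\bsom/2$, hence $(|u|+|k|)^{q-2}\leq(\tau\bsom/2)^{q-2}$ and $\mathfrak g_+^q\lesssim \tau^{q-2}(\sigma\varepsilon)^2\bsom^{q}$. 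Combining both cases yields the uniform bound $\int_B\mathfrak g_+^q(u,k)\,dx\leq C\max\{\tau^{q-2},1\}(\sigma\varepsilon)^2\bsom^{q}|B|$.

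Adding the three contributions, the isoperimetric integral on the left of \cref{Prop:energy} is controlled by $C\bigl[\max\{\tau^{q-2},1\}(\sigma\varepsilon)^2+(\sigma\varepsilon)^{\mfd}\bigr]\bsom^{q}|B|$. Rewriting the right hand side of the hypothesis of \cref{lem:shrinking} as $C_1(\sigma\varepsilon\bsom)^p(\theta_x c_o\varrho)^{-sp}|\mcQ|$ and reading off $C_1$ gives $C_1 = C\max\{\tau^{q-2},1\}\bigl[(\sigma\varepsilon)^{2-\mfd}+1\bigr]$, which carries the awkward factor $(\sigma\varepsilon)^{2-\mfd}$ present in the conclusion. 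Applying \cref{lem:shrinking} with $m=\varepsilon\bsom$, $l=\sigma\varepsilon\bsom$, $j$ chosen so that $2^j=1/\sigma$ (all measured on the translated variable $v=\bsmu^+-u$) converts the shrinking estimate $(C/(2^j-1))^{p-1}$ into $C_1\sigma^{p-1}(1-\sigma)^{-(p-1)}$, yielding the claimed bound, modulo tracking the $\nu$-dependence.

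I expect the main obstacle to be the initial-data term: both the case split $q\geq 2$ versus $1<q<2$ and the delicate use of the $\tau$-lower bound on $|k|$ are where the hypothesis $\tau\bsom\leq\pm\bsmu^{\pm}$ is really used and where the $\max\{\tau^{q-2},1\}$ factor in the final estimate appears. Every other step (cutoff gradient, tail absorption, and the isoperimetric/shrinking passage) is a routine adaptation of the argument already carried out in \cref{lemma4.51}.
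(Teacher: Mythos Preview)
Your proposal is correct and follows essentially the same route as the paper: apply the energy estimate of \cref{Prop:energy} to $(u-k)_+$ with $k=\bsmu^+-\sigma\varepsilon\bsom$ and a time-independent cutoff, bound the gradient and tail terms exactly as in \cref{lemma4.51}, control the initial-data term via \cref{lem:g} using $\tau\bsom\le\bsmu^+\le 2\bsom$, and finish with the isoperimetric inequality (the paper invokes \cref{lem:isop} directly rather than its corollary \cref{lem:shrinking}, but this is cosmetic).

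One small point: your appeal to an ``already enforced bound $\varepsilon<\tau/2$'' is not part of the hypotheses of \cref{Lm:6.12} (which only asks $\varepsilon<\tfrac18$), and the paper does not use it. The estimate $\mathfrak g_+^q(u,k)\le C\max\{\tau^{q-2},1\}(\sigma\varepsilon)^2\bsom^q$ for $1<q<2$ holds unconditionally: if $\sigma\varepsilon<\tau/2$ your argument via $|k|\ge\tau\bsom/2$ works, while if $\sigma\varepsilon\ge\tau/2$ one simply uses $|u|+|k|\ge(u-k)_+$ to get $\mathfrak g_+^q\le C(u-k)_+^q\le C(\sigma\varepsilon\bsom)^q\le C\tau^{q-2}(\sigma\varepsilon)^2\bsom^q$.
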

\begin{proof}
	We prove the case for $\bsmu^{+}$ only because the case for $\bsmu^-$ is analogous. 	Let $\theta_x = \lbr \tfrac{\bsom}{4} \rbr^{\frac{\mfd - q}{sp}}  $ and $\theta_t = \lbr \sigma \ve \bsom\rbr^{\mfd-p}$.  We choose a test function $\zeta = \zeta(x)$ such that $\zeta\equiv 1$ on $B_{ \theta_x c_o \varrho}$, it is supported in $ B_{ \theta_x \frac32 c_o \varrho}$ and $|\nabla \zeta|\apprle \tfrac{1}{\theta_xc_o\varrho}$.  We write the energy estimate from  \cref{Prop:energy} over the cylinder $B_{ \theta_x \frac32 c_o \varrho} \times ( t^\ast -  \theta_t (c_o\varrho)^{sp},t^\ast]$ for the functions $(u-k)_+$ where $k = \bsmu^+-\sigma \ve \bsom$.
	
	
	Using $(u-k)_{+} \leq \sigma \ve \bsom$ locally and  by our choice of the test function, applying \cref{Prop:energy}, we get
	\begin{multline*}
		\int_{t^\ast -  \theta_t (c_o\varrho)^{sp}}^{t^\ast}\int_{B_{\theta_x \frac32 c_o\varrho}}|(u-k)_+(x,t)|\int_{B_{\theta_xc_o\varrho}}\frac{|(u-k)_-(y,t)|^{p-1}}{|x-y|^{n+sp}}\,dx\,dy\,dt
		\\
		\begin{array}{rcl}
			&\leq&  \mopC (\sigma\ve \bsom)^p   \frac{\theta_t (c_o\varrho)^{sp} }{\theta_x^{sp}(c_o\varrho)^{sp}} |B_{\theta_xc_o\varrho}|
			+ \int_{B_{\theta_x 2 c_o\varrho}\times   t^\ast -  \theta_t (c_o\varrho)^{sp} } \mathfrak g_+^q (u,k) \,dx \\
			& \leq &  \mopC \frac{(\sigma\ve \bsom)^{\mfd} \bsom^{q-\mfd}}{4^{q - \mfd}}  |B_{\theta_xc_o\varrho}|
			+ \int_{B_{\theta_x 2 c_o\varrho}\times t^\ast -  \theta_t (c_o\varrho)^{sp} } \mathfrak g_+^q (u,k) \,dx.
		\end{array}
	\end{multline*}
	
	Now to deal with the last integral, we make use of \cref{lem:g} and $\tau \bsom \leq \bsmu^+ \leq 2\bsom$ to get
	\begin{equation*}
		\int_{B_{\theta_x 2 c_o\varrho}\times t^\ast -  \theta_t (c_o\varrho)^{sp} } \mathfrak g_+^q (u,k) \,dx .
		\leq \bsc_q \left( \sigma \ve \bsom \right)^{2} \max\{\tau^{q-2},1\} \bsom^{q-2} 
		|B_{\theta_xc_o\varrho}|.
	\end{equation*}

	We now invoke \cref{lem:isop} for $\bsmu^+$ 
	with $l = \bsmu^+ -\frac12 \sigma \ve\bsom$, $k = \bsmu^+ -  \ve\bsom$ and $m = \bsmu^+ - \sigma \ve\bsom$ to get
	\[
	(\tfrac12 \sigma \ve \bsom)(\ve\bsom)^{p-1}(1-\sigma)^{p-1} \frac{\nu |B_{\theta_x c_o\varrho}|}{(\theta_xc_o\varrho)^{n+sp}} |\{\bsmu^+ -u  \leq  \sigma \ve \bsom\} \cap \mcQ| \leq \mopC \max\{\tau^{q-2},1\}  \bsom^{q}   \lbr (\sigma\ve)^{\mfd} + (\sigma\ve)^2 \rbr |B_{\theta_xc_o\varrho}|.
	\]
	In particular, we get
	\[
	\nu \tfrac{\sigma}{2} (\ve\bsom)^p (1-\sigma)^{p-1} \frac{|B_{\theta_x c_o\varrho}|}{|\mcQ|} \frac{\theta_t}{\theta_x^{sp}}
	|\{\bsmu^+ -u  \leq  \sigma \ve \bsom\} \cap \mcQ| \leq \mopC \max\{\tau^{q-2},1\} \bsom^{q}   \lbr (\sigma\ve)^{\mfd} + (\sigma\ve)^2 \rbr |B_{\theta_xc_o\varrho}|,
	\]
	which becomes
	\[
	\nu \tfrac{\sigma}{2} (\ve\bsom)^p (1-\sigma)^{p-1}  \frac{(\sigma \ve \bsom)^{\mfd -p}}{(\tfrac{\bsom}{4} )^{\mfd - q}}
	|\{ \bsmu^+ -u  \leq  \sigma \ve \bsom\} \cap \mcQ| \leq \mopC \max\{\tau^{q-2},1\} \bsom^{q}   \lbr (\sigma\ve)^{\mfd} + (\sigma\ve)^2 \rbr |\mcQ|.
	\]
	This gives
	\[
	|\{ \bsmu^+ -u \leq  \sigma \ve \bsom\} \cap \mcQ| \leq \mopC \max\{\tau^{q-2},1\}  \lbr (\sigma\ve)^{2-\mfd} + 1 \rbr \tfrac{\sigma^{p-1}}{\nu (1-\sigma)^{p-1}} |\mcQ|,
	\]
	which holds for a constant $\mopC>0$ depending only on data. 
\end{proof}

Now we prove the de Giorgi iteration.  
\begin{lemma}\label{Lm:6.10}
	Let $\tau\bsom\le \pm \bsmu^\pm \le2\bsom$ and given some positive $\sigma \leq \tfrac{1}{2}$ and $\ve \in (0,\tfrac{\tau}{2}) $,  set $\theta_t =  (\sigma \varepsilon \bsom)^{\mfd - p}$, $\theta_x =  (\tfrac{\bsom}{4} )^{\frac{\mfd -q }{sp}}$  and denote 
	$\mcQ_{c_o\varrho}^{\theta_x,\theta_t} = B_{\theta_x c_o\varrho} \times (t^{\ast} - \theta_t (c_o\varrho)^{sp},  t^{\ast}]$. Then there exists a constant $\nu_1 = \nu_1(\datanb{}) \in (0,1)$ such that if 
	\begin{equation*}
		\left|\left\{
		\pm (\bsmu^{\pm}-u) \leq \tfrac12 \sigma \varepsilon \bsom\right\}\cap \mcQ_{c_o\varrho}^{\theta_x,\theta_t}\right|
		\leq
		\nu_1 \tfrac{ \min \{ \tau^{q-2} , \tau^{(2-q)(1+\frac{n}{sp})} \} (\sigma \ve)^{2-\mfd}}{(1+(\sigma \ve)^{2-\mfd})^{\frac{n+sp}{sp}}}| \mcQ_{c_o\varrho}^{\theta_x,\theta_t}|,
	\end{equation*}
	holds along with the assumption 
	\begin{equation*}
		c_o^{\frac{sp}{p-1}}\tail((u-\bsmu^\pm)_\pm ; \theta_x\varrho;  (t^{\ast}-\theta_t (c_o\varrho)^{sp},  t^{\ast}])\leq \sigma\varepsilon\bsom, 
	\end{equation*}
	then the following conclusion follows:
	\begin{equation*}
		\pm (\bsmu^{\pm}-u ) \geq\tfrac{1}4 \sigma \varepsilon \bsom
		\quad
		\mbox{ on }\quad \mcQ_{\tfrac{c_o\varrho}{2}}^{\theta_x,\theta_t} = B_{\theta_x\frac{ c_o\varrho}{2}} \times \left(  t^{\ast}-\theta_t (\tfrac{c_o\varrho}{2})^{sp}, t^{\ast} \right].
	\end{equation*}
\end{lemma}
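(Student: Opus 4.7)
The strategy is to mirror the de Giorgi iteration of \cref{lemma4.21} but replace the $\mfg^q$ estimates (which in \cref{lemma4.21} used the near-zero condition $|\bsmu^\pm|\le 8\varepsilon\bsom$) with bounds appropriate for the away-from-zero regime $\tau\bsom\le\pm\bsmu^\pm\le 2\bsom$. I present the argument for $\bsmu^-$; the $\bsmu^+$ case is symmetric. Introduce the shrinking levels $k_j=\bsmu^-+\tfrac14\sigma\varepsilon\bsom+\sigma\varepsilon\bsom/2^{j+2}$, nested cylinders $\mcQ_j\supset\tilde\mcQ_j\supset\mcQ_{j+1}$ inside $\mcQ_{c_o\varrho}^{\theta_x,\theta_t}$, and cutoff functions $\bar\zeta_j,\zeta_j$ exactly as in \cref{lemma4.21}, then apply \cref{Prop:energy} with $k=k_j$.

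The only genuinely new input is the control of $\mfg_-^q(u,k_j)$. On $\{u<k_j\}$ one has $\bsmu^-\le u\le k_j$, and since $\sigma\varepsilon<\tau/2$ the quantity $|u|+|k_j|$ stays comparable to $|\bsmu^-|\asymp\bsom$, namely it lies in $[\tau\bsom/2,\,18\bsom]$. \cref{lem:g} then yields the two-sided bound
\begin{equation*}
\tfrac{1}{\mopC}\min\{\tau^{q-2},1\}\,\bsom^{q-2}\,(u-k_j)_-^2 \;\le\; \mfg_-^q(u,k_j) \;\le\; \mopC\,\max\{\tau^{q-2},1\}\,\bsom^{q-2}\,(u-k_j)_-^2.
\end{equation*}
Inserting the upper bound in the time-derivative term, and estimating the gradient/tail terms exactly as in \cref{lemma4.21} (with $\varepsilon$ replaced by $\sigma\varepsilon$, the tail hypothesis absorbing the tail term into the gradient term), the four Caccioppoli contributions consolidate to
\begin{equation*}
\text{RHS}_j \;\le\; \mopC\,b_o^{\,j}\,|A_j|\,\frac{\bsom^{p+q-\mfd}(\sigma\varepsilon)^{p}}{(c_o\varrho)^{sp}}\,\bigl[\,1+\max\{\tau^{q-2},1\}(\sigma\varepsilon)^{2-\mfd}\,\bigr],
\end{equation*}
after substituting $\theta_t=(\sigma\varepsilon\bsom)^{\mfd-p}$ and $\theta_x^{sp}=(\bsom/4)^{\mfd-q}$, while the $\esssup\int\mfg_-^q\zeta^p$ on the left is bounded below by $\tfrac{1}{\mopC}\min\{\tau^{q-2},1\}\,\bsom^{q-2}\,\esssup\int(u-k_j)_-^2$.

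The remainder of the iteration --- Hölder's inequality, the fractional Sobolev inequality \cref{fracpoin}, and the telescoping of cylinders --- is identical to steps (a)--(e) of \cref{lemma4.21} and produces a recursion
\begin{equation*}
\bsy_{j+1} \;\le\; \bsc\,\Gamma\,\boldsymbol{b}^{\,j}\,\bsy_j^{1+s/(n+2s)},\qquad \bsy_j:=|A_j|/|\mcQ_j|.
\end{equation*}
The algebraic identity that made the powers of $\bsom$ collapse in \cref{lemma4.21} still holds because the time/space cylinder scalings are unchanged, so that $\Gamma$ carries only $\sigma\varepsilon$ and $\tau$ factors:
\begin{equation*}
\Gamma \;\simeq\; \frac{\max\{\tau^{q-2},1\}^{(n+sp)/(p(n+2s))}}{\min\{\tau^{q-2},1\}^{\,s/(n+2s)}}\,\bigl[1+(\sigma\varepsilon)^{2-\mfd}\bigr]^{(n+sp)/(p(n+2s))}.
\end{equation*}
\cref{geo_con} with $\alpha=s/(n+2s)$ requires $\bsy_0\le \Gamma^{-(n+2s)/s}$. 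Raising to this power and splitting into the cases $q\ge 2$ and $1<q<2$ (which swap the roles of $\min$ and $\max$ of $\{\tau^{q-2},1\}$) one verifies that $\Gamma^{-(n+2s)/s}$ equals, up to a constant depending only on the data, the quantity
\[
\frac{\min\{\tau^{q-2},\tau^{(2-q)(1+n/sp)}\}\,(\sigma\varepsilon)^{2-\mfd}}{(1+(\sigma\varepsilon)^{2-\mfd})^{(n+sp)/sp}}
\]
appearing in the hypothesis, identifying the threshold $\nu_1(\datanb{})$ and forcing $\bsy_\infty=0$, which is the pointwise conclusion.

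The only delicate point is the $\tau$-dependent two-sided bookkeeping of $\mfg_-^q$: the upper bound carries $\max\{\tau^{q-2},1\}$ while the lower bound carries $\min\{\tau^{q-2},1\}$, and these enter asymmetrically in the numerator and denominator of $\Gamma$. Producing the exact expression $\min\{\tau^{q-2},\tau^{(2-q)(1+n/sp)}\}$ is then a case-split computation that reconciles the $q\ge 2$ and $1<q<2$ regimes.
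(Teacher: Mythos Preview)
Your approach is the paper's: de Giorgi iteration with the two-sided bound on $\mfg^q_\pm$ supplied by $|u|+|k_j|\asymp\bsom$ in the regime $\tau\bsom\le\pm\bsmu^\pm\le 2\bsom$, followed by the fractional Sobolev inequality and \cref{geo_con}. Your $\tau$-bookkeeping via $\max\{\tau^{q-2},1\}$ and $\min\{\tau^{q-2},1\}$ and the $q\gtrless 2$ case split are also correct and reproduce the paper's consolidated $\min\{\tau^{q-2},\tau^{(2-q)(1+n/sp)}\}$.

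There is, however, a bookkeeping slip in your displayed $\Gamma$: it is missing the factor $(\sigma\varepsilon)^{s(\mfd-2)/(n+2s)}$. The point is that here, unlike in \cref{lemma4.21}, the spatial scale $\theta_x=(\bsom/4)^{(\mfd-q)/sp}$ contains \emph{no} $\sigma\varepsilon$ while $\theta_t=(\sigma\varepsilon\bsom)^{\mfd-p}$ does; this asymmetry is new. Tracking the $\sigma\varepsilon$ powers through $\text{RHS}_j^{(n+sp)/(p(n+2s))}$, the division by $\sigma\varepsilon\bsom$ on the left, and the factor $|\mcQ_j|^{s/(n+2s)}\sim(\sigma\varepsilon)^{(\mfd-p)s/(n+2s)}$ leaves exactly this residual power. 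Without it, your $\Gamma^{-(n+2s)/s}$ lacks the $(\sigma\varepsilon)^{2-\mfd}$ in the numerator of the threshold in the hypothesis, so the final verification cannot go through as written. The correct form, as in the paper, is
\[
\Gamma \;=\; (\sigma\varepsilon)^{\frac{s}{n+2s}(\mfd-2)}\bigl(1+(\sigma\varepsilon)^{2-\mfd}\bigr)^{\frac{n+sp}{p(n+2s)}}\bigl[\min\{\tau^{q-2},\tau^{(2-q)(1+\frac{n}{sp})}\}\bigr]^{-\frac{s}{n+2s}},
\]
whose $-(n+2s)/s$ power then yields the stated threshold exactly.
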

\begin{proof}
	For $j=0,1,\ldots$, define
	\begin{equation*}
		{\def\arraystretch{1.1}\begin{array}{cccc}
				k_j:=\bsmu^+-\tfrac{\sigma \ve \bsom}2 - \tfrac{\sigma \ve \bsom}{2^{j+1}},& \tilde{k}_j:=\tfrac{k_j+k_{j+1}}2,&
				\varrho_j:=\tfrac{c_o\varrho}2+\tfrac{c_o\varrho}{2^{j+1}},
				&\tilde{\varrho}_j:=\tfrac{\varrho_j+\varrho_{j+1}}2,\\
				B_j:=B_{\varrho_j}^{\theta_x},& \tilde{B}_j:=B_{\tilde{\varrho}_j}^{\theta_x},&
				\mcQ_j:=\mcQ_{\varrho_j}^{\theta_x,\theta_t},&
				\tilde{\mcQ}_j:=\mcQ_{\tilde\varrho_j}^{\theta_x,\theta_t}.
		\end{array}}
	\end{equation*}
	Furthermore, we also define 
	\begin{equation*}
		\hat{\varrho}_j:=\tfrac{3\varrho_j+\varrho_{j+1}}4, \quad 
		\bar{\varrho}_j:=\tfrac{\varrho_j+3\varrho_{j+1}}4,\quad 
		\hat{\mcQ}_j:=\mcQ_{\hat\varrho_j}^{\theta_x,\theta_t}, \quad
		\bar{\mcQ}_j:=\mcQ_{\bar\varrho_j}^{\theta_x,\theta_t}.
	\end{equation*}
	We now consider a cutoff functions $\bar{\zeta_j}$ and $\zeta_j$ such that
	\begin{equation*}
		\begin{array}{c}
			\bar{\zeta_j} \equiv 1 \text{ on } B_{j+1}, \quad \bar{\zeta_j} \in C_c^{\infty}(\bar{B}_{j}), \quad  |\nabla\bar{\zeta_j}| \apprle \frac{1}{\theta_x(\bar{\varrho_j} - \varrho_{j+1})} \approx \frac{2^j}{\theta_xc_o\varrho} \quad \text{and} \quad  |\pa_t\bar{\zeta_n}| \apprle \frac{1}{\theta_t(\bar\varrho_j^{sp} - \varrho_{j+1}^{sp})} \approx \frac{2^{jsp}}{\theta_t(c_o\varrho)^{sp}}, \\
			{\zeta_j} \equiv 1 \text{ on } \tilde{B}_{j}, \quad {\zeta_j} \in C_c^{\infty}(\hat{B}_{j}), \quad  |\nabla{\zeta_j}| \apprle \frac{1}{\theta_x(\hat{\varrho_n} - \tilde{\varrho_{j}})}\approx \frac{2^{j}}{\theta_xc_o\varrho} \quad \text{and} \quad  |\pa_t{\zeta_j}| \apprle \frac{1}{\theta_t(\hat\varrho_j^{sp} - \tilde{\varrho_{j}}^{sp})}\approx \frac{2^{jsp}}{\theta_t(c_o\varrho)^{sp}}.
		\end{array}
	\end{equation*}
	We now make the following observations:
	\begin{itemize}
		\item Since $\tau \bsom \leq \bsmu^+ \leq 2\bsom$ and $\varepsilon \in (0,\tfrac\tau2)$ and $0<\sigma \leq \tfrac12$,   we see that $$|u|+|k_j| \geq |k_j| \geq \tau \bsom - \frac{\varepsilon\bsom}{2} - \frac{\varepsilon\bsom}{2^{j+1}} \geq  \tau \bsom - \frac{\tau\bsom}{4} - \frac{\tau\bsom}{4} \geq \frac12 \tau \bsom . $$
		In particular, we have $\mathfrak g^q_+(u,\tilde{k}_j)  \apprge \min\{\tau^{q-2},1\} \bsom^{q-2}  (u-\tilde{k}_j)_+^2$.
		\item Let us denote $A_j := \{u(x,t) \geq k_j\} \cap \mcQ_j$.
		\item Using $\tau \bsom \leq \bsmu^+ \leq 2\bsom$, we  have $(u-k_j)_+ \leq \sigma \ve  \bsom$ and $\mathfrak g^q(u,k_j)_+ \apprle \max\{\tau^{q-2},1\} \bsom^{q-2} (\sigma \ve \bsom)^2$.
	\end{itemize}
	Now, we  apply the energy estimates from \cref{Prop:energy} over  $\mcQ_j $ to $ (u-\tilde{k}_j)_{+}$ and $\zeta_j$  and estimate analogously as \cref{lemma4.21} and using above observation we get
	

	\begin{multline}\label{Eq:5.20A}
		\min\{\tau^{q-2},1\} \bsom^{q-2}	\underset{(-\theta_t \tilde\varrho_j^{sp},0)}{\esssup}
		\int_{\tilde{B}_j} (u-{k}_j)_+^2\,dx
		+\iiint_{\tilde{\mcQ}_j}\frac{|(u-{k}_j)_{+}(x,t)-(u-{k}_j)_{+}|^p}{|x-y|^{n+sp}}\,dx \,dy\,dt
		\\\leq
		\mopC 2^{(n+sp)j} |A_j| \lbr  \frac{( \sigma \ve \bsom)^{p}}{\theta_x^{sp}(c_o\varrho)^{sp}} +  \frac{\max\{\tau^{q-2},1\} \bsom^{q-2} (\sigma \ve \bsom)^2}{\theta_t (c_o \varrho)^{sp}}\rbr.
	\end{multline}
	Using Young's inequality, we have
	\begin{multline*}
		|(u-{k}_{j})_+ \bar{\zeta_j}(x,t) - (u-{k}_{j})_+ \bar{\zeta_j}(y,t)|^p \leq c |(u-{k}_{j})_+(x,t) - (u-{k}_{j})_+ (y,t)|^p\bar{\zeta_j}^p(x,t) \\
		+ c |(u-{k}_{j})_+(y,t)|^p |\bar{\zeta_j}(x,t) - \bar{\zeta_j}(y,t)|^p,
	\end{multline*}
	from which we obtain the following sequence of estimates:
	\begin{equation*}
		\begin{array}{rcl}
			\frac{\sigma \ve \bsom}{2^{j+2}}
			|A_{j+1}|
			&\overred{6.41a}{a}{\leq} &
			\iint_{\tilde\mcQ_{j}}(u-{k}_j)_+\bar{\zeta_j}\,dx\,dt \\
			&\overred{6.41b}{b}{\leq} &
			\lbr\iint_{\tilde\mcQ_{j}}\left[(u-{k}_j)_+\bar{\zeta_j}\right]^{p\frac{n+2s}{n}}
			\,dx\,dt\rbr^{\frac{n}{p(n+2s)}}|A_j|^{1-\frac{n}{p(n+2s)}}\\
			&\overred{6.41c}{c}{\leq} &\mopC 
			\left(\iiint_{\tilde\mcQ_j}\frac{|(u-{k}_j)_{+}(x,t)\bar{\zeta_j}(x,t)-(u-{k}_j)_{+}\bar{\zeta_j}(y,t)|^p}{|x-y|^{n+sp}}\,dx \,dy\,dt\right)^{\frac{n}{p(n+2s)}} 
			\\&&\qquad\times \left(\underset{-\theta_t\tilde\varrho_j^{sp} < t < 0}{\esssup}\int_{\tilde{B}_j}[(u-\tilde{k}_j)_{+}\bar\zeta_j(x,t)]^2\,dx\right)^{\frac{s}{n+2s}}|A_j|^{1-\frac{n}{p(n+2s)}}\\
			&\overred{6.41d}{d}{\leq}&
			\mopC b_o^j   \lbr  \frac{(\sigma\ve \bsom)^{p}}{\theta_x^{sp}(c_o\varrho)^{sp}} +  \frac{\max\{\tau^{q-2},1\}\bsom^{q-2}(\sigma\ve \bsom)^2}{\theta_t (c_o \varrho)^{sp}}\rbr^{\frac{n}{p(n+2s)}} 
			\\&&\qquad\times \lbr \frac{1}{\min\{\tau^{q-2},1\}}\bsom^{2-q} \left[ \frac{(\sigma\ve \bsom)^{p}}{\theta_x^{sp}(c_o\varrho)^{sp}} +  \frac{\max\{\tau^{q-2},1\}\bsom^{q-2}(\sigma\ve \bsom)^2}{\theta_t (c_o \varrho)^{sp}} \right] \rbr^{\frac{s}{(n+2s)}}
			|A_j|^{1+\frac{s}{n+2s}} \\
			&\overred{6.41e}{e}{\leq} &
			\mopC
			\frac{b_o^j}{(c_o\varrho)^\frac{s(n+sp)}{n+2s}}
			(\sigma\ve\bsom)^{\lbr  {\frac{n(p+q-\mfd)}{p(n+2s)}} + {\frac{s(p+2-\mfd)}{(n+2s)}}\rbr}
			\lbr (\sigma\ve)^{\mfd-q} + (\sigma\ve)^{2-q} \rbr^{\frac{n}{p(n+2s)}} 
			\\&&\qquad\times \lbr \frac{1 + (\sigma \ve)^{\mfd -2}}{ \min \{ \tau^{q-2} , \tau^{(2-q)(1+\frac{n}{sp})} \} } \rbr^{\frac{s}{n+2s}}	|A_j|^{1+\frac{s}{n+2s}}, 
		\end{array}
	\end{equation*}
	where to obtain \redref{6.41a}{a}, we made use of the observations and enlarged the domain of integration with $\bar{\zeta}_j$; to obtain \redref{6.41b}{b}, we applied H\"older's inequality; to obtain \redref{6.41c}{c}, we applied \cref{fracpoin}; to obtain \redref{6.41d}{d}, we made use of \cref{Eq:5.20A}  with $\mopC = \mopC_{\data{}}$ and finally we collected all the terms to obtain \redref{6.41e}{e} and fact that $\min\{\tau^{q-2},1\} = \tau^{q-2}$ then $\max\{\tau^{q-2},1\}=1$ and visa versa.  Here $b_o = b_o(\datanb{})\geq 1$ is a constant.  We set
	$\bsy_j=|A_j|/|\mcQ_j|$ to get
	\begin{multline*}
	\frac{\sigma\ve \bsom}{2^{j+2}} \bsy_{j+1}  \leq \mopC  \bsy_j^{1+\frac{s}{n+2s}}\frac{b_o^j}{(c_o\varrho)^\frac{s(n+sp)}{n+2s}}
	(\sigma\ve\bsom)^{\lbr  {\frac{n(p+q-\mfd)}{p(n+2s)}} + {\frac{s(p+2-\mfd)}{(n+2s)}}\rbr}
	 \lbr (\sigma\ve)^{\mfd-q} + (\sigma\ve)^{2-q} \rbr^{\frac{n}{p(n+2s)}}\\ \times\lbr \tfrac{1 + (\sigma \ve)^{\mfd -2}}{ \min \{ \tau^{q-2} , \tau^{(2-q)(1+\frac{n}{sp})} \} } \rbr^{\frac{s}{n+2s}}
	|\mcQ_{j}|^{\frac{s}{n+2s}}.
	\end{multline*}
	Now using $|\mcQ_{j+1}| \approx |\mcQ_j| \approx  ( \bsom)^{\frac{n(\mfd-q)}{sp}} (\sigma\ve\bsom)^{\mfd - p } (c_o\varrho)^{n+sp}$,  we get
	\begin{multline*}
		\frac{\sigma\ve \bsom}{2^{j+2}}  \bsy_{j+1} \leq \mopC b_o^j \bsy_j^{1+\frac{s}{n+2s}}
		(\sigma\ve\bsom)^{\lbr  {\frac{n(p+q-\mfd)}{p(n+2s)}} + {\frac{s(p+2-\mfd)}{(n+2s)}} + \frac{s}{n+2s} \left[ \frac{n(\mfd-q)}{sp} + \mfd -p \right] \rbr} \\
		 \times \lbr (\sigma\ve)^{\mfd-q} + (\sigma\ve)^{2-q} \rbr^{\frac{n}{p(n+2s)}} \lbr 1 + (\sigma \ve)^{\mfd -2} \rbr^{\frac{s}{n+2s}} (\sigma \ve)^{\frac{n(q-\mfd)}{p(n+2s)}} \lbr  \min \{ \tau^{q-2} , \tau^{(2-q)(1+\frac{n}{sp})} \}  \rbr^{\frac{-s}{n+2s}} .
	\end{multline*}
%
	Simplifying this expression and noting that $$\lbr  {\frac{n(p+q-\mfd)}{p(n+2s)}} + {\frac{s(p+2-\mfd)}{(n+2s)}} + \frac{s}{n+2s} \left[ \frac{n(\mfd-q)}{sp} + \mfd -p \right] \rbr = 1,$$  we get
	\begin{equation*}
		\bsy_{j+1}
		\le
		\bsc \Gamma  \boldsymbol b^j  \bsy_j^{1+\frac{s}{n+2s}},
	\end{equation*}
	where $\bsc  = \bsc_{\data{}} \geq 1$,  $\boldsymbol b\geq  1$ are two constants depending only on the data and $\Gamma$ is defined as
	\begin{equation*}
		\begin{array}{rcl}
		\Gamma &=& \lbr (\sigma\ve)^{\mfd-q} + (\sigma\ve)^{2-q} \rbr^{\frac{n}{p(n+2s)}} \lbr 1 + (\sigma \ve)^{\mfd -2} \rbr^{\frac{s}{n+2s}} (\sigma \ve)^{\frac{n(q-\mfd)}{p(n+2s)}} \lbr  \min \{ \tau^{q-2} , \tau^{(2-q)(1+\frac{n}{sp})} \}  \rbr^{\frac{-s}{n+2s}}  \\
		&=& (\sigma \ve)^{\frac{s}{n+2s} (\mfd -2)} (1+(\sigma\ve)^{2-\mfd})^{\frac{n+sp}{p(n+2s)}} \lbr  \min \{ \tau^{q-2} , \tau^{(2-q)(1+\frac{n}{sp})} \}  \rbr^{\frac{-s}{n+2s}}.
		\end{array} 
	\end{equation*}

	We can now apply  \cref{geo_con} to see that if $\bsy_0 \leq \bsc^{-1/\alpha}\Gamma^{-1/\alpha} \boldsymbol b^{1/\alpha^2}$ holds with $\alpha = \tfrac{s}{n+2s}$, then $\bsy_{\infty} = 0$ which is the desired conclusion.  Thus we define
	$$ 
	\bsc^{-1/\alpha} \boldsymbol b^{1/\alpha^2}  \Gamma^{-1/\alpha}= \nu_1  \min \{ \tau^{q-2} , \tau^{(2-q)(1+\frac{n}{sp})} \}  \frac{(\sigma \ve)^{2-\mfd}}{(1+(\sigma \ve)^{2-\mfd})^{\frac{n+sp}{sp}}} ,
	$$
	which completes the proof.
\end{proof}	

Now we are ready to show that oscillation is reduced.
\begin{proposition}\label{prop6.14}
	Let $u$ be bounded, weak solution of \cref{maineq} and let $1<\{p,q,2\}<\mfd$. Suppose for two  constants $\alpha \in (0,1)$ and $\tau \in (0,\tfrac14)$, the following assumption holds:
%
%
%
%
	\[
	\tau \bsom \leq \pm \bsmu ^\pm \leq 2 \bsom \qquad \text{and} \qquad |\{ \pm (\bsmu^{\pm} -u(\cdot,t^{\ast}))\geq \tfrac{ \bsom}{4}\}\cap B_{ (\tfrac{ \bsom}{4})^{\frac{\mfd-q}{sp}} (c_o\varrho)}| \geq \alpha |B_{ (\tfrac{ \bsom}{4})^{\frac{\mfd-q}{sp}} (c_o\varrho)}|.
	\]
	Then there exists constants $\delta_1 = \delta_1(\datanb{,\alpha, \tau})$, $\eta_1 = \eta_1(\datanb{,\alpha,\tau})$ and $c_o = c_o(\eta_1)$ such that
	\[
	c_o^{\frac{sp}{p-1}} \leq \eta_1 , 
	\]
	and the following conclusion holds:
	\[
	\pm (\bsmu^{\pm} - u) \geq \eta_1 \bsom \quad \text{on} \quad B_{(\frac{\bsom}{4})^{\frac{\mfd-q }{sp}} \tfrac{c_o \varrho}{2}} \times (t^{\ast} + \tfrac12  \de_1 (\tfrac{\bsom}{4})^{\mfd-p}(c_o\varrho)^{sp} , t^{\ast}+ \de_1 (\tfrac{\bsom}{4})^{\mfd-p}(c_o\varrho)^{sp}],
	\]
	provided $B_{(\frac{\bsom}{4})^{\frac{\mfd-q }{sp}} (c_o \varrho)} \times (t^{\ast}  ,  t^{\ast}+ \de_1 (\tfrac{\bsom}{4})^{\mfd-p}(c_o\varrho)^{sp}] \subset \mcQ_1 .$
\end{proposition}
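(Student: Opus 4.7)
The plan is to mirror the strategy of \cref{prop6.13} but work with the three lemmas adapted to the away-from-zero regime, namely \cref{Lm:6:2}, \cref{Lm:6.12}, and \cref{Lm:6.10}. Since the pointwise bounds $\tau\bsom \leq \pm\bsmu^{\pm}\leq 2\bsom$ are now in force, we can no longer use the $|\bsmu^{\pm}|\leq 8\ve\bsom$ type hypotheses that were available in the close-to-zero case; instead, every appearance of $\mathfrak g^q_\pm(u,k)$ is controlled via the two-sided bound $\mathfrak g^q_\pm(u,k)\approx \bsom^{q-2}(u-k)_\pm^2$, at the cost of the $\tau$-dependent constants $\max\{\tau^{q-2},1\}$ and $\min\{\tau^{q-2},1\}$ that have already been absorbed into the statements of the three lemmas.

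First, I would apply \cref{Lm:6:2} with $\nu=\alpha$ at time $t^\ast$, with $\ve=\tfrac14$ playing no further role here; this yields constants $\varepsilon_0=\varepsilon_0(\mathrm{data},\alpha,\tau)\in(0,\tfrac{\tau}{2})$ and a forward propagation
\[
|\{u(\cdot,t)\leq \bsmu^+ -\varepsilon_0\bsom\}\cap B_{(\bsom/4)^{(\mfd-q)/sp}c_o\varrho}|\geq \tfrac{\alpha}{2}|B_{(\bsom/4)^{(\mfd-q)/sp}c_o\varrho}|
\]
(and similarly for $\bsmu^-$) for all $t\in(t^\ast,t^\ast+(\varepsilon_0\bsom)^{\mfd-p}(c_o\varrho)^{sp}]$, together with the tail alternative once $c_o^{sp/(p-1)}\leq 4^{\mfd-q}\varepsilon_0$. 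Next, I would choose parameters $\sigma\in(0,\tfrac12)$ to be fixed later and re-read the propagation information as holding on every sub-cylinder of the form $B_{(\bsom/4)^{(\mfd-q)/sp}c_o\varrho}\times(\hat t-(\sigma\varepsilon_0\bsom)^{\mfd-p}(c_o\varrho)^{sp},\hat t]$ for $\hat t$ ranging in an appropriate tail of the time interval; then \cref{Lm:6.12} gives
\[
|\{\pm(\bsmu^\pm-u)\leq \tfrac12\sigma\varepsilon_0\bsom\}\cap \mcQ|\leq \mopC\bigl((\sigma\varepsilon_0)^{2-\mfd}+1\bigr)\max\{\tau^{q-2},1\}\tfrac{\sigma^{p-1}}{\alpha(1-\sigma)^{p-1}}|\mcQ|.
\]

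The heart of the argument is the choice of $\sigma$ so that the shrinking constant above is smaller than the de Giorgi threshold produced by \cref{Lm:6.10}, namely
\[
\mopC\bigl((\sigma\varepsilon_0)^{2-\mfd}+1\bigr)\max\{\tau^{q-2},1\}\tfrac{\sigma^{p-1}}{\alpha(1-\sigma)^{p-1}}\leq \nu_1\,\tfrac{\min\{\tau^{q-2},\tau^{(2-q)(1+n/sp)}\}(\sigma\varepsilon_0)^{2-\mfd}}{(1+(\sigma\varepsilon_0)^{2-\mfd})^{(n+sp)/sp}}.
\]
Since $\varepsilon_0$ is already fixed, the dominant $\sigma$-behavior on the left is $\sigma^{p-1+(2-\mfd)}=\sigma^{p+1-\mfd}$ while on the right it is $\sigma^{(2-\mfd)(1-(n+sp)/sp)}=\sigma^{(\mfd-2)n/sp}$. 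Hence the inequality is solvable in $\sigma\in(0,\tfrac12)$ exactly when $p+1-\mfd > (\mfd-2)n/sp$, i.e.\ $\mfd<2+\tfrac{p-1}{1+n/sp}$, which is precisely the second range hypothesis of \cref{holderparabolic}; note that this is where the restriction $\mfd<2+(p-1)/(1+n/sp)$ enters (as opposed to the $\mfd<q+(p-1)/(1+n/sp)$ condition, which appeared in the close-to-zero balance of \cref{prop6.13}). The $\tau$-dependent factors $\max\{\tau^{q-2},1\}/\min\{\tau^{q-2},\tau^{(2-q)(1+n/sp)}\}$ only affect the final value of $\sigma$ (and hence $\eta_1$) but not the feasibility of this balancing.

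Once $\sigma$ is fixed, \cref{Lm:6.10} applied at each time level $\hat t$ in the upper half of the propagation window gives
\[
\pm(\bsmu^\pm-u)\geq \tfrac14\sigma\varepsilon_0\bsom\quad\text{on }B_{(\bsom/4)^{(\mfd-q)/sp}\tfrac{c_o\varrho}{2}}\times\bigl(\hat t-(\sigma\varepsilon_0\bsom)^{\mfd-p}(\tfrac{c_o\varrho}{2})^{sp},\hat t\bigr],
\]
and by choosing $\sigma$ a final time so that $(\sigma\varepsilon_0)^{\mfd-p}\leq \tfrac12(\varepsilon_0)^{\mfd-p}$ (possible since $\mfd>p$), these strips cover $(t^\ast+\tfrac12\delta_1(\tfrac{\bsom}{4})^{\mfd-p}(c_o\varrho)^{sp},t^\ast+\delta_1(\tfrac{\bsom}{4})^{\mfd-p}(c_o\varrho)^{sp}]$ for some $\delta_1=\delta_1(\mathrm{data},\alpha,\tau)$. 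Setting $\eta_1:=\tfrac14\sigma\varepsilon_0$ and choosing $c_o^{sp/(p-1)}\leq \eta_1$ so that the tail hypotheses in all three lemmas hold (as in \descrefnormal{6step1sing}{Step 1} and \descrefnormal{6step2sing}{Step 2} of \cref{prop6.13}) completes the proof. The main obstacle is the balancing of $\sigma$-exponents described above: getting the factors arising from the $\tau$-dependent upper and lower bounds on $\mathfrak g^q$ to play well with the $(\sigma\varepsilon)^{2-\mfd}$ weights in both the shrinking lemma and the de Giorgi threshold, and verifying that the algebraic range condition that emerges is precisely $\max\{p,q,2\}<\mfd<2+(p-1)/(1+n/sp)$.
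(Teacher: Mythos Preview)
Your proposal is correct and follows essentially the same approach as the paper: apply \cref{Lm:6:2} to propagate the measure information forward, then on sliding sub-cylinders apply the shrinking lemma \cref{Lm:6.12}, balance $\sigma$ so that the shrinking output falls below the de Giorgi threshold of \cref{Lm:6.10}, and finally choose $\sigma$ once more so that the resulting strips cover the upper half of the propagation window, setting $\eta_1=\tfrac14\sigma\varepsilon_0$ and $\delta_1=(4\varepsilon_0)^{\mfd-p}$. Your identification of the dominant $\sigma$-exponents and the resulting range condition $\mfd<2+(p-1)/(1+n/sp)$ matches the paper exactly.
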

\begin{proof}
	We prove the case for $\bsmu^{+}$ only because the case for $\bsmu^-$ is analogous. 
	The proof follows in several steps:
	\begin{description}[leftmargin=*]
		\descitemnormal{Step 1:}{6step1singk} We apply \cref{Lm:6:2}  to get 
		\[
		\left|\left\{ u(\cdot, t)\le\bsmu^+-\varepsilon\boldsymbol \om\right\}\cap B_{\lbr \tfrac{\bsom}{4} \rbr^{\frac{\mfd - q}{sp}}c_o\varrho}\right|
		\ge
		\tfrac12\alpha |B_{\lbr \tfrac{\bsom}{4} \rbr^{\frac{\mfd - q}{sp}} c_o\varrho}|
		\txt{for all} t\in (t^{\ast} , t^{\ast} +   \lbr \ve \bsom\rbr^{\mfd-p}(c_o\varrho)^{sp}],
		\]
		where $\varepsilon = \varepsilon(\datanb{,\alpha,  \tau })\in(0,\tfrac{\tau}{2})$ and we have imposed the condition  that  $c_o^{sp} \leq 4^{\mfd-q} \ve^{p-1}$.	 
		\descitemnormal{Step 2:}{6step2singk} In this step, we want to apply \cref{Lm:6.12}.  
		Since $\sigma \in (0,1)$ and $\mfd>p$, we see that $( \ve \bsom )^{\mfd-p}(c_o\varrho)^{sp} \geq (\sigma \ve \bsom)^{\mfd-p}(c_o\varrho)^{sp}$. Hence for all  $\hat{t} \in (t^{\ast}+ (\sigma\ve\bsom)^{\mfd-p}(c_o\varrho)^{sp} ,  t^{\ast} +(\varepsilon\bsom)^{\mfd-p}(c_o\varrho)^{sp} ]$ we have the measure information over the cylinder 
		\[
		\mcQ = B_{ \lbr \frac{\bsom}{4} \rbr^{\frac{\mfd - q}{sp}} c_o \varrho} \times ( \hat t -  \lbr \sigma \ve \bsom\rbr)^{\mfd-p} (c_o\varrho)^{sp} , \hat t] .
		\]
		Thus, applying \cref{Lm:6.12}, we get
		\begin{equation*}
			|\{ \bsmu^{+}-u \leq \tfrac12 \sigma \ve \bsom\} \cap \mcQ| \leq \mopC \lbr (\sigma \ve)^{2-\mfd} +1\rbr  \max\{\tau^{q-2} ,1 \} \tfrac{\sigma^{p-1}}{\alpha (1-\sigma)^{p-1}} |\mcQ|.
		\end{equation*}
		Hence if we choose $c_o^{sp} \leq 4^{\mfd-q} (\sigma \ve)^{p-1}$ ,  then the tail alternative is satisfied.
		\item[Step 3:] Now We make a choice of $\sigma$.  We need to choose $\sigma$ such that following inequality holds.
		\begin{equation}\label{Eq:6.09}
			\mopC  \lbr (\sigma \ve)^{2-\mfd} +1\rbr  \max\{\tau^{q-2} ,1 \}  \tfrac{\sigma^{p-1}}{\al (1-\sigma)^{p-1}} \leq \nu_1 \min \{ \tau^{q-2} , \tau^{2-q(1+\frac{n}{sp})} \} \tfrac{(\sigma \ve)^{2-\mfd}}{(1+(\sigma \ve)^{2-\mfd})^{\frac{n+sp}{sp}}} ,
		\end{equation}
		
		where	$\mopC = \mopC(\datanb{}) >0$ and $\nu_1 = \nu_1(\datanb{}) \in (0,1)$ depends only on data.  
		Recalling $\varepsilon = \varepsilon(\datanb{,\alpha,  \tau })\in(0,\tfrac{\tau}{2})$ is a fixed constant,  the left hand side has the form
		\[
		\mopC  \lbr (\sigma \ve)^{2-\mfd} +1\rbr   \max\{\tau^{q-2} ,1 \} 
		\tfrac{\sigma^{p-1}}{\al (1-\sigma)^{p-1}} \approx \mopC (\sigma^{2-\mfd} +1) \sigma^{p-1} = \mopC (\sigma^{p+1-\mfd} + \sigma^{p-1}).
		\]
		The right hand side has the form 
		\[
		\nu_1 \min \{ \tau^{q-2} , \tau^{2-q(1+\frac{n}{sp})} \} \tfrac{(\sigma \ve)^{2-\mfd}}{(1+(\sigma \ve)^{2-\mfd})^{\frac{n+sp}{sp}}}   \approx  \mopC \frac{\sigma^{(\mfd-2)\frac{n}{sp}}}{ \lbr \sigma^{\mfd-2} + 1 \rbr^{1+\frac{n}{sp}}} \\
		\overset{\sigma\in(0,\frac12)}{\geq}   \mopC \sigma^{(\mfd-2)\frac{n}{sp}}.
		\]
		Based on the above calculations, we need to ensure the following is satisfied:
		\begin{enumerate}[(i)]
			\item $p+1-\mfd   > 0$ which is equivalent to $\mfd < p+1$.
			\item $p+1-\mfd  >  (\mfd-2)\tfrac{n}{sp}$ which is equivalent to $\mfd < 2 + \frac{p-1}{1+\frac{n}{sp}}$.
			\item $p-1 >   (\mfd-2)\tfrac{n}{sp}$ which is equivalent to $\mfd < 2 + \frac{p-1}{\frac{n}{sp}}$.
		\end{enumerate}
	Thus we need to ensure 
	\begin{equation*}
		\max\{p,q,2\} < \mfd < 2 + \frac{p-1}{1+\frac{n}{sp}}.
	\end{equation*}
		\item[Step 4:] Now we can apply \cref{Lm:6.10} on the cylinder $
		\mcQ = B_{ \lbr \frac{\bsom}{4} \rbr^{\frac{\mfd - q}{sp}} c_o \varrho} \times (\hat t - \lbr \sigma \ve \bsom\rbr)^{\mfd-p} (c_o\varrho)^{sp}, \hat t] $ to get
		\begin{equation*}
			\bsmu^{+}-u \geq\tfrac{1}4 \sigma \varepsilon \bsom
			\quad
			\mbox{ on }\quad \mcQ = B_{ \lbr \frac{\bsom}{4} \rbr^{\frac{\mfd - q}{sp}} \tfrac{c_o \varrho}{2}} \times (\hat t -  \lbr \sigma \ve \bsom\rbr)^{\mfd-p} (\tfrac{c_o\varrho}{2})^{sp},  \hat t].
		\end{equation*}
		\item[Step 5:] From step 4, we have for all $\hat{t} \in (t^{\ast}+ (\sigma\ve\bsom)^{\mfd-p}(c_o\varrho)^{sp} ,  t^{\ast} +(\varepsilon\bsom)^{\mfd-p}(c_o\varrho)^{sp} ]$,
		\begin{equation*}
			\pm\left(\bsmu^{\pm}-u(\cdot,  \hat{t})\right)\geq\tfrac{1}4\sigma \de\varepsilon \bsom
			\quad
			\mbox{ on }\quad  B_{ \lbr \frac{\bsom}{4} \rbr^{\frac{\mfd - q}{sp}} \tfrac{c_o \varrho}{2}}.
		\end{equation*}
		Now we can further choose $\sigma$ small if needed such that 
		$$
		(\sigma \ve\bsom)^{\mfd-p}(c_o\varrho)^{sp} \leq \tfrac12 (\varepsilon\bsom)^{\mfd-p}(c_o\varrho)^{sp}.
		$$
		this can be ensure by taking $\sigma \leq (\tfrac{1}{2})^{\frac{1}{\mfd -p}}$ and we have that 
		$$
		(t^{\ast} + \tfrac12 (\varepsilon\bsom)^{\mfd-p}(c_o\varrho)^{sp} , t^\ast +(\varepsilon\bsom)^{\mfd-p}(c_o\varrho)^{sp}] \subset (t^\ast+\de_t (\sigma\de\ve\bsom)^{\mfd-p}(c_o\varrho)^{sp} ,  t^\ast +(\varepsilon\bsom)^{\mfd-p}(c_o\varrho)^{sp} ].
		$$
	\end{description}
	We take $\eta_1 = \tfrac{1}4 \sigma \varepsilon$ and $\delta_1 =( 4 \ve )^{\mfd-p} $ to complete the proof of the lemma. 
\end{proof}

\subsection{Reduction of Oscillation near zero}

Now we are ready to show the first step of reduction of oscillation.
For some positive constant $\tau \in (0,\tfrac14)$ that will be made precise later on in the proof of \cref{reductionoscsingunifiedk}, we have two cases
\refstepcounter{equation}
\def\myname{\theequation}
\begin{align}[left=\empheqlbrace]
	&\mbox{when $u$ is \emph{near} zero:}  \bsmu^-\le\tau\bsom  \,\,\text{and} \,\,
	\bsmu^+\ge-\tau\bsom,\tag*{(\myname$_{a}$)}\label{Eq:6Hp-main1}\\
	&\mbox{when $u$ is \emph{away} from zero:} \bsmu^- >\tau\bsom \,\,\text{or}\,\, \bsmu^+<-\tau\bsom.\tag*{(\myname$_{b}$)}\label{Eq:6Hp-main2}
\end{align}
Furthermore, we will always assume the following is satisfied $\bsmu^+ -\bsmu^- >\tfrac12\bsom$, 
since in the other case, we trivially get the reduction of oscillation.
We see that  \tlcref{Eq:6Hp-main1} gives
\begin{description}
	\item[Lower bound for $\bsmu^-$:] In this case, we have
	\begin{equation*}
		\bsmu^- = \bsmu^- - \bsmu^+ + \bsmu^+ \geq -\bsom + \bsmu^+ \geq -(1+\tau)\bsom.
	\end{equation*}
	\item[Upper bound for $\bsmu^+$:] In this case, we have
	\begin{equation*}
		\bsmu^+ = \bsmu^+ - \bsmu^- + \bsmu^- \leq \bsom + \bsmu^- \leq (1+\tau)\bsom.
	\end{equation*}
\end{description}
In particular, \tlcref{Eq:6Hp-main1} implies
\begin{equation*}
	|\bsmu^{\pm}| \leq (1+\tau)\bsom \leq 2\bsom.
\end{equation*}

\begin{lemma}
	\label{reductionoscsingunifiedk}
	Let $u$ be bounded, weak solution of \cref{maineq} and let $\mfd$ satisfy the condition from \cref{holderparabolic}.  There exists constants $\eta$,  $\de$, $\tau$ and $c_o $  depending only on data, such that if \tlcref{Eq:6Hp-main1} holds,  then one of the following two conclusion holds:
	\[
	\quad \bsmu^{+}-u \geq \eta \bsom
	\quad
	\mbox{ on }\quad  B_{ \lbr \frac{\bsom}{4} \rbr^{\frac{\mfd - q}{sp}} \tfrac{c_o \varrho}{2}} \times  (- \de (\tfrac{\bsom}{4})^{\mfd-p}(\tfrac{c_o\varrho}{2})^{sp},  0],
	\]
	or
	\[
	-(\bsmu^{-}-u) \geq \eta \bsom
	\quad
	\mbox{ on }\quad  B_{ \lbr \frac{\bsom}{4} \rbr^{\frac{\mfd - q}{sp}} \tfrac{c_o \varrho}{2}} \times  (- \de (\tfrac{\bsom}{4})^{\mfd-p}(\tfrac{c_o\varrho}{2})^{sp},  0].
	\]	
	Furthermore,	if we denote $\bsom_1:= (1-\eta)\bsom$, $\boldsymbol{C}_o = \min\{  \tfrac12 c_o (4(1-\eta))^{\frac{q-\mfd}{sp}},  \tfrac{1}{2} \de c_o (4(1-\eta))^{\frac{p-\mfd}{sp}}\}$ and $\varrho_1:= \boldsymbol{C}_o^{-1}\varrho$, then we have 
	\[
	\essosc_{\mcQ_{\varrho_1}(\bsom_1)} u \leq \bsom_1,
	\]
	where $\mcQ_{\varrho_1}(\bsom_1) = B_{\bsom_1^{\frac{\mfd-q}{sp}} \varrho_1} \times (- \bsom_1^{\mfd-p}\varrho_1^{sp},0]$.
\end{lemma}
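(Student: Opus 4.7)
The plan is to adapt the strategy of \cref{reductionoscsingunified}, branching between \cref{prop6.13} and \cref{prop6.14} according to the magnitude of $|\bsmu^{+}|$ (respectively $|\bsmu^{-}|$). We may assume $\bsmu^+-\bsmu^->\tfrac{\bsom}{2}$, since otherwise the conclusion is already immediate with any $\eta\leq\tfrac12$. Fix $\delta:=\min(\delta_o,\delta_1)$; both \cref{prop6.13} and \cref{prop6.14} remain valid with this smaller common parameter (the output sub-cylinders simply shrink correspondingly). Set $t^\ast:=-\delta\lbr\tfrac{\bsom}{4}\rbr^{\mfd-p}(c_o\varrho)^{sp}$, with $c_o$ chosen small enough (concretely, $c_o^{sp/(p-1)}\leq\eta:=\min(\eta_o,\eta_1)$) that the tail alternatives of both propositions hold, and write $B_\ast:=B_{(\bsom/4)^{(\mfd-q)/sp}c_o\varrho}$. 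The key observation is the following dichotomy: at least one of
\begin{equation*}
\lvert\{u(\cdot,t^\ast)\leq\bsmu^+-\tfrac{\bsom}{4}\}\cap B_\ast\rvert\geq\tfrac12\lvert B_\ast\rvert
\quad\text{or}\quad
\lvert\{u(\cdot,t^\ast)\geq\bsmu^-+\tfrac{\bsom}{4}\}\cap B_\ast\rvert\geq\tfrac12\lvert B_\ast\rvert
\end{equation*}
must hold, because the complementary sets $\{u(\cdot,t^\ast)>\bsmu^+-\tfrac{\bsom}{4}\}$ and $\{u(\cdot,t^\ast)<\bsmu^-+\tfrac{\bsom}{4}\}$ are disjoint under the standing assumption $\bsmu^+-\bsmu^->\tfrac{\bsom}{2}$, so simultaneous failure of both would force their combined measure to exceed $\lvert B_\ast\rvert$.

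By symmetry it suffices to treat the first alternative. The constant $\tau$ in \tlcref{Eq:6Hp-main1} is taken with $\tau\leq\eta_o$, which combined with \tlcref{Eq:6Hp-main1} and $\bsmu^+\leq\bsmu^-+\bsom\leq(1+\tau)\bsom\leq 2\bsom$ gives $-\eta_o\bsom\leq\bsmu^+\leq 2\bsom$. If $|\bsmu^+|\leq\eta_o\bsom$, the hypothesis of \cref{prop6.13} is met at the slice $t^\ast$ with $\alpha=\tfrac12$, producing $\bsmu^+-u\geq\eta_o\bsom$ on the sub-cylinder reaching $t=0$; otherwise $\bsmu^+>\eta_o\bsom$ (since $\bsmu^+\geq-\eta_o\bsom$), so the hypothesis $\eta_o\bsom\leq\bsmu^+\leq 2\bsom$ of \cref{prop6.14} holds with its internal parameter set equal to $\eta_o$, and we obtain $\bsmu^+-u\geq\eta_1\bsom$ on the analogous sub-cylinder. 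Either way we arrive at $\bsmu^+-u\geq\eta\bsom$ on the output sub-cylinder; the second alternative of the dichotomy is handled symmetrically, yielding instead $u-\bsmu^-\geq\eta\bsom$. The resulting pointwise bound forces $\essosc u\leq(\bsmu^+-\bsmu^-)-\eta\bsom\leq(1-\eta)\bsom=\bsom_1$ on the output sub-cylinder, and the value of $\mathbf{C}_o$ is then fixed by requiring $\mcQ_{\varrho_1}(\bsom_1)$ to be contained in this sub-cylinder, which amounts to matching spatial radii and temporal widths separately and produces the stated formula.

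The main obstacle is the simultaneous book-keeping of constants: one must pin down $\tau,\eta,\delta,c_o$, and $\mathbf{C}_o$ consistently so that \tlcref{Eq:6Hp-main1} feeds correctly into whichever branch of the dichotomy is triggered, both tail alternatives are satisfied, and the output sub-cylinder contains $\mcQ_{\varrho_1}(\bsom_1)$. The dichotomy itself is elementary once the disjointness of the two extremal level sets is noted, and the remaining argument is essentially a direct transcription of \cref{reductionoscsingunified} with the single proposition \cref{prop3.13} replaced by the two-case version \cref{prop6.13}/\cref{prop6.14}.
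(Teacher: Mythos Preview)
The proposal is correct and follows essentially the same route as the paper's proof: fix the time slice $t^\ast=-\de(\tfrac{\bsom}{4})^{\mfd-p}(c_o\varrho)^{sp}$, invoke the measure dichotomy (which the paper states without justification, while you correctly explain it via disjointness of the extremal level sets), set $\tau=\eta_o$, branch according to whether $|\bsmu^+|\leq\eta_o\bsom$ or $\eta_o\bsom\leq\bsmu^+\leq 2\bsom$ to apply \cref{prop6.13} or \cref{prop6.14}, and then read off $\eta=\min\{\eta_o,\eta_1\}$, $\de=\min\{\de_o,\de_1\}$ and the cylinder-matching constant $\boldsymbol{C}_o$. The only cosmetic difference is that the paper also throws $\tfrac12$ into the minimum defining $\de$, but this is inessential.
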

\begin{proof}
	We now obtain the reduction of oscillation as follows:
	\begin{description}[leftmargin=*]
		\item[Step 1:] Let us fix a time level $t^{\ast}$ chosen later in $\mcQ_1 = \mcQ_{\varrho} (\bsom) = B_{ \bsom^{\frac{\mfd-q}{sp}} (\varrho)} \times (-  \bsom^{\mfd-p} (\varrho)^{sp},0]. $
		Noting that $\bsmu^+ -\bsmu^- >\tfrac12\bsom$, we see that one of the two alternatives now must hold: 
		\refstepcounter{equation}
		\def\mynamen{\theequation}
		\begin{align}[left=\empheqlbrace]
			&|\{+ (\bsmu^{+} -u(\cdot,t^{\ast})\geq \tfrac14 \bsom\}\cap B_{ (\frac14\bsom)^{\frac{\mfd-q}{sp}} (c_o\varrho)}| \geq \tfrac12 |B_{ (\frac14\bsom)^{\frac{\mfd-q}{sp}} (c_o\varrho)}|,\tag*{(\mynamen$_{a}$)}\label{eq6.13.1}\\
			&|\{- (\bsmu^{-} -u(\cdot,t^{\ast}))\geq \tfrac14 \bsom\}\cap B_{ (\frac14\bsom)^{\frac{\mfd-q}{sp}} (c_o\varrho)}| \geq \tfrac12 |B_{ (\frac14\bsom)^{\frac{\mfd-q}{sp}} (c_o\varrho)}|.\tag*{(\mynamen$_{b}$)}\label{eq6.13.2}
		\end{align}
	
		\item[Step 2:] Without loss of generality, let us assume  \tlcref{eq6.13.1} holds, noting that the other case follows analogously.  Now we apply \cref{prop6.13} and \cref{prop6.14} with $\alpha = \tfrac12$ to get the constants $\eta_o$ and $\de_o$  depending only on data. 
		\item[Step 3:] We fix $\tau = \eta_o$ then we have two cases,
		\begin{description}
			\item[Case $-\tau \bsom \leq \bsmu^+ \leq \tau \bsom$:] In this case we apply \cref{prop6.13} to get 
			\begin{equation}\label{eq6.17}
				\bsmu^+ - u \geq \eta_o \bsom \quad \text{on} \quad B_{(\frac{\bsom}{4})^{\frac{\mfd-q }{sp}} \tfrac{c_o \varrho}{2}} \times (t^{\ast} ,t^{\ast}+ \de_o (\tfrac{\bsom}{4})^{\mfd-p}(\tfrac{c_o\varrho}{2})^{sp}].
			\end{equation}
			\item[Case $\tau \bsom \leq \bsmu^+ \leq 2 \bsom$:] In this case we apply \cref{prop6.14} to get 
			\begin{equation}\label{eq6.18}
				\bsmu^{+}-u \geq \eta_1 \bsom
				\quad
				\mbox{ on }\quad  B_{ \lbr \frac{\bsom}{4} \rbr^{\frac{\mfd - q}{sp}} \tfrac{c_o \varrho}{2}} \times  (t^{\ast} ,t^{\ast}+ \de_1 (\tfrac{\bsom}{4})^{\mfd-p}(\tfrac{c_o\varrho}{2})^{sp}].
			\end{equation}
		\end{description}
%
		The choice of $c_o$ is chosen to ensure the tail alternative is satisfied in both cases which can be done by taking $c_o < \min \{ \eta_o^{\frac{p-1}{sp}} ,  \eta_1^{\frac{p-1}{sp}} \}$.  
		\item[Step 4:] We now define $\de = \min \{ \de_o, \de_1,\tfrac12 \}$ and $\eta= \min  \{ \eta_o, \eta_1 \}$.  If we take $t^\ast = -\de (\tfrac{\bsom}{4})^{\mfd-p}(c_o\varrho)^{sp}$, then using \eqref{eq6.17} and \eqref{eq6.18}, the  following conclusion follows:
		\[
		\bsmu^{+}-u \geq \eta \bsom
		\quad
		\mbox{ on }\quad  B_{ \lbr \frac{\bsom}{4} \rbr^{\frac{\mfd - q}{sp}} \tfrac{c_o \varrho}{2}} \times  (- \de (\tfrac{\bsom}{4})^{\mfd-p}(\tfrac{c_o\varrho}{2})^{sp},  0].
		\]
		\item[Step 5:] Let us take $\bsom_1 := (1-\eta)\bsom$ and $\varrho_1 = \boldsymbol{C}_o^{-1}\varrho$ with 
		\[
		\boldsymbol{C}_o^{-1} = \min\{  \tfrac12 c_o (4(1-\eta))^{\frac{q-\mfd}{sp}},  \tfrac{1}{2} \de c_o (4(1-\eta))^{\frac{p-\mfd}{sp}}\},
		\]
		and define cylinder $\mcQ_{\varrho_1}(\bsom_1) = B_{\bsom_1^{\frac{\mfd-q}{sp}} \varrho_1} \times (- \bsom_1^{\mfd-p}\varrho_1^{sp},0]$ and $\mcQ_{\varrho}(\bsom) = B_{\bsom^{\frac{\mfd-q}{sp}} \varrho} \times (- \bsom^{\mfd-p}\varrho^{sp},0]$. 
		Then we will have 
		$
		\mcQ_{\varrho_1}(\bsom_1) \subset \mcQ_{\varrho} (\bsom)
		$
		and
		$
		\essosc_{\mcQ_{\varrho_1}(\bsom_1)} u \leq \bsom_1.
		$
	\end{description}
	This completes the proof of the reduction of oscillation. 
\end{proof}

\section{Reduction of Oscillation away from  zero}\label{section7}
We are in the case \tlcref{Eq:6Hp-main2} holds and without loss of generality, we consider the case $\bsmu^- > \tau \bsom$ noting that the other case follows by replacing $u$ with $-u$. The calculations in this section follow verbatim as in \cite[Sections 7,8,9]{adimurthi2025localholderregularitybounded}, because once the change of variable is achieved, then the equation structure is similar to \cref{maineq} with $q=2$. Thus we only provide a very rough sketch of the reduction of oscillation and refer to \cite{adimurthi2025localholderregularitybounded} for all the details. 

\begin{assumption}\label{assump3}
	Without loss of generality, we consider the case $\bsmu^- > \tau \bsom$. Let us further assume that the following bound holds: 
	\[
	\bsmu^- \leq \frac{1+\tau}{1-\mathring{\eta}} \bsom,
	\]
	for some $\mathring{\eta} = \mathring{\eta}(\datanb{}) \in (0,1)$ satisfying $\frac{1+\tau}{1-\mathring{\eta}} \geq 1$.  See \cref{rmkdeg7.15} on how this assumption is always satisfied.
\end{assumption}

\begin{remark}\label{rmkdeg7.15}
	In order to obtain H\"older regularity, we iteratively assume that \tlcref{Eq:6Hp-main1} holds and denote $i_o$ to be the first time when \tlcref{Eq:6Hp-main2} occurs. If $i_o=0$, then we trivially have $\bsmu^- \leq \bsom$ from the choice of \cref{defbsom}. On the other hand, if $i_o \geq 1$, then \tlcref{Eq:6Hp-main2} implies the following holds:
	\[
	\text{either} \quad\bsmu_{i_o}^-> \tau\bsom_{i_o}\;\quad
	\text{or}\quad
	\;\bsmu_{i_o}^+<-\tau\bsom_{i_o}.
	\]
	We work with $\bsmu_{i_o}^->\tau\bsom_{i_o}$ with the other case being analogous.
	Since $i_o$ is the first index for this to happen,
	we have $\bsmu_{i_o-1}^-\le \tau\bsom_{i_o-1}$ (here is where we need to assume $i_o \geq 1$) and
	\[
	\bsmu_{i_o}^-
	\leq \bsmu_{i_o}^+
	\leq
	\bsmu_{i_o-1}^{+}
	=
	\bsmu_{i_o-1}^{-} + \bsmu_{i_o-1}^{+} - \bsmu_{i_o-1}^{-}
	\le
	\bsmu_{i_o-1}^{-} + \bsom_{i_o-1}
	\le
	(1+\tau)\bsom_{i_o-1} = \frac{1+\tau}{1-\mreta}\bsom_{i_o},
	\]
	where we assumed $\bsom_{i_o} = (1-\mreta) \bsom_{i_o-1} =(1-\mreta)^2 \bsom_{i_o-2} \ldots = (1-\mreta)^{i_o} \bsom $. 
	As a result, we have
	\begin{equation}\label{Eq:7:4}
		\tau\bsom_{i_o}
		<
		\bsmu_{i_o}^-\le\frac{1+\tau}{1-\mreta}\bsom_{i_o}.
	\end{equation}
	In particular, \cref{assump3} is always satisfied.  Also we will be dealing with the cylinder of the form $\mcQ_{i_o} = B_{\de_x (\bsom_{i_o})^{\frac{\mfd - q}{sp}}  R_{i_o}} \times ( -\de_t (\bsom_{i_o})^{\mfd-p} R_{i_o}^{sp} ,0]$.
\end{remark}

\begin{remark}
	By an abuse of notation, we shall suppress keeping track of $i_o$ in the next section and instead denote $R_{i_o}, $ $\bsmu_{i_o}, $ and $\bsom_{i_o}$ by $R, $ $\bsmu$ and $\bsom$ respectively.   
\end{remark}

\begin{definition}\label{defvw}
	Let $\mcQ_{R} = B_{\de_x  R} \times ( -\de_t R^{sp} ,0]$ and  ${\mcQ}_o={\mcQ}_{\varrho} = B_{\varrho}(x_o) \times (t_o-\bar A\varrho^{sp},t_o] \subset \mcQ_R $ where $\bar A \geq 1$ will be chosen later in terms of $\data$. Let $\hat{u} (x,t) = u((\bsom)^{\frac{\mfd-q}{sp}} x ,  (\bsom)^{\mfd -p} t )$ and $w:=\tfrac{\hat u}{\bsmu^-}$,  then making use of \cref{assump3}, we see that $v$ satisfies
	\begin{equation}\label{Eq:7:9}
		1 \leq w \leq \frac{1+\tau}{\tau} \qquad \text{ a.e in }\quad {\mcQ_R},
	\end{equation}
	where we made use of the following bound:
	\[
	\bsmu^- \leq u \leq \bsmu^+ - \bsmu^- + \bsmu^- \leq \boldsymbol \om + \bsmu^- \leq (1+\tfrac{1}{\tau})\bsmu^-. 
	\]
	Moreover,  from $u$ solving \cref{maineq},  we can easily deduced that  $w$ also solves 
	\begin{equation*}
	\partial_t (w^{q-1}) + \text{P.V.}\int_{\RR^n} |w(x,t)-w(y,t)|^{p-2}(w(x,t)-w(y,t)) \hat{K}(x,y,t)\,dy=0,  \quad \text{in} \; \mcQ_R,
\end{equation*} 
where $\hat{K}:\RR^n\times\RR^n\times \RR \to [0,\infty)$ is a symmetric measurable function satisfying
\begin{equation*}
\left( \frac{\bsmu^-}{\bsom} \right)^{p-q}	\frac{(1-s)}{\Lambda|x-y|^{n+sp}}\leq \hat{K}(x,y,t)\leq \left( \frac{\bsmu^-}{\bsom} \right)^{p-q} \frac{(1-s)\Lambda}{|x-y|^{n+sp}}. 
\end{equation*}
\end{definition}

\begin{definition}\label{vwdef7.2}
 Let us define $v=w^{q-1}$ in $\mcQ_o$.	Let us denote $\bsmu^-_w  =  \frac{\essinf_{\mcQ_o} \hat u}{\bsmu^-}=	\essinf_{\mcQ_o} w$ and  $\bsmu^+_w   = \frac{\esssup_{\mcQ_o} \hat u}{\bsmu^-}=	\esssup_{\mcQ_o} w$. Analogously, denote $\bsmu^-_v = (\bsmu^-_w)^{q-1}=	(\essinf_{\mcQ_o} w)^{q-1} =\essinf_{\mcQ_o} v$ and  $\bsmu^+_v = (\bsmu^+_w)^{q-1}= 	(\esssup_{\mcQ_o} w)^{q-1} =\esssup_{\mcQ_o} v$.
	We shall denote $\bsom_w = \frac{\bsom}{\bsmu^-}$ using $\bsom \geq \essosc_{\mcQ_o} \hat u$  it is easy to see that 
	\[
	\bsom \geq \bsmu^+ - \bsmu^- \qquad \Longrightarrow \qquad \bsom_w \geq \bsmu^+_w - \bsmu^-_w.
	\]
	
	Since $w$ satisfies \cref{Eq:7:9}, then for any $x,y \in \mcQ_R$ with $v(x)\geq v(y)$, we have
	\[
	v(x) - v(y) = w^{q-1}(x) - w^{q-1}(y) \overlabel{alg_lem}{\leq} (q-1)\lbr \tfrac{1+\tau}{\tau}\rbr^{q-1} (w(x)- w(y)).
	\]
	In particular, this implies that if we take $\bsom_v := (q-1)\lbr \tfrac{1+\tau}{\tau}\rbr^{qq-1}  \bsom_w$, then we have
	\[
	\bsom_v \geq 	\bsmu^+_v - \bsmu^-_v.
	\]
	Moreover, making use of \cref{Eq:7:4} and $\bsom_w = \frac{\bsom}{\bsmu^-}$, we also have
	\begin{equation*}
		\mathfrak{C}_1:= (q-1)\frac{1-\mathring{\eta}}{1+\tau}\left(\frac{1+\tau}{\tau}\right)^{q-1}\leq {\bsom}_v\leq \frac{q-1}{\tau}\left(\frac{1+\tau}{\tau}\right)^{q-1}=:\mathfrak{C}_2.
	\end{equation*}
\end{definition}

Let us  first state standard energy estimate for $v$,   the proof of which follows by imitating the proof of \cite[Lemma 7.6]{adimurthi2025localholderregularitybounded} and fact that ratio $\tfrac{\bsom}{\bsmu^-}$ is bounded above and below by a constant depending only on data (see \eqref{Eq:7:4}).
\begin{lemma}\label{lemma7.3}
	With $w$  and  $v$ be as in \cref{defvw} and \cref{vwdef7.2} and $k \in \RR$ be given, then
	there exists a constant $\bsc  >0$ depending only on the data such that
	for all  every non-negative, piecewise smooth cut-off function
	$\zeta(x,t) = \zeta_1(x)\zeta_2(t)$ vanishing on $\partial_p \mcQ_o$,  there holds
	\begin{multline*}
		\esssup_{t_o-\bar A \varrho^{sp}<t<t_o}\int_{B_{\varrho}\times\{t\}}	
		\zeta^p(v-k)_{\pm}^2\,dx +
		\iint_{\mcQ_o} (v-k)_{\pm}(x,t)\zeta_1^p(x)\zeta_2^p(t)\lbr \int_{B_{\varrho}}\frac{(v-k)_{\mp}^{p-1}(y,t)}{|x-y|^{n+sp}}\,dy\rbr\,dx\,dt 
		\\
		+\iiint_{\mcQ_o} \frac{|(v-k)_{\pm}(x,t)-(v-k)_{\pm}(y,t)|^p}{|x-y|^{n+ps}}\max\{\zeta_1(x),\zeta_1(y)\}^p\zeta_2^p(t)\,dx\,dy\,dt \\ 
		\begin{array}{rcl}
			&\leq&
			\boldsymbol{C} \iiint_{\mcQ_o} \frac{\max\{(v-k)_{\pm}(x,t),(v-k)_{\pm}(y,t)\}^{p}|\zeta(x,t)-\zeta(y,t)|^p}{|x-y|^{n+sp}}\,dx\,dy\,dt
			\\
			&&+
			\iint_{\mcq}\mathfrak (v-k)_\pm^2 (u,k)|\partial_t\zeta^p| \,dx\,dt
			+\int_{B_{\varrho}\times \{t=t_o-\bar A\varrho^{sp}\}} \zeta^p (v-k)_\pm \,dx 
			\\
			&&+ \,\boldsymbol{C}\iint_{\mcQ_o}\zeta_2^p(t)\zeta_1^p(x)(v-k)_{\pm}(x,t)\lbr \esssup\limits_{\stackrel{t\in (t_o - \bar A \varrho^{sp},t_o)}{x \in \spt \zeta_1}}\left[\int_{{\RR^n\setminus B_{\varrho}}} \frac{(w- k^{\frac{1}{q-1}})_{\pm}(y,t)^{p-1}}{|x-y|^{n+sp}}\,dy\right]\rbr\,dx\,dt.
		\end{array}
	\end{multline*}
\end{lemma}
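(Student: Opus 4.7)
The plan is to derive the inequality from the standard Caccioppoli estimate \cref{Prop:energy} applied to $w$ (rather than to $u$ or $v$ directly), and then convert every $w$-difference appearing in the local terms into a $v$-difference via the bi-Lipschitz character of the map $s\mapsto s^{q-1}$ on the interval $[1,(1+\tau)/\tau]$ where $w$ takes its values. The starting point is that, by \cref{defvw}, $w$ is a bounded weak sub/super-solution of a fractional $p$-Laplace type equation with modified kernel $\hat K$ whose constants differ from those of $K$ only by the factor $(\bsmu^-/\bsom)^{p-q}$, which by \cref{Eq:7:4} is pinched between two data-dependent constants.

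First, I would apply \cref{Prop:energy} to $w$ at the truncation level $\kappa:=k^{1/(q-1)}$ over the cylinder $\mcQ_o$ with the test function $\zeta=\zeta_1(x)\zeta_2(t)$. This produces an inequality containing $\mathfrak g_\pm^q(w,\kappa)$ on the left-hand side (from the time derivative of $w^{q-1}=v$), the fractional seminorm of $(w-\kappa)_\pm\zeta$, and the mixed good term involving the product $(w-\kappa)_\pm(x)(w-\kappa)_\mp(y)$; on the right-hand side it produces the gradient-of-cutoff term, a $|\partial_t\zeta^p|\,\mathfrak g_\pm^q(w,\kappa)$ term, an initial term, and the tail term $\displaystyle\esssup\int_{\RR^n\setminus B_\varrho}(w-\kappa)_\pm^{p-1}|x-y|^{-n-sp}\,dy$. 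The tail term is left exactly in this form, which is why the stated lemma keeps the tail written in terms of $w$ and the exponent $k^{1/(q-1)}$.

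Next, I would convert all the local $w$-differences into $v$-differences. By \cref{alg_lem} applied with $c_1=1$, $c_2=(1+\tau)/\tau$, and exponent $q-1$, for any two numbers $a\geq b$ in $[1,(1+\tau)/\tau]$ we have $a^{q-1}-b^{q-1}\approx a-b$ with constants depending only on $q$ and $\tau$, hence depending only on data via \cref{vwdef7.2}. Therefore $(w-\kappa)_\pm \simeq (v-k)_\pm$ pointwise, and the same comparison propagates to differences $(w(x)-\kappa)_\pm-(w(y)-\kappa)_\pm \simeq (v(x)-k)_\pm-(v(y)-k)_\pm$. Combined with \cref{lem:g}, which controls $\mathfrak g_\pm^q(w,\kappa)$ from above and below by $(|w|+|\kappa|)^{q-2}(w-\kappa)_\pm^2\simeq (v-k)_\pm^2$ (here again using that $w\in[1,(1+\tau)/\tau]$), every non-tail term in the Caccioppoli inequality for $w$ is equivalent, up to data-dependent constants, to the corresponding term written in $v$. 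Absorbing these constants into $\boldsymbol C$, and absorbing the $(\bsmu^-/\bsom)^{p-q}$ factors coming from $\hat K$ in the same way, yields the stated inequality.

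The only non-routine point is the bookkeeping between the time-derivative term and the $\mathfrak g_\pm^q$ term: the energy estimate of \cref{Prop:energy} produces $\mathfrak g_\pm^q(w,\kappa)$, and this must be compared with $(v-k)_\pm^2$. This is exactly where \cref{lem:g}, together with the two-sided bound $1\leq w\leq (1+\tau)/\tau$, is essential; without the quantitative lower bound on $w$, the factor $(|w|+|\kappa|)^{q-2}$ could degenerate when $q<2$ and spoil the comparison. All other steps are textbook, and the result matches \cite[Lemma~7.6]{adimurthi2025localholderregularitybounded} once the kernel constants are tracked.
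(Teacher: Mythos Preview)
Your proposal is correct and matches the paper's approach exactly: the paper itself gives no proof beyond the sentence ``follows by imitating the proof of \cite[Lemma 7.6]{adimurthi2025localholderregularitybounded} and the fact that the ratio $\bsom/\bsmu^-$ is bounded above and below by a constant depending only on data,'' which is precisely the strategy you outline (apply \cref{Prop:energy} to $w$ at level $\kappa=k^{1/(q-1)}$, then use the bi-Lipschitz equivalence $s\mapsto s^{q-1}$ on $[1,(1+\tau)/\tau]$ via \cref{alg_lem} and \cref{lem:g} to convert local $w$-terms to $v$-terms while leaving the tail in $w$). Your sketch is in fact more detailed than what the paper provides.
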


Now  we can obtain reduction of oscillation considering the degenerate and singular cases separately as done in \cite{adimurthi2025localholderregularitybounded}. The first is the reduction of oscillation in the degenerate case $p>2$ which follows verbatim as in \cite[Proposition 8.5]{adimurthi2025localholderregularitybounded}:


\begin{proposition}\label{propdegaway}
	Let $p>2$ and $\mcQ_{c_o\varrho}^{\bar{A}} := B_{c_o\varrho} \times (-\lbr\tfrac{\bsom_v}{\mathbf{a}} \rbr^{2-p}(c_o\varrho)^{sp},0)$ be a given cylinder  with $\mathbf{a} = \mathbf{a}(\datanb{})$,  then there exists a constant $\mathbf{d} = \mathbf{d}(\datanb{,\tau,\mreta}) $, $\eta_d= \eta_d(\datanb{,\tau,\mreta})$ and $c_o = c_o(\datanb{,\tau,\mreta,\tfrac{1}{\mathbf{a}}})$ such following conclusions hold:
\[
\essosc_{\mcQ}v \leq \lbr1-\eta_d \rbr \bsom_v\qquad \text{where} \quad \mcQ:= B_{\frac14c_o\varrho}\times (-\mathbf{d}\bsom_v^{2-p}(\tfrac14c_o\varrho)^{sp},0),
\]
and 
\[
c_o^{\frac{sp}{p-1}}\tail((u-\bsmu^{\pm})_{\pm};0,c_o\varrho,(-\lbr\tfrac{\bsom_v}{\mathbf{a}} \rbr^{2-p}(c_o\varrho)^{sp},0)) \leq \eta_d \bsom,
\]
where 
$\bsom,\bsom_v$ are related by $\bsom_v = (q-1)\lbr \tfrac{1+\tau}{\tau}\rbr^{q-1}\tfrac{\bsom}{\bsmu^-}$. 
\end{proposition}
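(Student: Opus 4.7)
The plan is to execute a two-alternative argument analogous to the one developed for $q=2$ in \cref{section3}, but now applied to the transformed function $v = w^{q-1}$ of \cref{vwdef7.2}. The crucial observation is that by \cref{Eq:7:9}, $v$ lies between $1$ and $\lbr \tfrac{1+\tau}{\tau}\rbr^{q-1}$, so it is \emph{bounded away from zero}; consequently all intrinsic scalings of the form $(\ve\bsom_v)^{\mfd-2}$ that appear in the general analysis of \cref{prop6.13,prop6.14} are comparable to universal constants depending only on $\datanb{}, \tau, \mreta$ via the bounds $\mathfrak{C}_1 \le \bsom_v \le \mathfrak{C}_2$ from \cref{vwdef7.2}. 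In effect, the problem for $v$ reduces to the degenerate $p$-Laplace-like setting with the scaling exponent $\mfd=2$, matching the framework of \cref{redoscfirstp>2scaled,redoscsecondp>2scaled}.

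First I would formulate the two alternatives on the cylinder $\mcQ_{\mathbf{a}}^{\de_x,\de_t} := B_{c_o\varrho} \times (-\lbr\tfrac{\bsom_v}{\mathbf{a}}\rbr^{2-p}(c_o\varrho)^{sp},0]$ exactly as in \descref{alt1}{DegAlt-$\I$} and \descref{alt2}{DegAlt-$\II$}, testing whether the set $\{v \leq \bsmu_v^-+\tfrac14\bsom_v\}$ has small or large measure on sub-cylinders at the level of scaling $\lbr\tfrac{\bsom_v}{4}\rbr^{2-p}$. In \descref{alt1}{DegAlt-$\I$}, I would apply de Giorgi iteration based on the energy estimate of \cref{lemma7.3} (playing the role of \cref{lemma3.2}) to deduce $v \geq \bsmu_v^- + \tfrac18 \bsom_v$ on a reduced sub-cylinder near $\bar t$, and then propagate this pointwise positivity forward in time via the quantitative-initial-data iteration (the analogue of \cref{lemma3.3}) by choosing $\ve$ small enough that the intrinsic time scale $\nu_1 (\tfrac1{4\ve})^{0} (\ve\bsom_v)^{2-p}$ covers $\lbr\tfrac{\bsom_v}{\mathbf{a}}\rbr^{2-p}(c_o\varrho)^{sp}$.

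In \descref{alt2}{DegAlt-$\II$}, I would first extract a good time slice $t^\ast$ at which the set $\{v \leq \bsmu_v^+ - \tfrac14 \bsom_v\}$ has large measure in a ball (as in \cref{claim6.12}), then apply expansion of positivity (the analogue of \cref{lemma3.4}) to spread this measure information forward in time on the cylinder of correct intrinsic scale, then apply the measure-shrinking \cref{lemma3.5} to bring the measure below the threshold $\nu_1$ of \cref{lemma3.2}, and finally apply de Giorgi iteration for $(v - \bsmu_v^+ + \cdot)_+$ to get a pointwise reduction near the supremum. As in \descref{step6}{Step 6} of \cref{redoscsecondp>2scaled}, the circular dependence between $\sigma$ (from shrinking) and $\nu_1$ (from de Giorgi) is resolved because $\nu_1 \sim \sigma^{2-\mfd}$ with $\mfd=2$ effectively, so both scales are simultaneously achievable.

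The main obstacle, as in \cite{adimurthi2025localholderregularitybounded}, is handling the tail term: the substitution $v=w^{q-1}$ is only local inside $\mcQ_o$ while the equation is nonlocal, so the tail of $v$ is not directly controlled by $v$ itself but must be estimated through $u$ via \cref{Rmk:5.1} and the defining bound \cref{defbsom}. At every iteration step one must verify a tail alternative of the form $c_o^{sp/(p-1)}\tail((u-\bsmu^{\pm})_{\pm};0,c_o\varrho,\cdot) \leq \ve\bsom$, which is achieved by shrinking $c_o$ in a finite number of reductions depending only on $\datanb{}, \tau, \mreta, \mathbf{a}^{-1}$. Once $c_o$ is fixed from these requirements, the constants $\mathbf{d}$ and $\eta_d$ emerge as the quantitative outputs of the iterations, and combining the two alternatives yields the claimed reduction $\essosc_{\mcQ} v \leq (1-\eta_d)\bsom_v$ on the final cylinder $B_{\tfrac14 c_o\varrho} \times (-\mathbf{d}\bsom_v^{2-p}(\tfrac14 c_o\varrho)^{sp},0]$.
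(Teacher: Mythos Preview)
Your proposal is correct and mirrors the paper's approach: the paper states that this proposition follows verbatim from \cite[Proposition 8.5]{adimurthi2025localholderregularitybounded}, whose argument is precisely the two-alternative degenerate scheme of \cref{redoscfirstp>2scaled,redoscsecondp>2scaled} applied to $v$ via the energy estimate \cref{lemma7.3}, with the tail handled through $u$ by shrinking $c_o$. Your identification that the boundedness of $\bsom_v$ between $\mathfrak{C}_1$ and $\mathfrak{C}_2$ collapses the intrinsic geometry to the classical $\mfd=2$ degenerate setting is exactly the mechanism at work.
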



In the singular case $p<2$, we obtain the reduction of oscillation verbatim as in \cite[Proposition 9.2]{adimurthi2025localholderregularitybounded}:
\begin{proposition}
	Suppose for two  constants $\alpha \in (0,1)$ and $\varepsilon \in (0,1)$, the following assumption holds:
	\[
	|\{\pm (\bsmu^{\pm}_v - v(\cdot,t_o))\geq \varepsilon \bsom_v\} \cap B_{c_o\varrho} | \geq \alpha |B_{c_o\varrho}|.
	\]
	Then there exists constants $\delta = \delta(\datanb{,\alpha})$, $\eta = \eta(\datanb{,\alpha})$ and $c_o = c_o(\eta,\varepsilon)$ such that the following conclusion follows:
	\[
	c_o^{\frac{sp}{p-1}} \tailp((u - \bsmu^{\pm})_{\pm}; \mcQ_o) \leq \eta \varepsilon \bsom,
	\]
	and 
	\begin{equation*}
	\pm(\bsmu^{\pm}_v - v) \geq \eta \varepsilon \bsom_v \quad \text{on} \quad B_{c_o\varrho} \times (t_o + \tfrac12 \delta (\varepsilon \bsom_v)^{2-p}(c_o\varrho)^{sp}, t_o + \delta  (\varepsilon \bsom_v)^{2-p}(c_o\varrho)^{sp}),
		\end{equation*}
		provided $B_{2c_o\varrho} \times (t_o ,  t_o + \delta (\varepsilon \bsom_v)^{2-p}(c_o\varrho)^{sp}) \subset \mcQ_o$. 
		Moreover, we have $\delta \approx \alpha^{p-1}$ and $\eta \approx \alpha^{k}$ for some $k>1$ depending only on data.
\end{proposition}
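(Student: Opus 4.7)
The plan is to reproduce, in the singular regime $p<2$, the three-step template developed in \cref{section3} for $q=2$, now applied to the transformed function $v=w^{q-1}$ built from $w=\hat{u}/\bsmu^-$. The transformation is designed so that $\partial_t(w^{q-1})=\partial_t v$ is linear in $v$, and since $v$ is pinched between two universal constants depending only on $\tau$ (hence only on data, by \cref{Eq:7:9} and \cref{vwdef7.2}), the energy inequality of \cref{lemma7.3} has exactly the structural form of \cref{energyqequal2}: its kernel is merely rescaled by the harmless data-bounded factor $(\bsmu^-/\bsom)^{p-q}$ coming from \cref{defvw}. Consequently, I can apply the $v$-analogues of \cref{lemma3.2}, \cref{lemma3.4} and \cref{lemma3.5} with intrinsic time scaling $\theta_t=(\varepsilon\bsom_v)^{2-p}$ and trivial spatial scaling.

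First, starting from the hypothesized measure estimate at $t_o$, I would apply the propagation lemma analogous to \cref{lemma3.4} to obtain $\bar\varepsilon=\bar\varepsilon(\datanb{,\alpha})$ and $\delta_1=\delta_1(\datanb{,\alpha})$ for which
\begin{equation*}
\bigl|\{\pm(\bsmu^{\pm}_v-v(\cdot,t))\ge \delta_1\varepsilon\bsom_v\}\cap B_{c_o\varrho}\bigr|\ge\tfrac{\alpha}{2}\,|B_{c_o\varrho}|
\end{equation*}
uniformly for $t\in(t_o,t_o+\bar\varepsilon(\varepsilon\bsom_v)^{2-p}(c_o\varrho)^{sp}]$. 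Second, at each fixed time $\hat t$ in this interval, I would apply the shrinking lemma analogous to \cref{lemma3.5} on the backward cylinder $\mcQ_\sigma:=B_{c_o\varrho}\times(\hat t-(\sigma\delta_1\varepsilon\bsom_v)^{2-p}(c_o\varrho)^{sp},\hat t]$, with $\sigma\in(0,\tfrac12)$ a free parameter, obtaining
\begin{equation*}
\bigl|\{\pm(\bsmu^\pm_v-v)\le\tfrac12\sigma\delta_1\varepsilon\bsom_v\}\cap\mcQ_\sigma\bigr|\le \frac{\mathbf{C}\,\sigma^{p-1}}{\alpha(1-\sigma)^{p-1}}\,|\mcQ_\sigma|.
\end{equation*}
Third, I would feed this smallness into the de Giorgi iteration analogous to \cref{lemma3.2} on $\mcQ_\sigma$, which converts the smallness of the bad set into a pointwise conclusion once its density falls below the universal threshold $\nu_1=\nu_1(\datanb{})$ of that lemma.

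The asymptotic dependencies claimed in the proposition arise by balancing Steps 2 and 3: I would choose $\sigma$ so that $\mathbf{C}\sigma^{p-1}/(\alpha(1-\sigma)^{p-1})\le\nu_1$, which is possible precisely because $p-1>0$, and which yields $\sigma\approx (\alpha\nu_1/\mathbf{C})^{1/(p-1)}$. Setting $\eta:=\tfrac14\sigma\delta_1$ and tracking the length of the conclusion interval gives $\eta\approx\alpha^{k}$ for some $k>1$ and $\delta\approx\alpha^{p-1}$, matching the claim (and it is precisely here that the independence of $\delta$ and $\eta$ from $\varepsilon$ materialises, since all $\varepsilon$-dependence factors into the intrinsic scaling). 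At each of the three steps, the smallness requirement on $c_o^{sp/(p-1)}\tailp((u-\bsmu^\pm)_\pm;\mcQ_o)$ is what renders the tail term in \cref{lemma7.3} absorbable; because the kernel rescaling factor $(\bsmu^-/\bsom)^{p-q}$ is pinched between constants depending only on data, a tail bound on $v$ translates into the asserted tail bound on $u$ with a data-only constant.

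The main obstacle is the careful bookkeeping of the tail alternative through the change of variables $u\mapsto v$: one must verify that the hypothesis on $c_o^{sp/(p-1)}\tailp((u-\bsmu^\pm)_\pm;\mcQ_o)$ is strong enough to validate every application of \cref{lemma7.3} after the intrinsic time scaling, and simultaneously that the spatial cylinder $B_{2c_o\varrho}$ in the hypothesis of the proposition fits inside the reference cylinder $\mcQ_o$ so that all of the above calculations are legitimate. Once this translation is handled exactly as in \cite[Section~9]{adimurthi2025localholderregularitybounded}, the remainder of the argument is a routine adaptation of the classical singular-case regularity proof and produces the stated quantitative reduction of oscillation.
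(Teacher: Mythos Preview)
Your proposal is correct and follows essentially the same route the paper takes: the paper itself does not supply an independent argument but defers verbatim to \cite[Proposition~9.2]{adimurthi2025localholderregularitybounded}, whose proof is exactly the three-step template (propagation as in \cref{lemma3.4}, nonlocal shrinking as in \cref{lemma3.5}, de~Giorgi iteration as in \cref{lemma3.2}) applied to the transformed function $v=w^{q-1}$ via the energy estimate of \cref{lemma7.3}, with the standard singular intrinsic scaling $(\varepsilon\bsom_v)^{2-p}$ and no $\mfd$-modification needed since $v$ is pinched between data-constants. Your identification of the tail-translation through $u\mapsto w\mapsto v$ as the only genuinely delicate point, and your balancing of $\sigma$ against $\nu_1$ to obtain the stated $\alpha$-dependencies, both match that reference.
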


\section{Completing the proof of H\"{o}lder regularity}

The entire covering argument is by now a very standard argument, see for example  \cite[Section 10]{adimurthi2025localholderregularitybounded} for the details. This completes the proof of the H\"older regularity.

\section*{Data Availability Statement}

This paper contains no associated data. 

\section*{Conflict of Interest Statement}

The authors certify that there is no conflict of interest.

\section*{References} 

\end{document}